\newtheorem{theorem}{Theorem}[section]
\numberwithin{equation}{section}
  \newaliascnt{definition}{theorem}
  \Crefname{definition}{Definition}{Definitions}
  \newaliascnt{lemma}{theorem}
  \newtheorem{lemma}[lemma]{Lemma}
  \Crefname{lemma}{Lemma}{Lemmas}
  \newaliascnt{proposition}{theorem}
  \newtheorem{proposition}[proposition]{Proposition} %
  \Crefname{proposition}{Proposition}{Propositions}
  \newaliascnt{remark}{theorem}
  \newtheorem{remark}[remark]{Remark} %
  \Crefname{remark}{Remark}{Remarks} %
  \crefname{equation}{eq.}{eqs.}
\def\d{\mathrm{d}}
\def\subsection{\@startsection{subsection}{2}%
     \z@{.5\linespacing\@plus.7\linespacing}{.5\linespacing}%
     {\normalfont\bfseries}}
\begin{document}

\title[Non-isothermal Maxwell-Stefan asymptotics]{The non-isothermal Maxwell-Stefan asymptotics of the multi-species  Boltzmann equations}

\author[Xinqiu Chen]{Xinqiu Chen}
\address[Xinqiu Chen]{\newline School of Mathematics and Statistics, Wuhan University, Wuhan, 430072, P. R. China}
\email{xinqiu\_chen@whu.edu.cn}

\author[Ning Jiang]{Ning Jiang}
\address[Ning Jiang]{\newline School of Mathematics and Statistics, Wuhan University, Wuhan, 430072, P. R. China}
\email{njiang@whu.edu.cn}

\author[Yi-Long Luo]{Yi-Long Luo}
\address[Yi-Long Luo]
{\newline School of Mathematics, Hunan University, Changsha, 410082, P. R. China}
\email{luoylmath@hnu.edu.cn}

\thanks{\today}

\maketitle

\begin{abstract}
We study the convergence from the multi-species Boltzmann equations to the non-isothermal Maxwell-Stefan system. The global-in-time well-posedness of the Maxwell-Stefan system is first established. The solution is utilized as the fluid quantities to construct a local Maxwellian vector. The Maxwell-Stefan system can be derived from the multi-species Boltzmann equations under diffusive scaling by adding a relation on the total concentration. Different with the classical hydrodynamic limits of the Boltzmann equations, the Maxwellian based on the Maxwell-Stefan system is not a local equilibrium for the mixtures due to cross-interactions. A local coercivity property for the operator linearized around the local Maxwellian is established, based on the explicit spectral gap of the operator linearized around 
the global equilibrium. The global-in-time solution to the multi-species 
Boltzmann equations uniform in Knudsen number $\varepsilon$ is established in this scaling, thus the first non-isothermal Maxwell-Stefan asymptotics is rigorously justified. This generalizes
Bondesan and Briant’s work \cite{briant2021stability} from isothermal to non-isothermal case. \\

\noindent\textsc{Keywords.} Non-isothermal Maxwell-Stefan equations; Multi-species Boltzmann equations; Knudsen number; Asymptotical behaviors \\

\noindent\textsc{AMS subject classifications.} 35Q61; 82C40

\end{abstract}


\tableofcontents
\section{Introduction}
\subsection{The Multi-species Boltzmann Equations}
The multi-species Boltzmann equations \cite{Sirovich1962kinetic,Morse1964kinetic}
represent an extension of the standard Boltzmann equation for 
mono-species \cite{Cercignani1988applications,Cercignani1994dilute,Villani2002handbook}. They describe the evolution of 
a dilute gas composed of different species. More precisely, we consider a monatomic inert gaseous mixture composed of $N \geq 2$ different species, having atomic masses $(m_i)_{1 \leq i \leq N}$, evolving on $\mathbb{T}^3$.  
We use a vector-valued distribution function $\mathbf{F} = (F_1 , \cdots , F_N)$ to model this evolution, where $F_i=F_i(t,x,v)$ describes  the evolution of the $i$-th specie of mixtures satisfying the following equation: for any $1\leq i\leq N$, 
\begin{equation}
\partial_t F_i + v \cdot \nabla_x F_i = \mathit{Q}_i (\mathbf{F} , \mathbf{F}) ,  \label{def of Bz-i equation}
\end{equation}
with initial data 
\[ F_i(0,x,v) = F_i^{in} (x,v), \quad (x,v) \in \mathbb{T}^3 \times \mathbb{R}^3.
\]
The Boltzmann operator $\mathbf{Q}=(\mathit{Q}_1,\dots,\mathit{Q}_N)$ is given for any $1\leq i\leq N$,
\begin{equation}
    \begin{aligned}
    \mathit{Q}_i (\mathbf{F} , \mathbf{F}) (v) & = \sum_{j=1}^N \mathit{Q}_{ij} (F_i , F_j) (v)\\
    &=\sum_{j=1}^N \int_{\mathbb{R}^3 \times \mathbb{S}^2 } \mathit{B}_{ij} (|v-v_\ast| , \cos{\theta}) [F_i^\prime F_j^{\prime\ast} 
    -F_i F_j^\ast]  \,\d \sigma \d v_\ast ,\quad \cos{\theta} = \frac{(v - v_\ast) \cdot \sigma}{|v - v_\ast|},
    \end{aligned}
\end{equation}
where $\mathit{Q}_{ij}$ models interactions between particles of species $i$ and species $j$. The notations are defined as follows: 
$F_i^\prime = F_i(v^\prime)$, $F_i = F_i(v)$, $F_j^{\prime\ast}=F_j(v_\ast^\prime)$, and $F_j^{\ast} = F_j(v_\ast)$, where 
$v , v_\ast$ represent precollision velocities, while $v^\prime , v_\ast^\prime$ indicate post-collision velocities. 
We focus on binary and elastic collisions, ensuring that the collision velocities $v , v_\ast , v^\prime$ and $v_\ast^\prime$ 
satisfy the conservation of momentum and kinetic energy
\begin{equation}
 m_i v + m_j v_\ast = m_i v^\prime + m_j v_\ast^\prime ,\quad
        \frac{1}{2} m_i |v|^2 + \frac{1}{2} m_j |v_\ast|^2 = \frac{1}{2} m_i |v^\prime|^2 + \frac{1}{2} m_j |v_\ast^\prime|^2 .
\end{equation}
The expression of post-collision velocities are
 \begin{equation}
    \left\{
    \begin{aligned}
        v^\prime & = \frac{m_i v + m_j v_\ast}{m_i + m_j} + \frac{m_j}{m_i + m_j} |v - v_\ast| \sigma ,\\
        v_\ast^\prime & = \frac{m_i v + m_j v_\ast}{m_i + m_j} - \frac{m_i}{m_i + m_j} |v - v_\ast| \sigma.
    \end{aligned}
    \label{post-collision vel}
    \right. 
    \end{equation}

Moreover, the collision kernels $( \mathit{B}_{ij} )_{1 \leq i,j \leq N}$ are nonnegative and model the physics of the binary collisions between particles. We make the following assumptions on each $\mathit{B}_{ij}$ (as in \cite{briant2021stability}).
\begin{enumerate}[label=\textbf{(H\arabic*)}]
    \item It satisfies a symmetry property with respect to the interchange of both species indices $i,j$
\[ \mathit{B}_{ij} (|v - v_\ast| , \cos{\theta} ) = \mathit{B}_{ji} (|v - v_\ast| , \cos{\theta} ),\quad \forall v , v_\ast \in 
\mathbb{R}^3, \quad \forall \theta \in [0,\pi].
\]
    \item It decomposes into the product of a kinetic part $\Phi_{ij}\geq0$ and an angular part $\mathit{b}_{ij}>0$, namely
\[ \mathit{B}_{ij} (|v - v_\ast| , \cos{\theta} )=\Phi_{ij} (|v - v_\ast| ) \mathit{b}_{ij} (\cos{\theta}), \quad \forall v , 
v_\ast \in \mathbb{R}^3, \quad \forall \theta \in [0,\pi].  \]
    \item The kinetic part has the form
\[ \Phi_{ij} (|v - v_\ast| )=C_{ij}^{\Phi} |v - v_\ast|^\gamma, \quad C_{ij}^\Phi > 0, \quad \gamma \in (-3,1], \quad \forall v , 
v_\ast \in \mathbb{R}^3,  \]
and is usually classified into three classes: hard potential, when $\gamma>0$, soft potential, when $\gamma<0$, and Maxwellian potential, 
when $\gamma=0$.
    \item For the angular part, we consider a strong form of Grad's angular cutoff \cite{Grad1958bookprinciples}. 
We suppose that $\mathit{b}_{ij}$ is $C^1$, and there exists a constant $C>0$ such that 
\[ \mathit{b}_{ij} (\cos{ \theta}), \quad \mathit{b}^\prime_{ij} (\cos{ \theta}) \leq C,\quad \forall \theta \in [0,\pi].
\]
Furthermore, we assume that 
\[ \min_{1\leq i \leq N} \inf_{ \sigma_1, \sigma_2 \in \mathbb{S}^2} \int_{\mathbb{S}^2} \min\{\mathit{b}_{ii} (\sigma_1 \cdot \sigma_3), 
\mathit{b}_{ii}( \sigma_2 \cdot \sigma_3)\} \,\d \sigma_3 > 0.
\]
\end{enumerate}
The above hypotheses on the collision kernels are standard in both mono-species cases \cite{mouhot2006explicit,Baranger2005explicit} 
and multi-species cases \cite{mouhot2016SIAMJMAhypercoercivity,briant2016ARMAglobal, briant2021stability}. In these assumptions, 
(H1) indicates that the collisions are micro-reversible, (H2) is commonly used in many physical models, and (H3) applies to 
collision kernels derived from interaction potentials that behave like power laws. The positivity assumption on the infimum of 
the integral appearing in (H4) is satisfied by most physical models and is necessary to obtain an explicit spectral gap property for 
mono-species case \cite{mouhot2006explicit,Baranger2005explicit} and multi-species case 
\cite{briant2016ARMAglobal,mouhot2016SIAMJMAhypercoercivity}.

By using changes of variables $(v , v_\ast ) \rightarrow (v^\prime , v_\ast^\prime )$ and $(v , v_\ast) \rightarrow (v_\ast , v)$, together with the symmetries of the collision kernels 
(see \cite{mouhot2016SIAMJMAhypercoercivity,Boudin2015AAMdiffusion,Desvillettes2005polytropic} for more details), we recover the weak 
forms
\begin{equation}
  \int_{\mathbb{R}^3} \mathit{Q}_{ij} (f , g)(v) \psi(v) \,\d v = \int_{\mathbb{R}^6 \times \mathbb{S}^2} \mathit{B}_{ij}(v , v_\ast , 
  \sigma) f(v) g(v_\ast) \big[ \psi (v^\prime) - \psi(v) \big ] \, \d \sigma \d v_\ast \d v,
  \label{weak form-1}
\end{equation}
and 
\begin{equation}
  \begin{aligned}
  & \int_{\mathbb{R}^3} \mathit{Q}_{ij} (f , g) (v) \psi (v) \,\d v + \int_{\mathbb{R}^3} \mathit{Q}_{ji} (g , f) (v) \phi (v) 
  \,\d v \\
  & = - \frac{1}{2} \int_{\mathbb{R}^6 \times \mathbb{S}^2} \mathit{B}_{ij} (v , v_\ast,\sigma) \big[ f (v^\prime) g (v_\ast^\prime)
  - f (v) g (v_\ast )  \big]  \big[ \psi (v^\prime ) - \psi (v) + \phi (v_\ast^\prime) - \phi (v_\ast ) \big] \, \d \sigma \d v_\ast \d v.
\label{weak form-2}  
\end{aligned}
\end{equation}
Thus, the operator $\mathbf{Q} = (\mathit{Q}_1 , \dots , \mathit{Q}_N)$ has collision invariants for multi-species version 
if and only if 
$\bm { \psi } =(\psi_1 , \dots , \psi_N) \in \mathrm{Span} \left\{ \mathbf{e}^{(1)} , \dots, \mathbf{e}^{(N)} , v_1 \mathbf{m} , v_2 \mathbf{m} , 
v_3\mathbf{m}, |v|^2 \mathbf{m} \right\}$ with notation $\mathbf{m}=(m_1, \dots, m_N)$. 
Moreover, the operator $\mathbf{Q}$ satisfies a multi-species version of the classical $\mathit{H}\mbox{-}$Theorem \cite{Desvillettes2005polytropic}, 
which implies that any local equilibrium takes the form of a local Maxwellian. Specifically, the local Maxwellian vector 
$\mathbf{M} = (M_1 , \dots , M_N)$ have the following form, for any $1 \leq i \leq N$,
\[  M_i (t , x , v)=c_{loc,i} (t,x) \left (\frac{m_i}{2 \pi T_{loc} (t,x)} \right )^{3/2} \exp\left\{ -\frac{m_i |v - u_{loc} 
(t,x)|^2}{2 T_{loc} (t,x)} \right\},
\]
where we assume that the Boltzmann constant equals to 1. The macroscopic quantities $c_{loc,i} : \mathbb{R}_+ \times \mathbb{T}^3 \rightarrow \mathbb{R}_+$ represent local molar concentration of the $i$-th species, 
$u_{loc} : \mathbb{R}_+ \times \mathbb{T}^3 \rightarrow \mathbb{R}^3$ denotes the bulk velocity, and $T_{loc} : \mathbb{R}_+ \times \mathbb{T}^3 \rightarrow \mathbb{R}_+$ expresses the temperature of the mixture. These quantities can be computed from the local Maxwellians
\[  c_{loc,i} = \int_{\mathbb{R}^3} M_i \,\d v, \quad \forall 1 \leq i \leq N, \quad u_{loc} = \frac{1}{\rho_{loc}} 
\sum_{i=1}^N \int_{\mathbb{R}^3} m_i v M_i \,\d v,
\]
\[  T_{loc} = \frac{1}{3\rho_{loc}} \sum_{i=1}^N \int_{\mathbb{R}^3} m_i |v - u_{loc} |^2 M_i \,\d v,
\]
where $\rho_{loc}=\sum_{i=1}^N m_i c_{loc,i}$ is the total mass density.

On torus, $\mathit{H}\mbox{-}$Theorem for multi-species also implies that the global equilibrium is a stationary solution to the equations 
\eqref{def of Bz-i equation} associated with the initial data $\mathbf{F}^{in}=(F_1^{in},\dots,F_N^{in})$. 
The macroscopic quantities of the global equilibrium can be calculated as follows
\[ \bar{c}_i = \int_{\mathbb{T}^3 \times \mathbb{R}^3} F_i^{in}(x,v) \,\d x\d v, \quad \forall 1 \leq i \leq N,\quad 
\bar{u} = \frac{1}{\bar{\rho}} \sum_{i=1}^N \int_{\mathbb{T}^3 \times \mathbb{R}^3} m_i v F_i^{in} (x , v) \,\d x\d v,
\]
\[ \bar{ T} = \frac{1}{3\bar{\rho}} \sum_{i=1}^N \int_{\mathbb{T}^3 \times \mathbb{R}^3} m_i |v - \bar{ u} |^2 
F_i^{in} \,\d x \d v,
\]
where $\bar{ \rho} = \sum_{i=1}^N m_i \bar{c}_i$ denotes the global density of the mixtures. The global equilibrium is given by 
$\mathbf{M} = (M_1 , \dots , M_N)$ with 
\[  M_i (v) = \bar{c}_i (\frac{m_i}{2 \pi \bar{T} })^{3/2} \exp\{- \frac{m_i |v - \bar{u}|^2}
{2 \bar{T}}\},
\] 
for any $1 \leq i \leq N$. By translating and rescaling the coordinate system, we can always assume $\bar{u} = 0$ and $\bar{T} = 1$. 
The only global equilibrium is a normalized Maxwellian, denoted by $\bm{ \mu}=( \mu_1, \dots, \mu_N)$ with
\begin{equation}\label{def of mu-i in introduction}
\mu_i (v) = \bar{c}_i \left ( \frac{m_i}{2 \pi} \right )^{3/2} e^{ - \frac{m_i |v|^2}{2}} , \quad \forall 1 \leq i \leq N.
\end{equation}

The Cauchy theory and the trend to the global equilibrium for the solution of equations \eqref{def of Bz-i equation} have been investigated in \cite{Briant2016DCDSglobal} and \cite{briant2016ARMAglobal}. There have been some works investigating the classical fluid limits (Euler and Navier-Stokes) from the multi-species Boltzmann equations (or multi-species Vlasov-Maxwell-Boltzmann systems) in different scaling, such as \cite{ABT-JSP2003, Wu-Yang-JDE2023, Wu-Yang-AA2024,JL-APDE-2022,JLZ-ARMA-2023}. The current paper aims to study a non-fluid macroscopic asymptotics from the multi-species Boltzmann equations in the diffusive scaling, namely, the Maxwell-Stefan equations, which are fundamental for mixture gases in chemical engineering.  Specifically, by introducing a dimensionless parameter $\varepsilon>0$ (Knudsen number, which is the ratio of the mean free path to the macroscopic length scale, such as the length unit of the torus), the rescaled Boltzmann equations are given by:
\begin{equation}
\partial_t F_i^\varepsilon + \frac{1}{ \varepsilon} v \cdot \nabla_x F_i^\varepsilon = \frac{1}{\varepsilon^2} \mathit{Q}_i 
(\mathbf{F}^\varepsilon , \mathbf{F}^\varepsilon ) , \quad \forall 1 \leq i \leq N.  \label{diffusive scaled Bz-i}
\end{equation}
Starting from these multi-species Boltzmann equations in the diffusive scaling, the Maxwell-Stefan system can be derived. 

\subsection{The Maxwell-Stefan System}
Maxwell-Stefan system was derived in the 19-th century by Maxwell \cite{maxwell1867iv} for dilute gases 
and Stefan \cite{stefan1871gleichgewicht} for fluids, providing a generalization of Fick's law of mono-species diffusion 
\cite{fick1855ueber}. Fick's law was the most classical law to describe diffusive transport. It postulated the flux goes 
from higher concentration regions to lower concentration regions, and the magnitude is proportional to the concentration 
gradient. In many situations, it provides accurate description of diffusive transport. However, it fails to describe 
diffusion in the case of multi-component gaseous mixtures, as three distinct diffusion phenomena: 
osmotic diffusion (diffusion without a gradient), reverse diffusion (up-hill diffusion in direction of the
gradient) and diffusion barrier (diffusion with a zero flux), were observed in experiments, see \cite{Duncan-Toor-1962} for example. 
Maxwell-Stefan system was applied to overcome the shortcomings of the Fick's law, relying on the fact that the driving force 
of the species in a mixture is in local equilibrium with the total inter-species drag/friction force. 

Here we first present a simplest case, i.e. isothermal, ideal, and non-reactive gaseous mixtures, which takes the following form:
\begin{equation} \left\{
\begin{aligned}
&\partial_t c_i + \nabla_x \cdot \mathcal{F}_i = 0,\quad \forall  1 \leq i \leq N,\\
    & - c_{tot} \nabla_x n_i = \sum_{j \neq i} \frac{n_j \mathcal{F}_i - n_i \mathcal{F}_j}{D_{ij}} , \quad \forall 1 \leq i \leq N.
\end{aligned}\right.\label{classical M-S}
\end{equation}
In this context, the notation $c_i=c_i(t,x)$ represents the molar concentration of $i$-th species, while 
$\mathcal{F}_i=\mathcal{F}_i(t,x)$ indicates its molar flux. The total molar concentration of the mixture is given by 
$c_{tot}=\sum_{i=1}^N c_i$, and $n_i=\frac{c_i}{c_{tot}}$ represents the mole fraction of species $i$. 
The binary diffusion coefficients $D_{ij} = D_{ji}$ quantify the interactions between species $i$ and $j$ for any fixed $i,j\in\{1, \dots ,N\}$. 

The first  equation of \eqref{classical M-S} is the conservation law of the molar concentration. The second part corresponds to the so-called Maxwell-Stefan equations/law, describing the relations between the molar fluxes and the mole fractions in a nonlinear way. Additionally, due to the symmetry of the binary diffusion coefficients, another relation is necessary to close the system. The usual assumption made is the equimolar diffusion condition \cite{krishna1997maxwell}
\[ \sum_{i=1}^N \mathcal{F}_i=0.
\]
It ensures that the total concentration is fixed. If the binary diffusion coefficients 
$\{D_{ij}\}_{1\leq i,j \leq N}$ are all equal, the Maxwell-Stefan law becomes Fick’s law, and the equations in this system reduce into heat equations.

The Maxwell-Stefan system belongs to the so-called cross-diffusion models \cite{MBG-2017,DLM-SIAMJMA2014,Jüngel-Nonlinearity2015,Luo-Ni-JDE1996}, which describe the process in which the gradient in the concentration or density of one chemical or biological species induces a flux of another species. It is widely used in the context of chemical engineering \cite{taylor1993multicomponent} 
and plays an important role in physics and medicine \cite{Boudin2010diffusionlung,chang1980multicomponent}. Numerical 
studies \cite{ern1994multicomponent,giovangigli2012multicomponent} have been applied to the Maxwell-Stefan system in 20-th century, and mathematical community arose interest on it starting from \cite{boudin2012mathematical,bothe2011maxwell}. 

For the well-posedness of this system, the works \cite{bothe2011maxwell,HMP-NA2017,Jungel-Stelzer-SIAM2013} proved the existence of local-in-time smooth solutions and global-in-time weak solutions. For the nonisothermal gas mixtures, \cite{Hutridurga-Salvarani-AML2018} focused on the local-in-time solution for a special Maxwell-Stefan system. 
\cite{Helmer-Jungel-NARWA2021,Georgiadis-Jungel-Nonlinearity2024} investigated the global existence of weak solution, and the former one established a conditional weak–strong uniqueness property.

The first derivation \cite{Boudin2015AAMdiffusion} established a link between the Maxwell-Stefan system and the 
multi-species Boltzmann equations in diffusive scaling, using a moment method. Subsequently, more formal studies emerged 
that employed the same approach \cite{hutridurga2017maxwell,Anwasia2021hardsphere,Salvarani2021survery}, starting from the 
Boltzmann equations with various types of collision kernels. Rigorous analysis of this 
diffusion asymptotics was presented in \cite{briant2021stability} under isothermal condition. 
The study on the connection between the multi-species Boltzmann equations and the Fick cross-diffusion system followed in \cite{Briant2023fickcross}. In chemical engineering, considering temperature variations is more realistic. However, the studies on the non-isothermal cases are much rarer. Hutridurga and Salvarani formally derived the non-isothermal Maxwell-Stefan system using the moment method in \cite{Hutridurga2017noniso}, and then proved the existence and uniqueness of the local-in-time solution for this system in \cite{Hutridurga-Salvarani-AML2018}. The reactive mixtures case was investigated in \cite{Anwasia2020KRMpolyatomic}.

We present a brief proof for the non-isothermal Maxwell-Stefan system. It generalizes the result 
in \cite{Hutridurga2017noniso} to the case where the multi-species Boltzmann equations have general cross sections, following the computations in \cite{Boudin2017general}. We assume that the 
distributions for the rescaled multi-species Boltzmann equations \eqref{diffusive scaled Bz-i} under assumptions (H1)-(H2)-(H3)-(H4) keep in a local Maxwellian state, i.e. for any $1 \leq i \leq N$,
\[  M_i^\varepsilon (t,x,v)= c_i^\varepsilon (t,x) \left ( \frac{m_i}{2\pi T^\varepsilon (t,x)} \right )^{\frac{3}{2}} 
e^{-\frac{m_i|v-\varepsilon u_i^\varepsilon (t,x)|^2}{2 T^\varepsilon (t,x)}}.
\]
The choices of $\mathcal{O}(\varepsilon)$ in the macroscopic velocities are caused by our focus on the pure diffusion phenomenon. Multiplying the rescaled equation \eqref{diffusive scaled Bz-i} by $1, m_i v$, and integrating over $\mathbb{R}^3$ with respect to $v$, 
we obtain that for any $1 \leq i \leq N$,
\begin{equation}
   \partial_t c_i^\varepsilon + \nabla \cdot (c_i^\varepsilon u_i^\varepsilon ) =0 ,  \label{formal deri of equation of i}
\end{equation}
\begin{equation}  
 \varepsilon \partial_t (m_i c_i^\varepsilon u_i^\varepsilon ) + \frac{1}{\varepsilon} \nabla_x (c_i^\varepsilon T^\varepsilon) 
+ \varepsilon \nabla_x \cdot ( m_i c_i^\varepsilon u_i^\varepsilon \otimes u_i^\varepsilon) = \frac{1}{\varepsilon ^2} \sum_{j \neq i} \int_{\mathbb{R}^3} 
\mathit{Q}_{ij} ( F_i^\varepsilon, F_j^\varepsilon ) (v) m_i v \,\d v .  \label{formal deri of equation of m-i v}
\end{equation}
Moreover, by multiplying \eqref{diffusive scaled Bz-i} by $m_i |v|^2$, summing over the indices $i$, 
and integrating over $\mathbb{R}^3$, we get the equation
\begin{equation}
    3 \partial_t (\sum_{i=1}^N c_i^\varepsilon T^\varepsilon ) +5 \nabla_x \cdot (\sum_{i=1}^N  c_i^\varepsilon u_i^\varepsilon T^\varepsilon ) 
    + \varepsilon^2 \left[ \partial_t (\sum_{i=1}^N m_i c_i^\varepsilon |u_i^\varepsilon|^2 ) + \nabla_x \cdot (\sum_{i=1}^N m_i c_i^\varepsilon 
    |u_i^\varepsilon| u_i^\varepsilon )     \right] =0.  \label{formal deri of equation of T}
\end{equation}
Assuming these quantities are pertubated as
\[  c_i^\varepsilon = c_i + \mathcal{O} (\varepsilon), \quad u_i^\varepsilon = u_i +  \mathcal{O} (\varepsilon), \quad 
T^\varepsilon = T + \mathcal{O} (\varepsilon)\,.
\]
The equations for $c_i$ and $T$ can be derived  from equations \eqref{formal deri of equation of i}  and \eqref{formal deri of equation of T} by taking the limit as $\varepsilon \rightarrow 0$: 
\begin{equation}
    \partial_t c_i + \nabla \cdot (c_i u_i) =0, \quad \forall 1\leq i\leq N, \label{formal limit equ for c-i}
\end{equation}
\begin{equation}
    3 \partial_t (\sum_{i=1}^N c_i T) +5 \nabla_x \cdot (\sum_{i=1}^N  c_i u_i T ) = 0 . \label{formal limit equ for T}
\end{equation}
Moreover, we notice that the leading order of \eqref{formal deri of equation of m-i v} is 
$\mathcal{O}(\frac{1}{\varepsilon})$. By multiplying it by $\varepsilon$ and taking the limit as $\varepsilon \rightarrow 0$, we get 
\[ \nabla_x ( c_i T ) =  \lim_{\varepsilon \rightarrow 0} \left (\frac{1}{\varepsilon} \sum_{j \neq i} \int_{\mathbb{R}^3} 
\mathit{Q}_{ij} ( F_i^\varepsilon, F_j^\varepsilon ) (v) m_i v \,\d v \right ).
\]
Repeating the computations in \cite{Boudin2017general} with some minor modifications, as they considered a constant temperature there, we obtain the limit equation:
\begin{equation}
    \nabla_x ( c_i T ) = - \sum_{j \neq i} k_{ij} c_i c_j (u_i -u_j ), \quad \forall 1\leq i\leq N. \label{formal limit equ for flux-i}
\end{equation}
The expression of $k_{ij}$ is, for any $1 \leq i,j \leq N$, 
\begin{equation}
\begin{aligned}
    k_{ij} = &\frac{C_{ij}^\Phi \mu_{ij}^2}{6 T} \left ( \frac{m_i}{2 \pi T} \right )^{\frac{3}{2}} \left ( \frac{m_j}{2 \pi T} \right )^{\frac{3}{2}} 
    \int_{\mathbb{R}^6 \times \mathbb{S}^2 } |v-v_\ast|^\gamma \mathit{b}_{ij} (\cos{\theta})\\
    & \times \exp{\left[ -\frac{m_i |v|^2 +m_j |v_\ast|^2}{2 T} \right]} [ (v_\ast -v) + |v-v_\ast| \sigma  ]^2 \,\d \sigma \d v_\ast \d v\\
       = & \frac{8 \pi^{1/2} \mu_{ij} \| \mathit{b}_{ij}\|_{L^1(-1,1)} } {3} \Gamma(\frac{\gamma+5}{2}) 
       \left ( \frac{2 T}{\mu_{ij}}   \right )^{\gamma/2} :=  \frac{T^{\gamma/2}}{\Delta_{ij}} ,  \label{explicit expression of the diffusion coff}
\end{aligned}
\end{equation}
where $\mu_{ij} : =\frac{m_i m_j}{m_i + m_j}$ is the reduced mass. The diffusion coefficients $\frac{\Delta_{ij}}{T^{\gamma/2}}$ are 
symmetric with respect to the indices $i,j$, and they only depend on the atomic masses $(m_i)_{1\leq i\leq N}$ and the collision kernels $(\mathit{B}_{ij})_{1 \leq i,j \leq N}$. This implies that some additional relations must be satisfied by taking the summation with respect to $i=1, \cdots, N$ in the equation \eqref{formal limit equ for flux-i}:  $\nabla_x ( \sum_{i=1}^N c_i T )=0$. 
These relations infer that \eqref{formal limit equ for c-i}-\eqref{formal limit equ for T}-\eqref{formal limit equ for flux-i} are $4N-2$ independent equations, while the number of the limit macroscopic quantities $(c_i,u_i,T)_{1\leq i\leq N}$ is $4N+1$. Therefore, to close the limiting equations,
it is necessary to introduce three additional equations as supplements. We introduce the closure relations 
proposed in \cite{Hutridurga2017noniso}
\begin{equation}
    \sum_{i=1}^N c_i u_i = - \alpha \nabla_x c_{tot}, \quad \alpha >0,  \label{closure relation in intro}
\end{equation}
i.e., the total concentration $c_{tot} = \sum_{i=1}^N c_i$ follows a Fickian behaviour. We denote the flux 
    $J_i=c_i u_i$ for the species $i$. When the temperature is a constant, the relation $\nabla_x (c_{tot} T) =0$ implies $\sum_{i=1}^N c_i u_i =0 $ from the closure relations \eqref{closure relation in intro}, which coincides with the equimolar diffusion condition. 
The equations \eqref{formal limit equ for c-i} and \eqref{formal limit equ for T} deduce that 
$\partial_t c_{tot}=0$ and $\nabla_x \cdot (\sum_{i=1}^N c_i u_i)=0$, leading to the total concentration $c_{tot}$ to be a constant. Furthermore, if we assume the temperature is not same for different species and denote $T_i^\varepsilon$ for species $i$, then we can formally deduce that 
$T_i^\varepsilon - T_j^\varepsilon = \mathcal{O} (\varepsilon^2)$. This result is consistent with that in \cite{Hutridurga2017noniso}, obtained by multiplying the rescaled Boltzmann equations by $m_i |v|^2$, integrating it with respect to $v$ and repeating the above process. Now, the equations \eqref{formal limit equ for c-i}-\eqref{formal limit equ for T}-\eqref{formal limit equ for flux-i}-\eqref{closure relation in intro} together are called the {\bf non-isothermal Maxwell-Stefan system}, which is the system  we will prove the global well-posedness and rigorously justify the derivation from the scaled multi-species Boltzmann equations \eqref{diffusive scaled Bz-i}. This generalizes Bondesan and Briant's work \cite{briant2021stability} from isothermal to non-isothermal case. 

Our aim in this paper is to investigate the convergence from the multi-species Boltzmann equations to the non-isothermal Maxwell-Stefan system stated above, i.e. the equations \eqref{formal limit equ for c-i}, \eqref{formal limit equ for T}, \eqref{formal limit equ for flux-i}, and \eqref{closure relation in intro}. We first analyze this Maxwell-Stefan 
system around a constant state, and then use the perturbation solution to construct a local Maxwellian. 
Furthermore, we prove the stability of the diffusion asymptotics 
from this Maxwellian to the multi-species Boltzmann equations by introducing a suitable modified Sobolev norm, 
as used in \cite{mouhot2006Nonlinearity,briant2015JDEnavierstokes} for applications to the mono-species Boltzmann equation. 
In this part, we focus on the Boltzmann equations with hard potential and Maxwellian potential collision kernels ($\gamma\in[0,1]$), as we require the spectral gap property for the operator linearized around the global equilibrium \cite{briant2016ARMAglobal,mouhot2016SIAMJMAhypercoercivity}. We present notations and our main results in Section \ref{Sec:2}, and the sketch of the proofs of our work is at the end of this section. The perturbative Cauchy theory for the non-isothermal Maxwell-Stefan system is presented in Section \ref{Sec:3}. Finally, our analysis of diffusion asymptotics is in Section \ref{Sec:4}, following methods in \cite{briant2021stability}.

\section{Main Results}\label{Sec:2}
\subsection{Notations and Conventions}
We first introduce main notations used in this paper. For any vector-valued operator $\mathbf{w} \in (\mathbb{R} )^N, \mathbf{W} \in (\mathbb{R})^N$, $\mathbf{v} \in (\mathbb{R_+})^N$, $u \in \mathbb{R}$ and $q \in \mathbb{Q}$, we denote by
\[\mathbf{w} \mathbf{W} = (w_i W_i)_{1 \leq i \leq N}, \quad u \mathbf{w} = (u w_i)_{1 \leq i \leq N}, \quad \mathbf{v}^q 
= (v_i^q )_{1 \leq i \leq N}.
\]
The Euclidean scalar product in $\mathbb{R}^N$ weighted by a vector $\mathbf{w} \in (\mathbb{R}_+)^N$ is defined as 
\[  \langle \mathbf{c} , \mathbf{d} \rangle_{ \mathbf{w}} = \sum_{i=1}^N c_i d_i w_i, \quad \text{for any }\mathbf{c} , \mathbf{d} 
\in \mathbb{R}^N,
\]
and the corresponding norm is $\|\mathbf{c} \|_{ \mathbf{w}}^2 = \langle \mathbf{c} , \mathbf{c} \rangle_{ \mathbf{w}}$. When 
$\mathbf{w} = \mathbf{1} := (1 , \dots , 1)$, the index $ \mathbf{1}$ will be dropped in both notations for 
the scalar product and the norm. When $w\in \mathbb{R}_+$, we define
\[\langle \mathbf{c} , \mathbf{d} \rangle_w := \langle \mathbf{c} , \mathbf{d} \rangle_{w \mathbf{1}}=
\sum_{i=1}^N c_i d_i w,
\]
and the corresponding norm $\|\mathbf{c}\|_w = \langle \mathbf{c} , \mathbf{c} \rangle_w = \sum_{i=1}^N c_i^2 w$.

Then we introduce the functional spaces, for $p \in [ 1 , + \infty]$
\[L_t^p = L^p (0 , +\infty) , \quad L_x^p = L^p (\mathbb{T}^3 ), \quad L_{t,x}^p = L^p( \mathbb{R}_+ \times \mathbb{T}^3).
\]
For any positive measurable functions $\mathbf{w}: \mathbb{T}^3 \rightarrow (\mathbb{R}_+)^N$ and $w: \mathbb{T}^3 \rightarrow 
\mathbb{R}_+$ in the variable $x$, we define the weighted Hilbert space $L^2 (\mathbb{T}^3 , \mathbf{w})$ and its norm as
\[   \langle \mathbf{c} , \mathbf{d} \rangle_{L_x^2(\mathbf{w})} = \sum_{i=1}^N \langle c_i , d_i \rangle_{L_x^2(w_i)}=
\sum_{i=1}^N \int_{\mathbb{T}^3} c_i d_i w_i \,\d x = \int_{\mathbb{T}^3} \langle \mathbf{c} , \mathbf{d}\rangle_{ \mathbf{w}} \,\d x , 
\]
\[   \| \mathbf{c} \|_{L_x^2 ( \mathbf{w})}^2 = \sum_{i=1}^N \|c_i \|_{L_x^2(w_i)}^2 = \sum_{i=1}^N 
\int_{\mathbb{T}^3} c_i^2 w_i \,\d x = \int_{\mathbb{T}^3} \|\mathbf{c} \|^2_{\mathbf{w}} \,\d x .
\]
The weighted Hilbert space $L^2(\mathbb{T}^3,w)$ and its norm are defined as 
\[  \langle \mathbf{c}, \mathbf{d} \rangle_{L_x^2(w)} := \langle \mathbf{c} , \mathbf{d}\rangle_{L_x^2(w \mathbf{1})}
= \sum_{i=1}^N \langle c_i , d_i \rangle_{L_x^2(w)} = \sum_{i=1}^N \int_{\mathbb{T}^3} c_i d_i w \,\d x = 
\int_{\mathbb{T}^3} \langle \mathbf{c},\mathbf{d} \rangle_w \,\d x, 
\]
\[  \|\mathbf{c} \|_{L_x^2(w)} := \|\mathbf{c} \|_{L_x^2(w \mathbf{1})} = \sum_{i=1}^N \|c_i \|_{L_x^2(w)} \,\d x 
=\sum_{i=1}^N \int_{\mathbb{T}^3} c_i^2 w \,\d x = \int_{ \mathbb{T}^3} \|\mathbf{c}\|_{w} \,\d x.
\]

Furthermore, we introduce the corresponding weighted Sobolev spaces, for any $s \in \mathbb{N}$ and the vector-valued function 
$\mathbf{c} \in H^s (\mathbb{T}^3,\mathbf{w})$
\[\|\mathbf{c} \|_{H_x^s (\mathbf{w})} := \left ( \sum_{|\beta| \leq s} \|\partial_x^\beta \mathbf{c}\|^2_{L_x^2( \mathbf{w})} 
\right )^{1/2} = \left ( \sum_{i=1}^N \sum_{|\beta| \leq s} \|\partial_x^\beta c_i \|_{L_x^2 ( w_i)}^2 \right 
)^{1/2} ,
\]
with the multi-index $\beta \in \mathbb{N}^3$ and the length $|\beta| = \sum_{k=1}^3 \beta_k$. Similarly, the weighted Sobolev spaces $\mathbf{\mathbf{c}} \in H^s( \mathbb{T}^3, w)$ is defined as
\[\|\mathbf{c}\|_{H_x^s( w)}:=\left ( \sum_{|\beta| \leq s} \|\partial_x^\beta \mathbf{c}\|_{L_x^2(w)}
\right )^{1/2} = \left ( \sum_{i=1}^N \sum_{|\beta|\leq s} \|\partial_x^\beta c_i\|_{L_x^2(w)}^2 \right )^{1/2}.
\]
\subsection{Main Theorems}
Using the above notations, we can rewrite the non-isothermal Maxwell-Stefan system \eqref{formal limit equ for c-i}-\eqref{formal limit equ for T}-\eqref{formal limit equ for flux-i}-\eqref{closure relation in intro} in vectorial form
\begin{gather}
    \partial_t \mathbf{c} + \nabla_x \cdot (\mathbf{c} \mathbf{u}) = 0, \label{M-S 1}\\
    \nabla_x (\mathbf{c} T) = T^{\gamma/2} A(\mathbf{c}) \mathbf{u} , \quad \gamma \in (-3,1] \label{M-S 2}\\
    \partial_t (c_{tot} T) + \frac{5}{3} \nabla_x \cdot (\sum_{i=1}^N c_i u_i T)=0, \label{M-S 3}\\
    \langle \mathbf{c},\mathbf{u} \rangle  = -\alpha \nabla_x c_{tot} \quad \alpha >0. \label{M-S 4}
\end{gather}
The Maxwell-Stefan matrix $A$ is defined as 
\begin{equation}\label{def of matrix A in introduction}
    [A(\mathbf{c})]_{ij}= \frac{c_i c_j}{\Delta_{ij}} - \left ( \sum_{r=1}^N \frac{c_i c_r}{ \Delta_{ir}} \right ) \delta_{ij},
\end{equation}
which depends on the concentrations $\mathbf{c}=(c_1 , \dots , c_N)$ in a nonlinear way. The Maxwell-Stefan (in short, MS) matrix $A$ is non-positive defined for any $\mathbf{c} \in (\mathbb{R}_+ )^N$ with the kernel space on $\mathbb{R}^N$ given by $\mathrm{Ker} A= \mathrm{Span} ( \mathbf{1})$, see \cite{bothe2011maxwell,jungel2013existence,briant2022perturbativ}.

The first step in this paper is to establish the global-in-time Cauchy theory for the non-isothermal Maxwell-Stefan system \eqref{M-S 1}-\eqref{M-S 2}-\eqref{M-S 3}-\eqref{M-S 4} around any constant state of $(\bar{\mathbf{c}},\mathbf{0},1)$, where $\bar{\mathbf{c}}\in(\mathbb{R}_+^\ast)^N$ is a positive constant vector and $\mathbf{0} = (0, \dots, 0)$ represents the zero vector function from $\mathbb{R}_+ \times \mathbb{T}^3$ to $(\mathbb{R}^3)^N$. Before we present the first main theorem, we introduce the following norms for the perturbation variables $(\tilde{\mathbf{c}}, \tilde{\mathbf{U}}, \tilde{T})$, which are defined in the following Theorem \ref{theorem for M-S in main result}: 
\begin{equation} 
 \begin{aligned}
     \mathscr{E}_s (t) &:= \|\tilde{\mathbf{c}}\|_{H_x^s (\bar{\mathbf{c}}^{-1})}^2 (t) + \|\tilde{T}\|_{H_x^s}^2 (t) 
        + \chi \| \sum_{i=1}^N \tilde{c}_i \|_{H_x^s}^2 (t),\\
            \mathscr{D}_s (t) &:= d_1 \|\tilde{\mathbf{U}}\|_{H_x^s (\omega)}^2 (t) + d_2 
        \|\nabla_x  \sum_{i=1}^N  \tilde{c}_i\|_{H_x^s}^2 (t),
        \label{def of MS functionals in intro}
 \end{aligned}
\end{equation}
 where $\omega := (1 + \lambda \tilde{T})^{\gamma/2 - 1}$ and $\tilde{\mathbf{U}}$ is the projection of the velocity perturbation $\tilde{\mathbf{u}}$ onto space $\mathrm{Span} (\mathbf{1})^\perp$.

 \begin{theorem}\label{theorem for M-S in main result}
        Let $s > 3$ be an integer, $\lambda \in (0,1]$ and $\bar{\mathbf{c}}$ be a positive constant $N$-vector. There exist explicit constants 
        $\delta_{MS}, d_1,d_2,\chi > 0$ such that if the initial data $(\tilde{ \mathbf{c}}^{in}, \tilde{\mathbf{u}}^{in}, \tilde{T}^{in} ) \in H_x^s( \mathbb{T}^3) \times H_x^{s-1}
        (\mathbb{T}^3) \times H_x^s (\mathbb{T}^3)$ satisfy the following conditions (A1)-(A2)-(A3): for almost any $x \in \mathbb{T}^3$, and for any $1 \leq i \leq N$
    \begin{enumerate}[label=\textbf{(A\arabic*)}]
        \item  \textbf{Mass and temperature positivities:} \quad $\bar{c}_i+\lambda\tilde{c}_i^{in}(x)>0,$ 
        $1 + \lambda \tilde{T}^{in} (x)>0$,
        \item \textbf{Mass and temperature compatibility:}
        \[(\bar{c}_{tot} + \lambda \tilde{c}_{tot}^{in} (x))
        \nabla_x \tilde{T}^{in} (x) + (1 + \lambda \tilde{T}^{in} (x)) \nabla_x \tilde{c}_{tot}^{in} (x)=0,
        \quad \tilde{c}_{tot}^{in}=\sum_{i=1}^N \tilde{c}_i^{in},
        \]
        \item  \textbf{Moment compatibility:}
         \[   (\bar{c}_i + \lambda \tilde{c}_i^{in}) \nabla_x \tilde{T}^{in} + (1 + \lambda \tilde{T}^{in})
         \nabla_x \tilde{c}_i^{in} = (1 + \lambda \tilde{T}^{in})^{\gamma/2} \sum_{j \neq i} \frac{( \bar{c}_i +
         \lambda \tilde{c}_i^{in}) (\bar{c}_j + \lambda \tilde{c}_j^{in})}{\Delta_{ij}} \left (\tilde{u}_j^{in}
         - \tilde{u}_i^{in} \right ),
        \]
    \end{enumerate}
      and obey the smallness assumption
      \begin{equation}
      \|\tilde{ \mathbf{c}}^{in}\|^2_{H_x^s( \bar{\mathbf{c}}^{-1})}  + \chi 
        \|\sum_{i=1}^N \tilde{c}_i^{in}\|^2_{H_x^s} + \|\tilde{T}^{in}\|^2_{H_x^s} \leq \delta_{MS}^2,
      \end{equation}
      then there exists a unique global-in-time classical solution 
        \[(\mathbf{c} , \mathbf{u}, T) = (\bar{ \mathbf{c}} + \lambda \tilde{ \mathbf{c}}, \lambda \tilde{ \mathbf{u}}, 1+ \lambda 
        \tilde{T})  \]
        in $L^\infty (\mathbb{R}_+ ; H_x^s({\mathbb{T}^3})) \times L^\infty (\mathbb{R}_+ ; H_x^{s-1} ({\mathbb{T}^3})) \times L^\infty
        (\mathbb{R}_+ ; H_x^s({\mathbb{T}^3}) )$ to the non-isothermal Maxwell-Stefan system 
        \eqref{M-S 1}-\eqref{M-S 2}-\eqref{M-S 3}-\eqref{M-S 4} with the initial data
        $(\tilde{\mathbf{c}}, \tilde{\mathbf{u}}, \tilde{T})|_{t=0} = (\tilde{\mathbf{c}}^{in}, \tilde{\mathbf{u}}^{in}, 
        \tilde{T}^{in}) $, a.e. on $\mathbb{T}^3$. In particular, the positivities hold
        \[  \min_{1\leq i \leq N} \bar{c}_i + \lambda \tilde{c}_i (t,x)>0, \quad 1 + \lambda \tilde{T} (t,x)>0,\quad \text{a.e. } (t,x) \in 
        \mathbb{R}_+ \times \mathbb{T}^3.        
        \]
        Moreover, the folowing inequality holds for any $t\in\mathbb{R}_+$,
        \begin{equation}
         \mathscr{E}_s (t) + \int_0^t \mathscr{D}_s (\tau) \,d\tau \leq \mathscr{E}_s (0) \leq \delta_{MS}^2.
         \label{energy estimate for M-S in main thm}
        \end{equation}
         Explicit expressions of $d_1,d_2,\chi$ can be found in the following context, which are independent of parameters $\lambda$ and $\delta_{MS}$. 
    \end{theorem}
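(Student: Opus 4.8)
The plan is to turn \eqref{M-S 1}--\eqref{M-S 4} into a closed quasilinear parabolic system for $(\tilde{\mathbf{c}},\tilde T)$ in which the velocity is a slaved variable, and then to globalize a short-time solution through a single weighted energy estimate whose functionals are exactly $\mathscr{E}_s,\mathscr{D}_s$ from \eqref{def of MS functionals in intro}. \textbf{Step 1 (slaving the velocity; propagating compatibility).} Since the Maxwell-Stefan matrix $A(\mathbf{c})$ of \eqref{def of matrix A in introduction} is symmetric, non-positive, with $\mathrm{Ker}\,A(\mathbf{c})=\mathrm{Span}(\mathbf{1})$ for positive $\mathbf{c}$, its restriction to $\mathrm{Span}(\mathbf{1})^\perp$ is invertible; projecting \eqref{M-S 2} onto $\mathrm{Span}(\mathbf{1})^\perp$ then expresses $\tilde{\mathbf{U}}$ as $T^{-\gamma/2}$ times $\big(A(\mathbf{c})|_{\mathrm{Span}(\mathbf{1})^\perp}\big)^{-1}$ applied to the $\mathrm{Span}(\mathbf{1})^\perp$-projection of $\nabla_x(\mathbf{c}T)$, while \eqref{M-S 4} fixes the $\mathbf{1}$-component of $\mathbf{u}$ as $c_{tot}^{-1}\big(-\alpha\nabla_x c_{tot}-\langle\mathbf{c},\tilde{\mathbf{U}}\rangle\big)$. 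Summing \eqref{M-S 2} over $i$ and using $A(\mathbf{c})\mathbf{1}=0$ yields $\nabla_x(c_{tot}T)=0$, so the compatibility condition (A2) is propagated, and integrating \eqref{M-S 3} over $\mathbb{T}^3$ shows $c_{tot}T$ is constant in $(t,x)$; condition (A3) is merely the consistency of the prescribed $\tilde{\mathbf{u}}^{in}$ with \eqref{M-S 2} at $t=0$. Substituting the slaved $\mathbf{u}$ into \eqref{M-S 1} and keeping \eqref{M-S 3} for $\tilde T$ gives a second-order system whose principal part is $-\nabla_x\cdot(\mathcal{D}(\mathbf{c},T)\nabla_x(\,\cdot\,))$: degenerate-parabolic, with ``$\mathrm{Span}(\mathbf{1})^\perp$-dissipation'' coming from $-\big(A(\mathbf{c})|_{\mathrm{Span}(\mathbf{1})^\perp}\big)^{-1}>0$ and the remaining total-concentration direction supplied by the Fickian closure term $-\alpha\nabla_x c_{tot}$.

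\textbf{Step 2 (local existence and positivity).} On this reformulated system I would build a solution on a short interval by a linearization--iteration scheme: at each step solve a linear parabolic problem with coefficients frozen at the previous iterate, obtain uniform bounds in $L^\infty_t H^s_x$ (using that $H^s(\mathbb{T}^3)$ is a Banach algebra for $s>3/2$, together with $H^{s-1}$ for the velocity) and contraction in a lower-order norm, then pass to the limit. The positivities in (A1) persist for short time by continuity in $H^s_x\hookrightarrow C^0$; they are then retained globally by the a priori bound below (or a parabolic maximum principle).

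\textbf{Step 3 (the weighted a priori estimate --- the crux).} For each $|\beta|\le s$, apply $\partial_x^\beta$ to \eqref{M-S 1} and \eqref{M-S 3} and pair them, respectively, with $\partial_x^\beta\tilde{\mathbf{c}}$ in $L^2_x(\bar{\mathbf{c}}^{-1})$, with $\partial_x^\beta\tilde T$ in $L^2_x$, and with $\chi\,\partial_x^\beta\tilde c_{tot}$ in $L^2_x$. After substituting the slaved velocity from Step 1, the coercive part of the resulting identity is a weighted $H^s_x$-norm of $\tilde{\mathbf{U}}$ --- produced by the negative-definiteness of $A(\mathbf{c})$ on $\mathrm{Span}(\mathbf{1})^\perp$, with the weight $\omega=(1+\lambda\tilde T)^{\gamma/2-1}$ absorbing the factor $T^{\gamma/2}$ --- together with a full $H^s_x$-norm of $\nabla_x\tilde c_{tot}$ coming from the Fickian closure \eqref{M-S 4}, i.e. precisely $\mathscr{D}_s$ once $0<d_2\ll d_1$ are fixed appropriately. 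The role of the correction term $\chi\|\sum_i\tilde c_i\|_{H^s_x}^2$ in $\mathscr{E}_s$ is to couple \eqref{M-S 3} to the total-concentration dynamics through the propagated identity $\nabla_x(c_{tot}T)=0$, so that no derivative of $\tilde T$ is left undamped; choosing $d_1$, then $d_2$, then $\chi$ (all independent of $\lambda,\delta_{MS}$) makes the bilinear form coercive. All remaining terms are cubic or higher and, by Sobolev embedding and $\mathscr{E}_s\le\delta_{MS}^2$, are bounded by $C\,\mathscr{E}_s^{1/2}\mathscr{D}_s$ plus lower-order $\mathscr{E}_s$-contributions, hence absorbed; this gives $\tfrac{\d}{\d t}\mathscr{E}_s(t)+\mathscr{D}_s(t)\le 0$ and therefore \eqref{energy estimate for M-S in main thm}.

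\textbf{Step 4 (globalization and uniqueness).} The estimate of Step 3 keeps $\mathscr{E}_s(t)\le\mathscr{E}_s(0)\le\delta_{MS}^2$ as long as the solution exists, so the local solution of Step 2 extends to all $t\in\mathbb{R}_+$ with the positivities preserved; uniqueness follows from the same computation at lower order on the difference of two solutions, closed by Gr\"onwall. The hard part will be Step 3: \eqref{M-S 2} provides no dissipation along $\mathrm{Span}(\mathbf{1})$ (the kernel of $A$), so the total-concentration/temperature mode is a priori undamped, and recovering its dissipation forces the simultaneous use of the closure \eqref{M-S 4} and the constraint $\nabla_x(c_{tot}T)=0$ --- which is exactly what dictates the weighted structure of $\mathscr{E}_s,\mathscr{D}_s$ (the weight $\omega$ and the constants $d_1,d_2,\chi$). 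A secondary difficulty is controlling the $T^{\gamma/2}$ nonlinearity uniformly in $\lambda\in(0,1]$ for $\gamma\in(-3,1]$, handled by the choice of $\omega$ and the positivity of $1+\lambda\tilde T$.
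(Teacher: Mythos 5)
Your Step~1 and Step~3 are in close structural agreement with the paper's proof of Theorem~\ref{theorem for M-S in main result} (which is reduced to Theorem~\ref{theorem for M-S in orthogonal version} and Proposition~\ref{proposition for energy esti for M-S}): slaving $\tilde{\mathbf{U}}$ through $A(\mathbf{c})^{-1}$ on $\mathrm{Span}(\mathbf{1})^\perp$, using the closure to control the $\mathrm{Span}(\mathbf{1})$ mode via the heat equation for $c_{tot}$, and building the weighted dissipation $\mathscr{D}_s$ with $\omega=(1+\lambda\tilde T)^{\gamma/2-1}$ exactly as the paper does. Step~4 (globalization and a low-order uniqueness computation closed by Gr\"onwall) is also the paper's route.

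The genuine gap is in Step~2, and it is precisely the point the paper emphasizes as the main new difficulty relative to the isothermal case of \cite{briant2022perturbativ}. Once the constraint $\nabla_x(c_{tot}T)=0$ (obtained by summing \eqref{M-S 2} over $i$) is used to eliminate $\nabla_x c_{tot}$, the temperature equation becomes \eqref{another eq for T}, i.e.
\[
\partial_t T + \tfrac{\alpha}{3}\tfrac{|\nabla_x T|^2}{T} + \tfrac{2\alpha}{3}\Delta_x T = 0,
\]
which is a \emph{backward} parabolic equation: a generic ``freeze coefficients and solve a linear parabolic problem'' iteration applied to the reformulated $(\tilde{\mathbf{c}},\tilde T)$ system would have to solve this backward operator forward in time, and fails. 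If instead you keep the form \eqref{orig equ T}, the linearized equation for $T$ is a first-order transport equation (not parabolic), so again ``linear parabolic'' is not the right model. The paper resolves this with a purpose-built ``pseudo-nonlinear'' iteration \eqref{solvability of n+1}--\eqref{iter T equa}--\eqref{equ for total con of n+1}: at each stage the heat equation for $\tilde c_{tot}^{(n+1)}$ is solved first and completely decoupled, so that the nonlinear-looking terms $|\nabla_x\tilde c_{tot}^{(n+1)}|^2$ and $\nabla_x\tilde c_{tot}^{(n+1)}\cdot\nabla_x\tilde T^{(n+1)}$ in \eqref{iter T equa} become, respectively, a known source and a linear transport term for $\tilde T^{(n+1)}$; only then is $\tilde{\mathbf{c}}^{(n+1)}$ obtained from a linear hyperbolic-parabolic equation and $\tilde{\mathbf{U}}^{(n+1)}$ defined by inverting $A$. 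Your proposal should spell this out, as the naive linearization does not obviously close.

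A smaller point: you assert that integrating \eqref{M-S 3} over $\mathbb{T}^3$ shows $c_{tot}T$ is constant in $(t,x)$. The paper only uses and establishes spatial constancy ($\nabla_x(c_{tot}T)=0$, i.e.\ \eqref{implicit rela}); nothing in the construction or in the functionals $\mathscr{E}_s,\mathscr{D}_s$ invokes temporal constancy. Pushing your stronger claim through \eqref{M-S 3} and \eqref{M-S 4} yields $\nabla_x\cdot(T\nabla_x c_{tot})=0$, hence $\Delta_x\ln c_{tot}=0$ on the torus, which forces $c_{tot}$ (and then $T$) to be spatially constant --- a rigidity the paper does not assert and that you do not need. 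I would drop this remark, or at least not lean on it, and keep only the spatial relation $\nabla_x(c_{tot}T)=0$, which is the solvability condition actually used to define $\tilde{\mathbf{U}}$ and to convert $\nabla_x\tilde T$ into $\nabla_x\tilde c_{tot}$ in the energy estimate.
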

\begin{remark}
In fact, our calculations can be performed for the case when the spatial domain is the whole space $\mathbb{R}^3$ with trivial 
adjustments. Our results are applied to the space $\mathbb{T}^3$ due to the diffusion asymptotics, 
as we introduce a functional from \cite{mouhot2006Nonlinearity}, where the Poincaré inequality plays a crucial role in the analysis.
\end{remark}

The vectorial form for the multi-species Boltzmann equations in diffusive scaling is
\begin{equation}
\partial_t \mathbf{F}^\varepsilon + \frac{1}{\varepsilon} v \cdot \nabla_x \mathbf{F}^\varepsilon = \frac{1}{\varepsilon^2} 
\mathbf{Q}(\mathbf{F}^\varepsilon, \mathbf{F}^\varepsilon)  \label{Bz in diffusive scaling}
\end{equation}
with the initial data given
\begin{equation}
\mathbf{F}^\varepsilon (0,x,v) = \mathbf{F}^{\varepsilon, in}(x,v), \quad (x,v) \in \mathbb{T}^3 \times \mathbb{R}^3.
\end{equation}
The local Maxwellian vector $\mathbf{M}^\varepsilon = \mathbf{M}(\mathbf{c}, \varepsilon \mathbf{u}, T \mathbf{1})
= (M_1^\varepsilon, \dots, M_N^\varepsilon)$ is defined as the following, for
$1 \leq i \leq N$,
\begin{equation}\label{def of M-i}
    M_i^\varepsilon (t,x,v) = c_i(t,x) \left (\frac{m_i}{2 \pi T(t,x)} \right )^{3/2} \exp\left\{-\frac{m_i |v - \varepsilon u_i(t,x)|^2 }
    {2T(t,x)}\right\} ,
\end{equation}
where $(\mathbf{c}, \mathbf{u}, T)$ is the unique classical solution of the Maxwell-Stefan system 
\eqref{M-S 1}-\eqref{M-S 2}-\eqref{M-S 3}-\eqref{M-S 4} stated in Theorem \ref{theorem for M-S in main result}. 
We set the parameter $\lambda$ to be the 
Knudsen number $\varepsilon$, which means that $(\mathbf{c},\mathbf{u},T)$ is in 
the perturbative form that for any $1 \leq i \leq N$,
\begin{equation}
\begin{cases}
c_i = \bar{c}_i + \varepsilon \tilde{c}_i,\\
u_i = \varepsilon \tilde{u}_i,\\
T = 1 + \varepsilon \tilde{T}. \label{Perturbative form of the perturbations for M-S}
\end{cases}
\end{equation}

Starting from this choice of local Maxwellian, we plug the perturbation $\mathbf{F}^\varepsilon = \mathbf{M}^\varepsilon + \varepsilon 
\bm{\mu}^{1/2} \mathbf{f}^\varepsilon$ into the rescaled equation \eqref{Bz in diffusive scaling}, where $\bm{\mu}$ is the unique global 
equilibrium defined in \eqref{def of mu-i in introduction}. The perturbed equation is
\begin{equation}
\partial_t \mathbf{f}^\varepsilon + \frac{1}{\varepsilon} v \cdot \nabla_x \mathbf{f}^\varepsilon = \frac{1}{\varepsilon^2} 
\mathbf{L}^\varepsilon (\mathbf{f}^\varepsilon) + \frac{1}{\varepsilon} \mathbf{\Gamma} (\mathbf{f}^\varepsilon , 
\mathbf{f}^\varepsilon) + \mathbf{S}^\varepsilon , \label{pert Bz}
\end{equation}
where the linear operator $\mathbf{L}^\varepsilon = (\mathit{L}_1^\varepsilon, \dots, \mathit{L}_N^\varepsilon)$ is given by 
\begin{equation}
\mathit{L}_i^\varepsilon (\mathbf{g}) = \mu_i^{-1/2} \sum_{j=1}^N \left (\mathit{Q}_{ij} (M_i^\varepsilon, \mu_j^{1/2} g_j) + 
\mathit{Q}_{ij} (\mu_i^{1/2} g_i, M_j^\varepsilon) \right ). \label{def of L-eps}
\end{equation}
The nonlinear operator $\mathbf{\Gamma} = (\Gamma_1, \dots, \Gamma_N)$ is defined as 
\begin{equation}
\Gamma_i (\mathbf{f}, \mathbf{g}) = \frac{1}{2} \mu_i^{-1/2} \sum_{j=1}^N \left (\mathit{Q}_{ij} (\mu_i^{1/2} f_i, \mu_j^{1/2} g_j) + \mathit{Q}_{ij} 
(\mu_i^{1/2} g_i, \mu_j^{1/2} f_j) \right ),
\end{equation}
and the source term is given by 
\begin{equation}\label{equ for source term}
\mathbf{S}^\varepsilon = \frac{1}{\varepsilon^3} \bm{\mu}^{-1/2} \mathbf{Q}(\mathbf{M}^\varepsilon, \mathbf{M}^\varepsilon) - 
\frac{1}{\varepsilon} \bm{\mu}^{-1/2} \partial_t \mathbf{M}^\varepsilon -\frac{1}{\varepsilon^2} \bm{\mu}^{-1/2} v \cdot \nabla_x 
\mathbf{M}^\varepsilon.
\end{equation}

Moreover, we introduce the dissipative operator $\mathbf{L}=(L_1,\dots,L_N)$ linearized around the global equilibrium $\bm{\mu}$, defined as, for 
any $1 \leq i \leq N$,
\begin{equation}
\mathit{L}_i (\mathbf{g}) = \mu_i^{-1/2} \sum_{j=1}^N \left ( \mathit{Q}_{ij} (\mu_i, \mu_j^{1/2} g_j) + \mathit{Q}_{ij} (\mu_i^{1/2} 
g_i, \mu_j) \right ),
\end{equation}
which is a closed self-adjoint operator in the space $L^2(\mathbb{R}^3)$. The weak forms \eqref{weak form-1} and \eqref{weak form-2} imply that the kernel space for operator $\mathbf{L}$ is spanned by the orthonormal basis $(\bm{\phi}^{(1)}, \dots, \bm{\phi}^{N+4})$ with the expressions
\begin{equation*}
\begin{cases}
    \bm{\phi}^{(i)} = \frac{1}{ \sqrt{\bar{c}_i}} \mu_i^{1/2} \mathbf{e}^{(i)},  1 \leq i \leq N,\\
    \bm{\phi}^{(N+l)} = \frac{v_l}{\left ( \sum_{j=1}^N m_j \bar{c}_j \right )^{\frac{1}{2}} } (m_i \mu_i^{1/2})_{1 \leq i \leq N}, 
    \quad  1 \leq l \leq 3,\\
    \bm{\phi}^{(N+4)} = \frac{1}{\left (\sum_{j=1}^N \bar{c}_j \right )^{\frac{1}{2}}} \left ( \frac{m_i|v|^2-3}{\sqrt{6}} 
    \mu_i^{1/2}  \right )_{1 \leq i \leq N}.
\end{cases}
\end{equation*}
The orthogonal projection onto $\mathrm{Ker}{ \mathbf{L}}$ in $L^2(\mathbb{R}^3 )$ is denoted by
\begin{equation}
\bm{\pi}_{ \mathbf{L}}  (\mathbf{g}) (v) = \sum_{k=1}^{N+4} \langle \mathbf{g}, \bm{\phi}^{(k)} \rangle_{L_v^2}  \bm{\phi}^{(k)} (v).
\end{equation}
The explicit expression is 
\begin{equation}
  \begin{aligned}
  \bm{\pi}_\mathbf{L} ( \mathbf{g}) = & \sum_{i=1}^N \left ( \frac{1}{\bar{c}_i} \int_{\mathbb{R}^3} g_i \mu_i^{1/2} \,\d v \right ) 
  \mu_i^{1/2} \mathbf{e}^{(i)}\\
  & + \sum_{k=1}^3 \frac{v_k}{ \sum_{i=1}^N m_i \bar{c}_i} \left ( \sum_{i=1}^N \int_{\mathbb{R}^3} m_i v_k g_i \mu_i^{1/2} \,\d v 
  \right ) ( m_i \mu_i^{1/2} )_{1\leq i\leq N}\\
  & + \frac{1}{\sum_{i=1}^N \bar{c}_i} \left (\sum_{i=1}^N \int_{\mathbb{R}^3} \frac{m_i |v|^2 - 3}{\sqrt{6}} g_i \mu_i^{1/2} 
  \,\d v \right )  \left ( \frac{m_i |v|^2 - 3}{\sqrt{6}} \mu_i^{1/2} \right )_{1 \leq i \leq N}. \label{proje pi-L express}
  \end{aligned}
\end{equation}
We denote by $\mathbf{f}^\perp = \mathbf{f} - \bm{\pi}_\mathbf{L} (\mathbf{f})$. Since the Maxwellian $\mathbf{M}^\varepsilon$ 
is not an equilibrium for the collision operator $\mathbf{Q}$, the linear operator $\mathbf{L}^\varepsilon$ has no clear self-adjointness 
in the usual Sobolev space. The works \cite{Bondesan2020CPAAnonequilibrium,Briant2023hypocoercivity} established 
a local coercivity property by applying the decomposition 
$\mathbf{L}^\varepsilon = \mathbf{L} + ( \mathbf{L}^\varepsilon - \mathbf{L} )$ and utilizing the explicit spectral gap form \cite{briant2016ARMAglobal,mouhot2016SIAMJMAhypercoercivity} for $\mathbf{L}$ under the condition that the collision kernels of the Boltzmann equations satisfy assumptions (H1)-(H2)-(H3)-(H4) with $\gamma\in[0,1]$.

We choose the functional in \cite{mouhot2006Nonlinearity,briant2015JDEnavierstokes} to derive a priori estimate on 
$\mathbf{f}^\varepsilon$ in equation \eqref{pert Bz}, which is defined on the space $H^s ( \mathbb{T}^3 \times \mathbb{R}^3 )$ with 
$s \in \mathbb{N}^\ast$ and $\varepsilon \in (0,1]$,
\begin{equation}
\begin{aligned}
\|\cdot \|_{\mathcal{H}_\varepsilon^s} = \left\{ \sum_{ |\alpha| \leq s} a_\alpha^{(s)} \|\partial_x^\alpha \cdot\|_{L_{x,v}^2}^2
 + \varepsilon \sum_{\substack{ |\alpha| \leq s \\ k, \alpha_k > 0}} b_{\alpha,k}^{(s)} \langle \partial_x^\alpha \cdot, 
 \partial_{v}^{e_k} \partial_x^{\alpha-e_k} \cdot \rangle_{L_{x,v}^2} \right.\\
 + \left. \varepsilon^2 \sum_{\substack{ |\alpha| + |\beta| \leq s \\ |\beta| \geq 1}} d_{\alpha, \beta}^{(s)} \|\partial_v^\beta 
\partial_x^\alpha \cdot \|_{L_{x,v}^2}^2 \right\},\label{express of functional H-eps-s}
\end{aligned}
\end{equation}
for some positive constants $ \{a_\alpha^{(s)}\},  \{b_{\alpha, k}^{(s)} \}$ and $\{ d_{\alpha, \beta}^{(s)}\}$ to be chosen later. We remark that $\alpha_k > 0$ for all $k \in \{1,2,3\}$ means that $\alpha-e_k \geq 0$ for all $k \in \{1,2,3\}$. Here $e_k = (e_k^1, e_k^2, e_k^3)$ with $e_k^i = 1$ for $i = k$ and $e_k^i = 0$ for $i \neq k$

Based on these choices, we can establish the following result.
\begin{theorem}\label{theorem for perturbed f}
Let the collision kernels $(\mathit{B}_{ij})_{1 \leq i,j \leq N}$ satisfy assumptions (H1)-(H2)-(H3)-(H4) with the parameter in (H3) choosing $\gamma \in [0,1]$, and consider the local Maxwellian $\mathbf{M}^\varepsilon$ defined by \eqref{def of M-i}. There exist constants $s_0 \in \mathbb{N}^\ast$, $\delta_B>0$, $\bar{\delta}_{MS}>0$ and $\varepsilon_0 \in (0,1]$ independent of $\varepsilon$ such that for any $s \geq s_0$, $\varepsilon \in (0, \varepsilon_0]$ and $\delta_{MS} \in [0, \bar{\delta}_{MS}]$, if the initial data $\mathbf{f}^{in} \in H^s( \mathbb{T}^3 \times \mathbb{R}^3 )$ satisfies 
\[ \|\mathbf{f}^{in} \|_{ \mathcal{H}_\varepsilon^s } \leq \delta_B/2, \quad \|\bm{\pi}_\mathbf{T}^\varepsilon (\mathbf{f}^{in}) 
\|_{L_{x,v}^2} \leq C \delta_{MS}
\]
for some positive constant $C$ independent of $\varepsilon$ and $\delta_{MS}$, then there exists a 
unique $\mathbf{f} (t, x, v) \in C^0(\mathbb{R}_+ ; H_{x,v}^s)$ such that $\mathbf{F}^\varepsilon = 
\mathbf{M}^\varepsilon + \varepsilon \bm{\mu}^{1/2} \mathbf{f}$ is the classical solution to the multi-species Boltzmann equations \eqref{Bz in diffusive scaling}. Here the projection operator $\bm{\pi}_\mathbf{T}^\varepsilon$ is defined in \eqref{def of pi T}. 

In particular, if $\mathbf{F}^{\varepsilon, in} = \mathbf{M}^{ \varepsilon, in} + \varepsilon \bm{\mu}^{1/2} \mathbf{f}^{in} 
\geq0$, then $\mathbf{F}^\varepsilon (t,x,v) \geq 0$ holds for almost all $(t,x,v) \in \mathbb{R}_+ \times \mathbb{T}^3 \times \mathbb{R}^3$. 
Moreover, for any time $t \geq 0$, $\mathbf{F}^\varepsilon$ satisfies the stability property 
\[ \| \frac{\mathbf{F}^\varepsilon - \mathbf{M}^\varepsilon }{ \bm{\mu}^{1/2} } \|_{\mathcal{H}_\varepsilon^s} \leq \varepsilon \delta_B, 
\quad \forall \varepsilon \in (0, \varepsilon_0].
\]
Here the constant $\delta_B > 0$ explicitly only depends on the number of species $N$, the atomic masses $(m_i)_{1 \leq i \leq N}$ and on the collision kernels $(\mathit{B}_{ij})_{1 \leq i \leq N}$, and is independent of parameters $\varepsilon$ and $\delta_{MS}$. The upper bound 
$\bar{\delta}_{MS}$ depends on the choice of $\delta_B$.
\end{theorem}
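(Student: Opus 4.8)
The plan is to combine a short-time existence result for the perturbed equation \eqref{pert Bz} with a uniform-in-$\varepsilon$ \emph{a priori} estimate in the twisted norm $\|\cdot\|_{\mathcal{H}_\varepsilon^s}$, and to close the bound by a continuity argument, in the spirit of \cite{mouhot2006Nonlinearity,briant2015JDEnavierstokes,briant2021stability}. Writing $\mathbf{F}^\varepsilon=\mathbf{M}^\varepsilon+\varepsilon\bm{\mu}^{1/2}\mathbf{f}^\varepsilon$ turns the Boltzmann system \eqref{Bz in diffusive scaling} into \eqref{pert Bz}, whose local well-posedness in $C^0([0,T_\varepsilon];H_{x,v}^s)$ follows from a standard iteration scheme once $s\ge s_0$ is large enough to control the bilinear term; the whole content is then to upgrade this to $\|\mathbf{f}^\varepsilon(t)\|_{\mathcal{H}_\varepsilon^s}\le\delta_B$ for all $t\ge0$, uniformly in $\varepsilon\in(0,\varepsilon_0]$.

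The first ingredient is a local coercivity estimate for $\mathbf{L}^\varepsilon$. I would use the decomposition $\mathbf{L}^\varepsilon=\mathbf{L}+(\mathbf{L}^\varepsilon-\mathbf{L})$ together with the explicit spectral gap of $\mathbf{L}$ available under (H1)--(H4) with $\gamma\in[0,1]$ \cite{briant2016ARMAglobal,mouhot2016SIAMJMAhypercoercivity}: $\langle\mathbf{L}\mathbf{g},\mathbf{g}\rangle_{L_v^2}\le-\lambda_0\|\mathbf{g}^\perp\|_{L_v^2(\bm{\nu})}^2$. Because $(\mathbf{c},\mathbf{u},T)=(\bar{\mathbf{c}}+\varepsilon\tilde{\mathbf{c}},\varepsilon\tilde{\mathbf{u}},1+\varepsilon\tilde{T})$ with $\|(\tilde{\mathbf{c}},\tilde{\mathbf{u}},\tilde{T})\|_{H_x^s}\lesssim\delta_{MS}$ by Theorem \ref{theorem for M-S in main result} and \eqref{Perturbative form of the perturbations for M-S}, we have $\mathbf{M}^\varepsilon=\bm{\mu}+\mathcal{O}(\varepsilon(1+\delta_{MS}))$ in weighted $L_v^2H_x^s$, hence $\mathbf{L}^\varepsilon-\mathbf{L}=\mathcal{O}(\varepsilon)$ as an operator in the relevant norms; absorbing it yields $\langle\mathbf{L}^\varepsilon\mathbf{g},\mathbf{g}\rangle_{L_v^2}\le-\tfrac{\lambda_0}{2}\|\mathbf{g}^\perp\|_{L_v^2(\bm{\nu})}^2+C\varepsilon\|\mathbf{g}\|_{L_v^2(\bm{\nu})}^2$ for $\varepsilon$ small, with its $x$- and $v$-differentiated analogues obtained by Leibniz and commutator estimates on $\mathbf{M}^\varepsilon$. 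The second, and key, ingredient is the control of the source $\mathbf{S}^\varepsilon$ in \eqref{equ for source term}: the terms $\varepsilon^{-3}\bm{\mu}^{-1/2}\mathbf{Q}(\mathbf{M}^\varepsilon,\mathbf{M}^\varepsilon)$ and $\varepsilon^{-2}\bm{\mu}^{-1/2}v\cdot\nabla_x\mathbf{M}^\varepsilon$ are each singular because $\mathbf{M}^\varepsilon$ is \emph{not} a local equilibrium of $\mathbf{Q}$, but a moment expansion of $\mathbf{Q}(\mathbf{M}^\varepsilon,\mathbf{M}^\varepsilon)$ using the weak forms \eqref{weak form-1}--\eqref{weak form-2} and the expression \eqref{explicit expression of the diffusion coff} of $k_{ij}$ shows that precisely the Maxwell--Stefan relations \eqref{M-S 1}--\eqref{M-S 4} satisfied by $(\mathbf{c},\mathbf{u},T)$ cancel the $\mathcal{O}(\varepsilon^{-1})$ part of $\mathbf{S}^\varepsilon$. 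What remains is controlled by space-time derivatives of $(\tilde{\mathbf{c}},\tilde{\mathbf{u}},\tilde{T})$, hence $\int_0^\infty\|\mathbf{S}^\varepsilon(\tau)\|_{H_{x,v}^s}^2\,d\tau\lesssim\int_0^\infty\mathscr{D}_s(\tau)\,d\tau\le\delta_{MS}^2$ by \eqref{energy estimate for M-S in main thm}; I would keep track of which pieces of $\mathbf{S}^\varepsilon$ land in $\mathrm{Ker}\,\mathbf{L}$ and which in its complement, as the former are not damped by $\mathbf{L}^\varepsilon$.

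With these, I would differentiate $\|\mathbf{f}^\varepsilon\|_{\mathcal{H}_\varepsilon^s}^2$ and insert \eqref{pert Bz}. The term $\varepsilon^{-2}\mathbf{L}^\varepsilon\mathbf{f}^\varepsilon$ gives a dissipation $-\tfrac{c_0}{\varepsilon^2}\|(\mathbf{f}^\varepsilon)^\perp\|_{H_x^s(L_v^2(\bm{\nu}))}^2$; the transport term $\varepsilon^{-1}v\cdot\nabla_x\mathbf{f}^\varepsilon$ is handled via the cross terms $b_{\alpha,k}^{(s)}$ in \eqref{express of functional H-eps-s}, which after integration by parts produce a full $x$-gradient dissipation on $\bm{\pi}_{\mathbf{L}}\mathbf{f}^\varepsilon$ (up to a finite-dimensional direction) modulo $\varepsilon$-small errors; the bilinear term obeys $\varepsilon^{-1}|\langle\mathbf{\Gamma}(\mathbf{f}^\varepsilon,\mathbf{f}^\varepsilon),\mathbf{f}^\varepsilon\rangle_{\mathcal{H}_\varepsilon^s}|\lesssim\varepsilon^{-2}\|\mathbf{f}^\varepsilon\|_{\mathcal{H}_\varepsilon^s}\,\mathcal{D}(\mathbf{f}^\varepsilon)$ for an equivalent dissipation norm $\mathcal{D}$; and $\mathbf{S}^\varepsilon$ is absorbed by Young, contributing the time-integrable quantity from the previous step. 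The one undamped component is the finite-dimensional Maxwell--Stefan macroscopic part $\bm{\pi}_{\mathbf{T}}^\varepsilon\mathbf{f}^\varepsilon$; from the macroscopic moment identities for $\mathbf{f}^\varepsilon$ together with \eqref{M-S 1}--\eqref{M-S 4} one derives a closed bound $\|\bm{\pi}_{\mathbf{T}}^\varepsilon\mathbf{f}^\varepsilon(t)\|_{L_{x,v}^2}\lesssim\delta_{MS}$ propagated from the initial assumption and $\int_0^\infty\mathscr{D}_s\le\delta_{MS}^2$. Choosing the constants $\{a_\alpha^{(s)}\},\{b_{\alpha,k}^{(s)}\},\{d_{\alpha,\beta}^{(s)}\}$, then $s_0$, and finally $\varepsilon_0,\delta_B,\bar{\delta}_{MS}$ small enough that the $\|\mathbf{f}^\varepsilon\|_{\mathcal{H}_\varepsilon^s}$-factor in the bilinear term and the $\varepsilon$-errors are absorbed into the dissipation, a continuity argument closes the bound $\|\mathbf{f}^\varepsilon(t)\|_{\mathcal{H}_\varepsilon^s}\le\delta_B$ for all $t\ge0$, which is exactly the asserted stability $\|(\mathbf{F}^\varepsilon-\mathbf{M}^\varepsilon)\bm{\mu}^{-1/2}\|_{\mathcal{H}_\varepsilon^s}=\varepsilon\|\mathbf{f}^\varepsilon\|_{\mathcal{H}_\varepsilon^s}\le\varepsilon\delta_B$; nonnegativity $\mathbf{F}^\varepsilon\ge0$ is then propagated by the classical gain/loss-form maximum-principle argument as in \cite{briant2021stability}.

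I expect the main obstacle to be the simultaneous bookkeeping of the two non-equilibrium features: $\mathbf{M}^\varepsilon$ is not a local equilibrium of $\mathbf{Q}$, so $\mathbf{L}^\varepsilon$ is only \emph{approximately} coercive and its kernel is not exactly preserved by the flow, while the source $\mathbf{S}^\varepsilon$ carries genuinely singular $\varepsilon^{-1}$-order terms that must be cancelled \emph{exactly} by the Maxwell--Stefan equations; controlling the resulting undamped fluid modes $\bm{\pi}_{\mathbf{T}}^\varepsilon\mathbf{f}^\varepsilon$ uniformly in $\varepsilon$ using only the dissipation $\mathscr{D}_s$ from Theorem \ref{theorem for M-S in main result} is the delicate point.
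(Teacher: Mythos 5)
Your plan follows the paper's framework in broad outline (perturbation around $\mathbf{M}^\varepsilon$, twisted norm $\mathcal{H}_\varepsilon^s$, local coercivity via $\mathbf{L}^\varepsilon-\mathbf{L}$, source control, undamped macroscopic modes, continuity argument), but two of the steps as you have phrased them would not close.

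First, the coercivity error: you write $\langle\mathbf{L}^\varepsilon\mathbf{g},\mathbf{g}\rangle\le-\tfrac{\lambda_0}{2}\|\mathbf{g}^\perp\|^2_{L^2_v(\bm{\nu})}+C\varepsilon\|\mathbf{g}\|^2_{L^2_v(\bm{\nu})}$. After dividing by $\varepsilon^2$ in \eqref{pert Bz}, the error term $C\varepsilon^{-1}\|\mathbf{g}\|^2$ is singular in $\varepsilon$ and cannot be absorbed by the dissipation, which only acts on $\mathbf{g}^\perp$. The paper's Lemma \ref{lemma for some estimate on L-eps} is more careful precisely because of this: using Young's inequality with parameter $\eta_1/\varepsilon$ (and the factor $\delta_{MS}$ coming from the smallness of the macroscopic fluctuations), the error on the kernel part comes out as $\varepsilon^2\frac{\delta_{MS}C^\mathbf{L}_{\mathrm{coe}}}{\eta_1}\|\bm{\pi}_\mathbf{L}\mathbf{f}\|^2_{L^2_{x,v}(\langle v\rangle^\gamma)}$, so that $\frac{1}{\varepsilon^2}\langle\mathbf{L}^\varepsilon\mathbf{f},\mathbf{f}\rangle$ generates an error of size $\mathcal{O}(\delta_{MS})\|\bm{\pi}_\mathbf{L}\mathbf{f}\|^2$ \emph{uniformly} in $\varepsilon$; this is then handled by the Poincar\'e inequality \eqref{pi-L estimate for pert f}. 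Getting the $\varepsilon^2$ (not $\varepsilon$) factor on the kernel part is not a bookkeeping nicety but the structural point.

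Second, the source term: the claim $\int_0^\infty\|\mathbf{S}^\varepsilon(\tau)\|^2_{H^s_{x,v}}\,d\tau\lesssim\int_0^\infty\mathscr{D}_s(\tau)\,d\tau\le\delta_{MS}^2$ does not hold at the stated regularity. The quantity $\|\partial_x^\alpha\mathbf{S}^\varepsilon\|_{L^2_{x,v}}$ with $|\alpha|=s$ involves $(s+\text{const})$-order $x$-derivatives of $(\mathbf{c},\mathbf{u},T)$ (and, after eliminating the time derivatives via the Maxwell--Stefan equations, two extra spatial derivatives), so it cannot be dominated by $\mathscr{D}_s$, which only controls $s$-order norms of $\tilde{\mathbf{U}}$ and $\nabla_x\tilde{c}_{tot}$. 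What the paper does instead (Lemma \ref{lemma for estimates on source term S}) is to invoke Theorem \ref{theorem for M-S in main result} at a higher regularity level ($(\mathbf{c},\mathbf{u},T)\in L^\infty_t H^{s+6}_x\times L^\infty_t H^{s+5}_x\times L^\infty_t H^{s+6}_x$, using the $L^\infty_t$ energy bound $\mathscr{E}_{s+\text{const}}\le\delta_{MS}^2$), which yields the pointwise-in-time bound $\|\mathbf{S}^\varepsilon(t)\|\lesssim\delta_{MS}$ (with the $\mathcal{O}(\varepsilon^{-1})$ singular part of $\mathbf{S}^{\varepsilon,\perp}$ absorbed into the microscopic dissipation by Young with parameter $\eta/\varepsilon$, and the $\bm{\pi}_\mathbf{L}(\mathbf{S}^\varepsilon)$ part shown to be $\mathcal{O}(\delta_{MS})$ via the MS equations and the explicit moments of $\mathbf{M}^\varepsilon$). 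This gives a Gr\"onwall inequality of the form $\frac{\d}{\d t}\|\mathbf{f}\|_{\mathcal{H}_\varepsilon^s}^2\le-\lambda_B\|\mathbf{f}\|_{\mathcal{H}_\varepsilon^s}^2+C^{(s)}\delta_{MS}$, whose conclusion (decay into a ball of radius $\mathcal{O}(\sqrt{\delta_{MS}})$) is exactly what the continuity argument needs. Similarly, the control of $\bm{\pi}_\mathbf{T}^\varepsilon(\mathbf{f})$ in Lemma \ref{lemma for Poincaré inequality of pi-L of perturbed f} uses the $L^\infty_t$ bounds on the MS macroscopics (through explicit integration of $\partial_t\bm{\pi}_\mathbf{T}^\varepsilon(\mathbf{f})=-\varepsilon^{-1}\partial_t\bm{\pi}_\mathbf{T}^\varepsilon(\bm{\mu}^{-1/2}\mathbf{M}^\varepsilon)$), not the time-integrated dissipation. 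You should replace the $\int_0^\infty\mathscr{D}_s$-based argument with the higher-regularity $L^\infty_t$ energy bound for the Maxwell--Stefan solution.

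One smaller imprecision: the cancellation in $\varepsilon^{-3}\bm{\mu}^{-1/2}\mathbf{Q}(\mathbf{M}^\varepsilon,\mathbf{M}^\varepsilon)$ is not directly via the Maxwell--Stefan equations; it comes from the algebraic fact that $\mathbf{Q}(\mathbf{c}\bm{\mathcal{M}}_T^\varepsilon,\mathbf{c}\bm{\mathcal{M}}_T^\varepsilon)=0$ (a common-temperature, zero-velocity Maxwellian vector is a local equilibrium of $\mathbf{Q}$) together with $\mathbf{M}^\varepsilon-\mathbf{c}\bm{\mathcal{M}}_T^\varepsilon=\mathcal{O}(\varepsilon^2\delta_{MS})$. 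The Maxwell--Stefan equations enter in a different place, namely to kill the $\mathcal{O}(\varepsilon^{-1})$ singular part of $\bm{\pi}_\mathbf{L}(\mathbf{S}^\varepsilon)$ (the moments of $\varepsilon\partial_t\mathbf{M}^\varepsilon+v\cdot\nabla_x\mathbf{M}^\varepsilon$).
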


    Here we present the sketch of our proofs and the novelties. 
    \begin{enumerate}
        \item In the non-isothermal Maxwell-Stefan system \eqref{M-S 1}-\eqref{M-S 2}-\eqref{M-S 3}-\eqref{M-S 4}, the unknowns $(\mathbf{c}, \mathbf{u}, $ $ c_{tot}, T )$ are visually coupled with each other, which presents the highly nonlinear behavior. However, together with the flux-force relations \eqref{M-S 2} and the Fick's laws \eqref{M-S 4}, the unknowns $c_{tot}$ and $T$ can be completely decoupled from the functions $(\mathbf{c}, \mathbf{u})$. More precisely, from substituting the Fick's laws \eqref{M-S 4} into the mass conservations law \eqref{M-S 1}, one follows that the total concentration $c_{tot}$ subjects to the heat equation \eqref{orig equ c-tot}, and the temperature $T$ obeys the equation \eqref{orig equ T} with coefficients depending only on $c_{tot}$. By using the property of the Maxwell-Stefan (in short, MS) matrix $A$, the flux-force relation \eqref{M-S 2} provides an important relation $\nabla_x (c_{tot} T)=0$ with the total concentration $c_{tot}=\sum_{i=1}^N c_i$. This relation is derived from the kernel space of the MS matrix $A$, i.e. the space $\mathrm{Span} (\mathbf{1})$. The relation $\nabla_x (c_{tot} T)=0$ will enforce the $T$-equation \eqref{orig equ T} into the form \eqref{another eq for T}, which is a nonlinear backward parabolic equation with constant coefficients. Unfortunately, the backward form \eqref{another eq for T} cannot directly be used to solve the temperature globally in time. This is completely different with the isothermal case. 

        For this non-isothermal MS system, the work \cite{Hutridurga-Salvarani-AML2018} used a matrix transformation method. They used fluxes $\mathbf{J}=\mathbf{c} \mathbf{u}$ representation for the equation \eqref{M-S 2}, instead of the velocities $\mathbf{u}$, that was, 
         \begin{equation*}
        \nabla_x \mathbf{c} = F \mathbf{J}.
        \end{equation*}
        The matrix $F$ corresponds to the MS matrix, defined as 
        \begin{equation*}
        F_{ij} :=  \begin{cases}
        \frac{ c_i}{ D_{ij}} & \text{for } j \neq i, \; i, j = 1, \ldots, N, \\
        -\sum_{r \neq i} \frac{c_r}{ D_{ir}} & \text{for } j = i = 1, \ldots, N,
        \end{cases}
        \end{equation*}
        where $\{ D_{ij} \}_{1\leq i,j \leq N}$ are the diffusion coefficients. The known total concentration $c_{tot}$ implied that one only need to solve $N-1$ concentrations, i.e. $\mathbf{c}^\prime = \mathbf{c}^\prime =(c_1, \dots c_{N-1})$. The fluxes were same as the Fick's laws 
        $\sum_{i=1}^N J_i = -\alpha \nabla_x c_{tot}$ indicated the sum of fluxes, and they denoted $\mathbf{J}^\prime = (J_1, \dots, J_{N-1})$. After matrix transformation, they finally obtained equations for concentrations $\mathbf{c}^\prime$, 
        \begin{equation}
            \partial_t \mathbf{c}^\prime -\nabla_x (T F_0^{-1} \nabla_x \mathbf{c}^\prime ) = \mathbf{r} (\mathbf{c}^\prime).
            \label{quasi-parab eq by matrix trans}
        \end{equation}
        The $(N-1) \times (N-1 ) $ matrix $F_0$ was derived from matrix $F$, which is invertible with spectra $\sigma(F_0) \subset [ \delta, \eta)$ for some positive constants $\delta$ and $\eta$. The right hand side was the lower order term, 
        \begin{equation*}
            \mathbf{r} (\mathbf{c}^\prime) =\nabla_x \cdot (F_0^{-1} (\mathbf{c}^\prime \otimes \nabla_x T)) +\alpha \nabla_x 
            \cdot ( \hat{\mathbf{c}}^\prime \otimes \nabla_x c_{tot}),
        \end{equation*}
        with $\hat{\mathbf{c}}^\prime =(\frac{c_1}{D_{1N}}, \dots, 
        \frac{c_{N-1}}{D_{N-1, N}})$. Since they pre-solved the advection equation for $T$, the unique local-in-time in $L^p$ solution for this quasi-linear parabolic system \eqref{quasi-parab eq by matrix trans} was derived in \cite{Hutridurga-Salvarani-AML2018}. 
        However, this method seems fail when we looking for a global-in-time solution.

        Recently, the work \cite{briant2022perturbativ} proposed a different method in isothermal case, in which the total concentration $c_{tot}$ is a constant. 
        In energy estimate, they computed the inner product between the mass conservation law and $\mathbf{c}$. Using integration by parts, 
        the term $\langle \mathbf{c}\mathbf{u}, \nabla_x \mathbf{c}\rangle$ arose. 
        They replaced the term $\nabla_x \mathbf{c}$ with the flux force relations $\nabla_x \mathbf{c}= A(\mathbf{c}) \mathbf{u}$. 
        The matrix $A$ shares the same structure with the MS matrix here. 
        The non-positivity of $A$, hence, for any $\mathbf{c} \geq 0$ and 
        $\mathbf{U} \in \mathrm{Span} (\mathbf{1}) ^\perp$, 
                \begin{equation*}
                    \langle A(\mathbf{c}) \mathbf{U}, \mathbf{U} \rangle \leq  -\lambda_A 
                    (\min_{1\leq i \leq N} c_i)^2 
                \| \mathbf{\mathbf{U}} \|,
                \end{equation*}
        provided coercivity, which supplied the dissipation effect on velocities. However, for the non-isothermal case here, the replacement is much complex, i.e. $\nabla_x \mathbf{c} = T^{\gamma/2 -1} A (\mathbf{c}) \mathbf{U} -
        \frac{\nabla_x T}{T}\mathbf{c}$. We denote $\mathbf{U}$ is the projection of $\mathbf{u}$ onto $\mathrm{Span} (\mathbf{1})^\perp$, 
        and apply the velocities decomposition 
        $\mathbf{u} = \left ( \mathbf{U} - \frac{ \langle \mathbf{c}, \mathbf{U} 
        \rangle }{c_{tot}} \mathbf{1} \right ) - \frac{ \alpha \nabla_x c_{tot}}{c_{tot}} 
        \mathbf{1} $ by using the Fick's laws \eqref{M-S 4}. Then the first part of the replacement is similar to the isothermal case, which generates the dissipation on velocities with a temperature weight due to the non-isothermal effect, i.e. 
        $\| \mathbf{U}\|_{H_x^s (\omega)}$ with $\omega := T^{\gamma/2-1}$. Note that the second part meets the same problem with the $T$-equation \eqref{orig equ T}, i.e. the lacks of the dissipation term about $\nabla_x T$. Fortunately, the relation $\nabla_x (c_{tot} T) = 0$ provides a link between $\nabla_x c_{tot}$ and $\nabla_x T $, in a nonlinear way. By transforming the term $\nabla_x T$ into $\nabla_x c_{tot}$, we can use the dissipation contributed by the heat equation for $c_{tot}$ as a supplement.

        Since we consider the solutions around a constant state, the equations satisfied by the perturbations are split into linear and nonlinear parts. In energy estimate, the nonlinear parts in equations can always be controlled by terms of the form that the dissipation rate functional $\mathscr{D}_s$ multiplied by the energy functional $\mathscr{E}_s$. They could be absorbed by the dissipation contributed by the linear parts when $\mathscr{E}_s$ is small enough. 

        While designing the approximated scheme of the non-isothermal MS system \eqref{M-S 1}-\eqref{M-S 2}-\eqref{M-S 3}-\eqref{M-S 4}, we construct a so-called {\em pseudo-nonlinear iterative approximated scheme} \eqref{solvability of n+1}-\eqref{iter mass conser}-\eqref{iter T equa}-\eqref{equ for total con of n+1} later. Usually, the various linear iterative approximated schemes are employed to study the existence theory of the nonlinear PDEs, such as Navier-Stokes equations, and so on. To our best acknowledgment, the only nonlinear iterative approximated approach was developed by the last two authors of current paper in the work \cite{JL-2019-SIMA} (see also \cite{JLT-M3AS-2019}), in which the nonlinear iterative approximated scheme was designed to guarantee the highly nonlinear constraint $|d| = 1$ of the orientation for liquid crystal molecules. In current work, the approximated equations \eqref{iter mass conser} and \eqref{equ for total con of n+1} are linear forms. However, the temperature approximated equation \eqref{iter T equa} are explicitly nonlinear (involving the nonlinear forms $|\nabla_x \tilde{c}_{tot}^{(n+1)}|^2$ and $\nabla_x \tilde{c}_{tot}^{(n+1)} \cdot \nabla_x \tilde{T}^{(n+1)}$), which is such that all gradient forms can be controlled by the dissipation rate $\mathscr{D}_s^{(n+1)}$ defined in \eqref{dissiaption functional n+1}. Fortunately, the equation \eqref{equ for total con of n+1} for $\tilde{c}_{tot}^{(n+1)}$ is completely decoupled with the other iterative unknowns, which such that the  temperature approximated equation \eqref{iter T equa} can be solved by the linear theory.

        \item In the diffusion asymptotics process, the main difference to the diffusion limit 
        from the Boltzmann equation for mono-species to incompressible Navier-Stokes 
        equation is that the Maxwellian \eqref{def of M-i} is not an equilibrium to the mixtures. 
        This leads to two difficulties. One is that the linear operator $\mathbf{L}^\varepsilon$ is no longer self-adjoint in $L^2(\mathbb{R}^3)$, and it is hard to explicitly describe its kernel space. Other is that the source term $\mathbf{S}^\varepsilon$ arises in the perturbed equations due to the non-constants macroscopic quantities. The control for the source term is of order $\mathcal{O} (\delta_{MS})$. Thus the a prior estimate for $\mathbf{f}$ remains a term of $\mathcal{O} (\delta_{MS})$, which can not obtain an exponential decay in time. 

        Luckily, the work \cite{Bondesan2020CPAAnonequilibrium} noticed the constructed Maxwellian was close to the global equilibrium $\bm{\mu}$, which derived a local coercivity for the corresponding linear operator. We therefore use the decomposition $\mathbf{M}^\varepsilon = \bar{\mathbf{c}} 
        \bm{\mathcal{M}} + \bar{\mathbf{c}} (\bm{\mathcal{M}}^\varepsilon - 
        \bm{\mathcal{M}}) + \varepsilon \tilde{\mathbf{c}} \bm{\mathcal{M}}^\varepsilon$ in our analysis, where $\bm{\mathcal{M}}=\mathbf{M}(\mathbf{1}, 0, \mathbf{1})$ and 
        $\bm{\mathcal{M}}^\varepsilon = \mathbf{M}(\mathbf{1}, 
        \varepsilon \mathbf{u}, T\mathbf{1})$. The operator $\mathbf{L}$ related to the first item has an explicit spectral gap property, when the cross-sections $\{ \mathit{B}_{ij}\}_{1\leq i,j \leq N}$ are hard potential or Maxwellian potential \cite{briant2016ARMAglobal,mouhot2016SIAMJMAhypercoercivity}. The second term $\bm{\mathcal{M}}^\varepsilon - \bm{\mathcal{M}}$ is of order $\mathcal{O}(\varepsilon)$, and the third term is clearly $\mathcal{O}(\varepsilon)$. Similar method as that in \cite{Bondesan2020CPAAnonequilibrium} allows us to recover a coercivity property for the linear operator $\mathbf{L}^\varepsilon$. Moreover, the macroscopic quantities of $\mathbf{f}$ have the same order as that of the source term $\mathbf{S}^\varepsilon$. As a result, they have the same order as that of the local Maxwellian $\mathbf{M}^\varepsilon$.

        Our analysis follows the structure of the isothermal case \cite{briant2021stability}. 
        We now display the microscopic dissipative effect in the perturbed equations \eqref{pert Bz}. First the decomposition $\mathbf{L}^\varepsilon = \mathbf{K}^\varepsilon- \bm{\nu}^\varepsilon$ is applied, where $\bm{\nu}^\varepsilon$ is the multiplicative part. The Carleman representation allows us to rewrite the operator $\mathbf{K}^\varepsilon$ as a kernel operator. Due to the aimed functional \eqref{express of functional H-eps-s}, the lower bounds for the terms of $v$ derivatives corresponding to $\bm{\nu}^\varepsilon$ provide negative terms about $\|\partial_v^\beta \partial_x^\alpha \mathbf{f}\|_{H_{x,v}^s (\langle v\rangle^\gamma)}$ for $|\beta|\neq 0$. The commutators in this functional involve the negative terms associated with the $x$ derivatives, i.e. $\|\partial_x^\alpha \mathbf{f} \|_{H_{x,v}^s}$ for $|\alpha| \neq 0$. They can be bounded from below by $\|\partial_x^\alpha \mathbf{f} \|_{H_{x,v}^s(\langle v \rangle^\gamma)}$ for $\alpha \neq 0$, applying the coercivity property that contributes terms about $\|\partial_x^\alpha \mathbf{f}^\perp \|_{H_{x,v}^s(\langle v \rangle^\gamma)}$, together with the equivalence between two norms $\|\cdot \|_{L_{x,v}}^2$ and $\|\cdot \|_{L_{x,v}(\langle v\rangle^\gamma)}^2$ on the space $\mathrm{Ker} \mathbf{L}$. Moreover, the Poincar\'e inequality \eqref{pi-L estimate for pert f} is crucial, as there lacks a negative term about $\|\bm{\pi}_{\mathbf{L}} ( \mathbf{f}) \|_{L_{x,v}^2}$. The properties for the nonlinear operator $\mathbf{\Gamma}$ are extensions of the property for the Boltzmann operator in mono-species case \cite{briant2015JDEnavierstokes}, leading us to derive a prepared estimate \eqref{Prepared priori estimate}.

        Estimates for source term $\mathbf{S}^\varepsilon$ 
        make full use of its expression \eqref{equ for source term}, 
        and the Poincaré inequality mentioned above does the same. 
        We impose a stronger control on the projection 
        $\bm{\pi}_{\mathbf{L}}(\mathbf{S}^\varepsilon)$ for terms involving 
        pure $x$ derivatives, as they did in \cite{briant2021stability}. 
        This works because $x$ derivatives do not change the property that 
        the operator $\mathbf{Q}$ is orthogonal to $\mathrm{Ker} \mathbf{L}$. 
        Moreover, all of above estimates depend on the upper and lower bounds for 
        $\mathbf{M}^\varepsilon$, which is controlled by the terms $\bm{\mathcal{M}}^\delta$ 
        and $\bm{\mathcal{M}}^{1/\delta}$ for some positive constant $\delta$ uniformly with respect to $v$. 
        The varying temperature leads to various choices for $\delta$ in these upper and lower bounds, 
        which are no longer simply confined to 
        the interval $(0,1)$ as in the isothermal case. They require more careful 
        discussions, especially in the discussions for the operator 
        $\mathbf{L}^\varepsilon$ and the source term $\mathbf{S}^\varepsilon$.

        At last, the a prior estimate for $\mathbf{f}$ derived from inequality \eqref{Prepared priori estimate} relies on the equivalence 
        between the modified norm $\|\cdot \|_{\mathcal{H}^s_\varepsilon}$ in 
        \eqref{express of functional H-eps-s} and the norm 
            \[\left ( \sum_{|\alpha| \leq s}  \|\partial_x^\alpha 
            \cdot\|_{L_{x,v}^2}^2 
         + \varepsilon^2 \sum_{ \substack{|\alpha| + |\beta| \leq s \\ 
         |\beta| \geq 1} }  
         \|\partial_v^\beta \partial_x^\alpha \cdot\|_{L_{x,v}^2}^2 \right )^{1/2},
         \]
        as stated in \cite{briant2015JDEnavierstokes}, establishing the link between $\|\cdot \|_{\mathcal{H}^s_\varepsilon}$ 
        and the standard Sobolev norm. 

        \end{enumerate}


\section{Global well-posedness for the Non-Isothermal Maxwell-Stefan System}\label{Sec:3} 
We first observe that the closure relations \eqref{M-S 4} (i.e., Fick's laws) can decouple the equations for the total concentration and the temperature respectively from \eqref{M-S 1} and \eqref{M-S 3}, leading to 
\begin{gather}
\partial_t c_{tot} = \alpha \Delta_x c_{tot}, \label{orig equ c-tot}\\
\partial_t T - \frac{2\alpha}{3} \frac{ \Delta_x c_{tot} }{ c_{tot} } T - \frac{5\alpha}{3} \frac{\nabla_x c_{tot}}
{c_{tot}} \cdot \nabla_x T = 0. \label{orig equ T}
\end{gather}
The derivation of \eqref{orig equ T} utilizes the equation \eqref{orig equ c-tot}, and needs the fact that $c_{tot} (t,x) \neq 0$ for any 
$(t,x) \in \mathbb{R}_+ \times \mathbb{T}^3$. 
This holds under the basic assumption on initial data that $c_{tot}^{in} (x) > 0$ for any $x \in \mathbb{T}^3$, 
due to the property of the heat equation that $c_{tot}$ satisfied. Moreover, property of the MS matrix $A$ implies a relation $\nabla_x (c_{tot} T) =0$ 
from \eqref{M-S 2}, which provides a link between $\nabla_x c_{tot}$ and $\nabla_x T$, leading to another expression of equation for $T$, i.e. 
\begin{equation}
\partial_t T + \frac{\alpha }{3} \frac{|\nabla_x T|^2 }{T}+ \frac{2 \alpha}{3} \Delta_x T=0. \label{another eq for T}
\end{equation}
This nonlinear backward parabolic equation clearly indicates that $T$-equation dose not have a dissipative term, thus requiring the dissipation contributed by \eqref{orig equ c-tot}, achieved by the link $\frac{\nabla_x T}{T} = -\frac{\nabla_x c_{tot}}{c_{tot}}$.

Since the MS matrix $A = A( \mathbf{c} )$ defined in \eqref{def of matrix A in introduction} has the kernel space $\mathrm{Ker} A = \mathrm{Span} (\mathbf{1})$, the velocity vector has a unique decomposition 
$\mathbf{u} = \bm{\pi}_A (\mathbf{u}) + \mathbf{U}$, where $\bm{\pi}_A$ is the projection operator onto 
$\mathrm{Ker} A = \mathrm{Span} (\mathbf{1})$ and $\mathbf{U}$ is the projection onto $\mathrm{Span} (\mathbf{1})^\perp$. 
The projection $\bm{\pi}_A (\mathbf{u})$ is in the form $\bm{\pi}_A (\mathbf{u}) = \pi_A (\mathbf{u}) \mathbf{1}$, thus the closure relations \eqref{M-S 4} can be expressed as  
\[-\alpha \nabla_x c_{tot} = \langle \mathbf{c}, \mathbf{U} + \pi_A (\mathbf{u}) \mathbf{1} \rangle = \langle\mathbf{c}, \mathbf{U}\rangle 
+ c_{tot} \pi_A (\mathbf{u}),
\]
deducing that 
\[\pi_A (\mathbf{u}) = - \frac{ \langle \mathbf{c}, \mathbf{U} \rangle }{ c_{tot} } - \frac{\alpha \nabla_x c_{tot}}{c_{tot}}.
\]
Then we can write the velocity vector as 
\begin{equation}
    \mathbf{u} = \left ( \mathbf{U} - \frac{ \langle \mathbf{c}, \mathbf{U} \rangle }{c_{tot}} \mathbf{1} \right ) - 
\frac{ \alpha \nabla_x c_{tot}}{c_{tot}} \mathbf{1} := \mathbf{U}^\ast - \frac{\alpha \nabla_x c_{tot}}{c_{tot}} \mathbf{1}. \label{velocity decomposition}
\end{equation}

Under the basic assumption that $\inf_{x \in \mathbb{T}^3} c^{in}_{tot}> 0$, 
by taking the above velocity decomposition into the Maxwell-Stefan system \eqref{M-S 1}-\eqref{M-S 2}-\eqref{M-S 3}-\eqref{M-S 4}, 
we can get an equivalent system: 
\begin{gather}
\partial_t \mathbf{c} + \nabla_x \cdot (\mathbf{c} \mathbf{U}^\ast) - \nabla_x \cdot (\mathbf{c} \bar{u}) = 0,  \label{vect mass cons}\\
\nabla_x (\mathbf{c} T) = T^{\gamma/2} A (\mathbf{c}) \mathbf{U} , \quad \gamma \in (-3,1],  \label{vect M-S rela}\\
\partial_t T -\frac{2\alpha}{3}  \frac{ \Delta_x c_{tot} }{c_{tot}} - \frac{5\alpha}{3} \frac{\nabla_x c_{tot}}{c_{tot}} \cdot 
\nabla_x T =0, \label{equation for temperature T}
\end{gather}
with notations 
\begin{equation}
    c_{tot} := \langle \mathbf{c}, \mathbf{1} \rangle, \quad \mathbf{U}^\ast := \mathbf{U} - \frac{ \langle \mathbf{c}, \mathbf{U} \rangle }
    {c_{tot}}\mathbf{1}, \quad \bar{u} := \frac{\alpha \nabla_x c_{tot}}{c_{tot}}. \label{vect mean velo} 
\end{equation}
We point out that the evolution equation for the total concentration \eqref{orig equ c-tot} is a natural derivation, by taking the scalar product of the mass conservation equations \eqref{vect mass cons} and $\mathbf{1}$ in $\mathbb{R}^N$. 

Next we provide a simple analysis on the stationary solution for system 
\eqref{vect mass cons}-\eqref{vect M-S rela}-\eqref{equation for temperature T}. Let $\bar{\mathbf{c}}$ to be a 
positive constant N-vector, then the total concentration $\bar{c}_{tot} = \sum_{i=1}^N \bar{c}_i$ remains a positive constant. This leads to a constant temperature, derived from the equation \eqref{orig equ T} and the relation $\nabla_x (c_{tot}T)=0$, which also is the solvability condition for $\mathbf{U}$ obtained from \eqref{vect M-S rela}. Without loss of generality, we assume that the temperature is $1$. 
The property of matrix $A$ implies that $\mathbf{U} = 0$, meaning that $(\bar{ \mathbf{c} }, \mathbf{0}, 1)$ 
is a stationary solution for the above equivalent system. In other words, $(\bar{\mathbf{c}},\mathbf{0},1)$ is a stationary solution for the original Maxwell-Stefan system \eqref{M-S 1}-\eqref{M-S 2}-\eqref{M-S 3}-\eqref{M-S 4}.

We conduct our analysis on the equivalent system, and consider the solution perturbed around the stationary solution $(\bar{ \mathbf{c}}, \mathbf{0}, 1)$ in the form 
\begin{equation}
    \begin{cases}
    \mathbf{c} (t,x) = \bar{\mathbf{c}} + \lambda \tilde{\mathbf{c}} (t,x),\\
    \mathbf{U} (t,x) = \lambda \tilde{\mathbf{U}} (t,x),\\
    T (t,x) = 1 + \lambda \tilde{T} (t,x), \label{pert form for M-S in ortho pro}
    \end{cases}
\end{equation}
where $\lambda\in(0,1]$ is a parameter. Actually, perturbation $\tilde{\mathbf{U}}$ is the projection of the velocity perturbation $\tilde{\mathbf{u}}$ in Theorem \ref{theorem for M-S in main result} onto $\mathrm{Span} (\mathbf{1})^\perp$. Assumption (A3) in this Theorem transforms to 
        \begin{itemize}
    \item[(A3$^\prime$)] \textbf{Moment compatibility:}
    \begin{equation*}
    (\bar{c}_i + \lambda \tilde{c}_i^{in}) \nabla_x \tilde{T}^{in} + (1 + \lambda \tilde{T}^{in})
         \nabla_x \tilde{c}_i^{in} = ( 1 + \lambda \tilde{T}^{in} )^{\gamma/2} \sum_{j \neq i} \frac{ (\bar{c}_i + \lambda \tilde{c}_i^{in}) 
        (\bar{c}_j + \lambda \tilde{c}_j^{in}) }{ \Delta_{ij} } \left ( \tilde{U}_j^{in} - \tilde{U}_i^{in} \right ).
    \end{equation*}
\end{itemize}

By substituting the unknowns $(\mathbf{c},\mathbf{U},T )$ with $\mathbf{U} \in \mathrm{Span} (\mathbf{1})^\perp$ into the equations 
\eqref{vect mass cons}-\eqref{vect M-S rela}-\eqref{equation for temperature T} in the form \eqref{pert form for M-S in ortho pro}, 
we can derive the equations for the perturbations $(\tilde{\mathbf{c}},\tilde{\mathbf{U}},\tilde{T})$. 
We first present the notations in \eqref{vect mean velo} in a perturbative form, dividing them into two parts that 
depend linearly and nonlinearly on the perturbations,
\begin{equation}
    c_{tot} = \bar{c}_{tot} + \lambda \tilde{c}_{tot}, \quad \text{with } \bar{c}_{tot}=\sum_{i=1}^N \bar{c}_i, 
    \text{ and } \tilde{c}_{tot}=\sum_{i=1}^N \tilde{c}_i,    \label{express of c-tot}
\end{equation}
\begin{equation}
\begin{aligned}
    \bar{u} & = \alpha \frac{\lambda \nabla_x \tilde{c}_{tot} }{ \bar{c}_{tot} + \lambda \tilde{c}_{tot}}
    = \lambda \alpha \frac{\nabla_x \tilde{c}_{tot}}{ \bar{c}_{tot} } - \lambda^2 \frac{\alpha \tilde{c}_{tot} \nabla_x \tilde{c}_{tot}}
    {\bar{c}_{tot} (\bar{c}_{tot} + \lambda \tilde{c}_{tot})}\\
    & := \lambda \tilde{\bar{u}} - \lambda^2 N_1 ( \tilde{\bar{u}} ),  \label{express of u}
\end{aligned}
\end{equation}
\begin{equation}
    \begin{aligned}
     \mathbf{U}^\ast & =\lambda \tilde{\mathbf{U}} - \lambda \frac{\langle \bar{\mathbf{c}} + \lambda \tilde{\mathbf{c}}, 
     \tilde{\mathbf{U}}  \rangle}{ \bar{c}_{tot} + \lambda \tilde{c}_{tot} } \mathbf{1}\\ 
     & = \lambda \left (\tilde{\mathbf{U}} - \frac{ \langle \bar{\mathbf{c}}, \tilde{\mathbf{U}} \rangle}{ \bar{c}_{tot} } \mathbf{1} 
     \right ) - \lambda^2 \left ( \frac{\langle \tilde{\mathbf{c}}, \tilde{\mathbf{U}} \rangle}{ \bar{c}_{tot} } 
     - \frac{ \tilde{c}_{tot} \langle \bar{\mathbf{c}} + \lambda \tilde{\mathbf{c}}, \tilde{\mathbf{U}} \rangle }{ \bar{c}_{tot} 
     (\bar{c}_{tot} + \lambda \tilde{c}_{tot}) } \right ) \mathbf{1}\\
        & := \lambda \tilde{\mathbf{U}}^\ast - \lambda^2 N_2 (\tilde{\mathbf{U}}) \mathbf{1}.  \label{express of U}
\end{aligned}
\end{equation}
The operator $N_1$ depends on $\tilde{c}_{tot}$ in a nonlinear form, while the operator $N_2$ nonlinearly depends on $\tilde{\mathbf{c}}$ and $\tilde{\mathbf{U}}$. 
Under the basic assumption that $\inf_{x \in \mathbb{T}^3} \left (\bar{c}_{tot} + \lambda \tilde{c}_{tot}^{in} \right ) (x) > 0$ 
for all $\lambda \in (0,1]$, i.e. a direct derivation of assumption (A1) in Theorem \ref{theorem for M-S in main result}, 
the equations for the perturbations can be written as follows
\begin{equation}
    \begin{aligned}
    \partial_t \tilde{\mathbf{c}} + & \bar{\mathbf{c}} (\nabla_x \cdot \tilde{\mathbf{U}}^*)-\bar{\mathbf{c}} (\nabla_x\cdot\tilde{\bar{u}})\\
    &+\lambda \nabla_x\cdot(\tilde{\mathbf{c}} \tilde{\mathbf{U}}^\ast) - \lambda \nabla_x \cdot[ (\bar{\mathbf{c}} + \lambda 
    \tilde{\mathbf{c}}) N_2 (\tilde{\mathbf{U}}) ] - \lambda \nabla_x \cdot ( \tilde{\mathbf{c}} \tilde{\bar{u}} ) + \lambda \nabla_x \cdot 
    [ (\bar{\mathbf{c}} + \lambda \tilde{\mathbf{c}}) N_1 (\bar{u}) ] = 0,  \label{per mass-linear}
\end{aligned}
\end{equation}
the flux-force relations  
\begin{equation}
    (\bar{\mathbf{c}} + \lambda \tilde{\mathbf{c}}) \nabla_x \tilde{T} + (1 + \lambda \tilde{T}) \nabla_x \tilde{\mathbf{c}}
    = (1 + \lambda \tilde{T})^{\gamma/2} A (\bar{\mathbf{c}} + \lambda \tilde{\mathbf{c}} ) \tilde{\mathbf{U}}, \quad \gamma\in(-3,1], 
    \label{equa for M-S rela}
\end{equation}
and the evolution equation for temperature
\begin{equation}
    \partial_t \tilde{T} - \frac{2\alpha}{3} \nabla_x \cdot \left[ \frac{ \nabla_x \tilde{c}_{tot} }{ \bar{c}_{tot} + \lambda 
    \tilde{c}_{tot} } (1 + \lambda \tilde{T})  \right] - \frac{2\alpha \lambda}{3}
    \frac{ |\nabla_x \tilde{c}_{tot}|^2 }{ (\bar{c}_{tot} + \lambda \tilde{c}_{tot})^2 } (1 + \lambda \tilde{T})
    - \alpha \lambda \frac{\nabla_x \tilde{c}_{tot}}{ \bar{c}_{tot} + \lambda \tilde{c}_{tot} } \cdot \nabla_x \tilde{T} = 0.
    \label{equa for per T}
\end{equation}
Here uses the equality 
\begin{equation}
\nabla_x \cdot \left[ \frac{ \nabla_x \tilde{c}_{tot} }{\bar{c}_{tot} + \lambda \tilde{c}_{tot}} (1 + \lambda \tilde{T})  \right] 
= \frac{ \Delta_x \tilde{c}_{tot} }{\bar{c}_{tot} + \lambda \tilde{c}_{tot}} (1 + \lambda \tilde{T}) 
- \frac{ \lambda |\nabla_x \tilde{c}_{tot}|^2 }{ (\bar{c}_{tot} + \lambda \tilde{c}_{tot})^2 }( 1 + \lambda \tilde{T})
+ \frac{\lambda \nabla_x \tilde{c}_{tot} \cdot \nabla_x \tilde{T}}{ \bar{c}_{tot} + \lambda \tilde{c}_{tot} }.
\label{transformation used in T}
\end{equation}
The mass conservation equations are split into linear and nonlinear parts, with respect to the perturbations, facilitating subsequent calculations. We also give its direct expression 
\[ \partial_t \tilde{\mathbf{c}} +\nabla_x\cdot \left[(\bar{\mathbf{c}}+ \lambda \tilde{\mathbf{c}})(\tilde{\mathbf{U}}-\frac{\langle 
\bar{\mathbf{c}}+\lambda \tilde{\mathbf{c}}, \tilde{\mathbf{U}}\rangle  }{\bar{c}_{tot} +\lambda \tilde{c}_{tot}} \mathbf{1})\right] 
-\nabla_x \cdot \left[ (\bar{\mathbf{c}} + \lambda \tilde{\mathbf{c}} ) \frac{\alpha \nabla_x \tilde{c}_{tot}}{\bar{c}_{tot} 
+\lambda \tilde{c}_{tot}} \mathbf{1} \right] =0.
\]
It also has a natural deduction for the total concentration perturbation
\begin{equation}
    \partial_t \tilde{c}_{tot} = \alpha \Delta_x \tilde{c}_{tot}. \label{equation for per total concentration}
\end{equation}
Furthermore, we give another expression of the flux-force relations \eqref{equa for M-S rela}
\begin{equation}
    \nabla_x \tilde{\mathbf{c}} = ( 1 + \lambda \tilde{T} )^{\gamma/2 -1} A (\bar{\mathbf{c}} + \lambda \tilde{\mathbf{c}}) 
    \tilde{\mathbf{U}} + \frac{\nabla_x \tilde{c}_{tot}}{\bar{c}_{tot} + \lambda \tilde{c}_{tot}} (\bar{\mathbf{c}} + \lambda \tilde{\mathbf{c}}),     \label{per M-S rela}
\end{equation}
which applies an important relation, specifically the solvability condition for $\tilde{\mathbf{U}}$ 
\begin{equation}
    \frac{ \nabla_x \tilde{c}_{tot} }{ \bar{c}_{tot} + \lambda \tilde{c}_{tot} } = - \frac{\nabla_x \tilde{T}}{ 1 + \lambda \tilde{T} }. 
    \label{implicit rela}
\end{equation} 

Now we focus on the Cauchy problem of the system of equations 
\eqref{per mass-linear}-\eqref{equa for M-S rela}-\eqref{equa for per T} for the 
unknowns $(\tilde{\mathbf{c}}, \tilde{\mathbf{U}}, \tilde{T})$ with $\tilde{\mathbf{U}} \in \mathrm{Span} (\mathbf{1})^\perp$, 
given the initial data
\begin{equation}
    \begin{aligned}
      \tilde{\mathbf{c}} (0,x) = \tilde{\mathbf{c}}^{in} (x), \quad \tilde{\mathbf{U}} (0,x) & = \tilde{\mathbf{U}}^{in} (x), 
      \quad  \tilde{T} (0,x) = \tilde{T}^{in} (x) \quad \text{a.e. on } \mathbb{T}^3,\\
    \text{and } &\langle \tilde{\mathbf{U}}^{in} , \mathbf{1} \rangle = 0, \quad \forall x \in \mathbb{T}^3,\label{initial data set}
    \end{aligned}
    \end{equation}
    satisfying assumptions (A1), (A2) and (A3$^\prime$). 
The orthogonal reformulation of Theorem \ref{theorem for M-S in main result} then takes the following form.
\begin{theorem}\label{theorem for M-S in orthogonal version}
        Let $s > 3$ be an integer, $\lambda \in (0,1]$ and $\bar{\mathbf{c}}>0$. There exist explicit constants $\delta_{MS},d_1,d_2,\chi>0$ 
        such that if the initial data 
        $(\tilde{\mathbf{c}}^{in}, \tilde{\mathbf{U}}^{in}, \tilde{T}^{in}) \in H_x^s (\mathbb{T}^3)
        \times H_x^{s-1} (\mathbb{T}^3) \times H_x^s (\mathbb{T}^3)$ satisfies (A1)-(A2)-(A3$^\prime$) and the smallness assumption
        \begin{equation}
        \| \tilde{\mathbf{c}}^{in} \|^2_{H_x^s (\bar{\mathbf{c}}^{-1}) } + \|\tilde{T}^{in}\|^2_{H_x^s} + \chi \|\sum_{i=1}^N 
        \tilde{c}_i^{in} \|^2_{H_x^s} \leq \delta_{MS}^2, \label{smallness for E-0 in ortho thm}
        \end{equation}
        then there exists a unique classical solution $(\tilde{\mathbf{c}}, \tilde{\mathbf{U}}, \tilde{T} ) \in L^\infty ( \mathbb{R}_+ ; H_x^s({\mathbb{T}^3}) ) \times L^\infty ( \mathbb{R}_+; H_x^{s-1} ({\mathbb{T}^3}) ) \times L^\infty 
        ( \mathbb{R}_+; H_x^s ({\mathbb{T}^3}) )$ to the system of equations \eqref{per mass-linear}-\eqref{equa for M-S rela}-\eqref{equa for per T} with the initial data satisfying \eqref{initial data set}. In particular, for almost everywhere $(t,x) \in \mathbb{R}_+ \times \mathbb{T}^3$, we have 
        \[ \min_{1 \leq i \leq N}  \bar{c}_i + \lambda \tilde{c}_i (t,x) > 0, \quad 1 + \lambda \tilde{T} (t,x) > 0, \quad \tilde{\mathbf{U}} (t,x) \in \mathrm{Span} (\mathbf{1}) ^\perp.
        \]
        Moreover, the following estimate holds for any $t\geq0$,
       \[ \mathscr{E}_s (t) + \int_0^t \mathscr{D}_s (\tau) \,d\tau \leq \mathscr{E}_s (0) \leq \delta_{MS}^2,
       \]
       where the functionals $\mathscr{E}_s$ and $\mathscr{D}_s$ are defined in \eqref{def of MS functionals in intro}.
    \end{theorem}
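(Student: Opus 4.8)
The plan is to prove Theorem~\ref{theorem for M-S in orthogonal version} by the energy method implemented through a purpose-built approximation scheme: first solve a \emph{pseudo-nonlinear iterative scheme} whose iterates enjoy a priori estimates uniform in the iteration index $n$ and in $\lambda$, then pass to the limit and verify uniqueness, the positivities, and the orthogonality constraint. The scheme exploits the structural decoupling already exhibited. By \eqref{equation for per total concentration} the total-concentration perturbation $\tilde c_{tot}$ solves the closed heat equation $\partial_t\tilde c_{tot}=\alpha\Delta_x\tilde c_{tot}$, so at each step I would first solve $\tilde c_{tot}^{(n+1)}$ exactly with data $\tilde c_{tot}^{in}$; the maximum principle then gives $\bar c_{tot}+\lambda\tilde c_{tot}^{(n+1)}>0$ on $\mathbb{R}_+\times\mathbb{T}^3$ (using (A1)) together with the full parabolic dissipation, i.e. $\frac{d}{dt}\|\tilde c_{tot}^{(n+1)}\|_{H^s_x}^2+2\alpha\|\nabla_x\tilde c_{tot}^{(n+1)}\|_{H^s_x}^2=0$. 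Next, with $\tilde c_{tot}^{(n+1)}$ in hand, I solve \eqref{equa for per T} for $\tilde T^{(n+1)}$: although \eqref{another eq for T} shows the $T$-equation to be genuinely \emph{backward} parabolic, in the form \eqref{equa for per T} it is linear in $\tilde T^{(n+1)}$ with coefficients and forcing built from the already-known $\tilde c_{tot}^{(n+1)}$, and the dangerous quadratic terms $|\nabla_x\tilde c_{tot}^{(n+1)}|^2$, $\nabla_x\tilde c_{tot}^{(n+1)}\cdot\nabla_x\tilde T^{(n+1)}$ are absorbed into the $c_{tot}$-dissipation, so this step is solvable by linear theory and, through the link \eqref{implicit rela}, preserves $1+\lambda\tilde T^{(n+1)}>0$ (it forces $c_{tot}T$ to be spatially constant, hence positive by (A1)). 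Finally, given $(\tilde c_{tot}^{(n+1)},\tilde T^{(n+1)})$ and the previous iterate $(\tilde{\mathbf{c}}^{(n)},\tilde{\mathbf{U}}^{(n)})$, I solve the linear mass-conservation equation \eqref{per mass-linear} for $\tilde{\mathbf{c}}^{(n+1)}$ and recover $\tilde{\mathbf{U}}^{(n+1)}$ from \eqref{per M-S rela} by inverting $A(\bar{\mathbf{c}}+\lambda\tilde{\mathbf{c}}^{(n+1)})$ on $\mathrm{Span}(\mathbf{1})^\perp$; since $\mathrm{Ker}\,A=\mathrm{Span}(\mathbf{1})$ this yields a unique $\tilde{\mathbf{U}}^{(n+1)}\in\mathrm{Span}(\mathbf{1})^\perp$, so the orthogonality is built into the construction, and the solvability condition \eqref{implicit rela} is exactly what (A2)-(A3$^\prime$) guarantee at $t=0$ and is then propagated.

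The heart of the argument is the closed energy estimate for the iterates. I would differentiate the equations up to order $s$ in $x$, test \eqref{per mass-linear} against the weighted perturbation $\bar{\mathbf{c}}^{-1}\tilde{\mathbf{c}}^{(n+1)}$, test \eqref{equa for per T} against $\tilde T^{(n+1)}$, and add the heat-equation identity for $\sum_i\tilde c_i^{(n+1)}$ with prefactor $\chi$, building $\mathscr{E}_s^{(n+1)}$ and $\mathscr{D}_s^{(n+1)}$. The decisive manipulations are those flagged in the introduction: integration by parts in the mass-conservation estimate produces $\langle\mathbf{c}\mathbf{u},\nabla_x\mathbf{c}\rangle$-type terms, into which I substitute the velocity decomposition \eqref{velocity decomposition} and the flux-force relation \eqref{per M-S rela}; the non-positivity of $A$ on $\mathrm{Span}(\mathbf{1})^\perp$, in the quantitative form recalled earlier, then yields the dissipation $d_1\|\tilde{\mathbf{U}}^{(n+1)}\|_{H^s_x(\omega)}^2$ with $\omega=(1+\lambda\tilde T)^{\gamma/2-1}$, while the remaining piece carrying $\nabla_x\tilde c_{tot}^{(n+1)}$---and, likewise, the indefinite terms produced by the backward temperature equation after converting $\nabla_x\tilde T^{(n+1)}$ via \eqref{implicit rela}---are rewritten as multiples of $\nabla_x\tilde c_{tot}^{(n+1)}$ and paid for out of the heat-equation dissipation, which is why $\chi$ must be taken large (to provide enough budget for $d_2\|\nabla_x\sum_i\tilde c_i^{(n+1)}\|_{H^s_x}^2$ in $\mathscr{D}_s$, for the temperature growth, and for the nonlinear remainders simultaneously). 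All genuinely nonlinear contributions---products of perturbations with derivatives, the operators $N_1, N_2$---are handled by Sobolev product and Moser/commutator estimates (here $s>3$ so that $H^s_x\hookrightarrow L^\infty_x$ and $H^s_x$ is an algebra) and bounded by terms of the form $C\sqrt{\mathscr{E}_s}\,\mathscr{D}_s$ of the current or the previous iterate; fixing $\chi,d_1,d_2$ (independently of $\lambda,\delta_{MS},n$) and then taking $\delta_{MS}$ small, these are absorbed, closing an inequality of the shape $\frac{d}{dt}\mathscr{E}_s^{(n+1)}+\mathscr{D}_s^{(n+1)}\lesssim(\text{previous-iterate remainders})$ which, summed over the iteration, gives $\mathscr{E}_s^{(n+1)}(t)+\int_0^t\mathscr{D}_s^{(n+1)}(\tau)\,d\tau\le\mathscr{E}_s(0)\le\delta_{MS}^2$ for all $n$.

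It then remains to pass to the limit and conclude. The uniform $H^s_x$-bounds plus a contraction estimate for the differences $(\tilde{\mathbf{c}}^{(n+1)}-\tilde{\mathbf{c}}^{(n)},\tilde{\mathbf{U}}^{(n+1)}-\tilde{\mathbf{U}}^{(n)},\tilde T^{(n+1)}-\tilde T^{(n)})$ in a lower-order norm show that the iterates form a Cauchy sequence; the limit $(\tilde{\mathbf{c}},\tilde{\mathbf{U}},\tilde T)$ solves \eqref{per mass-linear}-\eqref{equa for M-S rela}-\eqref{equa for per T} with the stated regularity in $L^\infty(\mathbb{R}_+;H^s_x)\times L^\infty(\mathbb{R}_+;H^{s-1}_x)\times L^\infty(\mathbb{R}_+;H^s_x)$, and the energy inequality descends to it by weak lower semicontinuity. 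Uniqueness follows from the same difference estimate applied to two solutions with identical data. Positivity of $c_{tot}$ and $T$ is as in the scheme (heat-equation maximum principle, then $c_{tot}T$ spatially constant); positivity of each $c_i=\bar c_i+\lambda\tilde c_i$ follows because $\|\tilde{\mathbf{c}}\|_{H^s_x}\le\delta_{MS}$ and $H^s_x\hookrightarrow L^\infty_x$ force $\lambda|\tilde c_i|<\min_i\bar c_i$ once $\delta_{MS}$ is small; and $\tilde{\mathbf{U}}(t,x)\in\mathrm{Span}(\mathbf{1})^\perp$ passes to the limit since it holds for every iterate.

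The main obstacle is precisely the absence of any dissipative mechanism for the temperature: \eqref{another eq for T} is a nonlinear \emph{backward} parabolic equation, so a naive energy estimate on $\tilde T$ produces only anti-dissipation and cannot close on its own, and the whole scheme is engineered to route around this. The device is to solve $c_{tot}$ first from its own heat equation and then to use the purely algebraic constraint $\nabla_x(c_{tot}T)=0$---which comes from $\mathrm{Ker}\,A=\mathrm{Span}(\mathbf{1})$ applied to \eqref{M-S 2}---to trade every $\nabla_x T$ for $\nabla_x c_{tot}$, thereby inheriting the parabolic smoothing of $c_{tot}$; the anti-dissipation is then controlled by the (time-integrable) heat dissipation, which is finite but does not decay, consistent with the fact that $\mathscr{D}_s$ contains no $\nabla_x\tilde T$ term and $\mathscr{E}_s$ only keeps $\tilde T$ bounded. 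Making this borrowing quantitative at every derivative order, consistently with the nonlinear coupling through $A(\mathbf{c})$ and the temperature weight $\omega$, and with all constants independent of $\lambda$ (as needed for the subsequent Boltzmann diffusion limit), is where the bulk of the technical work lies; a secondary subtlety is keeping the temperature step of the iteration linear despite the genuinely nonlinear terms in \eqref{equa for per T}, which is possible only because $\tilde c_{tot}^{(n+1)}$ is available in closed form before $\tilde T^{(n+1)}$ is solved.
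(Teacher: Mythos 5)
Your proposal follows the paper's strategy essentially verbatim: the same pseudo-nonlinear iterative scheme (heat equation for $\tilde c_{tot}^{(n+1)}$ first, then a temperature step made linear by the closed-form $\tilde c_{tot}^{(n+1)}$, then the mass-conservation equation, then reconstruction of $\tilde{\mathbf{U}}^{(n+1)}$), the same energy identities exploiting the velocity decomposition \eqref{velocity decomposition}, the flux-force substitution \eqref{per M-S rela}, the non-positivity of $A$ on $\mathrm{Span}(\mathbf{1})^\perp$, and the trade of $\nabla_x\tilde T$ for $\nabla_x\tilde c_{tot}$ via $\nabla_x(c_{tot}T)=0$ to pay the backward-parabolic anti-dissipation out of the heat-equation budget; the passage to the limit differs cosmetically (you propose a Cauchy/contraction argument where the paper uses uniform $H^s_x$ bounds plus Aubin--Lions compactness, both standard here).

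One small but consequential imprecision: you recover $\tilde{\mathbf{U}}^{(n+1)}$ by inverting $A(\bar{\mathbf{c}}+\lambda\tilde{\mathbf{c}}^{(n+1)})$, i.e.\ at the \emph{current} iterate. If done literally, the mass-conservation step ceases to be a linear PDE for $\tilde{\mathbf{c}}^{(n+1)}$, because $\tilde{\mathbf{U}}^{(n+1)}$ would then depend nonlinearly on the very unknown you are solving for, undermining your claim of step-wise linear solvability. The paper avoids this by freezing the matrix and the temperature weight at the previous iterate, i.e.\ defining $\tilde{\mathbf{U}}^{(n+1)}$ through $(1+\lambda\tilde T^{(n)})^{1-\gamma/2}A(\bar{\mathbf{c}}+\lambda\tilde{\mathbf{c}}^{(n)})^{-1}$ applied to $\nabla_x\tilde{\mathbf{c}}^{(n+1)}$ and $\nabla_x\tilde T^{(n+1)}$; this is precisely what makes the $\tilde{\mathbf{c}}^{(n+1)}$-step a \emph{linear} hyperbolic-parabolic problem while the full nonlinearity is recovered only in the limit $n\to\infty$. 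With that correction your scheme coincides with the paper's.
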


\subsection{Energy Estimate for the Maxwell-Stefan System}
Before starting to calculate the energy estimate, we first refer Proposition 3.1, Proposition 3.2 and Proposition 3.3 in \cite{briant2022perturbativ}:
\begin{proposition}\label{prop A}
    For any $\mathbf{c} \geq 0$ the matrix $A (\mathbf{c})$ is nonpositive, in the sense that there exist two positive constants $\lambda_A$ 
    and $\mu_A$ such that, for any 
    $\mathbf{X} \in \mathbb{R}^N$,
    \[  \| A (\mathbf{c}) \mathbf{X} \| \leq \mu_A \langle \mathbf{c}, \mathbf{1} \rangle^2 \|\mathbf{X}\|,
    \]
    \[  \langle \mathbf{X}, A(\mathbf{c}) \mathbf{X} \rangle \leq - \lambda_A ( \min_{1\leq i\leq N} c_i )^2 
    [ \| \mathbf{X} \|^2 - \langle\mathbf{X}, \mathbf{1} \rangle^2 ] \leq 0.
    \]
  \end{proposition}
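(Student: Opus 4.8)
The plan is to exploit the fact that $-A(\mathbf{c})$ is, up to the entrywise weights $c_ic_j/\Delta_{ij}$, a weighted graph Laplacian on the complete graph with $N$ vertices. First I would record the elementary structural features of $A(\mathbf{c})$ coming from \eqref{def of matrix A in introduction}: since $\Delta_{ij}=\Delta_{ji}>0$, the off-diagonal entries are $[A(\mathbf{c})]_{ij}=c_ic_j/\Delta_{ij}\ge 0$ for $i\ne j$, while the diagonal entry simplifies to $[A(\mathbf{c})]_{ii}=-\sum_{r\ne i}c_ic_r/\Delta_{ir}$. Hence $A(\mathbf{c})$ is symmetric, has non-negative off-diagonal entries, and has vanishing row sums, so that $A(\mathbf{c})\mathbf{1}=0$; this already confirms $\mathrm{Span}(\mathbf{1})\subseteq\mathrm{Ker}\,A(\mathbf{c})$.

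For the first (operator-norm) bound I would estimate the absolute row sums of $A(\mathbf{c})$: for each $i$,
\[
\sum_{j=1}^N \big|[A(\mathbf{c})]_{ij}\big| = 2\sum_{j\ne i}\frac{c_ic_j}{\Delta_{ij}} \le \frac{2c_i}{\min_{k\ne l}\Delta_{kl}}\sum_{j=1}^N c_j = \frac{2c_i}{\min_{k\ne l}\Delta_{kl}}\langle\mathbf{c},\mathbf{1}\rangle ,
\]
so that $\|A(\mathbf{c})\|_{\infty\to\infty}\le 2\langle\mathbf{c},\mathbf{1}\rangle^2/\min_{k\ne l}\Delta_{kl}$; since $A(\mathbf{c})$ is symmetric, $\|A(\mathbf{c})\|_{2\to 2}\le\|A(\mathbf{c})\|_{\infty\to\infty}$, which gives the claimed inequality with $\mu_A = 2/\min_{k\ne l}\Delta_{kl}$, a constant depending only on the $\Delta_{kl}$.

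For the coercivity estimate I would first derive the quadratic-form identity by symmetrising the double sum and inserting the zero-row-sum property into the diagonal terms:
\[
\langle\mathbf{X},A(\mathbf{c})\mathbf{X}\rangle = -\frac12\sum_{i\ne j}\frac{c_ic_j}{\Delta_{ij}}(X_i-X_j)^2 ,
\]
which exhibits non-positivity at once. To pass to the quantitative bound I would lower-bound each weight by $c_ic_j/\Delta_{ij}\ge(\min_k c_k)^2/\max_{k\ne l}\Delta_{kl}$, and then invoke the elementary identity $\sum_{i\ne j}(X_i-X_j)^2 = 2N\|\mathbf{X}\|^2-2\langle\mathbf{X},\mathbf{1}\rangle^2$ — that is, $2N$ times the squared norm of the component of $\mathbf{X}$ orthogonal to $\mathbf{1}$ — to obtain the stated estimate with $\lambda_A$ proportional to $1/\max_{k\ne l}\Delta_{kl}$; the final $\le 0$ is then immediate from Cauchy--Schwarz, $\langle\mathbf{X},\mathbf{1}\rangle^2\le N\|\mathbf{X}\|^2$.

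There is no serious obstacle here — the statement is classical (see \cite{bothe2011maxwell,jungel2013existence,briant2022perturbativ}) and the only point requiring care is the bookkeeping: one must verify that $\lambda_A$ and $\mu_A$ are genuinely \emph{independent of} $\mathbf{c}$, which is precisely what factoring out $(\min_k c_k)^2$ and $\langle\mathbf{c},\mathbf{1}\rangle^2$ in the two estimates achieves, leaving both constants dependent only on $N$ and on $\min_{k\ne l}\Delta_{kl}$ and $\max_{k\ne l}\Delta_{kl}$.
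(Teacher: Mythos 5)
The paper does not prove this proposition; it is quoted verbatim from \cite{briant2022perturbativ} (Proposition~3.1 there), so there is no in-paper proof to compare against. Your argument is a complete, self-contained derivation along the standard lines — identifying $-A(\mathbf{c})$ as a weighted graph Laplacian, using symmetry plus vanishing row sums to get the quadratic-form identity $\langle\mathbf{X},A(\mathbf{c})\mathbf{X}\rangle=-\tfrac12\sum_{i\ne j}\frac{c_ic_j}{\Delta_{ij}}(X_i-X_j)^2$, and the Schur/Gershgorin bound on $\|A\|_{2\to2}$ for the operator-norm estimate — and it correctly produces constants $\mu_A,\lambda_A$ depending only on $N$ and the $\Delta_{ij}$.

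One small inaccuracy worth flagging, though it is inherited from the paper's own loose wording rather than being a gap in your reasoning. Your identity gives $\langle\mathbf{X},A(\mathbf{c})\mathbf{X}\rangle\le -\lambda_A(\min_i c_i)^2\bigl[N\|\mathbf{X}\|^2-\langle\mathbf{X},\mathbf{1}\rangle^2\bigr]$, and it is \emph{this} bracket that Cauchy--Schwarz ($\langle\mathbf{X},\mathbf{1}\rangle^2\le N\|\mathbf{X}\|^2$) certifies as nonnegative. The bracket in the proposition's statement, $\|\mathbf{X}\|^2-\langle\mathbf{X},\mathbf{1}\rangle^2$, is \emph{not} nonnegative in general (take $\mathbf{X}=\mathbf{1}$: it equals $N-N^2<0$ for $N\ge2$), so the chain ``$\le -\lambda_A(\min_i c_i)^2[\|\mathbf{X}\|^2-\langle\mathbf{X},\mathbf{1}\rangle^2]\le 0$'' as printed is not literally correct; your last sentence attributing that final ``$\le 0$'' to Cauchy--Schwarz glosses over this. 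What you have actually proved is the sharper form with the $N$, from which both the stated quantitative estimate (since $N\|\mathbf{X}\|^2-\langle\mathbf{X},\mathbf{1}\rangle^2\ge\|\mathbf{X}\|^2-\langle\mathbf{X},\mathbf{1}\rangle^2$) and the overall nonpositivity $\langle\mathbf{X},A(\mathbf{c})\mathbf{X}\rangle\le0$ follow separately. The discrepancy is harmless for the paper's purposes, where the estimate is only invoked for $\mathbf{X}\in\mathrm{Span}(\mathbf{1})^\perp$ (so the $\langle\mathbf{X},\mathbf{1}\rangle$ term vanishes), but if you present the proof you should either carry the $1/N$ with $\langle\mathbf{X},\mathbf{1}\rangle^2$, or state the two conclusions of the second display as independent facts rather than as a single chain.
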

  \begin{proposition}\label{prpo scalar estimate for A}
  Consider a multi-index $\beta \in \mathbb{N}^3$ and let $\mathbf{c}, \mathbf{U} \in H^{ |\beta| } (\mathbb{T}^3)$, with $\mathbf{c} \geq 0$. 
  Then, for any $\mathbf{X} \in \mathbb{R}^N$,
  \begin{align*}
    \langle \partial_x^\beta [ A (\mathbf{c}) \mathbf{U} ], \mathbf{X} \rangle \leq & \langle A (\mathbf{c}) \partial_x^\beta \mathbf{U}, 
    \mathbf{X} \rangle + 2N^2 \mu_A \langle \mathbf{c}, \mathbf{1}\rangle \|\mathbf{X}\|3^{|\beta|} 
    \sum_{ \substack{\beta_1 + \beta_3 = \beta \\ |\beta_1| \geq 1} } \|\partial_x^{\beta_1} \mathbf{c}\| \|\partial_x^{\beta_3} \mathbf{U}\|\\
      & + N^2 \mu_A \|\mathbf{X}\| 3^{|\beta|} \sum{ \substack{\beta_1 + \beta_2 + \beta_3 = \beta \\ |\beta_1|, |\beta_2| \geq 1} }
      \|\partial_x^{\beta_1} \mathbf{c}\| \|\partial_x^{\beta_2}\mathbf{c}\|  \|\partial_x^{\beta_3} \mathbf{U}\|.
  \end{align*}
  \end{proposition}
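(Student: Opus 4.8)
The plan is to reduce the statement to the multi-index Leibniz rule by using the fact that each entry of the Maxwell--Stefan matrix \eqref{def of matrix A in introduction} is a homogeneous quadratic polynomial in $\mathbf{c}$. Concretely, $A$ admits an exact polarization: there is a \emph{symmetric} bilinear map $\mathcal{A}\colon\mathbb{R}^N\times\mathbb{R}^N\to\mathbb{R}^{N\times N}$, given by $[\mathcal{A}(\mathbf{a},\mathbf{b})]_{ij}=\tfrac12\bigl(\tfrac{a_ib_j+b_ia_j}{\Delta_{ij}}\bigr)-\tfrac12\delta_{ij}\sum_{r}\tfrac{a_ib_r+b_ia_r}{\Delta_{ir}}$, such that $\mathcal{A}(\mathbf{c},\mathbf{c})=A(\mathbf{c})$ for all $\mathbf{c}$. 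Since the asserted inequality involves only the Euclidean structure of $\mathbb{R}^N$ (the $\|\cdot\|$ and $\langle\cdot,\cdot\rangle$ are those of $\mathbb{R}^N$), it suffices to prove it at each fixed $x\in\mathbb{T}^3$.

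First I would expand $\partial_x^\beta[A(\mathbf{c})\mathbf{U}]=\partial_x^\beta[\mathcal{A}(\mathbf{c},\mathbf{c})\mathbf{U}]$ by the multinomial Leibniz formula,
\[
\partial_x^\beta\bigl[A(\mathbf{c})\mathbf{U}\bigr]=\sum_{\beta_1+\beta_2+\beta_3=\beta}\frac{\beta!}{\beta_1!\,\beta_2!\,\beta_3!}\,\mathcal{A}\bigl(\partial_x^{\beta_1}\mathbf{c},\partial_x^{\beta_2}\mathbf{c}\bigr)\,\partial_x^{\beta_3}\mathbf{U},
\]
pair with $\mathbf{X}$, and split according to whether $\beta_1,\beta_2$ vanish. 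The single term $\beta_1=\beta_2=0$ is $\langle A(\mathbf{c})\partial_x^\beta\mathbf{U},\mathbf{X}\rangle$, which is the first term on the right-hand side of the claimed inequality. The terms with exactly one of $\beta_1,\beta_2$ equal to zero combine: by the symmetry $\mathcal{A}(\partial_x^{\beta_1}\mathbf{c},\mathbf{c})=\mathcal{A}(\mathbf{c},\partial_x^{\beta_1}\mathbf{c})$ and since the two corresponding multinomial coefficients coincide (both equal $\binom{\beta}{\beta_1}$), they sum to $2\sum_{\beta_1+\beta_3=\beta,\,|\beta_1|\geq1}\binom{\beta}{\beta_1}\langle\mathcal{A}(\partial_x^{\beta_1}\mathbf{c},\mathbf{c})\partial_x^{\beta_3}\mathbf{U},\mathbf{X}\rangle$. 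What remains is the family with $|\beta_1|,|\beta_2|\geq1$.

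Next I would extract from Proposition~\ref{prop A} two pointwise bilinear bounds. The key elementary observation is that $A(\mathbf{e}^{(k)})=0$ for every canonical basis vector (no cross-diffusion when a single species is present), so that $\mathcal{A}(\mathbf{e}^{(k)},\mathbf{e}^{(l)})=\tfrac12 A(\mathbf{e}^{(k)}+\mathbf{e}^{(l)})$; applying Proposition~\ref{prop A} to the nonnegative vector $\mathbf{e}^{(k)}+\mathbf{e}^{(l)}$ (for which $\langle\cdot,\mathbf{1}\rangle=2$) gives $\|\mathcal{A}(\mathbf{e}^{(k)},\mathbf{e}^{(l)})\mathbf{X}\|\leq 2\mu_A\|\mathbf{X}\|$, and also bounds $1/\min_{i\neq j}\Delta_{ij}$ by a multiple of $\mu_A$. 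Expanding $\mathbf{a},\mathbf{b}$ in the canonical basis and using Cauchy--Schwarz then yields $\|\mathcal{A}(\mathbf{a},\mathbf{b})\mathbf{X}\|\leq N^2\mu_A\|\mathbf{a}\|\,\|\mathbf{b}\|\,\|\mathbf{X}\|$, which controls the $|\beta_1|,|\beta_2|\geq1$ family. For the one-derivative family I would instead exploit that $\mathbf{c}\mapsto\mathcal{A}(\mathbf{v},\mathbf{c})$ is \emph{linear}: expanding the undifferentiated (nonnegative) factor $\mathbf{c}=\sum_l c_l\mathbf{e}^{(l)}$ and using $\sum_l c_l=\langle\mathbf{c},\mathbf{1}\rangle$ gives $\|\mathcal{A}(\partial_x^{\beta_1}\mathbf{c},\mathbf{c})\mathbf{X}\|\leq N^2\mu_A\langle\mathbf{c},\mathbf{1}\rangle\,\|\partial_x^{\beta_1}\mathbf{c}\|\,\|\mathbf{X}\|$. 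Inserting these estimates, bounding all multinomial and binomial coefficients by $3^{|\beta|}$ (using $\sum_{\beta_1+\beta_2+\beta_3=\beta}\tfrac{\beta!}{\beta_1!\beta_2!\beta_3!}=3^{|\beta|}$ and $\binom{\beta}{\beta_1}\leq 2^{|\beta|}\leq 3^{|\beta|}$), and passing from the scalar product against $\mathbf{X}$ to $\|\mathbf{X}\|$ by Cauchy--Schwarz, produces exactly the two sums in the statement.

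The argument is conceptually routine; the step that requires care is the second one above, namely deriving the two clean bilinear/linearized estimates—with constants no larger than the stated $N^2\mu_A$—out of the single quadratic bound of Proposition~\ref{prop A}, and bookkeeping the combinatorial factors so they close at $3^{|\beta|}$. Everything else is the standard multi-index manipulation already used for $A$ in the isothermal analysis of \cite{briant2022perturbativ}.
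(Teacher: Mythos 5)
The paper does not prove this proposition; it cites it directly from \cite{briant2022perturbativ} along with Propositions \ref{prop A} and \ref{prop A-1}. Your blind proof is correct on its own terms. The polarization identity $\mathcal{A}(\mathbf{a},\mathbf{b})$ is exactly the symmetric bilinear map with $\mathcal{A}(\mathbf{c},\mathbf{c})=A(\mathbf{c})$, and the key structural facts you use check out: $A(\mathbf{e}^{(k)})=0$ (both the off-diagonal entries $c_ic_j/\Delta_{ij}$ with $i\neq j$ and the diagonal entry $-\sum_{r\neq k}c_kc_r/\Delta_{kr}$ vanish on a coordinate vector), whence $\mathcal{A}(\mathbf{e}^{(k)},\mathbf{e}^{(l)})=\tfrac12 A(\mathbf{e}^{(k)}+\mathbf{e}^{(l)})$ and Proposition \ref{prop A} gives $\|\mathcal{A}(\mathbf{e}^{(k)},\mathbf{e}^{(l)})\mathbf{X}\|\le 2\mu_A\|\mathbf{X}\|$. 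Expanding in the canonical basis then gives the two operator bounds $\|\mathcal{A}(\mathbf{a},\mathbf{b})\mathbf{X}\|\le 2N\mu_A\|\mathbf{a}\|\|\mathbf{b}\|\|\mathbf{X}\|$ and, using $c_l\ge 0$ so $\sum_l c_l=\langle\mathbf{c},\mathbf{1}\rangle$, $\|\mathcal{A}(\mathbf{v},\mathbf{c})\mathbf{X}\|\le 2\sqrt{N}\mu_A\langle\mathbf{c},\mathbf{1}\rangle\|\mathbf{v}\|\|\mathbf{X}\|$; both constants are $\le N^2\mu_A$ for $N\ge 2$. The multi-index Leibniz expansion for the trilinear map $(\mathbf{c},\mathbf{c},\mathbf{U})\mapsto\mathcal{A}(\mathbf{c},\mathbf{c})\mathbf{U}$, the symmetry argument collecting the two one-derivative pieces with a factor $2$, the pointwise multinomial bound $\frac{\beta!}{\beta_1!\beta_2!\beta_3!}\le 3^{|\beta|}$, and Cauchy--Schwarz in $\mathbb{R}^N$ then yield the stated inequality termwise with the stated constants.

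Two small remarks. The parenthetical claim that Proposition \ref{prop A} ``also bounds $1/\min_{i\neq j}\Delta_{ij}$ by a multiple of $\mu_A$'' is never used and can be dropped. Also, your argument actually proves the sharper constants $2N\mu_A$ and $4\sqrt{N}\mu_A$ in place of $N^2\mu_A$ and $2N^2\mu_A$; stating the proposition with $N^2\mu_A$ is therefore safe but not tight, which is consistent with the cited source treating $N^2\mu_A$ as a convenient uniform bound.
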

  \begin{proposition}\label{prop A-1}
  For any $\mathbf{c} \in (\mathbb{R}_+^\ast)^N$ and any $\mathbf{U} \in \mathrm{Span} (\mathbf{1})^\perp$, the following estimates hold:
  \[  \|A(\mathbf{c})^{-1} \mathbf{U}\| \leq \frac{1}{ \lambda_A (\min_{1 \leq i \leq N} c_i)^2 } \|\mathbf{U}\|,
  \]
  \[\langle A (\mathbf{c})^{-1} \mathbf{U}, \mathbf{U} \rangle \leq -\frac{\lambda_A (\min_{1 \leq i \leq N} c_i )^2}
  {\mu_A^2 \langle \mathbf{c}, \mathbf{1}\rangle^4}  \|\mathbf{U}\|^2. 
  \]
  \end{proposition}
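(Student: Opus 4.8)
The plan is to exploit the fact, recalled just after \eqref{def of matrix A in introduction}, that $A(\mathbf{c})$ is symmetric with kernel exactly $\mathrm{Span}(\mathbf{1})$; consequently $A(\mathbf{c})$ restricts to an invertible endomorphism of the invariant subspace $\mathrm{Span}(\mathbf{1})^\perp$, and throughout the argument $A(\mathbf{c})^{-1}$ is to be read as the inverse of this restriction (equivalently, the Moore--Penrose pseudoinverse). The first thing I would record is the consequence of Proposition~\ref{prop A} for vectors in the orthogonal complement: since $\mathbf{X}\in\mathrm{Span}(\mathbf{1})^\perp$ means $\langle\mathbf{X},\mathbf{1}\rangle=0$, the coercivity estimate there collapses to
\[
\langle \mathbf{X}, A(\mathbf{c})\mathbf{X}\rangle \le -\lambda_A\big(\min_{1\le i\le N} c_i\big)^2 \|\mathbf{X}\|^2,\qquad \mathbf{X}\in\mathrm{Span}(\mathbf{1})^\perp,
\]
so that $-A(\mathbf{c})$ acting on $\mathrm{Span}(\mathbf{1})^\perp$ is symmetric positive definite with all eigenvalues at least $\lambda_A(\min_i c_i)^2$.

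For the first inequality I would use this eigenvalue bound directly: the largest eigenvalue of $\big(-A(\mathbf{c})\big)^{-1}$ on $\mathrm{Span}(\mathbf{1})^\perp$ equals the reciprocal of the smallest eigenvalue of $-A(\mathbf{c})$ there, hence is $\le 1/\big(\lambda_A(\min_i c_i)^2\big)$; applied to $\mathbf{U}\in\mathrm{Span}(\mathbf{1})^\perp$ this gives $\|A(\mathbf{c})^{-1}\mathbf{U}\|\le \tfrac{1}{\lambda_A(\min_i c_i)^2}\|\mathbf{U}\|$.

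For the second inequality I would set $\mathbf{Y}:=A(\mathbf{c})^{-1}\mathbf{U}\in\mathrm{Span}(\mathbf{1})^\perp$, so that $\mathbf{U}=A(\mathbf{c})\mathbf{Y}$, and compute
\[
\langle A(\mathbf{c})^{-1}\mathbf{U}, \mathbf{U}\rangle = \langle \mathbf{Y}, A(\mathbf{c})\mathbf{Y}\rangle \le -\lambda_A\big(\min_{1\le i\le N} c_i\big)^2 \|\mathbf{Y}\|^2,
\]
by the displayed coercivity. It then remains to bound $\|\mathbf{Y}\|$ from below in terms of $\|\mathbf{U}\|$, which is exactly what the operator-norm estimate $\|A(\mathbf{c})\mathbf{Y}\|\le \mu_A\langle\mathbf{c},\mathbf{1}\rangle^2\|\mathbf{Y}\|$ from Proposition~\ref{prop A} provides: $\|\mathbf{Y}\|\ge \|\mathbf{U}\|/\big(\mu_A\langle\mathbf{c},\mathbf{1}\rangle^2\big)$. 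Substituting yields $\langle A(\mathbf{c})^{-1}\mathbf{U},\mathbf{U}\rangle\le -\tfrac{\lambda_A(\min_i c_i)^2}{\mu_A^2\langle\mathbf{c},\mathbf{1}\rangle^4}\|\mathbf{U}\|^2$, as claimed.

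The argument is short and uses nothing beyond Proposition~\ref{prop A}; the only point demanding a little care — the "hard part," such as it is — is the bookkeeping of the ambient space. One must consistently work on $\mathrm{Span}(\mathbf{1})^\perp$, where $A(\mathbf{c})$ is genuinely invertible, and verify that the manipulations never leave this subspace, which holds because symmetry of $A(\mathbf{c})$ forces it to map $\mathrm{Span}(\mathbf{1})^\perp$ into itself. No compactness or additional qualitative input is needed.
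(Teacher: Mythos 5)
Your proof is correct. The paper does not actually prove Proposition~\ref{prop A-1}; it imports it verbatim from \cite{briant2022perturbativ} together with Propositions~\ref{prop A} and~\ref{prpo scalar estimate for A}. Your argument — restricting the symmetric matrix $A(\mathbf{c})$ to the invariant subspace $\mathrm{Span}(\mathbf{1})^\perp$, reading $A(\mathbf{c})^{-1}$ as the inverse of that restriction, and then combining the coercivity bound $\langle \mathbf{X}, A(\mathbf{c})\mathbf{X}\rangle \le -\lambda_A(\min_i c_i)^2\|\mathbf{X}\|^2$ with the operator-norm bound $\|A(\mathbf{c})\mathbf{X}\|\le \mu_A\langle\mathbf{c},\mathbf{1}\rangle^2\|\mathbf{X}\|$ from Proposition~\ref{prop A} — is exactly the standard derivation of these estimates and matches the one in the cited reference.
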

Furthermore, we present some inequalities frequently used in the following computations. Lemma 3.1 in \cite{jiang2021CMS} 
implies that if $f \in \mathbb{R}^N \rightarrow \mathbb{R}$ is a smooth function and 
$\mathbf{n} = (n_1, \dots, n_N) : \mathbb{T}^3 \rightarrow \mathbb{R}^N$ is a vector-valued function belonging to $H^{|\beta|}$ 
with $\beta \neq 0$, then for any $|\beta| \leq s$ $(s \geq 2)$ we have the following inequality 
\begin{equation*}
\|\partial_x^\beta f (\mathbf{n}) \|_{L_x^2} \lesssim \sum_{i=1}^N \sum_{j=1}^s \|\frac{\partial^j f}{\partial n_i^j} (\mathbf{n}) 
\|_{L_x^\infty} \|\nabla_x n_i\|_{H^{s-1}} (1 + \|n_i\|_{ H^{s-1} }^{s-1}).
\end{equation*}
Inspired from this inequality, we define $H_1(x):=(1+x)^{\gamma/2-1}$, which enables us to deduce that for any integer $s>3$,
\begin{equation} 
    \|(1 + \lambda \tilde{T})^{\gamma/2 - 1}\|_{ \dot{H}_x^s } \leq \sum_{ \substack{|\beta| \leq s \\ |\beta| \neq 0} }
    \|\partial^\beta H_1\|_{L_x^\infty} \|\nabla \tilde{T}\|_{H_x^{s-1}} (1 + \|\nabla \tilde{T}\|_{H_x^{s-1}}^{s-1} ), \quad 
    \forall \lambda \in (0,1]. \label{per T homo Hs esti}
\end{equation}
And its $H_x^s$ norm is bounded by
\[ \| (1 + \lambda \tilde{T})^{ \gamma/2 - 1}\|_{H_x^s}^2 \leq \| (1 + \lambda \tilde{T})^{\gamma/2-1} \|_{\dot{H}_x^s}^2 
+ \|H_1\|_{L_x^\infty}^2 |\mathbb{T}^3|^2.
\]
We introduce a functional $G_1 (I_1) = \sum_{ |\beta| \leq s } \|\partial^\beta H_1 \|_{L^\infty (I_1)}$. Since the composite function $H_1(\lambda \tilde{T})$ is considered here, we choose the internal to be 
$I_1 = \left[ - \|\tilde{T}\|_{L_x^\infty}, + \|\tilde{T}\|_{L_x^\infty} \right]$. Then we have 
\begin{equation}
    \|(1 + \lambda \tilde{T})^{\gamma/2 - 1}\|_{H_x^s} \leq C_{|\mathbb{T}^3|} G_1 (I_1) (1 + \|\tilde{T}\|_{H_x^s} + \|\tilde{T}\|_{H_x^s}^s),
    \quad \forall \lambda \in (0,1],  \label{per T Hs esti}
\end{equation}
where $C_{|\mathbb{T}^3|} > 0$ is a constant that depends on $s, N$ and $|\mathbb{T}^3|$. Similarly, we define $H_2(x) := \frac{1}{\bar{c}_{tot} + x}$, and the following inequality holds
\begin{equation}
\| \frac{1}{\bar{c}_{tot} + \lambda \tilde{c}_{tot}} \|_{H_x^s} \leq C_{|\mathbb{T}^3|} G_2 (I_2) (1 + \|\tilde{c}_{tot}\|_{H_x^s} 
+ \|\tilde{c}_{tot}\|_{H_x^s}^s), \quad \forall \lambda \in (0,1].  \label{estimate frac c-tot Hs}
\end{equation}
The functional is defined as $G_2 (I_2) = \sum_{ |\beta| \leq s}  \|\partial^\beta H_2\|_{L^\infty (I_2)}$, and 
the interval is chosen to be $I_2 = \left[ -\|\tilde{c}_{tot}\|_{L_x^\infty}, + \|\tilde{c}_{tot}\|_{L_x^\infty} \right]$. 
We will omit the intervals $I_1$ and $I_2$ in subsequent computations, discussing them only when necessary.

Here we recall functionals defined in \eqref{def of MS functionals in intro}, in which the energy functional is
\begin{equation}
    \mathscr{E}_s (t) := \|\tilde{\mathbf{c}}\|_{H_x^s (\bar{\mathbf{c}}^{-1}) }^2 (t) + \|\tilde{T}\|_{H_x^s}^2 (t) 
    +\chi \| \tilde{c}_{tot} \|_{H_x^s}^2 (t) ,
\end{equation}
where $\chi$ is a positive constant defined as
\begin{equation}
\chi = \frac{4}{3\bar{c}_{tot}^2} + \frac{3 (\frac{3}{2})^{1-\gamma/2} }{ \alpha \lambda_A (\min_{1 \leq i\leq N}\bar{c}_i)^2 }.
\label{defi of chi}
\end{equation}
The energy dissipation rate functional is
\begin{equation}
\mathscr{D}_s (t) := d_1 \|\tilde{\mathbf{U}}\|^2_{H_x^s(\omega)} (t) + d_2 \|\nabla_x \tilde{c}_{tot}\|_{H_x^s}^2 (t),
\end{equation}
with the notation 
\begin{equation}
\omega := (1 + \lambda \tilde{T})^{\gamma/2 - 1}. \label{def of omega}
\end{equation}
The constants $d_1$ and $d_2$ are defined as 
\begin{align}
d_1 := & \frac{\lambda_A (\min_{1\leq i\leq N}\bar{c}_i)^2 }{3},\\
d_2 := & \frac{\alpha}{\bar{c}_{tot}} + \frac{2 \alpha}{3\bar{c}_{tot}^2} + \frac{3 (\frac{3}{2})^{1 - \gamma/2} }{2 \lambda_A 
(\min_{1 \leq i \leq N} \bar{c}_i)^2 }.
\end{align}

Here we present the energy estimate for the system of equations that the perturbations $(\tilde{\mathbf{c}}, \tilde{\mathbf{U}}, \tilde{T})$ satisfied.
\begin{proposition}\label{proposition for energy esti for M-S}
    Let $s>3$ be an integer and $\bar{\mathbf{c}} >0$, assume $(\tilde{\mathbf{c}}, \tilde{\mathbf{U}}, \tilde{T})$ with 
    $ \tilde{\mathbf{U}} \in \mathrm{Span} (\mathbf{1})^\perp$  is a sufficiently smooth solution for the perturbed system \eqref{per mass-linear}-\eqref{equa for M-S rela}-\eqref{equa for per T} with the initial data $(\tilde{\mathbf{c}}^{in}, \tilde{\mathbf{U}}^{in}, \tilde{T}^{in})$ satisfying (A1)-(A2)-(A3$^\prime$). 
    There exists a positive constant $C_s$, such that 
    \begin{equation}
    \begin{aligned}
    \frac{1}{2} \frac{\d}{\d t} \mathscr{E}_s & (t) + d_1 \|\tilde{\mathbf{U}}\|_{H_x^s(\omega)}^2 (t) + \left ( 
      \frac{\alpha}{\bar{c}_{tot}} + \frac{2\alpha}{ 3 \bar{c}_{tot}^2 } + \frac{ 6 (\frac{3}{2})^{1 - \frac{\gamma}{2}} 
      - 3 (1 + C_{Sob} \mathscr{E}_s^{1/2} (t) )^{1 - \frac{\gamma}{2}} }{2 \lambda_A ( \min_{1 \leq i \leq N} \bar{c}_i )^2}  \right ) 
      \|\nabla_x \tilde{c}_{tot}\|_{H_x^s}^2 (t)\\
&\leq C_s (1 + G_1 + G_2 + G_1 G_2 + G_2^2) (1 + \mathscr{E}_s^{1/2} (t))^{1 - \gamma/2}(1 + \mathscr{E}_s^{1/2} (t) 
     + \mathscr{E}_s^{s + 2} (t)) \mathscr{E}_s^{1/2} (t) \mathscr{D}_s (t).  \label{original energy esti}
    \end{aligned}
\end{equation}
The constant $C_s$ depends on $s, N ,\mu_A ,\lambda_A, \alpha, \bar{\mathbf{c}}, C_{Sob}, |\mathbb{T}^3|$, and 
is independent of the parameter $\lambda$. The functionals 
$G_1,G_2$ are defined as above, which depend on the ranges of 
$\tilde{T}$ and $\tilde{c}_{tot}$. In particular, if the functional 
    $\mathscr{E}_s (0) := \| \tilde{\mathbf{c}}^{in} \|_{H^s (\bar{\mathbf{c}}^{-1}) }^2 + \| \tilde{T}^{in} \|_{H^s}^2 
    + \chi \| \sum_{i=1}^N \tilde{c}_i^{in}\|_{H^s}^2 $ is bounded by $\delta_{MS}^2$, and the bound $\delta_{MS}$ satisfies
 \begin{equation}
 \begin{aligned}
   \max\{ 1, \frac{1}{\sqrt{\chi}}\}  C_{sob} \delta_{MS} &\leq \frac{ \min_{1 \leq i \leq N} \bar{c}_i }{4 \max_{1 \leq i \leq N} \bar{c}_i} 
   \leq \frac{1}{4}, \\
    \text{ and} \quad C_s (1 + \gamma_1) (1 + \delta_{MS})^{1 - \gamma/2} &(1 + \delta_{MS} + \delta_{MS}^{2s + 4}) 
    \delta_{MS} \leq 1/4, \label{assum for delta-MS}
    \end{aligned}
 \end{equation}
 where $\gamma_1$ is a positive constant defined in the following context, then the following inequality 
 holds for any $t \in \mathbb{R}_+$,
    \begin{equation}
        \frac{\d}{\d t} \mathscr{E}_s (t) + \mathscr{D}_s (t) \leq 0.  \label{energy estimate under assu}
        \end{equation}
        Furthermore, the positivities hold for almost any $(t,x)\in \mathbb{R}_+ \times \mathbb{T}^3$, and for any $\lambda \in(0,1]$
         \[  \bar{c}_i + \lambda \tilde{c}_i (t,x) > 0, \quad \forall 1 \leq i \leq N, \quad  
         1 + \lambda \tilde{T} (t,x) > 0.
        \]
  \end{proposition}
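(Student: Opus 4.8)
The plan is a weighted $H^s_x$ energy estimate carried out simultaneously on the three evolution equations in play: the mass balance \eqref{per mass-linear}, the temperature equation \eqref{equa for per T}, and the \emph{decoupled} heat equation \eqref{equation for per total concentration} satisfied by $\tilde c_{tot}$. For each multi-index $\beta$ with $|\beta|\le s$ I would apply $\partial_x^\beta$ and test \eqref{per mass-linear} against $\partial_x^\beta\tilde{\mathbf{c}}$ in $\langle\cdot,\cdot\rangle_{L^2_x(\bar{\mathbf{c}}^{-1})}$, \eqref{equa for per T} against $\partial_x^\beta\tilde T$ in $L^2_x$, and $\chi$ times \eqref{equation for per total concentration} against $\partial_x^\beta\tilde c_{tot}$ in $L^2_x$. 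Summing over $|\beta|\le s$ assembles $\tfrac12\tfrac{\d}{\d t}\mathscr{E}_s(t)$ on the left. The contribution of the heat equation is exactly $-\alpha\chi\|\nabla_x\tilde c_{tot}\|^2_{H^s_x}$ with no remainder whatsoever; this term is the single dissipation ``reservoir'' to be redistributed to the other two equations, neither of which has intrinsic parabolic dissipation.

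In the mass-balance piece, after integrating by parts the divergence terms one is left with factors $\partial_x^\beta\nabla_x\tilde{\mathbf{c}}$ paired against $\partial_x^\beta\tilde{\mathbf{U}}^\ast$ (from $\bar{\mathbf{c}}\nabla_x\!\cdot\!\tilde{\mathbf{U}}^\ast$) and a separate term coming from the mean velocity $\bar{\mathbf{c}}\nabla_x\!\cdot\!\tilde{\bar u}$. I would eliminate $\nabla_x\tilde{\mathbf{c}}$ via the flux--force relation \eqref{per M-S rela}, applied after $\partial_x^\beta$, peeling off the commutator $[\partial_x^\beta,\omega A(\mathbf{c})]\tilde{\mathbf{U}}$, which Proposition \ref{prpo scalar estimate for A} (together with the composition bound \eqref{per T Hs esti} for $\omega$) controls by a finite sum of products of lower-order Sobolev norms. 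The leading $A$-term is $-\langle\partial_x^\beta\tilde{\mathbf{U}},\,\omega\,A(\mathbf{c})\,\partial_x^\beta\tilde{\mathbf{U}}\rangle$ (one may replace $\tilde{\mathbf{U}}^\ast$ by $\tilde{\mathbf{U}}$ since the range of $A$ lies in $\mathrm{Span}(\mathbf{1})^\perp$), and Proposition \ref{prop A} with $\langle\tilde{\mathbf{U}},\mathbf{1}\rangle=0$ turns it into a genuine dissipation no smaller than $d_1\|\tilde{\mathbf{U}}\|^2_{H^s_x(\omega)}$ once $\min_i(\bar c_i+\lambda\tilde c_i)$ is bounded below by the smallness hypothesis. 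The leading $\nabla_x\tilde c_{tot}$-term coming out of \eqref{per M-S rela} then drops out because $\langle\tilde{\mathbf{U}}^\ast,\bar{\mathbf{c}}\rangle\equiv0$, and the mean-velocity term, after one further integration by parts, cancellation of the weight $\bar c_i$ against $\bar c_i^{-1}$, and the identity $\sum_i\partial_x^\beta\tilde c_i=\partial_x^\beta\tilde c_{tot}$, yields the dissipation $\tfrac{\alpha}{\bar c_{tot}}\|\nabla_x\tilde c_{tot}\|^2_{H^s_x}$.

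The temperature equation is backward-parabolic (cf.\ \eqref{another eq for T}), so here I would systematically invoke the solvability identity \eqref{implicit rela}, $\tfrac{\nabla_x\tilde c_{tot}}{\bar c_{tot}+\lambda\tilde c_{tot}}=-\tfrac{\nabla_x\tilde T}{1+\lambda\tilde T}$, to trade gradients of $\tilde T$ for gradients of $\tilde c_{tot}$. Integrating by parts the divergence term of \eqref{equa for per T}, the leading piece $\tfrac{2\alpha}{3\bar c_{tot}}\int\nabla_x\partial_x^\beta\tilde c_{tot}\cdot\nabla_x\partial_x^\beta\tilde T$ becomes, via \eqref{implicit rela}, the dissipation $\tfrac{2\alpha}{3\bar c_{tot}^2}\|\nabla_x\tilde c_{tot}\|^2_{H^s_x}$, while the two genuinely quadratic terms in \eqref{equa for per T}, all product-rule remainders, and the subleading parts of the flux--force substitution (all carrying an extra power of $\lambda\tilde{\mathbf{c}}$, $\lambda\tilde T$ or $\nabla_x\tilde c_{tot}$) are each of the form $\mathscr{E}_s^{1/2}\mathscr{D}_s$ up to constants and are estimated, using $H^s_x\hookrightarrow L^\infty_x$ ($s>3$) together with \eqref{per T Hs esti} and \eqref{estimate frac c-tot Hs}, by $C_s(1+G_1+G_2+G_1G_2+G_2^2)(1+\mathscr{E}_s^{1/2})^{1-\gamma/2}(1+\mathscr{E}_s^{1/2}+\mathscr{E}_s^{s+2})\mathscr{E}_s^{1/2}\mathscr{D}_s$, the factor $(1+\mathscr{E}_s^{1/2})^{1-\gamma/2}$ being the cost of the weights $(1+\lambda\tilde T)^{\gamma/2}$ and $\omega^{-1}$. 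Collecting the exact terms and inserting the definitions \eqref{defi of chi} of $\chi$ (and the stated $d_1,d_2$) — where the second summand of $\chi$ is tuned precisely so that $\alpha\chi\|\nabla_x\tilde c_{tot}\|^2_{H^s_x}$ absorbs the bad $\|\nabla_x\tilde c_{tot}\|^2_{H^s_x}$ produced when $|\nabla_x\tilde T|\le\tfrac{1+\lambda\tilde T}{\bar c_{tot}+\lambda\tilde c_{tot}}|\nabla_x\tilde c_{tot}|$ is used against the velocity dissipation, the factor $(3/2)^{1-\gamma/2}$ reflecting $1+\lambda\tilde T\le3/2$ — gives exactly \eqref{original energy esti}, the coefficient $6(3/2)^{1-\gamma/2}-3(1+C_{Sob}\mathscr{E}_s^{1/2})^{1-\gamma/2}$ recording that the available margin shrinks as $\mathscr{E}_s$ grows.

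Finally, if $\delta_{MS}$ obeys \eqref{assum for delta-MS}, then $C_{Sob}\mathscr{E}_s^{1/2}\le\tfrac12$ makes the coefficient of $\|\nabla_x\tilde c_{tot}\|^2_{H^s_x}$ in \eqref{original energy esti} at least $d_2$, while the right-hand side of \eqref{original energy esti} is $\le\tfrac14\mathscr{D}_s(t)$; hence $\tfrac{\d}{\d t}\mathscr{E}_s+2\mathscr{D}_s\le\tfrac12\mathscr{D}_s$, which is \eqref{energy estimate under assu} and in particular yields $\mathscr{E}_s(t)\le\mathscr{E}_s(0)\le\delta_{MS}^2$ for all $t\ge0$. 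The positivities then follow from Sobolev embedding: $\|\tilde{\mathbf{c}}(t)\|_{L^\infty_x}\lesssim\sqrt{\max_i\bar c_i}\,\mathscr{E}_s(t)^{1/2}$ and $\|\tilde T(t)\|_{L^\infty_x}\lesssim\mathscr{E}_s(t)^{1/2}$, so the first line of \eqref{assum for delta-MS} forces $\bar c_i+\lambda\tilde c_i(t,x)>0$ and $1+\lambda\tilde T(t,x)>0$ for a.e.\ $(t,x)$ and every $\lambda\in(0,1]$. The main obstacle is precisely the constant bookkeeping of the third step: since the temperature equation carries no dissipation of its own, the whole scheme closes only if $\chi,d_1,d_2$ are chosen so that the single reservoir $\alpha\chi\|\nabla_x\tilde c_{tot}\|^2_{H^s_x}$ simultaneously (i) absorbs every $\nabla_x\tilde T$ that \eqref{implicit rela} converts into $\nabla_x\tilde c_{tot}$, (ii) leaves a strictly positive net coefficient after the $(3/2)^{1-\gamma/2}$-type losses, and (iii) retains enough margin to swallow the cubic-and-higher nonlinear remainders once $\delta_{MS}$ is small — which is exactly what the formulas for $\chi,d_1,d_2$ and the conditions \eqref{assum for delta-MS} encode.
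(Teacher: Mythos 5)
Your proposal reproduces the paper's architecture almost exactly: $H^s_x(\bar{\mathbf{c}}^{-1})$-, $H^s_x$-, and $\chi\,H^s_x$-tests on \eqref{per mass-linear}, \eqref{equa for per T}, \eqref{equation for per total concentration} respectively; elimination of $\nabla_x\tilde{\mathbf{c}}$ via \eqref{per M-S rela}; Proposition~\ref{prop A} for the velocity dissipation; the solvability identity \eqref{implicit rela} to convert $\nabla_x\tilde T$ into $\nabla_x\tilde c_{tot}$; the heat equation as the sole reservoir; composition bounds \eqref{per T Hs esti}, \eqref{estimate frac c-tot Hs}; and a bootstrap via the smallness assumption \eqref{assum for delta-MS}.

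There is one genuine structural deviation worth naming. You propose keeping $\tilde{\mathbf{U}}^\ast$ intact inside the substituted flux–force relation and invoking the pointwise identity $\langle\tilde{\mathbf{U}}^\ast,\bar{\mathbf{c}}\rangle\equiv 0$ (and, for the $A$-term, $\mathbf{1}^{\mathsf T}A=0$) to kill the leading $\partial_x^\beta\tilde{\mathbf{U}}\times\partial_x^\beta\nabla_x\tilde c_{tot}$ cross terms identically. The paper does \emph{not} do this: it first decomposes $\tilde{\mathbf{U}}^\ast=\tilde{\mathbf{U}}-\tfrac{\langle\bar{\mathbf{c}},\tilde{\mathbf{U}}\rangle}{\bar c_{tot}}\mathbf{1}$, producing its integrals $I_2$ and $I_{1,2}^1$, and then bounds each of those separately by Young's inequality with parameters $\eta_1,\eta_2$, even though in fact $I_2+I_{1,2}^1=0$. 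That is exactly where the paper's loss $d_1=\lambda_A(\min_i\bar c_i)^2/3$ and the $\mathscr{E}_s$-dependent penalty $-3(1+C_{Sob}\mathscr{E}_s^{1/2})^{1-\gamma/2}/(2\lambda_A(\min_i\bar c_i)^2)$ in front of $\|\nabla_x\tilde c_{tot}\|^2_{H^s_x}$ come from, and your observation would simply delete them — buying a sharper $d_1$ close to $\lambda_A(\min_i\bar c_i)^2$ and a cleaner $\nabla_x\tilde c_{tot}$ coefficient. Consequently your route would \emph{not} reproduce the literal displayed inequality \eqref{original energy esti}: to obtain that exact formula you would have to give up the cancellation and Young each cross term as the paper does. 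Your description of $\chi$ is also slightly misattributed — in the paper the second summand $3(3/2)^{1-\gamma/2}/(\alpha\lambda_A(\min_i\bar c_i)^2)$ of \eqref{defi of chi} compensates precisely these Young-losses in the $\tilde{\mathbf{c}}$ estimate, not the temperature substitution; under your cancellation it would be superfluous and $\chi$ could be taken smaller. Finally, a sign slip: the leading contribution $+\tfrac{2\alpha}{3\bar c_{tot}^2}\|\nabla_x\tilde c_{tot}\|^2_{H^s_x}$ from the $T$-equation (the paper's $VII_{1,1}^1$) is a \emph{growth} term, not a dissipation as you call it — this is what makes the $T$-equation backward-parabolic and forces the $\chi$-weighted heat dissipation to overcompensate. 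Your later bookkeeping ("$\alpha\chi\|\nabla\tilde c_{tot}\|^2$ absorbs the bad term") is consistent with the correct sign, so the slip is cosmetic, but it should be fixed. Both routes close; yours gives tighter constants but a slightly different final inequality.
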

  \begin{proof}\textit{Proof of the Proposition \ref{proposition for energy esti for M-S} }\\
\textbf{The $H^s$ estimate for $\tilde{\mathbf{c}}$}

Now we apply the derivative $\partial_x^\beta$ to the equation \eqref{per mass-linear} with $|\beta| \leq s$, where $s > 3$ is an integer and the multi-index 
$\beta \in \mathbb{N}^3$ has length $|\beta| = \sum_{k=1}^3 \beta_k$. We take the scalar product with the vector 
$( \frac{1}{\bar{c}_i} \partial_x^\beta \tilde{c}_i )_{1 \leq i \leq N}$ and integrate it over $\mathbb{T}^3$, that is
\begin{equation}
    \begin{aligned}
    \frac{1}{2} \frac{\d}{\d t} \|\partial_x^\beta \tilde{\mathbf{c}}\|^2_{L_x^2 (\bar{\mathbf{c}}^{-1}) }  & =  
    \underbrace{ \int_{\mathbb{T}^3} \langle \partial_x^\beta \tilde{\mathbf{U}}^\ast, \nabla_x \partial_x^\beta \tilde{\mathbf{c}} \rangle 
    \,\d x }_{I} 
    \underbrace{ - \int_{\mathbb{T}^3} \langle \partial_x^\beta \tilde{\bar{u}} , \nabla_x \partial_x^\beta \tilde{c}_{tot} \rangle 
    \,\d x }_{II} \\
    &+\underbrace{ \lambda \int_{\mathbb{T}^3} \langle \partial_x^\beta ( \tilde{\mathbf{c}} \tilde{\mathbf{U}}^\ast ), \nabla_x 
    \partial_x^\beta \tilde{\mathbf{c}} \rangle_{ \bar{\mathbf{c}}^{-1} } \,\d x }_{III}
         \underbrace{ - \lambda \int_{\mathbb{T}^3} \langle \partial_x^\beta [ ( \bar{\mathbf{c}} + \lambda \tilde{\mathbf{c}} ) 
        N_2 (\tilde{\mathbf{U}}) ], \nabla_x \partial_x^\beta \tilde{\mathbf{c}} \rangle_{ \bar{\mathbf{c}}^{-1} } \,\d x}_{IV}\\
        & \underbrace{ - \lambda \int_{\mathbb{T}^3} \langle \partial_x^\beta (\tilde{\mathbf{c}} \tilde{\bar{u}}), \nabla_x \partial_x^\beta 
        \tilde{\mathbf{c}} \rangle_{\bar{\mathbf{c}}^{-1}} \,\d x }_{V}
         + \underbrace{ \lambda \int_{\mathbb{T}^3} \langle \partial_x^\beta [ ( \bar{\mathbf{c}} + \lambda \tilde{\mathbf{c}} )
        N_1 (\bar{u}) ], \nabla_x \partial_x^\beta \tilde{\mathbf{c}} \rangle_{ \bar{\mathbf{c}}^{-1} } \,\d x }_{VI}.
        \label{equation of beta-x deri for pert c}
\end{aligned}
\end{equation}
\textit{Control of integral I}

The following inequality in \cite{majda2002vorticity} will be frequently used  
\begin{equation}
    \| f g \|_{H_x^s} \lesssim \|f\|_{H_x^s} \|g\|_{H_x^s}, \quad \text{for } s > \frac{3}{2}.  \label{G-N Hs}
    \end{equation}
The integral $I$ can be written in the following form due to the expression 
$\tilde{\mathbf{U}}^\ast = \tilde{\mathbf{U}} - \frac{\langle \bar{\mathbf{c}}, \tilde{\mathbf{U}} \rangle}{\bar{c}_{tot}} \mathbf{1}$, 
\[  I = \underbrace{ \int_{\mathbb{T}^3} \langle \partial_x^\beta \tilde{\mathbf{U}}, \nabla_x \partial_x^\beta \tilde{\mathbf{c}} \rangle 
\,\d x }_{I_1} \underbrace{ - \frac{1}{\bar{c}_{tot}} \int_{\mathbb{T}^3} \partial_x^\beta 
(\langle \bar{\mathbf{c}}, \tilde{\mathbf{U}} \rangle) \nabla_x \partial_x^\beta \tilde{c}_{tot} \,\d x }_{I_2}.
\]
Using Hölder's inequality, the integral $I_2$ can be bounded as
\begin{equation}
    \begin{aligned}
    |I_2| &\leq \frac{1}{\bar{c}_{tot}} (\int_{\mathbb{T}^3} |\partial_x^\beta(\langle \bar{\mathbf{c}},\tilde{\mathbf{U}}\rangle)|^2 
    \,\d x)^{1/2}  \|\nabla_x \partial_x^\beta \tilde{c}_{tot}\|_{L_x^2}\\
    &\leq \frac{\max_{1\leq i\leq N}\bar{c}_i}{\bar{c}_{tot}}
  \|\omega^{-1}\|^{1/2}_{L_x^\infty} \|\partial_x^\beta\tilde{\mathbf{U}}\|_{L_x^2(\omega)} \|\partial_x^\beta\nabla_x\tilde{c}_{tot}\|_{L_x^2}\\
  & \leq \eta_1 \|\partial_x^\beta \tilde{\mathbf{U}}\|_{L_x^2(\omega)}^2 +\frac{  \|\omega^{-1}\|_{L_x^\infty}}{4\eta_1}
  \|\partial_x^\beta\nabla_x\tilde{c}_{tot}\|_{L_x^2}^2,
\end{aligned}
  \end{equation}
  for any constant $\eta_1 > 0$. Young's inequality is applied in the third step. Taking the expression of $\nabla_x \tilde{\mathbf{c}}$ \eqref{per M-S rela} 
  into $I_1$, then we obtain
  \[  I_1 = \underbrace{ \int_{\mathbb{T}^3} \langle \partial_x^\beta \tilde{\mathbf{U}}, \partial_x^\beta \left[ \omega A(\mathbf{c}) 
  \tilde{\mathbf{U}}  \right] \rangle \,\d x }_{I_{1,1}} + 
  \underbrace{ \int_{\mathbb{T}^3} \langle \partial_x^\beta \tilde{\mathbf{U}}, \partial_x^\beta \left[ 
    \frac{ ( \bar{\mathbf{c}} + \lambda \tilde{\mathbf{c}} ) \nabla_x \tilde{c}_{tot} }{\bar{c}_{tot} + \lambda \tilde{c}_{tot}} 
    \right] \rangle \,\d x}_{I_{1,2}},
\]
where $\omega$ is defined in \eqref{def of omega}. 
First, the integral $I_{1,2}$ can be split into linear and nonlinear parts
\begin{align*}
I_{1,2} = & \underbrace{ \int_{\mathbb{T}^3} \langle \partial_x^\beta \tilde{\mathbf{U}}, \partial_x^\beta 
\left ( \frac{ \nabla_x \tilde{c}_{tot} }{\bar{c}_{tot}} \right ) \bar{\mathbf{c}} \rangle \,\d x }_{I_{1,2}^1}  
\underbrace{-\lambda \int_{\mathbb{T}^3} \langle \partial_x^\beta \tilde{\mathbf{U}}, \partial_x^\beta 
\left ( \frac{ \tilde{c}_{tot} \nabla_x \tilde{c}_{tot} }{ \bar{c}_{tot} (\bar{c}_{tot} + \lambda \tilde{c}_{tot}) } \right ) 
\bar{\mathbf{c}} \rangle \,\d x }_{I_{1,2}^2}\\
    & + \underbrace{ \lambda \int_{\mathbb{T}^3} \langle \partial_x^\beta \tilde{\mathbf{U}}, \partial_x^\beta \left[ 
      \frac{\nabla_x \tilde{c}_{tot}}{ \bar{c}_{tot} + \lambda \tilde{c}_{tot} } \tilde{\mathbf{c}} \right] \rangle \,\d x }_{I_{1,2}^3}.
\end{align*}
The bound for $I_{1,2}^1$ uses Young's inequality, deducing that for any $\eta_2 > 0$
\begin{equation}
    \begin{aligned}
I_{1,2}^1 & \leq \frac{ \max_{1\leq i\leq N} \bar{c}_i }{\bar{c}_{tot}} \|\omega^{-1}\|_{L_x^\infty}^{1/2} 
\|\partial_x^\beta \tilde{\mathbf{U}}\|_{L_x^2(\omega)} \|\partial_x^\beta \nabla_x \tilde{c}_{tot}\|_{L_x^2} \\
&\leq \eta_2 \|\partial_x^\beta \tilde{\mathbf{U}}\|_{L_x^2(\omega)}^2 
+ \frac{ \|\omega^{-1}\|_{L_x^\infty} }{4 \eta_2}  \| \partial_x^\beta \nabla_x \tilde{c}_{tot}\|_{L_x^2}^2.
    \end{aligned}
\end{equation}
Since $\bar{\mathbf{c}}$ is a constant vector, we can easily verify that the norms $\|\cdot\|_{H_x^s}$ and 
$\|\cdot \|_{H_x^s(\bar{\mathbf{c}}^{-1})}$ are equivalent
\begin{equation}
\|\tilde{\mathbf{c}}\|_{H_x^s} \leq \max_{1 \leq i \leq N} \bar{c}_i \|\tilde{\mathbf{c}}\|_{H_x^s (\bar{\mathbf{c}}^{-1}) }
\leq \frac{ \max_{1 \leq i \leq N} }{ \min_{1 \leq i \leq N} } \|\tilde{\mathbf{c}}\|_{H_x^s}.
\end{equation}
Moreover, the upper bound for $\omega^{-1}$ is derived by using the Sobolev embedding $H^2 \hookrightarrow L^\infty$, leading to 
\begin{equation}
\|\omega^{-1}\|_{L_x^\infty} = \|(1 + \lambda \tilde{T})^{1 - \gamma/2}\|_{L_x^\infty} \lesssim 
(1 + \|\tilde{T}\|_{H_x^s})^{1 - \gamma/2} \lesssim \left (1 + \mathscr{E}_s^{1/2} (t)  \right )^{1-\gamma/2},  \label{L infty w-1}
\end{equation}
for any $\lambda \in (0,1]$. Combining these results inequality \eqref{estimate frac c-tot Hs}, the remaining parts of $I_{1,2}$ are bounded by
\begin{equation}
    \begin{aligned}
    I_{1,2}^2 & \lesssim \frac{ \lambda \max_{1 \leq i \leq N} \bar{c}_i}{\bar{c}_{tot}} \|\omega^{-1}\|_{L_x^\infty}^{1/2}
    \|\tilde{c}_{tot}\|_{H_x^s} \|\frac{1}{\bar{c}_{tot} + \lambda \tilde{c}_{tot} }\|_{H_x^s} \|\nabla_x \tilde{c}_{tot}\|_{H_x^s} 
    \|\tilde{\mathbf{U}}\|_{H_x^s (\omega) }\\
& \lesssim G_2 (1 + \mathscr{E}_s^{1/2} (t))^{1/2 - \gamma/4} (1 + \mathscr{E}_s^{1/2} (t) + \mathscr{E}_s^{s/2} (t)) 
\mathscr{E}_s^{1/2} (t) \mathscr{D}_s(t),
    \end{aligned}
\end{equation}
and 
\begin{equation}
    \begin{aligned}
   I_{1,2}^3 & \lesssim \|\omega^{-1}\|_{ L_x^\infty }^{1/2} \|\tilde{\mathbf{c}}\|_{H^s}
   \| \frac{1}{\bar{c}_{tot} + \lambda \tilde{c}_{tot} }\|_{H_x^s} \|\nabla_x \tilde{c}_{tot}\|_{H_x^s} 
   \|\tilde{\mathbf{U}}\|_{H^s (\omega) } \\
 & \lesssim G_2 (1 + \mathscr{E}_s^{1/2} (t))^{1/2 - \gamma/4}(1 + \mathscr{E}_s^{1/2} (t) + \mathscr{E}_s^{s/2} (t)) 
 \mathscr{E}_s^{1/2} (t) \mathscr{D}_s (t) .
    \end{aligned}
\end{equation}
Next, we note that there is a natural expansion for $I_{1,1}$
\[I_{1,1} = \int_{\mathbb{T}^3} \langle \partial_x^\beta \tilde{\mathbf{U}}, \partial_x^\beta \left[ \omega A(\mathbf{c}) \tilde{\mathbf{U}}
  \right] \rangle \,\d x = \int_{\mathbb{T}^3} \sum_{\tilde{\beta} \leq \beta} \langle \partial_x^\beta \tilde{\mathbf{U}} 
  \partial_x^{\beta - \tilde{\beta}} \omega, \partial_x^{\tilde{\beta}} [ A (\mathbf{c}) \tilde{\mathbf{U}} ] \rangle\,\d x.
  \]
Applying Proposition \ref{prpo scalar estimate for A} with $\mathbf{X} = \partial_x^\beta \tilde{\mathbf{U}} 
  \partial_x^{\beta - \tilde{\beta}}w$, it can be bounded by
  \begin{align*}
  I_{1,1} \leq & \underbrace{ \int_{\mathbb{T}^3} \sum_{ \tilde{\beta} \leq \beta } \langle A (\mathbf{c}) \partial_x^{\tilde{\beta}}
  \tilde{\mathbf{U}}, \partial_x^\beta \tilde{\mathbf{U}} \partial_x^{\beta - \tilde{\beta}} \omega \rangle \,\d x }_{I_{1,1}^1}\\
  & + \underbrace{ 2 \lambda N^2 \mu_A \int_{\mathbb{T}^3} \sum_{\tilde{\beta} \leq \beta} 3^{|\tilde{\beta}|} 
  (\bar{c}_{tot}  + \lambda \tilde{c}_{tot}) |\partial_x^{\beta - \tilde{\beta}} \omega| \|\partial_x^\beta \tilde{\mathbf{U}}\|
  \sum{ \substack{\beta_1 + \beta_3 = \tilde{\beta} \\ |\beta_1| \geq 1} } \|\partial_x^{\beta_1} \tilde{\mathbf{c}}\| 
  \|\partial_x^{\beta_3} \tilde{\mathbf{U}} \| \,\d x  }_{I_{1,1}^2}\\
  & + \underbrace{ \lambda^2 N^2 \mu_A \int_{\mathbb{T}^3} \sum_{\tilde{\beta} \leq \beta} 3^{|\tilde{\beta}|}
  |\partial_x^{\beta - \tilde{\beta}} \omega| \|\partial_x^\beta \tilde{\mathbf{U}}\| \sum_{ \substack{\beta_1 + \beta_2 + \beta_3 =
  \tilde{\beta} \\ |\beta_1|, |\beta_2| \geq 1}} \|\partial_x^{\beta_1} \tilde{\mathbf{c}}\|  \|\partial_x^{\beta_2} \tilde{\mathbf{c}}\| 
  \|\partial_x^{\beta_3} \tilde{\mathbf{U}}\| \,\d x }_{I_{1,1}^3}.
  \end{align*}
For the case $\tilde{\beta} = \beta$ in $I_{1,1}^1$, Proposition \ref{prop A} infers that 
\begin{equation}
    \begin{aligned}
\int_{\mathbb{T}^3} \omega \langle A (\mathbf{c}) \partial_x^{\beta} \tilde{\mathbf{U}}, \partial_x^\beta \tilde{\mathbf{U}} \rangle \,\d x
& \leq - \lambda_A \int_{\mathbb{T}^3} \left ( \min_{1 \leq i \leq N} \bar{c}_i + \lambda \tilde{c}_i \right )^2
\omega \|\partial_x^\beta \tilde{\mathbf{U}}\|^2 \,\d x\\
& \leq - \lambda_A \left ( \min_{1 \leq i \leq N} \bar{c}_i - \|\tilde{\mathbf{c}}\|_{L_x^\infty} \right )^2
\|\partial_x^\beta \tilde{\mathbf{U}}\|_{L_x^2 (\omega)}^2\\
& \leq - \lambda_A (\min_{1 \leq i \leq N} \bar{c}_i)^2\|\partial_x^\beta \tilde{\mathbf{U}}\|_{L_x^2(\omega)}^2
 + 2 \lambda_A ( \min_{1 \leq i \leq N} \bar{c}_i ) \|\tilde{\mathbf{c}}\|_{L_x^\infty} \|\partial_x^\beta \tilde{\mathbf{U}}\|_{L_x^2 (\omega)}^2.
    \end{aligned}
\end{equation}
The second part of the right hand side can be bounded as follows, by using the Sobolev embedding $ H_x^2 \hookrightarrow L_x^\infty$ 
and the assumption that $s>3$,
\[2 \lambda_A ( \min_{1 \leq i \leq N} \bar{c}_i) \|\tilde{\mathbf{c}}\|_{L_x^\infty} \|\partial_x^\beta \tilde{\mathbf{U}}\|_{L_x^2(\omega)}^2
\lesssim \|\tilde{\mathbf{c}}\|_{H_x^s} \|\tilde{\mathbf{U}}\|_{H_x^s (\omega)}^2 \lesssim \mathscr{E}_s^{1/2} (t) \mathscr{D}_s (t).
\]
Again, using Proposition \ref{prop A}, we have
\[\langle A(\mathbf{c}) \partial_x^{\tilde{\beta}} \tilde{\mathbf{U}}, \partial_x^\beta \tilde{\mathbf{U}}\rangle \leq
\|A (\mathbf{c}) \partial_x^{\tilde{\beta}} \tilde{\mathbf{U}}\|  \|\partial_x^\beta \tilde{\mathbf{U}}\| \leq \mu_A 
(\bar{c}_{tot} + \lambda \tilde{c}_{tot})^2 \|\partial_x^{\tilde{\beta}} \tilde{\mathbf{U}}\| \|\partial_x^\beta \tilde{\mathbf{U}}\|,
\]
which leads to the remaining parts of $I_{1,1}^1$ being bounded by
    \begin{align*}
   I_{1,1}^1(\tilde{\beta} \neq \beta) & \leq \mu_A \int_{\mathbb{T}^3} \sum_{ 
    \substack{ \tilde{\beta} \leq \beta\\ \tilde{\beta} \neq \beta} } (\bar{c}_{tot} + \lambda \tilde{c}_{tot})^2 
    \partial_x^{\beta-\tilde{\beta}} \omega \|\partial_x^{\tilde{\beta}} \tilde{\mathbf{U}}\| \|\partial_x^\beta \tilde{\mathbf{U}}\| \,\d x \\
& \leq \mu_A (\bar{c}_{tot} + \lambda \|\tilde{c}_{tot}\|_{L_x^\infty})^2 \|\partial_x^\beta \tilde{\mathbf{U}}\|_{L_x^2}
\left ( \int_{\mathbb{T}^3} \sum_{\substack{\tilde{\beta}\leq \beta\\ \tilde{\beta}\neq \beta} }
(\partial_x^{\beta - \tilde{\beta}} \omega )^2 \|\partial_x^{\tilde{\beta}} \tilde{\mathbf{U}}\|^2 \,\d x  \right )^{1/2}.
    \end{align*}
The calculus inequality in \cite[Lemma 3.4]{majda2002vorticity} implies
\begin{align*}
\left ( \int_{\mathbb{T}^3} \sum_{\substack{\tilde{\beta}\leq \beta\\ \tilde{\beta}\neq \beta} }
(\partial_x^{\beta-\tilde{\beta}} \omega )^2\|\partial_x^{\tilde{\beta}} \tilde{\mathbf{U}}\|^2 \,\d x \right )^{1/2}
&\lesssim \|\partial_x \omega \|_{L_x^\infty} \|\tilde{\mathbf{U}}\|_{H_x^{s-1}} +\|\tilde{\mathbf{U}}\|_{L_x^\infty}
\|\partial_x \omega \|_{H_x^{s-1}} \\
& \lesssim \|\omega\|_{\dot{H}_x^s}\|\tilde{\mathbf{U}}\|_{H_x^s} \lesssim G_1(\|\tilde{T}\|_{H_x^s}+\|\tilde{T}\|_{H_x^s}^s)
\|\tilde{\mathbf{U}}\|_{H_x^s}.
\end{align*}
The second step applies the Sobolev embedding $H^2 \hookrightarrow L^\infty$, assumption $s>3$ and the estimate for $\|\omega\|_{\dot{H}_x^s}$ in \eqref{per T homo Hs esti}. Therefore, the following inequality holds 
\begin{equation}
    \begin{aligned}
     I_{1,1}^1 (\tilde{\beta} \neq \beta) & \lesssim ( \bar{c}_{tot} + \lambda \|\tilde{c}_{tot}\|_{L_x^\infty})^2
    \|\omega^{-1}\|_{L_x^\infty} \|\omega\|_{\dot{H}_x^s} \|\tilde{\mathbf{U}}\|_{H_x^s (\omega) }^2\\
    & \lesssim G_1 (1 + \mathscr{E}_s^{1/2} (t))^{1 - \gamma/2}(1 + \mathscr{E}_s (t))(1 + \mathscr{E}_s^{(s-1)/2} (t))
    \mathscr{E}_s^{1/2} (t) \mathscr{D}_s (t).
    \end{aligned}
\end{equation} 
Moreover, the integrals $I_{1,1}^2$ and $I_{1,1}^3$ can be estimated as
\begin{align*}
    I_{1,1}^2 & = 2 \lambda N^2 \mu_A \int_{\mathbb{T}^3} \sum_{\tilde{\beta} \leq \beta} 3^{|\tilde{\beta}|}(\bar{c}_{tot}  
    + \lambda \tilde{c}_{tot}) |\partial_x^{\beta - \tilde{\beta}} \omega| \|\partial_x^\beta \tilde{\mathbf{U}}\|
    \sum{\substack{\beta_1 + \beta_3 = \tilde{\beta} \\ |\beta_1| \geq 1}} \|\partial_x^{\beta_1} \tilde{\mathbf{c}}\| 
    \|\partial_x^{\beta_3} \tilde{\mathbf{U}}\| \,\d x\\
    & \leq 2 \lambda 3^s N^2 \mu_A (\bar{c}_{tot} + \|\tilde{c}_{tot}\|_{L_x^\infty}) \|\partial_x^\beta 
    \tilde{\mathbf{U}}\|_{L_x^2} \left ( \int_{\mathbb{T}^3}  \sum{ \substack{\beta^\prime + \beta_1 + \beta_3 = \beta \\ |\beta_1| \geq 1} }
    |\partial_x^{\beta^\prime} \omega|^2 \|\partial_x^{\beta_1} \tilde{\mathbf{c}}\|^2 \|\partial_x^{\beta_3} \tilde{\mathbf{U}}\|^2
    \,\d x \right )^{1/2},
\end{align*}
\begin{align*}
    I_{1,1}^3 = & \lambda^2 N^2 \mu_A \int_{\mathbb{T}^3}  \sum_{\tilde{\beta} \leq \beta} 3^{ |\tilde{\beta}| }
    |\partial_x^{\beta - \tilde{\beta}} \omega| \|\partial_x^\beta \tilde{\mathbf{U}}\| \sum_{ 
      \substack{ \beta_1 + \beta_2 + \beta_3 = \tilde{\beta} \\ |\beta_1|, |\beta_2| \geq 1} }\|\partial_x^{\beta_1} \tilde{\mathbf{c}}\|  
      \|\partial_x^{\beta_2} \tilde{\mathbf{c}}\|  \|\partial_x^{\beta_3} \tilde{\mathbf{U}}\| \,\d x ,\\
    & \leq \lambda^2 N^2 3^s \mu_A \|\partial_x^\beta \tilde{\mathbf{U}}\|_{L_x^2} \left ( \int_{\mathbb{T}^3} \sum_{ 
      \substack{ \beta^\prime + \beta_1 + \beta_2 + \beta_3 = \beta \\ |\beta_1|, |\beta_2| \geq 1}} |\partial_x^{\beta^\prime} \omega|^2 
        \|\partial_x^{\beta_1} \tilde{\mathbf{c}}\|^2  \|\partial_x^{\beta_2} \tilde{\mathbf{c}}\|^2  
        \|\partial_x^{\beta_3} \tilde{\mathbf{U}}\|^2 \,\d x    \right )^{1/2},
\end{align*}
with the notation $\beta^\prime = \beta - \tilde{\beta}$. Since $s > 3$, we can use the Sobolev embedding 
$ H_x^{s/2} \hookrightarrow L_x^\infty$, which holds as $s/2 > 3/2$. In the bound for $I_{1,1}^2$, at most one of 
$|\beta^\prime|, |\beta_1|, |\beta_3|$ is strictly larger than $|\beta|/2$, leading to the split as follows
\[\sum{ \substack{ \beta^\prime + \beta_1 + \beta_3 = \beta \\ |\beta_1| \geq 1} }
 = \sum{ \substack{ \beta^\prime + \beta_1 + \beta_3 = \beta \\ |\beta_1| \geq 1\\ |\beta^\prime|, |\beta_1| \leq \frac{s}{2}}}
 + \sum{ \substack{ \beta^\prime + \beta_1 + \beta_3 = \beta \\ |\beta_1| \geq 1\\ |\beta^\prime|, |\beta_3| \leq \frac{s}{2}}}
 + \sum{ \substack{ \beta^\prime + \beta_1 + \beta_3 = \beta \\ |\beta_1| \geq 1\\ |\beta_1|, |\beta_3| \leq \frac{s}{2}}}.
\]
Each one of them can be bounded by
\begin{align*} 
  \sum{ \substack{ \beta^\prime + \beta_1 + \beta_3 = \beta \\ |\beta_1| \geq 1\\ |\beta^\prime|, |\beta_1| \leq \frac{s}{2} } } 
  \int_{\mathbb{T}^3} & |\partial_x^{\beta^\prime} \omega|^2 \|\partial_x^{\beta_1} \tilde{\mathbf{c}}\|^2 
  \|\partial_x^{\beta_3} \tilde{\mathbf{U}}\|^2 \,\d x\\
    & \leq \sum{ \substack{ \beta^\prime + \beta_1 + \beta_3 = \beta \\ |\beta_1|\geq 1\\ |\beta^\prime|, |\beta_1| \leq \frac{s}{2} } } 
    \|\partial_x^{\beta^\prime} \omega\|_{L_x^\infty}^2  \|\partial_x^{\beta_1} \tilde{\mathbf{c}}\|_{L_x^\infty}^2 
    \|\partial_x^{\beta_3} \tilde{\mathbf{U}}\|_{L_x^2}^2\\
    & \lesssim \sum{ \substack{ \beta^\prime + \beta_1 + \beta_3 = \beta \\ |\beta_1|\geq 1\\ |\beta^\prime|, |\beta_1| \leq \frac{s}{2} } } 
    \|\partial_x^{\beta^\prime} \omega\|_{H_x^{s/2}}^2  \|\partial_x^{\beta_1} \tilde{\mathbf{c}}\|_{H_x^{s/2}}^2 
    \|\partial_x^{\beta_3} \tilde{\mathbf{U}}\|_{L_x^2}^2\\
    & \lesssim \|\omega\|_{H_x^s}^2 \|\tilde{\mathbf{c}} \|_{H_x^s}^2  \|\tilde{\mathbf{U}}\|_{H_x^s}^2.
\end{align*}
Therefore, we can deduce the bound for $I_{1,1}^2$, using the inequalities \eqref{per T Hs esti} and \eqref{L infty w-1},
\begin{equation}
    \begin{aligned}
        I_{1,1}^2 & \lesssim \|\omega^{-1}\|_{L_x^\infty} (\bar{c}_{tot} + \|\tilde{c}_{tot}\|_{L_x^\infty}) \|\omega\|_{H_x^s}
    \|\tilde{\mathbf{c}}\|_{H_x^s}  \|\tilde{\mathbf{U}}\|^2_{H_x^s (\omega)}\\
    & \lesssim G_1 (1 + \mathscr{E}_s^{1/2} (t))^{1 - \gamma/2} (1 + \mathscr{E}_s^{1/2} (t)) 
    (1 + \mathscr{E}_s^{1/2} (t) + \mathscr{E}_s^{s/2} (t)) \mathscr{E}_s^{1/2} (t) \mathscr{D}_s (t).
    \end{aligned}
\end{equation}
The estimate for $I_{1,1}^3$ is similar, leading to
\begin{equation}
    \begin{aligned}
      I_{1,1}^3 & \lesssim \|\omega^{-1} \|_{L_x^\infty} \|\omega\|_{H_x^s} \|\tilde{\mathbf{c}}\|_{H_x^s}^2 
      \|\tilde{\mathbf{U}}\|_{H_x^s (\omega)}^2\\
    & \lesssim G_1 (1+\mathscr{E}_s^{1/2} (t))^{1 - \gamma/2}(1 + \mathscr{E}_s^{1/2} (t) + \mathscr{E}_s^{s/2} (t))
    \mathscr{E}_s (t) \mathscr{D}_s (t).
    \end{aligned}
\end{equation}
\textit{Controls of integrals II and III}

We apply the expressions $\tilde{\bar{u}} = \frac{\alpha \nabla_x \tilde{c}_{tot}}{\bar{c}_{tot}}$ and 
$\tilde{\mathbf{U}}^\ast = \tilde{\mathbf{U}} - \frac{ \langle \bar{\mathbf{c}}, \tilde{\mathbf{U}}\rangle }{\bar{c}_{tot}}\mathbf{1}$ 
to the integrals $II$ and $III$, deducing that
\begin{equation}
II = - \int_{\mathbb{T}^3} \langle \partial_x^\beta \tilde{\bar{u}} , \nabla_x \partial_x^\beta \tilde{\mathbf{c}} \rangle \,\d x
 = - \frac{\alpha}{\bar{c}_{tot}} \|\partial_x^\beta \nabla_x \tilde{c}_{tot}\|_{L_x^2}^2,
\end{equation}
\begin{align*}
III & = \lambda \int_{\mathbb{T}^3}  \langle \partial_x^\beta (\tilde{\mathbf{c}} \tilde{\mathbf{U}}^\ast ), \nabla_x \partial_x^\beta 
\tilde{\mathbf{c}}\rangle_{ \bar{\mathbf{c}}^{-1} } \,\d x\\
& = \underbrace{ \lambda \int_{\mathbb{T}^3}  \langle \partial_x^\beta (\tilde{\mathbf{c}} \tilde{\mathbf{U}} ), \nabla_x \partial_x^\beta 
\tilde{\mathbf{c}}\rangle_{ \bar{\mathbf{c}}^{-1} } \,\d x }_{III_1} 
\underbrace{ - \frac{\lambda}{\bar{c}_{tot}} \int_{\mathbb{T}^3} \langle \partial_x^\beta 
[ \tilde{\mathbf{c}} \langle \bar{\mathbf{c}}, \tilde{\mathbf{U}}\rangle ], \nabla_x \partial_x^\beta \tilde{\mathbf{c}}
\rangle_{ \bar{\mathbf{c}}^{-1} } \,\d x }_{III_2}.
\end{align*}
To begin the estimate for the integral $III$, we present the bound for $\|\nabla_x \partial_x^\beta \tilde{\mathbf{c}}\|_{L_x^2}$, 
using the expression \eqref{per M-S rela} and the estimate on $\|A(\mathbf{c})\tilde{\mathbf{U}}\|_{H_x^s}$ in \cite{briant2022perturbativ},
\begin{equation}
    \begin{aligned}
     \|\nabla_x \partial_x^\beta \tilde{\mathbf{c}}\|_{L_x^2} & \leq \|\omega\|_{H_x^s}\| A (\mathbf{c}) \tilde{\mathbf{U}}\|_{H_x^s} 
     + \|\frac{ \bar{\mathbf{c}} + \lambda \tilde{\mathbf{c}} }{ \bar{c}_{tot} + \lambda \tilde{c}_{tot} } \nabla_x 
     \tilde{c}_{tot} \|_{H_x^s}\\
    & \lesssim \| \omega^{-1} \|_{L_x^\infty}^{1/2} \| \omega \|_{H_x^s} (1 + \|\tilde{\mathbf{c}}\|_{H_x^s}^2) 
    \|\tilde{\mathbf{U}}\|_{H_x^s (\omega)} + (1 + \|\tilde{\mathbf{c}}\|_{H_x^s}) \|\frac{1}
    {\bar{c}_{tot} + \lambda \tilde{c}_{tot}}\|_{H_x^s} \|\nabla_x \tilde{c}_{tot}\|_{H_x^s}\\
    & \lesssim G_1 (1 + \mathscr{E}_s^{1/2} (t))^{1/2 - \gamma/4} (1 + \mathscr{E}_s^{1/2} (t) + \mathscr{E}_s^{s/2} (t))
(1 + \mathscr{E}_s (t))  \mathscr{D}^{1/2}_s (t)\\
&+ G_2  (1 + \mathscr{E}_s^{1/2} (t))(1 + \mathscr{E}_s^{1/2} (t) + \mathscr{E}_s^{s/2} (t)) \mathscr{D}^{1/2}_s (t).
    \label{estim for grad c}
    \end{aligned}
\end{equation}
The deduction uses the inequalities \eqref{per T Hs esti}, \eqref{estimate frac c-tot Hs} and \eqref{L infty w-1}. Based on this inequality, we can dominate $III_1$ and $III_2$ as
\begin{equation}
    \begin{aligned}
    III_1 \leq & \frac{\lambda}{\min_{1 \leq i \leq N}\bar{c}_i} \|\omega^{-1} \|_{L_x^\infty}^{1/2} \|\tilde{\mathbf{c}}\|_{H_x^s}
\| \tilde{\mathbf{U}}\|_{H_x^s(\omega)} \|\nabla_x \partial_x^\beta \tilde{\mathbf{c}}\|_{L_x^2}\\
\lesssim & G_1 (1 + \mathscr{E}_s^{1/2} (t))^{1 - \gamma/2} (1 + \mathscr{E}_s^{1/2} (t) + \mathscr{E}_s^{s/2} (t))
(1 + \mathscr{E}_s (t)) \mathscr{E}_s^{1/2} (t) \mathscr{D}_s (t)\\
&+ G_2 (1 + \mathscr{E}_s^{1/2} (t))^{1/2 - \gamma/4} (1 + \mathscr{E}_s^{1/2} (t))(1 + \mathscr{E}_s^{1/2} (t) 
+ \mathscr{E}_s^{s/2} (t)) \mathscr{E}_s^{1/2} (t) \mathscr{D}_s (t),
    \end{aligned}
\end{equation}
\begin{equation}
\begin{aligned}
III_2 \leq & \frac{ \lambda \max_{1 \leq i \leq N} \bar{c}_i }{ \bar{c}_{tot} \min_{1 \leq i \leq N}\bar{c}_i }
\|\omega^{-1} \|_{L_x^\infty}^{1/2} \|\tilde{\mathbf{c}}\|_{H_x^s} \| \tilde{\mathbf{U}}\|_{H_x^s(\omega)}  
\|\nabla_x \partial_x^\beta \tilde{\mathbf{c}}\|_{L_x^2}\\
\lesssim & G_1 (1 + \mathscr{E}_s^{1/2} (t))^{1 - \gamma/2} (1 + \mathscr{E}_s^{1/2} (t) + \mathscr{E}_s^{s/2} (t)) 
(1 + \mathscr{E}_s (t)) \mathscr{E}_s^{1/2} (t) \mathscr{D}_s (t)\\
& + G_2 (1 + \mathscr{E}_s^{1/2} (t))^{1/2 - \gamma/4} (1 + \mathscr{E}_s^{1/2} (t)) 
(1 + \mathscr{E}_s^{1/2} (t) + \mathscr{E}_s^{s/2} (t)) \mathscr{E}_s^{1/2} (t) \mathscr{D}_s (t).
\end{aligned}
\end{equation}
\textit{Control of integral IV}

Since the expression $N_2 ( \tilde{\mathbf{U}} ) = \frac{\langle \tilde{\mathbf{c}}, \tilde{\mathbf{U}} \rangle}{\bar{c}_{tot}} 
- \frac{\tilde{c}_{tot} \langle \bar{\mathbf{c}}, \tilde{\mathbf{U}} \rangle}{\bar{c}_{tot} (\bar{c}_{tot} + \lambda \tilde{c}_{tot}) } 
- \frac{\lambda \tilde{c}_{tot} \langle \tilde{\mathbf{c}}, \tilde{\mathbf{U}} \rangle }
{\bar{c}_{tot} (\bar{c}_{tot} + \lambda \tilde{c}_{tot})}$ holds, the integral $IV$ are divided into the following parts,
\begin{align*}
 IV = & - \lambda \int_{\mathbb{T}^3} \langle \partial_x^\beta [ (\bar{\mathbf{c}} + \lambda \tilde{\mathbf{c}} ) N_2 (\tilde{\mathbf{U}}) ], 
 \nabla_x \partial_x^\beta \tilde{\mathbf{c}} \rangle_{ \bar{\mathbf{c}}^{-1} } \,\d x\\
 = & \underbrace{ - \lambda \int_{\mathbb{T}^3} \partial_x^\beta \left ( \frac{\langle \tilde{\mathbf{c}}, \tilde{\mathbf{U}} \rangle}
 {\bar{c}_{tot}} \right ) \nabla_x \partial_x^\beta \tilde{c}_{tot} \,\d x }_{IV_1} 
 + \underbrace{ \lambda \int_{\mathbb{T}^3} \partial_x^\beta \left[ 
  \frac{\tilde{c}_{tot} \langle \bar{\mathbf{c}}, \tilde{\mathbf{U}} \rangle}{\bar{c}_{tot} (\bar{c}_{tot} + \lambda \tilde{c}_{tot}) }
\right] \nabla_x \partial_x^\beta \tilde{c}_{tot} \,\d x }_{IV_2}\\
    & + \underbrace{ \lambda^2 \int_{\mathbb{T}^3} \partial_x^\beta \left[ 
      \frac{ \tilde{c}_{tot} \langle \tilde{\mathbf{c}}, \tilde{\mathbf{U}} \rangle }{\bar{c}_{tot} (\bar{c}_{tot} 
      + \lambda \tilde{c}_{tot}) }\right] \nabla_x \partial_x^\beta \tilde{c}_{tot} \,\d x  }_{IV_3} 
      \underbrace{ - \frac{\lambda^2}{\bar{c}_{tot}} \int_{\mathbb{T}^3} \langle \partial_x^\beta \left[ 
        \langle \tilde{\mathbf{c}}, \tilde{\mathbf{U}} \rangle \tilde{\mathbf{c}} \right], \nabla_x \partial_x^\beta \tilde{\mathbf{c}} 
        \rangle_{ \bar{\mathbf{c}}^{-1} } \,\d x}_{IV_4} \\
        & + \underbrace{ \frac{\lambda^2}{\bar{c}_{tot}} \int_{\mathbb{T}^3} \langle \partial_x^\beta \left[ 
          \frac{\tilde{c}_{tot} \langle \bar{\mathbf{c}}, \tilde{\mathbf{U}} \rangle}{ \bar{c}_{tot} + \lambda \tilde{c}_{tot} } 
          \tilde{\mathbf{c}} \right], \nabla_x \partial_x^\beta \tilde{\mathbf{c}} \rangle_{ \bar{\mathbf{c}}^{-1} } \,\d x}_{IV_5} 
          + \underbrace{\frac{\lambda^3}{\bar{c}_{tot}} \int_{\mathbb{T}^3} \langle \partial_x^\beta \left[ 
            \frac{ \tilde{c}_{tot} \langle \tilde{\mathbf{c}}, \tilde{\mathbf{U}} \rangle }{ \bar{c}_{tot} + \lambda \tilde{c}_{tot} } 
            \tilde{\mathbf{c}} \right], \nabla_x \partial_x^\beta \tilde{\mathbf{c}} \rangle_{ \bar{\mathbf{c}}^{-1} } \,\d x}_{IV_6}.
\end{align*}
We first control $IV_1-IV_3$ by applying inequalities \eqref{per T Hs esti}, \eqref{estimate frac c-tot Hs} and \eqref{L infty w-1}, 
that are
\begin{equation}
    \begin{aligned}
    IV_1\leq& \frac{\lambda}{\bar{c}_{tot}} \|\omega^{-1}\|_{L_x^\infty}^{1/2}\|\tilde{\mathbf{c}}\|_{H_x^s} 
    \|\tilde{\mathbf{U}}\|_{H_x^s(\omega)}\|\nabla_x \partial_x^\beta\tilde{c}_{tot}\|_{L_x^2} 
    \lesssim (1+\mathscr{E}_s^{1/2}(t))^{1/2-\gamma/4} \mathscr{E}_s^{1/2}(t) \mathscr{D}_s(t),
    \end{aligned}
\end{equation}
\begin{equation}
    \begin{aligned}
    IV_2 \leq & \frac{ \lambda \max_{1 \leq i \leq N} \bar{c}_i}{\bar{c}_{tot}} \|\omega^{-1}\|_{L_x^\infty}^{1/2}
    \|\tilde{c}_{tot}\|_{H_x^s} \|\frac{1}{\bar{c}_{tot} + \lambda \tilde{c}_{tot}}\|_{H_x^s} \|\tilde{\mathbf{U}}\|_{H_x^s (\omega)}
    \|\nabla_x \partial_x^\beta \tilde{c}_{tot}\|_{L_x^\beta}\\
    \lesssim & G_2 (1 + \mathscr{E}_s^{1/2} (t))^{1/2 - \gamma/4} (1 + \mathscr{E}_s^{1/2} (t) + \mathscr{E}_s^{s/2} (t))
    \mathscr{E}_s^{1/2} (t) \mathscr{D}_s (t),
    \end{aligned}
\end{equation}
and 
\begin{equation}
\begin{aligned}
IV_3 \leq & \frac{\lambda^2}{\bar{c}_{tot}} \|\omega^{-1}\|_{L_x^\infty}^{1/2}  
\|\frac{1}{\bar{c}_{tot} + \lambda \tilde{c}_{tot}} \|_{H_x^s}  \|\tilde{c}_{tot}\|_{H_x^s}   \|\tilde{\mathbf{c}}\|_{H_x^s}  
\|\tilde{\mathbf{U}}\|_{H_x^s (\omega)}  \|\nabla_x \partial_x^\beta \tilde{c}_{tot}\|_{L_x^\beta}\\
\lesssim & G_2 (1 + \mathscr{E}_s^{1/2} (t))^{1/2 - \gamma/4} (1 + \mathscr{E}_s^{1/2} (t) + \mathscr{E}_s^{s/2} (t)) 
\mathscr{E}_s (t) \mathscr{D}_s (t).
\end{aligned}
\end{equation}
The estimates for $IV_4-IV_6$ rely on the control for $\|\nabla_x \partial_x^\beta \tilde{\mathbf{c}}\|_{L_x^2}$ in \eqref{estim for grad c}, writing as follows,
\begin{equation}
\begin{aligned}
IV_4 \leq & \frac{\lambda^2}{\bar{c}_{tot} \min_{1 \leq i \leq N} \bar{c}_i}  \|\omega^{-1}\|_{L_x^\infty}^{1/2}
\|\tilde{\mathbf{c}}\|_{H_x^s}^2  \|\tilde{\mathbf{U}}\|_{H_x^s (\omega)}  \|\nabla_x \partial_x^\beta \tilde{\mathbf{c}}\|_{L_x^2}\\
 \lesssim & G_1 (1 + \mathscr{E}_s^{1/2} (t))^{1 - \gamma/2} (1 + \mathscr{E}_s^{1/2} (t) + \mathscr{E}_s^{s/2} (t)) 
 (1 + \mathscr{E}_s(t)) \mathscr{E}_s (t) \mathscr{D}_s (t)\\
 & + G_2 (1 + \mathscr{E}_s^{1/2} (t))^{1/2 - \gamma/4} (1 + \mathscr{E}_s^{1/2} (t)) (1 + \mathscr{E}_s^{1/2} (t) + \mathscr{E}_s^{s/2} (t)) 
 \mathscr{E}_s (t) \mathscr{D}_s (t),
\end{aligned}
\end{equation}
\begin{equation}
\begin{aligned}
IV_5 \leq & \frac{ \lambda^2 \max_{1 \leq i \leq N} \bar{c}_i}{\bar{c}_{tot} \min_{1 \leq i \leq N} \bar{c}_i}  
\|\omega^{-1}\|_{L_x^\infty}^{1/2}  \|\frac{1}{\bar{c}_{tot} + \lambda \tilde{c}_{tot}}\|_{H_x^s} \|\tilde{c}_{tot}\|_{H_x^s} 
\|\tilde{\mathbf{c}}\|_{H_x^s}  \|\tilde{\mathbf{U}}\|_{H_x^s (\omega)}  \|\nabla_x \partial_x^\beta \tilde{\mathbf{c}}\|_{L_x^2}\\
\lesssim & G_1 G_2 (1 + \mathscr{E}_s^{1/2} (t))^{1 - \gamma/2} (1+\mathscr{E}_s^{1/2} (t) + \mathscr{E}_s^s (t)) 
 (1 + \mathscr{E}_s (t)) \mathscr{E}_s (t) \mathscr{D}_s (t)\\
 &+ G_2^2 (1 + \mathscr{E}_s^{1/2} (t))^{1/2 - \gamma/4} (1 + \mathscr{E}_s^{1/2} (t))( 1+ \mathscr{E}_s^{1/2} (t) + \mathscr{E}_s^s (t)) 
 \mathscr{E}_s (t) \mathscr{D}_s (t),
\end{aligned}
\end{equation}
and 
\begin{equation}
\begin{aligned}
IV_6 \leq & \frac{\lambda^3}{\bar{c}_{tot} \min_{1 \leq i \leq N} \bar{c}_i}  \|\omega^{-1}\|_{L_x^\infty}^{1/2}
\|\frac{1}{\bar{c}_{tot} + \lambda \tilde{c}_{tot}}\|_{H_x^s} \|\tilde{c}_{tot}\|_{H_x^s} \|\tilde{\mathbf{c}}\|_{H_x^s}^2
\|\tilde{\mathbf{U}}\|_{H_x^s(\omega)}\|\nabla_x \partial_x^\beta\tilde{\mathbf{c}}\|_{L_x^2}\\
\lesssim & G_1 G_2 (1 + \mathscr{E}_s^{1/2} (t))^{1 - \gamma/2} (1 + \mathscr{E}_s^{1/2} (t) + \mathscr{E}_s^s (t)) 
 (1 + \mathscr{E}_s (t)) \mathscr{E}^{3/2}_s (t) \mathscr{D}_s (t)\\
 & + G_2^2 (1 + \mathscr{E}_s^{1/2} (t))^{1/2 - \gamma/4} (1 + \mathscr{E}_s^{1/2} (t)) (1 + \mathscr{E}_s^{1/2} (t) + \mathscr{E}_s^s (t)) 
 \mathscr{E}^{3/2}_s (t) \mathscr{D}_s (t).
\end{aligned}
\end{equation}
\textit{Controls of integrals V and VI}

Taking the expression $\tilde{\bar{u}} = \frac{\alpha \nabla_x \tilde{c}_{tot}}{\bar{c}_{tot}}$ into the integral $V$ and 
using the bound for $\|\nabla_x \partial_x^\beta \tilde{\mathbf{c}}\|_{L_x^2}$, we obtain
\begin{equation}
\begin{aligned}
V =  - & \lambda \int_{\mathbb{T}^3} \langle \partial_x^\beta (\tilde{\mathbf{c}} \tilde{\bar{u}}), \nabla_x \partial_x^\beta 
\tilde{\mathbf{c}} \rangle_{\bar{\mathbf{c}}^{-1}} \,\d x = - \frac{\alpha \lambda}{\bar{c}_{tot}} \int_{\mathbb{T}^3} 
\langle \partial_x^\beta  (\tilde{\mathbf{c}} \nabla_x\tilde{c}_{tot}), \nabla_x \partial_x^\beta \tilde{\mathbf{c}} 
\rangle_{ \bar{\mathbf{c}}^{-1} } \,\d x\\
\leq & \frac{\alpha \lambda}{ \min_{1 \leq i \leq N} \bar{c}_i \bar{c}_{tot}}  \|\tilde{\mathbf{c}}\|_{H_x^s}
\|\nabla_x \tilde{c}_{tot}\|_{H_x^s} \|\nabla_x \partial_x^\beta \tilde{\mathbf{c}}\|_{L_x^2}\\
\lesssim & G_1  (1 + \mathscr{E}_s^{1/2} (t))^{1/2 - \gamma/4} (1 + \mathscr{E}_s^{1/2} (t) + \mathscr{E}_s^{s/2} (t)) 
 (1 + \mathscr{E}_s (t)) \mathscr{E}^{1/2}_s (t) \mathscr{D}_s (t)\\
 & + G_2 (1 + \mathscr{E}_s^{1/2} (t) + \mathscr{E}_s^{s/2} (t)) (1 + \mathscr{E}_s^{1/2} (t)) \mathscr{E}^{1/2}_s (t) \mathscr{D}_s (t).
\end{aligned}
\end{equation}
The expression $N_1 ( \tilde{\bar{u}} ) = \frac{\alpha \tilde{c}_{tot} \nabla_x \tilde{c}_{tot}}{ \bar{c}_{tot} 
(\bar{c}_{tot} + \lambda \tilde{c}_{tot}) }$ implies that the integral $VI$ can be expanded as
\begin{align*}
     VI = & \lambda \int_{\mathbb{T}^3} \langle \partial_x^\beta [ (\bar{\mathbf{c}}+\lambda\tilde{\mathbf{c}}) N_1(\bar{u}) ], 
     \nabla_x \partial_x^\beta \tilde{\mathbf{c}} \rangle_{ \bar{\mathbf{c}}^{-1} } \,\d x\\
 = & \underbrace{ \frac{\alpha\lambda}{\bar{c}_{tot}} \int_{\mathbb{T}^3} \partial_x^\beta \left[ 
  \frac{ \tilde{c}_{tot} \nabla_x \tilde{c}_{tot} }{ \bar{c}_{tot} + \lambda \tilde{c}_{tot} } \right] 
  \nabla_x \partial_x^\beta \tilde{c}_{tot} \,\d x}_{VI_1} 
  + \underbrace{ \frac{\alpha \lambda^2}{\bar{c}_{tot}} \int_{\mathbb{T}^3}\langle 
    \partial_x^\beta \left[ \frac{ \tilde{c}_{tot} \nabla_x \tilde{c}_{tot}}{\bar{c}_{tot} + \lambda \tilde{c}_{tot}}
    \tilde{\mathbf{c}} \right], \nabla_x \partial_x^\beta \tilde{\mathbf{c}} \rangle_{ \bar{\mathbf{c}}^{-1} } \,\d x}_{VI_2}.
\end{align*}
Estimate for $VI_1$ is derived by using the inequality \eqref{estimate frac c-tot Hs},
\begin{equation}
\begin{aligned}
VI_1 \leq & \frac{\alpha \lambda}{\bar{c}_{tot}}  \|\tilde{c}_{tot}\|_{H_x^s}  \|\frac{1}{\bar{c}_{tot} + \lambda  \tilde{c}_{tot}}\|_{H_x^s}  
\|\nabla_x\tilde{c}_{tot}\|_{H_x^s}^2 \lesssim G_2 (1 + \mathscr{E}_s^{1/2} (t) + \mathscr{E}_s^{s/2} (t)) \mathscr{E}^{1/2}_s (t) 
\mathscr{D}_s (t).
\end{aligned}
\end{equation}
The method to control $VI_2$ is similar to that for $IV$, employing the inequality \eqref{estimate frac c-tot Hs} and the estimate for 
$\|\nabla_x \partial_x^\beta \tilde{\mathbf{c}}\|_{L_x^2}$ in \eqref{estim for grad c},
\begin{equation}
\begin{aligned}
VI_2 \leq & \frac{\alpha \lambda^2}{\bar{c}_{tot} \min_{1 \leq i \leq N} \bar{c}_i} 
\|\frac{1}{\bar{c}_{tot} + \lambda \tilde{c}_{tot}}\|_{H_x^s} \|\tilde{c}_{tot}\|_{H_x^s} \|\nabla_x \tilde{c}_{tot}\|_{H_x^s}
\|\tilde{\mathbf{c}}\|_{H_x^s}  \|\nabla_x \partial_x^\beta \tilde{\mathbf{c}}\|_{L_x^2}\\
\lesssim & G_1 G_2 (1 + \mathscr{E}_s^{1/2} (t))^{1/2 - \gamma/4} (1 + \mathscr{E}_s^{1/2} (t) + \mathscr{E}_s^s (t))
 (1 + \mathscr{E}_s (t)) \mathscr{E}_s (t) \mathscr{D}_s (t)\\
 & +  G_2^2 (1 + \mathscr{E}_s^{1/2} (t)) (1 + \mathscr{E}_s^{1/2} (t) + \mathscr{E}_s^s (t)) \mathscr{E}_s (t) \mathscr{D}_s (t).
\end{aligned}
\end{equation}
Finally, by summing all inequalities above for $|\beta| \leq s$, 
we complete the estimate for $\| \tilde{\mathbf{c}}\|_{H_x^s(\bar{\mathbf{c}}^{-1})}$, 
that is
\begin{equation}
    \begin{aligned}
    \frac{1}{2} \frac{\d}{\d t}&  \|\tilde{\mathbf{c}}\|_{H_x^s}^2 +  \left ( \lambda ( \min_{1 \leq i \leq N} \bar{c}_i)^2 
    - \eta_1 - \eta_2 \right ) \|\tilde{\mathbf{U}}\|_{H_x^s (\omega)}^2 + \left ( \frac{\alpha}{\bar{c}_{tot}} - 
    \frac{\|\omega^{-1}\|_{L_x^\infty}}{4 \eta_1} - \frac{\|\omega^{-1}\|_{L_x^\infty}}{4 \eta_2} \right ) 
     \|\nabla_x\tilde{c}_{tot}\|_{H_x^s}\\
& \lesssim (1 + G_1 + G_2 + G_1 G_2 + G_2^2) (1 + \mathscr{E}_s^{1/2} (t))^{1 - \gamma/2} 
(1 + \mathscr{E}_s^{1/2} (t) + \mathscr{E}_s^{s+2} (t)) \mathscr{E}_s^{1/2} (t) \mathscr{D}_s (t).  \label{prior Hs for per c}
    \end{aligned}
\end{equation}
\textbf{The $H^s$ estimates for $\tilde{T}$ and $\tilde{c}_{tot}$}

The evolution equation for $\tilde{T}$ is
\begin{gather*}
    \partial_t \tilde{T} - \frac{2\alpha}{3} \nabla_x \cdot \left[ \frac{ \nabla_x \tilde{c}_{tot}}{\bar{c}_{tot} + \lambda \tilde{c}_{tot}}  
    (1+\lambda \tilde{T})  \right] - \frac{2 \alpha \lambda}{3} \frac{ |\nabla_x \tilde{c}_{tot}|^2 }
    {(\bar{c}_{tot} + \lambda \tilde{c}_{tot})^2} (1 + \lambda \tilde{T}) - \alpha \lambda 
    \frac{ \nabla_x \tilde{c}_{tot}}{\bar{c}_{tot} + \lambda \tilde{c}_{tot}} \cdot \nabla_x \tilde{T}=0.
\end{gather*}
By taking derivative $\partial_x^\beta$ of this equation with $|\beta| \leq s$, multiplying by $\partial_x^\beta \tilde{T}$ and 
integrating with respect to $x$ over $\mathbb{T}^3$, we deduce that
\begin{equation}
    \begin{aligned}
\frac{1}{2} \frac{\d}{\d t} \|\partial_x^\beta \tilde{T} \|_{L_x^2}^2 = & \underbrace{ - \frac{2\alpha}{3} 
\langle \partial_x^\beta \left (\frac{ \nabla_x \tilde{c}_{tot}}{\bar{c}_{tot} + \lambda \tilde{c}_{tot}} \right ), 
\nabla_x \partial_x^\beta \tilde{T} \rangle_{L_x^2} }_{VII_1} 
\underbrace{-\frac{2 \alpha \lambda}{3} \langle \partial_x^\beta \left ( 
  \frac{\nabla_x \tilde{c}_{tot}}{\bar{c}_{tot} + \lambda \tilde{c}_{tot}} \tilde{T} \right ), \nabla_x \partial_x^\beta 
    \tilde{T} \rangle_{L_x^2} }_{VII_2}\\
    & + \underbrace{\frac{2 \alpha \lambda}{3} \langle \partial_x^\beta \left[ \frac{|\nabla_x \tilde{c}_{tot}|^2}
    { (\bar{c}_{tot} + \lambda \tilde{c}_{tot})^2 } \left (1 + \lambda \tilde{T} \right ) \right], \partial_x^\beta \tilde{T}
    \rangle_{L_x^2} }_{VII_3}\\
    & + \underbrace{\alpha \lambda \langle \partial_x^\beta \left ( \frac{\nabla_x\tilde{c}_{tot}}
    {\bar{c}_{tot} + \lambda \tilde{c}_{tot}} \cdot \nabla_x \tilde{T} \right ), \partial_x^\beta \tilde{T}\rangle_{L_x^2} }_{VII_4}.
    \end{aligned}
\end{equation}
Before dominating these integrals, we rewrite the relation \eqref{implicit rela}
\begin{equation*}
    \frac{\nabla_x\tilde{c}_{tot}}{\bar{c}_{tot}+\lambda\tilde{c}_{tot}}=-\frac{\nabla_x\tilde{T}}
    {1+\lambda \tilde{T}}.
\end{equation*}
Using it to replace $\nabla_x\tilde{T}$, the integral $VII_1$ can be expressed as
\begin{align*}
VII_1 = & \underbrace{ \frac{2\alpha}{3} \langle \partial_x^\beta \left (\frac{\nabla_x\tilde{c}_{tot}}{\bar{c}_{tot} 
+ \lambda \tilde{c}_{tot}} \right ), \partial_x^\beta \left ( \frac{\nabla_x \tilde{c}_{tot}}{\bar{c}_{tot} + \lambda \tilde{c}_{tot}} 
\right ) \rangle_{L_x^2} }_{VII_{1,1}}\\
& + \underbrace{ \frac{2 \alpha \lambda}{3} \langle \partial_x^\beta \left ( \frac{\nabla_x \tilde{c}_{tot}}{\bar{c}_{tot} 
+ \lambda \tilde{c}_{tot}} \right ), \partial_x^\beta \left ( \frac{\nabla_x \tilde{c}_{tot}}{\bar{c}_{tot} + \lambda \tilde{c}_{tot}} 
\tilde{T} \right ) \rangle_{L_x^2} }_{VII_{1,2}}.
\end{align*}
Applying the inequalities \eqref{estimate frac c-tot Hs} and \eqref{G-N Hs}, we control the second term as
\begin{equation}
    VII_{1,2} \lesssim \|\frac{1}{\bar{c}_{tot} + \lambda \tilde{c}_{tot}}\|_{H_x^s}^2  \|\tilde{T}\|_{H_x^s}  
    \|\nabla_x\tilde{c}_{tot}\|_{H_x^s}^2 \lesssim G_2^2 (1 + \mathscr{E}_s^{1/2} (t) + \mathscr{E}_s^s (t)) 
    \mathscr{E}_s^{1/2} (t) \mathscr{D}_s (t).
\end{equation}
The integral $VII_{1,1}$ needs a more refined estimate. Due to the decomposition 
\[  \frac{1}{\bar{c}_{tot} + \lambda \tilde{c}_{tot}} = \frac{1}{\bar{c}_{tot}} - \frac{\lambda \tilde{c}_{tot}}{\bar{c}_{tot} 
(\bar{c}_{tot} + \lambda \tilde{c}_{tot}) },
\]
this integral can be divided into the following parts 
\begin{align*}
    VII_{1,1} = & \underbrace{ \frac{2 \alpha}{3 \bar{c}_{tot}^2} \langle \partial_x^\beta \nabla_x \tilde{c}_{tot}, 
    \partial_x^\beta \nabla_x \tilde{c}_{tot} \rangle_{L_x^2} }_{VII_{1,1}^1}
    \underbrace{ -\frac{4 \alpha \lambda}{3 \bar{c}_{tot}^2} \langle \partial_x^\beta \nabla_x \tilde{c}_{tot}, 
    \partial_x^\beta \left ( \frac{\tilde{c}_{tot} \nabla_x \tilde{c}_{tot}}{\bar{c}_{tot} + \lambda \tilde{c}_{tot}} \right ) 
    \rangle_{L_x^2} }_{VII_{1,1}^2}\\
    & + \underbrace{ \frac{2 \alpha \lambda^2}{3 \bar{c}_{tot}^2} \langle \partial_x^\beta \left ( 
      \frac{\tilde{c}_{tot} \nabla_x \tilde{c}_{tot}}{\bar{c}_{tot} + \lambda \tilde{c}_{tot}} \right ), \partial_x^\beta 
      \left ( \frac{\tilde{c}_{tot} \nabla_x\tilde{c}_{tot}}{\bar{c}_{tot} + \lambda \tilde{c}_{tot}} \right )\rangle_{L_x^2} }_{VII_{1,1}^3}.
\end{align*}
We control them separately, writing as follows
\begin{equation}
    VII_{1, 1}^1 = \frac{2 \alpha }{3 \bar{c}_{tot}^2}  \|\partial_x^\beta \nabla_x \tilde{c}_{tot}\|_{L_x^2}^2,
\end{equation}
\begin{equation}
\begin{aligned}
    VII_{1, 1}^2 \lesssim \|\frac{1}{\bar{c}_{tot} + \lambda \tilde{c}_{tot}}\|_{H_x^s}  \|\tilde{c}_{tot}\|_{H_x^s}
    \|\nabla_x \tilde{c}_{tot}\|^2_{H_x^s} \lesssim G_2 (1 + \mathscr{E}_s^{1/2} (t) + \mathscr{E}_s^{s/2} (t)) \mathscr{E}_s^{1/2} (t) 
    \mathscr{D}_s (t),
\end{aligned}
\end{equation}
and
\begin{equation}
\begin{aligned}
    VII_{1, 1}^3 \lesssim \|\frac{1}{\bar{c}_{tot} + \lambda \tilde{c}_{tot}}\|^2_{H_x^s}  \|\tilde{c}_{tot}\|^2_{H_x^s}
    \|\nabla_x\tilde{c}_{tot}\|^2_{H_x^s} \lesssim G_2^2 (1 + \mathscr{E}_s^{1/2} (t) + \mathscr{E}_s^s (t))
    \mathscr{E}_s (t) \mathscr{D}_s (t).
\end{aligned}
\end{equation}
For the integrals $VII_2$ and $VII_4$, we again replace $\nabla_x\tilde{T}$ with the relation \eqref{implicit rela} and apply the inequality \eqref{G-N Hs}, deducing that
\begin{equation}
    \begin{aligned}
        VII_2 = \frac{2 \alpha \lambda}{3} \langle \partial_x^\beta \left ( 
          \frac{\nabla_x \tilde{c}_{tot}}{\bar{c}_{tot} + \lambda \tilde{c}_{tot}} \tilde{T} \right ), 
          \partial_x^\beta \left[ \frac{\nabla_x \tilde{c}_{tot}}{\bar{c}_{tot} + \lambda \tilde{c}_{tot}} (1 + \lambda \tilde{T}) \right] 
          \rangle_{L_x^2} \\
        \lesssim \|\frac{1}{\bar{c}_{tot} + \lambda \tilde{c}_{tot}}\|_{H_x^s}^2 (1 + \|\tilde{T}\|_{H_x^s}) \|\tilde{T}\|_{H_x^s}  
        \|\nabla_x \tilde{c}_{tot}\|_{H_x^s}^2\\
        \lesssim G_2^2 (1 + \mathscr{E}_s^{1/2} (t) + \mathscr{E}_s^s (t)) (1 + \mathscr{E}_s^{1/2} (t)) \mathscr{E}_s^{1/2} (t) 
        \mathscr{D}_s (t),
    \end{aligned}
    \end{equation}
\begin{equation}
\begin{aligned}
VII_4 = & - \alpha \lambda \langle \partial_x^\beta \left[ \frac{|\nabla_x\tilde{c}_{tot}|^2}{(\bar{c}_{tot} + \lambda \tilde{c}_{tot})^2} 
(1 + \lambda \tilde{T}) \right], \partial_x^\beta \tilde{T} \rangle_{L_x^2}\\
    \lesssim & (1 + \|\tilde{T}\|_{H_x^s})  \|\tilde{T}\|_{H_x^s}  \|\frac{1}{\bar{c}_{tot} + \lambda \tilde{c}_{tot}}\|_{H_x^s}^2 
    \|\nabla_x \tilde{c}_{tot}\|^2_{H_x^s}\\
    \lesssim & G_2^2 (1 + \mathscr{E}_s^{1/2} (t))(1 + \mathscr{E}_s^{1/2} (t) + \mathscr{E}_s^s (t)) \mathscr{E}_s^{1/2} (t) 
    \mathscr{D}_s (t).
\end{aligned}
\end{equation}
The calculations for $VII_3$ is same to that for $VII_4$,
\begin{equation}
\begin{aligned}
    VII_3 = & \frac{2 \alpha \lambda}{3} \langle \partial_x^\beta \left[ \frac{ |\nabla_x\tilde{c}_{tot}|^2 }
    { (\bar{c}_{tot} + \lambda \tilde{c}_{tot})^2 } \left ( 1 + \lambda \tilde{T} \right ) \right], 
    \partial_x^\beta \tilde{T} \rangle_{L_x^2}\\
    \lesssim & G_2^2 (1 + \mathscr{E}_s^{1/2} (t)) (1 + \mathscr{E}_s^{1/2} (t) + \mathscr{E}_s^{s} (t)) \mathscr{E}_s^{1/2} (t)  
    \mathscr{D}_s (t).
\end{aligned}
\end{equation}
Summing all above inequalities for $|\beta| \leq s$, now we establish the $H_x^s$ norm estimate for $\tilde{T}$, i.e. 
\begin{equation}
\frac{1}{2} \frac{\d}{\d t}  \|\tilde{T}\|_{H_x^s}^2 - \frac{2 \alpha}{3 \bar{c}_{tot}^2} \|\nabla_x \tilde{c}_{tot}\|_{H_x^s}^2
\lesssim (G_2 + G_2^2) (1 + \mathscr{E}_s^{1/2} (t) + \mathscr{E}_s^{s+1/2} (t)) \mathscr{E}_s^{1/2} (t) \mathscr{D}_s (t).  \label{prior Hs for per T}
\end{equation}
At last, the mass conservation equations \eqref{per mass-linear} have a derivation for $\tilde{c}_{tot}$, i.e.
\[ \partial_t \tilde{c}_{tot} = \alpha \Delta_x \tilde{c}_{tot}.
\]
Applying derivative $\partial_x^\beta$ to this equation ($|\beta| \leq s$), multiplying by $\chi \partial_x^\beta \tilde{c}_{tot}$ 
and integrating with respect to $x$ over $\mathbb{T}^3$, we obtain the equality 
\begin{equation*}
\frac{\chi}{2} \frac{\d}{\d t} \| \partial_x^\beta \tilde{c}_{tot}\|_{L_x^2}^2 + \alpha \chi 
\|\nabla_x \partial_x^\beta  \tilde{c}_{tot}\|_{L_x^2}^2 = 0,
\end{equation*}
where $\chi$ is defined in \eqref{defi of chi}. It is clear that the estimate for $\|\tilde{c}_{tot}\|_{H_x^s}$ 
can be expressed as 
\begin{equation}
\frac{\chi}{2} \frac{\d}{\d t}  \|\tilde{c}_{tot}\|_{H_x^s}^2 + \alpha \chi \|\nabla_x\tilde{c}_{tot}\|_{H_x^s}^2 = 0
\label{prior Hs for per c-tot}.
\end{equation}
\textbf{The energy estimate for the perturbations}

Now we sum the inequalities \eqref{prior Hs for per c}, \eqref{prior Hs for per T} and \eqref{prior Hs for per c-tot}, obtaining 
\begin{equation}
    \begin{aligned}
    \frac{1}{2} \frac{\d}{ \d t}\mathscr{E}_s & (t) + \left ( \lambda_A ( \min_{1 \leq i \leq N} \bar{c}_i)^2 - \eta_1 - \eta_2 \right )  
    \|\tilde{\mathbf{U}}\|_{H_x^s (\omega)}^2 (t)\\
& + \left ( \alpha \chi + \frac{\alpha}{\bar{c}_{tot}} - \frac{2\alpha}{3\bar{c}_{tot}^2} - \frac{\|\omega^{-1}\|_{L_x^\infty}(t)}{4\eta_1} 
- \frac{ \|\omega^{-1}\|_{L_x^\infty} (t) }{4 \eta_2} \right )  \|\nabla_x\tilde{c}_{tot}\|_{H_x^s}^2 (t)\\
&\leq C_s (1 + G_1 + G_2 + G_1 G_2 + G_2^2) (1 + \mathscr{E}_s^{1/2} (t))^{1 - \gamma/2}(1 + \mathscr{E}_s^{1/2} (t) 
+ \mathscr{E}_s^{s+2} (t)) \mathscr{E}_s^{1/2} (t) \mathscr{D}_s (t).
    \end{aligned}
\end{equation}
The constant $C_s > 0$ depends on $s, N, \mu_A, \lambda_A, \alpha, \bar{\mathbf{c}}, C_{Sob}, |\mathbb{T}^3|$.
The bound for $\omega^{-1}$ uses the Sobolev embedding, that is
\[  \|\omega^{-1}\|_{L_x^\infty} = \|(1 + \lambda \tilde{T})^{1 - \gamma/2}\|_{L_x^\infty} 
\leq (1 + \|\tilde{T}\|_{L_x^\infty})^{1 - \gamma/2}  \leq (1 + C_{Sob} \|\tilde{T}\|_{H_x^s})^{1 - \gamma/2} .
\]
Let the constant $\eta_1 = \eta_2 = \frac{ \lambda_A ( \min_{1 \leq i \leq N} \bar{c}_i)^2 }{3}$, the definitions of $d_1$ and $\chi$ deduce that we recover inequality \eqref{original energy esti}.

Since the functionals $G_1 = G_1 (I_1)$ and $G_2 = G_2 (I_2)$ depend on the composite functions $H_1(\lambda \tilde{T})$ and 
$H_2(\lambda \tilde{c}_{tot})$, there exists a positive constant $\gamma_1$ such that
\begin{equation}
G_1 + G_2 + G_1 G_2 + G_2^2 |_{I_1=I_2=[-\frac{1}{2},+\frac{1}{2}]} \leq \gamma_1.
\end{equation}
This is caused by the smoothness and continuity of the functions $H_1,H_2$ and $\tilde{T},\tilde{c}_{tot}$. Moreover, we can find a positive 
constant $\gamma_0<\gamma_1$ such that 
\begin{equation}
    G_1 + G_2 + G_1 G_2 + G_2^2 |_{I_1=I_2=[-\frac{1}{4},+\frac{1}{4}]} \leq \gamma_0,
\end{equation}
and there exists a constant $0<\iota_0 < 1$ such that $\gamma_0 \leq \iota_0 \gamma_1< \gamma_1$. Recall that we choose 
$I_1 = [ - \|\tilde{T}\|_{L_x^\infty}, + \|\tilde{T}\|_{L_x^\infty} ]$ and 
$I_2 = [ - \|\tilde{c}_{tot}, +\|\tilde{c}_{tot}\|_{L_x^\infty} ]$ to ensure the inequalities 
\eqref{per T Hs esti} and \eqref{estimate frac c-tot Hs} hold in the previous discussion. Using the Sobolev embedding, we can deduce the following bounds from the assumptions on the initial data in Proposition \ref{proposition for energy esti for M-S},
\[  \|\tilde{c}_{tot}^{in} \|_{L_x^\infty}, \|\tilde{T}^{in}\|_{L_x^\infty} \leq  \max\{1, \frac{1}{\sqrt{\chi}}\} 
C_{Sob} \delta_{MS} \leq 1/2,
\]
\[\text{ and} \quad C_s (1 + \gamma_1) (1 + \delta_{MS})^{1 - \gamma/2}(1 + \delta_{MS} + \delta_{MS}^{2 s + 4}) \delta_{MS} 
\leq 1/2.
\]
Obviously, the initial data satisfy $I_1,I_2 \subseteq [ - \frac{1}{2}, + \frac{1}{2}]$. Thus, for the initial time, the functionals can be bounded by 
\[ G_1 + G_2 + G_1 G_2 + G_2^2 \leq \gamma_1. 
\]
Due to the definitions of the constants $d_1$ and $d_2$, the inequality \eqref{original energy esti} becomes
\[\frac{\d}{\d t} \mathscr{E}_s (t) + \mathscr{D}_s (t) \leq 0, \]
for time $t=0$. We introduce
\begin{align*}
T^\ast := \sup & \left\{ \tau>0; \sup_{t \in [0,\tau]} \max\{ 1,\frac{1}{ \sqrt{\chi} } \} C_{Sob} \mathscr{E}_s^{1/2} (t)\leq  
\frac{ \min_{1 \leq i \leq N} \bar{c}_i }{2 \max_{1 \leq i \leq N} \bar{c}_i} \right.\\
 & \left.\text{ and } C_s (1 + \gamma_1) (1 + \mathscr{E}_s^{1/2} (t))^{1 - \gamma/2} (1 + \mathscr{E}_s^{1/2} (t) + \mathscr{E}_s^{s+2} (t)) 
\mathscr{E}_s^{1/2} (t) \leq \frac{1}{2} \right\},
\end{align*}
constant $C_s$ coincides with that in \eqref{original energy esti}. Then the above inequality holds on $t\in[0,T^\ast]$. We then prove $T^\ast = + \infty$. Otherwise, if $T^\ast < + \infty$, it is clear that for all $t\in[0,T^\ast]$, we have
\[  \mathscr{E}_s (t) + \int_0^t \mathscr{D}_s (\tau) \,d\tau \leq \mathscr{E}_s (0) \leq \delta_{MS}^2.
\]
The assumptions on $\delta_{MS}$ \eqref{assum for delta-MS} imply that
\[  \max\{ 1,\frac{1}{ \sqrt{\chi} } \}  C_{Sob} \mathscr{E}_s^{1/2} (t) \leq \frac{ \min_{1 \leq i \leq N} \bar{c}_i}{4 \max_{1 \leq i \leq N} \bar{c}_i} \leq \frac{1}{4} . 
 \]
Thus, the above discussions on functionals $G_1$ and $G_2$ deduce that for any $t \in [0, T^\ast]$, 
\[ 1 + G_1 + G_2 + G_1 G_2 + G_2^2 \leq \gamma_0 \leq \iota_0 \gamma_1 <\gamma_1.
\]
It guarantees that the following inequality holds for any $t \in [0, T^\ast]$
\begin{align*}
C_s (1 + G_1 + G_2 + G_1 G_2 + G_2^2) & (1 + \mathscr{E}_s^{1/2} (t))^{1 - \gamma/2}(1 + \mathscr{E}_s^{1/2} (t) + \mathscr{E}_s^{s+2} (t)) 
\mathscr{E}_s^{1/2} (t)\\
&\leq C_s(1+\iota\gamma_1)(1+\mathscr{E}_s^{1/2}(t))^{1-\gamma/2}(1+\mathscr{E}_s^{1/2}(t)+\mathscr{E}_s^{s+2}(t))
\mathscr{E}_s^{1/2}(t)\\
& < C_s(1 + \gamma_1)(1 + \delta_{MS})^{1 - \gamma/2}(1 + \delta_{MS} + \delta_{MS}^{2s + 4}) \delta_{MS} 
\leq \frac{1}{4} < \frac{1}{2}.
\end{align*}
Due to the continuity of the energy functional $\mathscr{E}_s(t)$, there exists a small constant $\tau_0>0$ such that 
for all $t \in [0, T^\ast + \tau_0]$,
\begin{gather*}
    \max\{ 1,\frac{1}{ \sqrt{\chi} } \} C_{Sob} \mathscr{E}_s^{1/2} (t)\leq \frac{ \min_{1 \leq i \leq N} \bar{c}_i}{2 \max_{1 \leq i \leq N} \bar{c}_i} \\
   C_s (1 + \gamma_1) (1 + \mathscr{E}_s^{1/2} (t))^{1 - \gamma/2} (1 + \mathscr{E}_s^{1/2} (t) + \mathscr{E}_s^{s+2} (t)) 
\mathscr{E}_s^{1/2} (t) \leq \frac{1}{2} ,
\end{gather*}
contradicting the definition of $T^\ast$. Therefore, $T^\ast = + \infty$. We conclude that the inequality \eqref{energy estimate under assu} holds for all $t \in \mathbb{R}_+$.

Moreover, we prove the positivities of the concentrations and the temperature. Applying the Sobolev embedding and the assumption \eqref{assum for delta-MS} for the bound $\delta_{MS}$, we have, for all $t\in\mathbb{R}_+$
\[  \|\tilde{\mathbf{c}}\|_{L_x^\infty} (t) \leq C_{Sob} \|\tilde{\mathbf{c}}\|_{H_x^s} (t) \leq C_{Sob} \max_{1 \leq i \leq N} 
\bar{c}_i  \|\tilde{\mathbf{c}}\|_{H_x^s (\bar{\mathbf{c}}^{-1}) }(t) \leq C_{Sob} (\max_{1 \leq i \leq N} \bar{c}_i) \delta_{MS}
\leq \frac{ \min_{1 \leq i \leq N}\bar{c}_i}{2},
\]
\[  \|\tilde{T}\|_{L_x^\infty} (t) \leq C_{Sob} \|\tilde{T}\|_{H_x^s} (t) \leq C_{Sob} \delta_{MS} \leq \frac{1}{2}.
\]
Thus the positivities hold as
\[  \bar{c}_{i} + \lambda \tilde{c}_i (t,x) \geq \bar{c}_{i} - \|\tilde{\mathbf{c}}\|_{L_x^\infty} (t) \geq 
\frac{ \min_{1 \leq i \leq N} \bar{c}_i}{2} > 0, \quad \forall 1 \leq i \leq N, (t,x) \in \mathbb{R}_+ \times \mathbb{T}^3, \lambda \in (0,1]
\]
and 
\[1 + \lambda \tilde{T} (t,x) \geq 1 - \|\tilde{T}\|_{L_x^\infty} (t) \geq \frac{1}{2} > 0 \quad \forall (t,x) \in \mathbb{R}_+ \times 
\mathbb{T}^3, \lambda \in (0,1].\]

In addition, the flux-force relations \eqref{equa for M-S rela} and Proposition \ref{prop A-1} imply that there exists a positive constant $C_{MS}$, such that the following inequality holds for any $t \geq 0$,
\begin{equation}
    \begin{aligned}
  \|\tilde{\mathbf{U}}\|_{H_x^{s-1}} (t) = & \|(1 + \lambda \tilde{T})^{1 - \gamma/2} A (\mathbf{c})^{-1} \nabla_x \tilde{\mathbf{c}} 
  + (1 + \lambda \tilde{T})^{- \gamma/2} \nabla_x \tilde{T} A (\mathbf{c})^{-1} (\bar{\mathbf{c}} + \lambda \tilde{\mathbf{c}})\|_{H_x^s}  \\
& \leq C (\bar{\mathbf{c}}, s, N, \delta_{MS}, \lambda_A, \mu_A,C_{Sob},|\mathbb{T}^3|) ( \|\nabla_x \tilde{\mathbf{c}}\|_{H_x^{s-1}} 
+ \|\nabla_x \tilde{T}\|_{H_x^{s-1}} )\\
& \leq C_{MS} \delta_{MS}.\label{pert U bound in L-t-infty}
    \end{aligned}
\end{equation}
The constant $C_{MS}$ is independent of the parameters $\lambda$ and $\delta_{MS}$, since we impose an upper bound for $\delta_{MS}$ in the assumption.
\end{proof}

\subsection{Existence and Uniqueness of the Solution to the Maxwell-Stefan System}
\textbf{Existence of the perturbation solution}

We first emphasize that the results in Proposition \ref{proposition for energy esti for M-S} can be directly applied, since the assumptions there 
are the same as those in Theorem \ref{theorem for M-S in orthogonal version}. We will develop a so-called {\em pseudo-nonlinear iterative approximated method} on the time interval $[0,T_0]$ 
with any positive constant $T_0$, and initially set 
\[  \tilde{\mathbf{c}}^{(0)} := \tilde{\mathbf{c}}^{in}, \quad \tilde{T}^{(0)} := \tilde{T}^{in}.
\]
The assumptions on the initial data ensure that 
\[  \mathscr{E}_s^{(0)} := \|\tilde{\mathbf{c}}^{(0)}\|_{H_x^s (\bar{\mathbf{c}}^{-1}) }^2 + \|\tilde{T}^{(0)}\|_{H_x^s}^2 + \chi \|\sum_{i=1}^N \tilde{c}_i^{(0)}\|_{H_x^s} \leq \delta_{MS}^2.
\]
The assumptions (A2) and (A3$^\prime$) infer that we can define $\tilde{\mathbf{U}}^{(0)} := \tilde{\mathbf{U}}^{in}$, which satisfies 
not only $\tilde{\mathbf{U}}^{(0)} \in \mathrm{Span} (\mathbf{1})^\perp$ but also the flux-force relations. We assume that 
there are $\tilde{\mathbf{c}}^{(n)} \in L^\infty (0, T_0; H_x^s)$ and $\tilde{T}^{(n)} \in L^\infty (0, T_0; H_x^s)$ with $n\in \mathbb{N}^\ast$, satisfying that 
for any $(t,x) \in [0,T_0] \times \mathbb{T}^3$
\begin{equation}
     ( \bar{c}_{tot} + \lambda \tilde{c}_{tot}^{(n-1)} ) \nabla_x \tilde{T}^{(n)} + (1 + \lambda \tilde{T}^{(n-1)} ) 
     \nabla_x \tilde{c}_{tot}^{(n)} = 0 , \quad \text{with } \tilde{c}_{tot}^{(n)}=\sum_{i=1}^N \tilde{c}_i^{(n)},
\end{equation}
and for any $t\in[0,T_0]$
\[  \mathscr{E}_s^{(n)} (t) := \|\tilde{\mathbf{c}}^{(n)}\|_{H_x^s (\bar{\mathbf{c}}^{-1}) }^2 (t) + \|\tilde{T}^{(n)}\|_{H_x^s}^2 (t) 
+ \chi \|\tilde{c}_{tot}^{(n)}\|_{H_x^s} (t) \leq \delta_{MS}^2.
\]
Define $\tilde{\mathbf{U}}^{(n)} \in \mathrm{Span} (\mathbf{1})^\perp$ as 
\begin{align*}
 \tilde{\mathbf{U}}^{(n)} : =& (1 + \lambda \tilde{T}^{(n-1)})^{1-\gamma/2} A(\bar{\mathbf{c}} + \lambda \tilde{\mathbf{c}}^{(n-1)} )^{-1} 
\nabla_ x \tilde{\mathbf{c}}^{(n)}\\
& + (1 + \lambda \tilde{T}^{(n-1)})^{-\gamma/2} A(\bar{\mathbf{c}} + \lambda \tilde{\mathbf{c}}^{(n-1)} )^{-1} 
(\bar{\mathbf{c}} + \lambda \tilde{\mathbf{c}}^{(n-1))}) \nabla_x \tilde{T}^{(n)},
\end{align*}
then $\tilde{\mathbf{U}}^{(n)}$ belongs to $L^\infty (0, T_0; H_x^{s-1})$. 
By induction, we define $(\tilde{\mathbf{c}}^{(n+1)}, \tilde{T}^{(n+1)})$ as 
 \begin{equation}
   \nabla_x \tilde{c}_{tot}^{(n+1)} (1 + \lambda \tilde{T}^{(n)}) + \nabla_x \tilde{T}^{(n+1)} (\bar{c}_{tot} + \lambda 
   \tilde{c}_{tot}^{(n)} ) = 0, \quad \text{with }\tilde{c}_{tot}^{(n+1)} = \sum_{i=1}^N \tilde{c}_i^{(n+1)},
   \label{solvability of n+1}
    \end{equation}
    which enables us to define $\tilde{\mathbf{U}}^{(n+1)} \in \mathrm{Span} (\mathbf{1})^\perp$ as 
\begin{equation}
    \begin{aligned}
         \tilde{\mathbf{U}}^{(n+1)} : =& (1 + \lambda \tilde{T}^{(n)})^{1-\gamma/2} A(\bar{\mathbf{c}} + \lambda \tilde{\mathbf{c}}^{(n)} )^{-1} 
\nabla_ x \tilde{\mathbf{c}}^{(n+1)}\\
& + (1 + \lambda \tilde{T}^{(n)})^{-\gamma/2} A(\bar{\mathbf{c}} + \lambda \tilde{\mathbf{c}}^{(n)} )^{-1} 
(\bar{\mathbf{c}} + \lambda \tilde{\mathbf{c}}^{(n))}) \nabla_x \tilde{T}^{(n+1)}.
    \end{aligned}
\end{equation}
The quantities $\tilde{\mathbf{c}}^{(n+1)}$ and $\tilde{T}^{(n+1)}$ satisfy the following equations 
\begin{equation}
\begin{aligned}
    \partial_t \tilde{\mathbf{c}}^{(n+1)} + \nabla_x \cdot \left[ ( \bar{\mathbf{c}} + \lambda \tilde{\mathbf{c}}^{(n)} ) \left ( 
    \tilde{\mathbf{U}}^{(n+1)} -\frac{\langle \bar{\mathbf{c}} +\lambda \tilde{\mathbf{c}}^{(n)} , \tilde{\mathbf{U}}^{(n+1)} \rangle }
    {\bar{c}_{tot} + \lambda \tilde{c}_{tot}^{(n)} }   \mathbf{1} \right )    \right] \\
    - \nabla_x \cdot \left[ ( \bar{\mathbf{c}} + \lambda \tilde{\mathbf{c}}^{(n)} )  \frac{\alpha \nabla_x \tilde{c}_{tot}^{(n+1)}}
    {\bar{c}_{tot} + \lambda \tilde{c}_{tot}^{(n)} } \mathbf{1}   \right] =0, \label{iter mass conser}
\end{aligned}
\end{equation}
\begin{equation}
    \begin{aligned}
\partial_t \tilde{T}^{(n+1)} - \frac{2\alpha}{3} \nabla_x \cdot \left[ \frac{1 + \lambda \tilde{T}^{(n)}}
{\bar{c}_{tot} + \lambda \tilde{c}_{tot}^{(n)}} \nabla_x \tilde{c}_{tot}^{(n+1)} \right]
& - \frac{2 \alpha \lambda}{3} \frac{1 + \lambda \tilde{T}^{(n)}}{ (\bar{c}_{tot} + \lambda \tilde{c}_{tot}^{(n)})^2 } 
| \nabla_x \tilde{c}_{tot}^{(n+1)} |^2\\
& - \alpha \lambda \frac{ \nabla_x \tilde{c}_{tot}^{(n+1)} \cdot \nabla_x \tilde{T}^{(n+1)}}{ \bar{c}_{tot} +
\lambda \tilde{c}_{tot}^{(n)}} = 0,  \label{iter T equa}
    \end{aligned}
\end{equation}
supplemented with the initial data that for almost any $x\in\mathbb{T}^3$ 
\begin{equation}
\tilde{\mathbf{c}}^{(n+1)} (0,x) = \tilde{\mathbf{c}}^{in} (x), 
\quad  \tilde{T}^{(n+1)} (0,x) = \tilde{T}^{in} (x). \label{initial data set for per M-S n+1}
\end{equation}
The equation \eqref{iter mass conser} for $\tilde{\mathbf{c}}^{(n+1)}$ can also be written in a form similar to the equation 
\eqref{per mass-linear}. It has a natural derivation by taking the scalar product with $\mathbf{1}$ in $\mathbb{R}^N$,
\begin{equation}
\partial_t \tilde{c}_{tot}^{(n+1)} = \alpha \Delta_x \tilde{c}_{tot}^{(n+1)}. \label{equ for total con of n+1}
\end{equation}
The main features of the pseudo-nonlinear iterative approximated method are as follows: Although the equation \eqref{iter T equa} appears nonlinear due to the terms $| \nabla_x \tilde{c}_{tot}^{(n+1)} |^2$ and $\nabla_x \tilde{c}_{tot}^{(n+1)} \cdot \nabla_x \tilde{T}^{(n+1)}$, 
we clarify that it is actually linear with respect to $\tilde{T}^{(n+1)}$. 
This is because $\tilde{c}_{tot}^{(n+1)}$ is solvable due to the heat equation \eqref{equ for total con of n+1}.

Now we introduce the functionals 
\begin{gather}
\mathscr{E}_s^{(n+1)} (t) := \|\tilde{\mathbf{c}}^{(n+1)}\|_{H_x^s( \bar{\mathbf{c}}^{-1} )}^2 (t) + \|\tilde{T}^{(n+1)}\|_{H_x^s}^2 (t) 
+ \chi \|\tilde{c}_{tot}^{(n+1)}\|_{H_x^s}, \label{energy functional n+1}
\\
\mathscr{D}_s^{(n+1)} (t) := d_1 \|\tilde{\mathbf{U}}^{(n+1)} \|_{H_x^s(\omega^{(n)})}^2 (t) +d_2 \|\nabla_x\tilde{c}_{tot}^{(n+1)}\|_{H_x^s}^2 (t), \label{dissiaption functional n+1}
\end{gather}
with the notation $\omega^{(n)} := (1 + \lambda \tilde{T}^{(n)})^{\gamma/2 - 1}$. Following the above calculations in the energy estimate, we can obtain 
    \begin{align*}
        \frac{1}{2} \frac{\d}{\d t} & \mathscr{E}^{(n+1)}_s (t) + d_1\|\tilde{\mathbf{U}}^{(n+1)}\|_{H_x^s(\omega^{(n)})}^2 (t)\\
         + & \left ( \frac{\alpha}{\bar{c}_{tot}} + \frac{2\alpha}{3\bar{c}_{tot}^2} + \frac{6 (\frac{3}{2})^{1 - \frac{\gamma}{2}} 
        - 3 [1 + C_{Sob} (\mathscr{E}^{(n)}_s)^{1/2} (t) ]^{1 - \frac{\gamma}{2}} }{2 \lambda_A ( \min_{1 \leq i \leq N} \bar{c}_i)^2}  
        \right )  \|\nabla_x \tilde{c}_{tot}^{(n+1)}\|_{H_x^s}^2 (t)\\
    & \leq C_s (1 + G_1 + G_2 + G_1 G_2 + G_2^2)[ 1 + (\mathscr{E}^{(n)}_s)^{1/2} (t) ]^{1 - \gamma/2}\\
    &\times [1 + (\mathscr{E}^{(n)}_s)^{1/2} (t) + (\mathscr{E}^{(n)}_s)^{s+2} (t) ] 
    (\mathscr{E}^{(n)}_s)^{1/2} (t) \mathscr{D}^{(n+1)}_s (t).  \label{energy esti for n+1}
    \end{align*}
    The constant $C_s$ coincides with the one in inequality \eqref{original energy esti}. Here the functionals $G_1$ and $G_2$ depend on the intervals $I_1=[- \|\tilde{T}^{(n)}\|_{L_x^\infty} , +\|\tilde{T}^{(n)}\|_{L_x^\infty} ]$ and 
$I_2= [- \|\tilde{c}_{tot}^{(n)}\|_{L_x^\infty}, + \|\tilde{c}_{tot}^{(n)}\|_{L_x^\infty}]$. 
Since $\mathscr{E}_s^{(n)} (t) \leq \delta_{MS}^2$ for any $t\in[0,T_0]$, by repeating the previous discussions in the energy estimate, we finally can deduce that for any $t\in[0,T_0]$
\begin{equation}
 \mathscr{E}^{(n+1)}_s (t) + \int_0^t \mathscr{D}^{(n+1)}_s (\tau) \,d\tau \leq \mathscr{E}^{(n+1)}_s (0) \leq \delta_{MS}^2.
 \label{energy esti for n+1}
\end{equation}

Based on this inequality, there exist $\tilde{c}_{tot}^{(n+1)}$ to the equation \eqref{equ for total con of n+1} that 
belongs to $L^\infty (0, T_0; H_x^s)$ and 
$\nabla_x \tilde{c}_{tot}^{(n+1)}$ that belongs to $L^2(0,T_0;H_x^s)$. When $\tilde{c}_{tot}^{(n+1)}$ is known and 
the relation \eqref{solvability of n+1} holds, the equation \eqref{iter T equa} is linear to $\tilde{T}^{(n+1)}$. 
It is not hard to deduce that there exists $\tilde{T}^{(n+1)}$ belonging to $L^\infty(0,T_0;H_x^s)$ for this equation. 
Then \eqref{iter mass conser} is a linear hyperbolic-parabolic equation for $\tilde{\mathbf{c}}^{(n+1)}$, 
standard method raises the existence of $\tilde{\mathbf{c}}^{(n+1)}$ belonging to $L^\infty (0, T_0; H_x^s)$. 
Moreover, the relation $\tilde{c}_{tot}^{(n+1)} =\sum_{i=1}^N \tilde{c}_i^{(n+1)}$ holds 
since the solution to \eqref{equ for total con of n+1} is unique. 
At last, we derive $\tilde{\mathbf{U}}^{(n+1)} \in L^\infty (0,T_0; H_x^{s-1})$ from its definition, 
and $(\tilde{\mathbf{c}}^{(n+1)}, \tilde{\mathbf{U}}^{(n+1)},\tilde{T}^{(n+1)} )$ satisfies the inequality \eqref{energy esti for n+1}.

By induction, there exist sequences $(\tilde{\mathbf{c}}^{(n)})_{n \in \mathbb{N}}$ and $(\tilde{T}^{(n)})_{n \in \mathbb{N}}$, having a uniform bound. We rewrite the equation for $\tilde{T}^{(n+1)}$ as 
\begin{equation}
    \partial_t \tilde{T}^{(n+1)} +\frac{2 \alpha}{3} \Delta_x \tilde{T}^{(n+1)} +\frac{\alpha \lambda}{3} \frac{|\nabla_x \tilde{T}^{(n+1)}|^2} 
    {1 + \lambda \tilde{T}^{(n)}} =0, \label{transformation of the iter T equ}
\end{equation}
by applying the relation \eqref{solvability of n+1} in the form 
$\frac{\nabla_x \tilde{c}_{tot}^{(n+1)}}{\bar{c}_{tot} +\lambda \tilde{c}_{tot}^{(n)}} = - \frac{\nabla_x \tilde{T}^{(n+1)}}{1 +
\lambda \tilde{T}^{(n)}}$. Due to the Sobolev embedding $H_x^2 \hookrightarrow L_x^\infty$ and the assumption $s>3$, 
it is not hard to find a polynomial $\mathcal{P}_1$, with coefficients depending on $ \alpha, s$, 
such that for any $t \in [0,T_0]$, 
\begin{align*} 
\| \partial_t \tilde{T}^{(n+1)}\|_{L_x^2} & \lesssim \mathcal{P}_1 \left ( \|\tilde{T}^{(n)}\|_{H_x^s}, 
\|\tilde{T}^{(n+1)}\|_{H_x^s} \right ) \lesssim \mathcal{P}_1 (\delta_{MS}).
\end{align*}
Using the Aubin-Lions Theorem, it deduces that there exists $\tilde{T}^\infty \in L^\infty (0, T_0; H_x^s)$ 
such that for any $s^\prime <s$, up to a subsequence, 
\begin{itemize}
    \item  $(\tilde{T}^{(n)})_{n \in \mathbb{N}}$ converges to $\tilde{T}^\infty$ weakly in 
    $L^\infty (0, T_0 ; H_x^s)$ (specifically, weakly-$\ast$ in $L^\infty (0, T_0)$), 
    \item $(\tilde{T}^{(n)})_{n \in \mathbb{N}}$ converges to $\tilde{T}^\infty$ 
    strongly in $C^0(0, T_0; H_x^{s^\prime})$.
\end{itemize}
Integrating \eqref{transformation of the iter T equ} against a test function, the nonlinear terms in it can be bounded 
and dealt with by applying the strong convergence in $H_x^{s^\prime}$. Therefore, it can pass to the limit as $n \rightarrow +\infty $, writing as 
\begin{equation}
    \partial_t \tilde{T}^\infty + \frac{2 \alpha}{3} \Delta_x \tilde{T}^\infty +\frac{\alpha \lambda}{3} \frac{|\nabla_x \tilde{T}^\infty|^2} 
    {1 + \lambda \tilde{T}^\infty} =0.
\end{equation}

Similar discussions can be applied on the sequence $(\tilde{\mathbf{c}}^{(n)})_{n \in \mathbb{N}}$, deducing that there exists $\tilde{\mathbf{c}}^\infty \in L^\infty (0,T_0; H_x^s)$, such that for any $s^\prime <s$, up to a subsequence,
\begin{itemize}
    \item $(\tilde{\mathbf{c}}^{(n)})_{n \in \mathbb{N}}$ converges to $\tilde{\mathbf{c}}^\infty$ 
    weakly in $L^\infty (0,T_0 ; H_x^s)$ (specifically, weakly-$\ast$ in $L^\infty (0, T_0)$),
    \item $(\tilde{\mathbf{c}}^{(n)})_{n \in \mathbb{N}}$ converges to $\tilde{\mathbf{c}}^\infty$ 
    strongly in $C^0(0,T_0;H_x^{s^\prime})$.
\end{itemize}
Define $\tilde{c}_{tot}^\infty := \sum_{i=1}^N \tilde{c}_i^\infty$, it is not hard to notice that the equation \eqref{solvability of n+1} can also pass to 
the limit as $n \rightarrow +\infty$, leading to
\[  \nabla_x \tilde{c}_{tot}^\infty (1 +\lambda \tilde{T}^\infty ) + \nabla_x \tilde{T}^\infty ( \bar{c}_{tot} +\lambda \tilde{c}_{tot}^\infty )=0.
\]
Then we can define $\tilde{\mathbf{U}}^\infty \in \mathrm{Span} (\mathbf{1})^\perp$ as 
\[  \tilde{\mathbf{U}}^\infty = (1 + \lambda \tilde{T}^\infty)^{1-\gamma/2} [A (\bar{\mathbf{c}} + \lambda \tilde{\mathbf{c}}^\infty)]^{-1} 
\nabla_x \tilde{\mathbf{c}} + (1 + \lambda \tilde{T}^\infty)^{-\gamma/2} [A (\bar{\mathbf{c}} + \lambda \tilde{\mathbf{c}}^\infty)]^{-1} (\bar{\mathbf{c}} + \lambda  \tilde{\mathbf{c}}^\infty) \nabla_x \tilde{T},
\]
which belongs to $L^\infty (0,T_0; H_x^{s-1})$. Since we can deal with the nonlinear terms by applying the strong convergences, the evolution equations for 
$(\tilde{\mathbf{c}}^{(n)})_{n \in \mathbb{N}}$ can also pass to the limit 
\begin{equation*}
\begin{aligned}
    \partial_t \tilde{\mathbf{c}}^\infty + \nabla_x \cdot \left[ ( \bar{\mathbf{c}} + \lambda \tilde{\mathbf{c}}^\infty ) \left ( 
    \tilde{\mathbf{U}}^\infty -\frac{\langle \bar{\mathbf{c}} +\lambda \tilde{\mathbf{c}}^\infty , \tilde{\mathbf{U}}^\infty \rangle }
    {\bar{c}_{tot} + \lambda \tilde{c}_{tot}^\infty }   \mathbf{1} \right )    \right] 
    - \nabla_x \cdot \left[ ( \bar{\mathbf{c}} + \lambda \tilde{\mathbf{c}}^\infty )  \frac{\alpha \nabla_x \tilde{c}_{tot}^\infty }
    {\bar{c}_{tot} + \lambda \tilde{c}_{tot}^\infty } \mathbf{1}   \right] =0. 
\end{aligned}
\end{equation*}
Here we use the fact that $(1 + \lambda \tilde{T}^{(n)})^{\gamma/2}$ converges strongly to $(1 + \lambda \tilde{T}^\infty)^{\gamma/2}$ 
in $H_x^{s^\prime}$, obtained from the Dominated Convergence Theorem. We conclude that 
$(\tilde{\mathbf{c}}^\infty, \tilde{\mathbf{U}}^\infty, \tilde{T}^\infty)$ is a classical solution to the system of equations \eqref{per mass-linear}-\eqref{equa for M-S rela}-\eqref{equa for per T}.  Moreover, it is not hard to check 
$\partial_t \tilde{\mathbf{U}}^\infty \in L^\infty (0,T_0; L_x^2)$, resulting in 
$\tilde{\mathbf{U}}^\infty \in C^0 (0,T_0; H_x^{s^\prime-1})$. 
Finally, by using the continuity of $(\tilde{\mathbf{c}}^\infty, \tilde{\mathbf{U}}^\infty, \tilde{T}^\infty)$, we conclude that the following 
inequality holds for $t\in [0,T_0]$
\[  \mathscr{E}_s^\infty(t) := \|\tilde{\mathbf{c}}^\infty\|_{H_x^s (\bar{\mathbf{c}}^{-1}) }^2 (t) + 
\|\tilde{T}^\infty\|_{H_x^s}^2 (t) + \chi \|\tilde{c}_{tot}^\infty\|_{H_x^s}^2 \leq \delta_{MS}^2,
\]
owing to the energy estimates established in the previous. 

Reset $t=T_0$ as an initial moment and repeat the above processes on the time interval 
$[T_0, 2T_0]$, the corresponding energy functional can be bounded as $\mathscr{E}_s^\infty (2T_0) \leq \delta_{MS}^2$. 
By induction, we conclude that there is a classical solution for the perturbative system 
\eqref{per mass-linear}-\eqref{equa for M-S rela}-\eqref{equa for per T} on $t \in \mathbb{R}_+$.

\textbf{Uniqueness of the perturbation solution}

Consider $(\tilde{\mathbf{c}}, \tilde{\mathbf{U}}, \tilde{T})$ and $(\tilde{\mathbf{d}}, \tilde{\mathbf{V}}, \tilde{\theta})$ are two solutions to the system of equations \eqref{per mass-linear}-\eqref{equa for M-S rela}-\eqref{equa for per T} and satisfy the assumptions in Theorem \ref{theorem for M-S in orthogonal version}. We introduce the notations frequently used below
\[  \mathbf{d} = \bar{\mathbf{c}} + \lambda \tilde{\mathbf{d}}, \quad \tilde{d}_{tot} = \sum_{i=1}^N \tilde{d}_i, 
\quad \tilde{\bar{v}} = \frac{\alpha \nabla_x \tilde{d}_{tot}}{\bar{c}_{tot}}, \quad 
M_1 (\tilde{\bar{v}})= \frac{\tilde{d}_{tot}}{\bar{c}_{tot} + \lambda \tilde{d}_{tot}} \tilde{\bar{v}},
\]
\[ \tilde{\mathbf{V}}^\ast = \tilde{\mathbf{V}} - \frac{\langle \bar{\mathbf{c}}, \tilde{\mathbf{V}}\rangle}{\bar{c}_{tot}} \mathbf{1}, 
\quad M_2 (\tilde{\mathbf{V}}) = \frac{\langle \tilde{\mathbf{d}}, \tilde{\mathbf{V}} \rangle}{\bar{c}_{tot}} 
- \frac{\tilde{d}_{tot} \langle \bar{\mathbf{c}} + \lambda \tilde{\mathbf{d}}, \tilde{\mathbf{V}} \rangle}{\bar{c}_{tot} 
(\bar{c}_{tot} + \lambda \tilde{d}_{tot})},
\]
and denote $\tilde{\mathbf{h}} = \tilde{\mathbf{c}} - \tilde{\mathbf{d}}$, $\tilde{\mathbf{W}} = \tilde{\mathbf{U}} - \tilde{\mathbf{V}}$ with $\tilde{\mathbf{W}}\in \mathrm{Span} (\mathbf{1})^\perp$ and 
$\tilde{I} = \tilde{T} - \tilde{\theta}$. The corresponding notations are $\tilde{h}_{tot} = \langle \tilde{\mathbf{h}}, \mathbf{1} \rangle$ and 
$\tilde{\mathbf{W}}^\ast = \tilde{\mathbf{W}} - \frac{\langle \bar{\mathbf{c}}, \tilde{\mathbf{W}} \rangle}{\bar{c}_{tot}} \mathbf{1}$, 
$\tilde{\bar{w}} = \frac{\alpha \nabla_x \tilde{h}_{tot}}{\bar{c}_{tot}}$. 
Recalling the equation \eqref{equation for per total concentration} that $\tilde{c}_{tot}$ and $\tilde{d}_{tot}$ satisfy, the equation for $\tilde{h}_{tot}$ is
\[ \partial_t \tilde{h}_{tot} = \alpha \Delta_x \tilde{h}_{tot}, \quad \alpha >0,
\]
with the initial data $\tilde{h}_{tot} (0,x) = 0$ a.e. $x \in \mathbb{T}^3$. It is clear that $\tilde{h}_{tot} = 0$ for a.e. $(t,x) \in \mathbb{R}_+ \times \mathbb{T}^3$. 
Next, the subtraction of the evolution equations for temperatures is 
\begin{equation}
\begin{aligned}
\partial_t \tilde{I} - \frac{2 \alpha \lambda }{3} \nabla_x \cdot \left[ \frac{\nabla_x \tilde{c}_{tot}}{\bar{c}_{tot} + \lambda 
\tilde{d}_{tot}} \tilde{I} \right] - \frac{2 \alpha \lambda^2}{3} \frac{ |\nabla_x \tilde{c}_{tot}|^2}
{ (\bar{c}_{tot} + \lambda \tilde{d}_{tot})^2} \tilde{I} - \alpha \lambda \frac{\nabla_x \tilde{d}_{tot}}{\bar{c}_{tot} 
+ \lambda \tilde{d}_{tot}} \cdot \nabla_x \tilde{I} = 0.
\end{aligned}
\end{equation}
It uses the following calculations 
\[  \frac{1}{\bar{c}_{tot} + \lambda \tilde{c}_{tot}} - \frac{1}{\bar{c}_{tot} + \lambda \tilde{d}_{tot}} = 
\frac{-\lambda \tilde{h}_{tot}}{ (\bar{c}_{tot} + \lambda \tilde{c}_{tot}) (\bar{c}_{tot} + \lambda \tilde{d}_{tot}) } = 0,
 \] 
\[  \frac{1}{ (\bar{c}_{tot} + \lambda \tilde{c}_{tot})^2 } - \frac{1}{ (\bar{c}_{tot} + \lambda \tilde{d}_{tot})^2 } 
= \frac{ - 2 \lambda \bar{c}_{tot} \tilde{h}_{tot} - \lambda^2 (\tilde{c}_{tot} + \tilde{d}_{tot}) \tilde{h}_{tot}}
{ (\bar{c}_{tot} + \lambda \tilde{c}_{tot})^2 (\bar{c}_{tot} + \lambda \tilde{d}_{tot})^2 } = 0,
\]
and
\[  |\nabla_x \tilde{c}_{tot}|^2 - |\nabla_x\tilde{d}_{tot}|^2 = \nabla_x \tilde{c}_{tot} \cdot \nabla_x \tilde{h}_{tot} 
+ \nabla_x \tilde{d}_{tot} \cdot \tilde{h}_{tot} = 0,
\]
which hold since $\tilde{h}_{tot}=0$. Moreover, the solvability conditions for $\tilde{\mathbf{U}}$ and $ \tilde{\mathbf{V}}$ provide relations
\[\nabla_x \tilde{I} (\bar{c}_{tot} + \lambda \tilde{d}_{tot}) + \lambda \nabla_x \tilde{d}_{tot} \tilde{I} = 0,
\]
and
\[  \nabla_x \tilde{I} (\bar{c}_{tot} + \lambda \tilde{d}_{tot}) + \lambda \nabla_x \tilde{c}_{tot} \tilde{I} = 0.
\]
The difference is caused by the equality
\begin{align*}
\nabla_x \tilde{c}_{tot} \tilde{T} - \nabla_x \tilde{d}_{tot} \tilde{\theta} = \nabla_x \tilde{c}_{tot} \tilde{T} 
- \nabla_x \tilde{d}_{tot} \tilde{T} + \nabla_x \tilde{d}_{tot} \tilde{T} - \nabla_x \tilde{d}_{tot} \tilde{\theta} 
= \nabla_x \tilde{h}_{tot} \tilde{T} + \nabla_x \tilde{d}_{tot} \tilde{I}\\
= \nabla_x \tilde{c}_{tot} \tilde{T} - \nabla_x\tilde{c}_{tot} \tilde{\theta} + \nabla_x \tilde{c}_{tot} \tilde{\theta} - \nabla_x 
\tilde{d}_{tot} \tilde{\theta} = \nabla_x \tilde{c}_{tot} \tilde{I} + \nabla_x \tilde{h}_{tot} \tilde{\theta}.
\end{align*}
We further deduce the expression 
\[\nabla_x \tilde{I} = - \frac{\lambda \tilde{d}_{tot}}{\bar{c}_{tot} + \lambda \tilde{d}_{tot}} \tilde{I} 
= - \frac{\lambda \tilde{c}_{tot}}{\bar{c}_{tot} + \lambda \tilde{d}_{tot}} \tilde{I}.
\]
Now combining the relation $|\tilde{c}_{tot}|^2 = |\tilde{d}_{tot}|^2$, we can obtain the $L_x^2$ norm of $\tilde{I}$:
\begin{equation}
\frac{1}{2} \frac{\d}{\d t}\|\tilde{I} \|_{L_x^2}^2 = \frac{\alpha \lambda^2}{3} \int_{\mathbb{T}^3} \frac{|\nabla_x \tilde{d}_{tot}|^2}
{ (\bar{c}_{tot} + \lambda \tilde{d}_{tot})^2 } \tilde{I}^2 \,\d x \lesssim \delta_{MS}^2 \|\tilde{I}\|_{L_x^2}^2,
\end{equation}
where we use the assumptions that $\|\tilde{d}_{tot}\|_{H_x^s}^2 \leq \delta_{MS}^2$ with $s>3$ and the Sobolev embedding. We conclude that $\tilde{I} = 0$ 
almost everywhere in $(t,x) \in \mathbb{R}_+ \times \mathbb{T}^3$, deduced from Grönwall's inequality given the initial data $\tilde{I}^{in} = 0$.

Finally, we focus on the subtraction of the mass conservation equations and the flux-force relations, writing as
\begin{equation}
    \begin{aligned}
    \partial_t \tilde{\mathbf{h}} + \bar{\mathbf{c}} (\nabla_x \cdot \tilde{\mathbf{W}}^\ast) & + \lambda \nabla_x \cdot 
    (\tilde{\mathbf{h}} \tilde{\mathbf{U}}^\ast) + \lambda \nabla_x \cdot (\tilde{\mathbf{d}} \tilde{\mathbf{W}}^\ast) - \lambda \nabla_x \cdot 
    (\tilde{\mathbf{h}} \tilde{\bar{u}}) - \lambda \nabla_x \cdot [ \bar{\mathbf{c}} (N_2 (\tilde{\mathbf{U}}) - M_2 (\tilde{\mathbf{V}}) ) ]\\
    & - \lambda^2 \nabla_x \cdot (\tilde{\mathbf{h}} N_2 (\tilde{\mathbf{U}})) - \lambda^2 \nabla_x \cdot [ \tilde{\mathbf{d}} 
    (N_2 (\tilde{\mathbf{U}}) - M_2 (\tilde{\mathbf{V}}) ) ] + \lambda^2 \nabla_x \cdot  ( \tilde{\mathbf{h}} N_1 (\tilde{\bar{u}}) ) = 0,
    \label{subtra of mass cons}
\end{aligned}
\end{equation}
\begin{equation}
    \begin{aligned}
    \nabla_x \tilde{\mathbf{h}} (1 + \lambda \tilde{\theta}) + \lambda \tilde{\mathbf{h}} \nabla_x \tilde{\theta} = (1 + \lambda 
    \tilde{\theta})^{\gamma/2} [A (\mathbf{c}) - A(\mathbf{d})] \tilde{\mathbf{U}} + (1 + \lambda \tilde{\theta})^{\gamma/2} 
    A (\mathbf{d}) \tilde{\mathbf{W}} . \label{subtra of M-S rela}
      \end{aligned}
\end{equation} 
The deduction uses the fact that $\tilde{h}_{tot} = 0$, $\tilde{I} = 0$ $\tilde{T} = \tilde{\theta}$, 
$\tilde{\bar{w}} = \frac{\alpha \nabla_x \tilde{h}_{tot}}{\bar{c}_{tot}} = 0$, and 
\[ N_1 (\tilde{\bar{u}}) - M_1(\tilde{\bar{v}}) 
=\frac{\tilde{c}_{tot} \tilde{\bar{w}}}{\bar{c}_{tot}+\lambda \tilde{c}_{tot}}  - \frac{\lambda \tilde{c}_{tot} \tilde{h}_{tot} \tilde{\bar{v}}}{(\bar{c}_{tot}+\lambda \tilde{c}_{tot})(\bar{c}_{tot}+\lambda \tilde{d}_{tot})} +\frac{\tilde{h}_{tot} \tilde{\bar{v}}}{\bar{c}_{tot}+\lambda \tilde{c}_{tot}}=0.
\]
 By applying the expression
\begin{equation}
N_2 (\tilde{\mathbf{U}}) - M_2 (\tilde{\mathbf{V}}) = \frac{\langle \mathbf{h}, \tilde{\mathbf{U}} \rangle 
+ \langle \mathbf{d}, \tilde{\mathbf{W}}\rangle}{\bar{c}_{tot}} - \frac{\tilde{d}_{tot} (\lambda 
\langle \tilde{\mathbf{h}}, \tilde{\mathbf{U}} \rangle + \langle \bar{\mathbf{c}} + \lambda \tilde{\mathbf{d}}, \tilde{\mathbf{W}}\rangle)} 
 {\bar{c}_{tot} (\bar{c}_{tot} + \lambda \tilde{d}_{tot})},
\end{equation}
we obtain the $L_x^2 (\bar{\mathbf{c}}^{-1})$ norm of $\tilde{\mathbf{h}}$ as 
\begin{align*}
\frac{1}{2} \frac{\d}{\d t} \|\tilde{\mathbf{h}}\|_{L_x^2 (\bar{\mathbf{c}}^{-1}) }^2 & = \int_{\mathbb{T}^3} 
\langle \tilde{\mathbf{W}}, \nabla_x\tilde{\mathbf{h}} \rangle \,\d x + \lambda \int_{\mathbb{T}^3} 
\langle \tilde{\mathbf{h}} \tilde{\mathbf{U}}^\ast, \nabla_x \tilde{\mathbf{h}} \rangle_{ \bar{\mathbf{c}}^{-1} } \,\d x 
+ \lambda \int_{\mathbb{T}^3} \langle \tilde{\mathbf{d}} \tilde{\mathbf{W}}^\ast, \nabla_x \tilde{\mathbf{h}} 
\rangle_{ \bar{\mathbf{c}}^{-1} } \,\d x\\
& - \lambda \int_{\mathbb{T}^3} \langle \tilde{\mathbf{h}} \tilde{\bar{u}}, \nabla_x \tilde{\mathbf{h}} \rangle_{ \bar{\mathbf{c}}^{-1} } 
\,\d x - \lambda^2 \int_{\mathbb{T}^3} \langle \tilde{\mathbf{h}} N_2 (\tilde{\mathbf{U}}), \nabla_x \tilde{\mathbf{h}} 
\rangle_{ \bar{\mathbf{c}}^{-1} } \,\d x\\
& - \lambda^2 \int_{\mathbb{T}^3}\langle \tilde{\mathbf{d}} \left ( 
  \frac{\langle \tilde{\mathbf{h}}, \tilde{\mathbf{U}} \rangle }{\bar{c}_{tot}} - \frac{\lambda \tilde{d}_{tot} 
  \langle \tilde{\mathbf{h}}, \tilde{\mathbf{U}}\rangle }{\bar{c}_{tot} (\bar{c}_{tot} + \lambda \tilde{d}_{tot})}    \right ), 
  \nabla_x \tilde{\mathbf{h}} \rangle_{ \bar{\mathbf{c}}^{-1} } \,\d x\\
& - \lambda^2 \int_{\mathbb{T}^3} \langle \tilde{\mathbf{d}} \left ( 
  \frac{\langle \tilde{\mathbf{d}}, \tilde{\mathbf{W}} \rangle }{\bar{c}_{tot}} - \frac{\tilde{d}_{tot} 
  \langle \bar{\mathbf{c}} + \lambda \tilde{\mathbf{d}}, \tilde{\mathbf{W}} \rangle }{\bar{c}_{tot} (\bar{c}_{tot} 
  + \lambda \tilde{d}_{tot})}    \right ), \nabla_x\tilde{\mathbf{h}} \rangle_{ \bar{\mathbf{c}}^{-1} } \,\d x\\
& + \lambda^2 \int_{\mathbb{T}^3} \langle \tilde{\mathbf{h}} N_1 (\tilde{\bar{u}}), \nabla_x \tilde{\mathbf{h}} 
\rangle_{ \bar{\mathbf{c}}^{-1} } \,\d x .
\end{align*}
Replacing $\nabla_x \tilde{\mathbf{h}}$ as we have done in the energy estimate, the first integral on the right hand side can be bounded as 
\begin{align*}
    \int_{\mathbb{T}^3} \langle \tilde{\mathbf{W}}, \nabla_x\mathbf{h} \rangle \,\d x = & \int_{\mathbb{T}^3} \langle 
      \tilde{\mathbf{W}}, (1 + \lambda \tilde{\theta})^{\gamma/2-1}  A (\mathbf{d}) \tilde{\mathbf{W}}\rangle \,\d x \\
      &  + \int_{\mathbb{T}^3} \langle \tilde{\mathbf{W}}, (1 + \lambda \tilde{\theta})^{\gamma/2 - 1} [ A (\mathbf{c}) - A (\mathbf{d}) ] 
    \tilde{\mathbf{U}} \rangle \,\d x - \lambda \int_{\mathbb{T}^3} \langle \tilde{\mathbf{W}}, \tilde{\mathbf{h}} \nabla_x \tilde{\theta}\rangle \,\d x\\
    & \leq -\frac{\lambda_A}{2} (\min_{1 \leq i \leq N} \bar{c}_i)^2  \|\tilde{\mathbf{W}}\|_{L_x^2}^2 + \delta_{MS} 
    \mathcal{P} (\delta_{MS})  \|\tilde{\mathbf{h}}\|_{L_x^2}  \|\tilde{\mathbf{W}}\|_{L_x^2},
\end{align*}
where $\mathcal{P} (\delta_{MS}) > 0$ is a polynomial in $\delta_{MS}$ whose coefficients only depend on 
$\bar{\mathbf{c}}$ and $N$. This inequality also uses the expression of the MS matrix $A$ and the estimate
\[  |c_i c_j - d_i d_j| = |c_i c_j - c_i d_j + c_i d_j - d_i d_j| = \lambda |c_i \tilde{h}_j + \tilde{h}_i d_j| \leq \lambda 
C (|\tilde{h}_i| + |\tilde{h}_j|).
\]
After calculations, we finally deduce that 
\begin{align*}
  \frac{1}{2} \frac{\d}{\d t} \|\tilde{\mathbf{h}}\|_{L_x^2 (\bar{\mathbf{c}}^{-1}) }^2 \leq & - \frac{\lambda_A}{2} (\min_{1 \leq i \leq N} \bar{c}_i)^2  
\|\tilde{\mathbf{W}}\|_{L_x^2}^2 + \delta_{MS} \mathcal{P}^\prime (\delta_{MS}) (\|\tilde{\mathbf{h}}\|_{L_x^2} + 
\|\tilde{\mathbf{W}}\|_{L_x^2})^2\\
  \leq & \left (- \frac{\lambda_A}{2} (\min_{1 \leq i \leq N} \bar{c}_i)^2 +2 \delta_{MS} \mathcal{P}^\prime (\delta_{MS}) \right )
\|\tilde{\mathbf{W}}\|_{L_x^2}^2 \\
&+ 2  \delta_{MS} \mathcal{P}^\prime (\delta_{MS}) \|\tilde{\mathbf{h}}\|_{L_x^2(\bar{\mathbf{c}}^{-1})}^2.   
\end{align*}
Here we also replace $\nabla_x \tilde{\mathbf{h}}$ using the equation \eqref{subtra of M-S rela}. $\mathcal{P}^\prime (\delta_{MS}) > 0$ denotes another polynomial in $\delta_{MS}$. Choosing $\delta_{MS}$ small enough so that 
$- \frac{\lambda_A}{2} (\min_{1 \leq i \leq N} \bar{c}_i)^2 +2 \delta_{MS} \mathcal{P}^\prime (\delta_{MS}) \leq 0$, we deduce that 
\[  \frac{1}{2} \frac{\d}{\d t} \|\tilde{\mathbf{h}}\|_{L_x^2 (\bar{\mathbf{c}}^{-1}) }^2  \leq 2 (\max_{1\leq i\leq N}\bar{c}_i) \delta_{MS} \mathcal{P}^\prime (\delta_{MS}) \|\tilde{\mathbf{h}}\|_{L_x^2(\bar{\mathbf{c}}^{-1})}.
\]
Grönwall's inequality implies that $\tilde{\mathbf{h}}(t,x)=0$ for almost any $(t,x)\in \mathbb{R}_+ \times \mathbb{T}^3$, due to 
the initial data $\tilde{\mathbf{h}}(0,x)=0$ for any $x\in \mathbb{T}^3$. It means $\tilde{\mathbf{c}}=\tilde{\mathbf{d}}$, then the 
equation \eqref{subtra of M-S rela} becomes
\[ (1 + \lambda \tilde{\theta})^{\gamma/2} A (\mathbf{d}) \tilde{\mathbf{W}} = 0 ,
\]
inferring that $\tilde{\mathbf{W}} = 0$, as $1+\lambda \tilde{\theta}(t,x)>0$ for almost any $(t,x)\in \mathbb{R}_+ \times \mathbb{T}^3$ and $\tilde{\mathbf{W}} \in \mathrm{Span}  (\mathbf{1})^\perp$. We conclude that the solution $(\tilde{\mathbf{c}}, \tilde{\mathbf{U}}, \tilde{T})$ is equal to 
$(\tilde{\mathbf{d}}, \tilde{\mathbf{V}}, \tilde{\theta})$ and the uniqueness is established.

\section{The Diffusion Asymptotics Process}\label{Sec:4}
We again emphasize that we only consider the Boltzmann equations with collision kernels satisfying assumptions (H1)-(H2)-(H3)-(H4) 
with $\gamma \in [0,1]$ (i.e. the hard potential and the Maxwellian potential cases) in this Section, 
since we need the explicit spectral gap for the operator 
$\mathbf{L}$ \cite{briant2016ARMAglobal,mouhot2016SIAMJMAhypercoercivity}. 

In the first part, we will establish the hypocoercive properties satisfied by the Boltzmann operators $\mathbf{L}^\varepsilon$ and 
$\mathbf{\Gamma}$, some estimates for the source term $\mathbf{S}^\varepsilon$ following methods in \cite{briant2021stability}, and a crucial Poincaré inequality for $\mathbf{f}$, as preparations. The proof for the perturbative Cauchy theory of the Boltzmann multi-species equations states in the second part.
\subsection{Preparations for A Priori Estimate of the Perturbation $\mathbf{f}$}
We consider the local Maxwellian $\mathbf{M}^\varepsilon$ defined in \eqref{def of M-i}, whose macroscopic quantities are the unique solution in Theorem \ref{theorem for M-S in main result} in the form \eqref{Perturbative form of the perturbations for M-S}. These macroscopic quantities have the following properties and estimates 
\begin{gather*}
\min_{1 \leq i \leq N} \bar{c}_i + \varepsilon \tilde{c}_i (t,x) >0, \quad 1 + \varepsilon \tilde{T} (t,x) >0,\quad 
\text{a.e. } (t,x) \in \mathbb{R}_+ \times \mathbb{T}^3,\\
\|\tilde{\mathbf{c}}\|_{L_t^\infty H_x^s( \bar{\mathbf{c}}^{-1} )} \leq \delta_{MS},\\
\|\tilde{T}\|_{L_t^\infty H_x^s} \leq \delta_{MS}.
\end{gather*}
Moreover, we can easily find a positive constant $C_{MS}$ from the decomposition \eqref{velocity decomposition}, such that 
\[ \|\tilde{\mathbf{u}}\|_{L_t^\infty H_x^{s-1}} \leq C_{MS} \delta_{MS}.
\]
The constant $C_{MS}$ is independent of $\varepsilon$ and $\delta_{MS}$. Computation details for its $\mathrm{Span} (\mathbf{1})^\perp$ part lie in \eqref{pert U bound in L-t-infty} 

We first present some estimates for the local Maxwellian $\mathbf{M}^\varepsilon$. 
Following the notations in \cite{briant2021stability}, we define the local Maxwellian 
$\bm{\mathcal{M}}^\varepsilon = \mathbf{M}(\mathbf{1}, \varepsilon \mathbf{u}, T \mathbf{1})$ and the global Maxwellian 
$\bm{\mathcal{M}} = \mathbf{M}(\mathbf{1}, 0, \mathbf{1})$. Then we have the expressions 
$\mathbf{M}^\varepsilon = (\bar{\mathbf{c}} + \varepsilon \tilde{\mathbf{c}}) \bm{\mathcal{M}}$ and 
$\bm{\mu} = \bar{\mathbf{c}} \bm{\mathcal{M}}$.
\begin{lemma}\label{lemma for bound of M-i}
 Consider the local Maxwellian $\mathbf{M}^\varepsilon$ defined in \eqref{def of M-i}, where 
$(\mathbf{c}, \mathbf{u},T )$ is the unique perturbation solution 
 stated in Theorem \ref{theorem for M-S in main result}. Then for any 
 $\varepsilon \in (0,1]$ and $1 \leq i \leq N$, when parameter 
 \begin{equation}\label{delta comm-low}
 \delta \in (0, 1 - \delta_{MS}),
 \end{equation}
 there exist explicit constants $C_\delta^{low}$ and $\mathcal{R}_i^{low}(\delta)$, independent of $\varepsilon$, such that for any 
  $(t,x,v) \in \mathbb{R}_+ \times \mathbb{T}^3 \times \mathbb{R}^3$,
 \begin{equation}\label{lower bound for M-i}
  M_i^\varepsilon (t,x,v) \geq C_\delta^{low} \mathcal{R}_{i}^{low}(\delta) c_i \mathcal{M}_i^{1/\delta}.
 \end{equation}
 
For any $\varepsilon \in (0,1]$ and $1 \leq i \leq N$, when parameter 
\begin{equation}
        \delta \in (0, \frac{1}{1 + \delta_{MS}} ),  \label{delta comm-up}
        \end{equation}
there exist explicit constants $C_\delta^{up}$ and $\mathcal{R}_i^{up}(\delta)$, such that for any 
  $(t,x,v) \in \mathbb{R}_+ \times \mathbb{T}^3 \times \mathbb{R}^3$,
 \begin{equation}\label{upper bound for M-i}
        M_i^\varepsilon (t,x,v)\leq C_\delta^{up} \mathcal{R}_i^{up} (\delta) c_i \mathcal{M}_i^\delta .
        \end{equation}
        
  Moreover, for any $\varepsilon \in (0,1]$ and $1 \leq i \leq N$, when $\delta$ ranges in \eqref{delta comm-up}, there 
  exists a constant $\mathcal{R}_i^{sub} (\delta)$, such that we can bound the following subtraction, for any 
  $(t,x,v) \in \mathbb{R}_+ \times \mathbb{T}^3 \times \mathbb{R}^3$,
   \begin{equation}\label{upper bound for sub of M-i-eps and M-i}
       |\mathcal{M}_i^\varepsilon - \mathcal{M}_i| (t,x,v)\leq C_\delta^{up} (\varepsilon |u_i| + \varepsilon |\tilde{T}|) \mathcal{R}_i^{sub} 
       (\delta) \mathcal{M}_i^\delta.
       \end{equation} 
\end{lemma}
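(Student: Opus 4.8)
The plan is to compare the local Maxwellian $M_i^\varepsilon$ with a power of the global Maxwellian $\mathcal M_i$, by absorbing the discrepancies in temperature and in the drift velocity $\varepsilon u_i$ into Gaussian exponents with a slightly different rate. First I would record the explicit quotient: since $M_i^\varepsilon(t,x,v) = c_i(t,x)(m_i/2\pi T)^{3/2}\exp\{-m_i|v-\varepsilon u_i|^2/2T\}$ and $\mathcal M_i(v) = (m_i/2\pi)^{3/2}\exp\{-m_i|v|^2/2\}$, one has
\[
\frac{M_i^\varepsilon}{c_i\,\mathcal M_i^{1/\delta}} \;=\; T^{-3/2}\exp\!\left\{\tfrac{m_i|v|^2}{2\delta} - \tfrac{m_i|v-\varepsilon u_i|^2}{2T}\right\}.
\]
The exponent is a quadratic in $v$ whose leading coefficient is $\tfrac{m_i}{2}(\tfrac1\delta - \tfrac1T)$. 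Under the lower-bound range \eqref{delta comm-low}, $\delta < 1-\delta_{MS}$, and the a priori bound $\|\tilde T\|_{L^\infty_{t,x}}\le C_{Sob}\|\tilde T\|_{H^s_x}\le C_{Sob}\delta_{MS}$ (Sobolev embedding, $s>3$) gives $T = 1+\varepsilon\tilde T \in (1-\delta_{MS},1+\delta_{MS})$ for all $\varepsilon\in(0,1]$; hence $1/\delta - 1/T > 0$ is bounded below by a positive constant depending only on $\delta$ and $\delta_{MS}$. So the quadratic tends to $+\infty$ and attains a finite minimum over $v\in\mathbb R^3$; completing the square, that minimum equals a constant of the form $-\tfrac{m_i}{2}\cdot\tfrac{(\varepsilon u_i)^2}{T}\cdot\tfrac{1/\delta}{1/\delta - 1/T}$, which is bounded below uniformly in $\varepsilon$ and $x$ by $\|\tilde{\mathbf u}\|_{L^\infty_t H^{s-1}_x}\le C_{MS}\delta_{MS}$. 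Combining with the prefactor $T^{-3/2}\ge (1+\delta_{MS})^{-3/2}$ yields \eqref{lower bound for M-i} with $C_\delta^{low}$ collecting the $\delta$-dependent constants and $\mathcal R_i^{low}(\delta)$ the residual mass/velocity factors.

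For the upper bound I would argue symmetrically. In the range \eqref{delta comm-up}, $\delta < 1/(1+\delta_{MS})$, so $1/\delta > 1+\delta_{MS} > T$, and this time the leading coefficient of $\tfrac{m_i|v|^2}{2}\cdot\delta - \tfrac{m_i|v-\varepsilon u_i|^2}{2T}$ in the exponent of $M_i^\varepsilon/(c_i\mathcal M_i^\delta)$ is $\tfrac{m_i}{2}(\delta - \tfrac1T) < 0$, again bounded away from $0$ by a constant depending on $\delta,\delta_{MS}$. Hence the exponent is bounded above by its (finite) maximum, computed by completing the square exactly as before, and together with $T^{-3/2}\le(1-\delta_{MS})^{-3/2}$ this gives \eqref{upper bound for M-i}. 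For the subtraction estimate \eqref{upper bound for sub of M-i-eps and M-i}, I would write $\mathcal M_i^\varepsilon - \mathcal M_i$ as an integral of the derivative along a path interpolating the parameters: with $g(s) := \mathbf M_i(1, s\varepsilon u_i, 1+s\varepsilon\tilde T)$ one has $\mathcal M_i^\varepsilon - \mathcal M_i = \int_0^1 g'(s)\,ds$, and $g'(s)$ is a linear combination of $\varepsilon u_i$ and $\varepsilon\tilde T$ times Gaussians with rate in the interior of the admissible window, each of which is pointwise $\le C_\delta^{up}\mathcal R_i^{sub}(\delta)\mathcal M_i^\delta$ by the same square-completion bound applied with a slightly enlarged $\delta$; the factors $\varepsilon|u_i|$ and $\varepsilon|\tilde T|$ come out front. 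Alternatively, since $\mathcal M_i^\varepsilon - \mathcal M_i$ itself is a difference of Gaussians one can bound $|\mathcal M_i^\varepsilon - \mathcal M_i|\le \mathcal M_i^\varepsilon + \mathcal M_i$, but the mean-value form is what produces the needed smallness factor $\varepsilon(|u_i|+|\tilde T|)$.

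The only genuine subtlety — and the step I would be most careful with — is keeping track of the admissible range of $\delta$ and verifying that all constants are \emph{independent of $\varepsilon$}. This hinges entirely on the two-sided bound $1-\delta_{MS} < T(t,x) < 1+\delta_{MS}$ holding uniformly in $\varepsilon\in(0,1]$, which follows from Theorem \ref{theorem for M-S in main result} (the estimate $\mathscr E_s(t)\le\delta_{MS}^2$ and Sobolev embedding) and is precisely why the parameter windows \eqref{delta comm-low}, \eqref{delta comm-up} are phrased in terms of $\delta_{MS}$ rather than simply $(0,1)$ as in the isothermal case; the square-completion in each case degenerates exactly when $\delta$ reaches the boundary of its window, so the constants $C_\delta^{low}, C_\delta^{up}$ blow up as $\delta$ approaches the endpoint, which is acceptable since in the applications $\delta$ will be chosen strictly interior. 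The velocity discrepancy $\varepsilon u_i$ contributes only a bounded shift because $\|\tilde{\mathbf u}\|_{L^\infty_t H^{s-1}_x}\le C_{MS}\delta_{MS}$, uniformly in $\varepsilon$, so no additional largeness enters. Everything else is routine Gaussian bookkeeping.
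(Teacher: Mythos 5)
Your proposal is correct and follows essentially the same route as the paper's own proof: rewrite $\mathcal{M}_i^\varepsilon \mathcal{M}_i^{\mp 1/\delta}$ (resp. $\mathcal{M}_i^\varepsilon \mathcal{M}_i^{-\delta}$), use the constraint on $\delta$ together with $T=1+\varepsilon\tilde T$ close to $1$ to control the sign of the quadratic's leading coefficient, complete the square, and bound the extremum uniformly in $\varepsilon$; for the difference, integrate along the parameter path $s\mapsto (1, s\varepsilon u_i, 1+s\varepsilon\tilde T)$ exactly as the paper does with its $\mathcal{A}_i^{s,\varepsilon}$. The only cosmetic difference is that you express the extremal value in closed form via the square-completion, whereas the paper first discards $v\cdot u_i$ via $|v\cdot u_i|\le |v||u_i|$ and then optimizes over $|v|$; both yield constants with the same $\mathcal{O}(\delta_{MS})$ dependence.
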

\begin{proof}
        For any $\delta > 0$ and for any $1\leq i \leq N$, we first establish the lower bound for $M_i^\varepsilon = c_i \mathcal{M}_i^\varepsilon$, 
        writing $\mathcal{M}_i^\varepsilon$ in the form
        \[\mathcal{M}_i^\varepsilon (t,x,v) = \mathcal{M}_i^{1/\delta} (v) \mathcal{M}_i^\varepsilon (t,x,v) \mathcal{M}_i^{-1/\delta} (v), 
        \quad t \geq 0, x \in \mathbb{T}^3, v \in \mathbb{R}^3  .
          \]
        The product is expressed as
        \[  \mathcal{M}_i^\varepsilon \mathcal{M}_i^{-1/\delta} =\left ( \frac{m_i}{2\pi} \right )^{\frac{3 (\delta -1)}{2 \delta}}
        \left ( \frac{1}{T} \right )^{\frac{3}{2}} \exp\left\{ \frac{m_i |v|^2}{2} ( \frac{1}{\delta} - \frac{1}{T}) 
        + \frac{\varepsilon m_i v \cdot u_i}{T} - \frac{\varepsilon^2 m_i |u_i|^2}{2T}   \right\},
        \]
        where $T = 1 + \varepsilon \tilde{T} > 0$, since $\tilde{T}$ is small enough. To find a uniform lower bound for this product 
        with respect to $v \in \mathbb{R}^3$, we choose 
        \begin{equation}
        \delta\in (0,1-\delta_{MS}),
        \end{equation} 
        which ensures that $\frac{1}{\delta} - \frac{1}{1 + \varepsilon \tilde{T} }>0$ for any $\varepsilon \in (0,1]$. 
        Then we can obtain
        \[  \mathcal{M}_i^\varepsilon \mathcal{M}_i^{-\frac{1}{\delta}} \geq \left ( \frac{m_i}{2\pi} \right )^{\frac{3 (\delta -1)}{2\delta}} 
        \left (\frac{1}{1 + \varepsilon |\tilde{T}|} \right )^{\frac{3}{2}} \exp\left\{ \frac{m_i |v|^2}{2} 
        (\frac{1}{\delta} - \frac{1}{1 - \varepsilon |\tilde{T}|}) - \frac{\varepsilon m_i |u_i| |v|}{1 - \varepsilon |\tilde{T}|} 
        - \frac{\varepsilon^2 m_i |u_i|^2}{2 (1 - \varepsilon |\tilde{T}|)} \right\},
        \]
        where the index of the exponent on the right hand side is a quadratic function with respect to $|v|$. Its minimum value occurs at 
        $|v| = \frac{\delta \varepsilon |u_i|}{(1 - \delta)- \varepsilon |\tilde{T}|}$, that is, for any $v\in \mathbb{R}^3$
        \[  \mathcal{M}_i^\varepsilon \mathcal{M}_i^{-1/\delta} (v) \geq \left ( \frac{m_i}{2 \pi} 
        \right )^{\frac{3 (\delta -1)}{2\delta}} \left (\frac{1}{1 + \varepsilon |\tilde{T}|} \right )^{\frac{3}{2}} 
        \exp\left\{ - \frac{\varepsilon^2 m_i |u_i|^2}{2 (1 - \delta)- 2 \varepsilon |\tilde{T}|} \right\}.
        \]
        The lower bound for $M_i^\varepsilon$ is, for any $1 \leq i \leq N$
        \begin{equation*}
        M_i^\varepsilon (v) \geq C_\delta^{low} \mathcal{R}_{i}^{low} (\delta) c_i \mathcal{M}_i^{1/\delta}, 
        \quad \forall v \in \mathbb{R}^3,
        \end{equation*}
        with notations
        \[ C_\delta^{low} = \min_{1 \leq i \leq N} \left ( \frac{m_i}{2 \pi} \right )^{\frac{3 (\delta -1  )}{2\delta}}, 
        \quad \mathcal{R}_i^{low} (\delta) = \left ( \frac{1}{1 + |\tilde{T}|} \right )^{\frac{3}{2}} \exp\left\{  
          - \frac{ m_i |u_i|^2}{2 (1 - |\tilde{T}| -\delta) } \right\}.
        \]
        
        Next, the upper bound for $M_i^\varepsilon$ ($1 \leq i \leq N$) uses similar calculations. For any $\delta>0$, the local Maxwellian $\mathcal{M}_i^\varepsilon$ can be expressed as
        \[  \mathcal{M}_i^\varepsilon (t,x,v) = \mathcal{M}_i^\delta (v) \mathcal{M}_i^\varepsilon (t,x,v) \mathcal{M}_i^{-\delta} (v),
        \]
        where the product has the bound 
        \begin{align*}
          \mathcal{M}_i^\varepsilon \mathcal{M}_i^{-\delta} & =\left ( \frac{m_i}{2\pi} \right )^{\frac{3(1-\delta)}{2}}
          (\frac{1}{1 + \varepsilon \tilde{T}})^{\frac{3}{2}} \exp\left\{ - \frac{m_i}{2}\left ( \frac{1}{1 + \varepsilon \tilde{T}} 
          - \delta \right ) |v|^2 + \frac{\varepsilon m_i u_i \cdot v}{1 + \varepsilon \tilde{T}} - \frac{\varepsilon^2 m_i |u_i|^2}
          {2 (1 + \varepsilon \tilde{T})} \right\}\\
        & \leq \left ( \frac{m_i}{2 \pi} \right )^{\frac{3(1-\delta)}{2}} (\frac{1}{1 + \varepsilon \tilde{T}})^{\frac{3}{2}} 
        \exp\left\{ - \frac{m_i}{2} \left ( \frac{1}{1 + \varepsilon\tilde{T}} - \delta \right ) |v|^2
         + \frac{\varepsilon m_i |u_i| |v|}{1 + \varepsilon \tilde{T}} - \frac{\varepsilon^2 m_i |u_i|^2}{2 (1 + \varepsilon \tilde{T})} 
         \right\}.
        \end{align*}
        If the parameter is in
        \begin{equation}
        \delta \in \left (0, \frac{1}{1 + \delta_{MS}} \right ),
        \end{equation}
        then the inequality $ \frac{1}{1 + \varepsilon \tilde{T}} - \delta >0$ holds for any $\varepsilon \in (0,1]$. 
        Since $1+ \varepsilon \tilde{T}>0$, the exponent part arrives its maximum value at $|v| = \frac{\varepsilon |u_i|}{1 - (1 + \varepsilon \tilde{T}) \delta}$, leading to
        \begin{align*}
        \mathcal{M}_i^\varepsilon \mathcal{M}_i^{ - \delta} & \leq \left ( \frac{m_i}{2 \pi} \right )^{\frac{3 (1 - \delta)}{2}} 
        (\frac{1}{1 + \varepsilon \tilde{T}})^{\frac{3}{2}} \exp\left\{ \frac{\delta \varepsilon^2 m_i |u_i|^2}
        {2 [1 - (1 + \varepsilon \tilde{T}) \delta] }  \right\}\\
        & \leq \left ( \frac{m_i}{2 \pi} \right )^{\frac{3 (1 - \delta)}{2}} (\frac{1}{1 - \varepsilon |\tilde{T}|})^{\frac{3}{2}} 
        \exp \left\{ \frac{\delta \varepsilon^2  m_i |u_i|^2}{2 [1 - (1 + \varepsilon |\tilde{T}|) \delta] } \right\}.\\
        \end{align*} 
        The upper bound for $M_i^\varepsilon$ is recovered as, for any $1 \leq i \leq N$,
        \begin{equation}
        M_i^\varepsilon (v) \leq C_\delta^{up} \mathcal{R}_i^{up} (\delta) c_i \mathcal{M}_i^\delta \quad \forall v \in \mathbb{R}^3,
        \end{equation}
        with notations
        \[ C_\delta^{up} = \max_{1 \leq i \leq N} \left ( \frac{m_i}{2\pi} \right )^{\frac{3 (1 - \delta)}{2}}, \quad 
        \mathcal{R}_i^{up} (\delta) = (\frac{1}{1 - |\tilde{T}| })^{\frac{3}{2}}\exp \left\{ 
          \frac{\delta m_i |u_i|^2}{2 (1- (1 + |\tilde{T}|) \delta)} \right\} .
          \]
          
        Finally, we establish the upper bound for $|\mathcal{M}_i^\varepsilon - \mathcal{M}_i|$. The subtraction can express as
        \begin{align*}
        \mathcal{M}_i^\varepsilon - \mathcal{M}_i & = \int_0^1 \frac{d}{ds} \mathcal{A}_i^{s, \varepsilon} \,ds = \int_0^1 
        \mathcal{A}_i^{s, \varepsilon} \mathcal{B}_i^{s, \varepsilon} \,ds = \int_0^1 \mathcal{A}_i^{s, \varepsilon} 
        \mathcal{B}_i^{s,\varepsilon} \mathcal{M}_i^{-\delta}\,ds \mathcal{M}_i^\delta,
        \end{align*}
        with the notations
        \[  \mathcal{A}_i^{s, \varepsilon} := \left ( \frac{m_i}{2 \pi} \right )^{\frac{3}{2}} \left ( 
          \frac{1}{1 + s \varepsilon \tilde{T}} \right )^{\frac{3}{2}} \exp \left\{ - \frac{m_i |v - s \varepsilon u_i|^2}
          {2 (1 + s \varepsilon \tilde{T})} \right\},
          \]
        \[  \mathcal{B}_i^{s, \varepsilon} := \frac{m_i (v - s \varepsilon u_i) \cdot \varepsilon u_i}{1 + s \varepsilon \tilde{T}} 
        + \frac{m_i |v - s \varepsilon u_i|^2 \varepsilon \tilde{T}}{2 (1 + s\varepsilon \tilde{T})^2} 
        - \frac{3 \varepsilon \tilde{T}}{2 (1 + s \varepsilon \tilde{T})}.
        \]
        Applying the Cauchy-Schwarz inequality, we deduce the following bounds
        \begin{align*}
        \mathcal{A}_i^{s, \varepsilon} \mathcal{M}_i^{ - \delta}  = &\left ( \frac{m_i}{2 \pi} \right )^{ \frac{3 (1 - \delta)}{2}} 
        \left ( \frac{1}{1 + s \varepsilon \tilde{T}} \right )^{\frac{3}{2}}\\
        &\times \exp \left\{ - \frac{m_i |v|^2}{2} \left ( 
          \frac{1}{1 + s \varepsilon \tilde{T}} - \delta \right ) + \frac{ s \varepsilon m_i  u_i \cdot v}{1 + s \varepsilon \tilde{T}} 
          - \frac{s^2 \varepsilon^2 m_i |u_i|^2}{2 (1 + s \varepsilon \tilde{T}) } \right\} \\
        \leq & \left ( \frac{m_i}{2 \pi} \right )^{\frac{3 (1 - \delta)}{2}} \left ( \frac{1}{1 + s \varepsilon \tilde{T}} 
        \right )^{\frac{3}{2}} \exp \left\{ - \frac{m_i |v|^2}{2} \left ( \frac{1}{1 + s \varepsilon \tilde{T}} - \delta \right ) 
        + \frac{\varepsilon m_i |u_i| |v|}{1 + s \varepsilon \tilde{T}}    \right\},
        \end{align*}
       and 
       \begin{align*}
       |\mathcal{B}_i^{s, \varepsilon}| \leq \varepsilon |u_i| \frac{m_i (|v| + \varepsilon |u_i|)}{1 + s \varepsilon \tilde{T}} 
       + \varepsilon |\tilde{T}| \frac{m_i (|v| + \varepsilon |u_i|)^2}{2 (1 + s \varepsilon \tilde{T})^2} + \varepsilon |\tilde{T}| 
       \frac{3}{2 (1 + s \varepsilon \tilde{T})}.
       \end{align*}
       Here the term $1+ s \varepsilon \tilde{T}$ is positive for any $s\in[0,1]$ and for any $\varepsilon \in (0,1]$, as we can assume $\tilde{T}$ is small enough. 
       When the parameter $\delta$ satisfies \eqref{delta comm-up}, which guarantees
       $\delta (1 +s \varepsilon \tilde{T}) <1$, 
       we can distinguish the velocity space into two parts. For $|v|\geq 
       \frac{4 \varepsilon |u_i|}{1 - \delta (1 + s \varepsilon \tilde{T})}$, we obtain the upper bounds 
       \begin{align*}
       \mathcal{A}_i^{s, \varepsilon} \mathcal{M}_i^{-\delta} & \leq \left ( \frac{m_i}{2 \pi} \right )^{ \frac{3 (1 - \delta)}{2}} 
       (\frac{1}{1 + s \varepsilon \tilde{T}})^{\frac{3}{2}} \exp \left\{ - \frac{m_i |v|^2}{4} \left ( 
        \frac{1}{1 + s \varepsilon \tilde{T}} - \delta \right ) \right\}\\
        & \leq \left ( \frac{m_i}{2 \pi} \right )^{ \frac{3 (1 - \delta)}{2}}  (\frac{1}{1 - |\tilde{T}|})^{\frac{3}{2}} \exp 
        \left\{ - \frac{m_i |v|^2}{4} \left ( \frac{1}{1 + |\tilde{T}| } - \delta \right )\right\},
       \end{align*}
       and 
       \begin{align*}
       |\mathcal{B}_i^{s, \varepsilon}| & \leq \varepsilon |u_i| \left ( \frac{5}{1 + s \varepsilon \tilde{T}} - \delta \right ) 
       \frac{m_i |v|}{4} + \varepsilon |\tilde{T}|  \left (  \left ( \frac{5}{1 + s \varepsilon \tilde{T}} - \delta \right )^2 
       \frac{m_i |v|^2}{32} + \frac{3}{2 (1 + s \varepsilon \tilde{T})}  \right )\\
       & \leq \varepsilon |u_i| \left ( \frac{5}{1 - |\tilde{T}|} - \delta \right ) \frac{m_i |v|}{4} + \varepsilon |\tilde{T}|
       \left ( \left ( \frac{5}{1 - |\tilde{T}|} - \delta \right )^2 \frac{m_i |v|^2}{32} + \frac{3}{2 (1 - |\tilde{T}|)} \right ).
       \end{align*}
       For $|v| \leq \frac{4 \varepsilon |u_i|}{1 - \delta (1 + s \varepsilon \tilde{T})}$, the exponent part of 
       $\mathcal{A}_i^{s, \varepsilon} \mathcal{M}_i^{ - \delta}$ arrives its maximum value at 
       $|v| = \frac{ \varepsilon |u_i|}{1 - \delta (1 + s \varepsilon \tilde{T})}$, and we get
       \begin{align*}
        \mathcal{A}_i^{s, \varepsilon} \mathcal{M}_i^{ - \delta} & \leq \left ( \frac{m_i}{2 \pi} \right )^{ \frac{3 (1 - \delta)}{2}} 
        (\frac{1}{1 - |\tilde{T}|})^{\frac{3}{2}} \exp \left\{ \frac{m_i |u_i|^2}{2 (1 - |\tilde{T}|) 
        (1 - \delta (1 + |\tilde{T}|) ) }\right\},
       \end{align*}
       \begin{align*}
       |\mathcal{B}_i^{s, \varepsilon}| \leq &\varepsilon |u_i| \frac{m_i |u_i|}{1 - |\tilde{T}|} \left ( 1 + \frac{4}{1 - \delta 
       (1 + |\tilde{T}|) }   \right ) \\
       &+ \varepsilon |\tilde{T}| \left ( \frac{m_i |u_i|^2}{2 (1 - |\tilde{T}|^2)} 
       \left ( 1 + \frac{4}{1 - \delta (1 + |\tilde{T}|)}    \right )^2 + \frac{3}{2 (1 - |\tilde{T}|)} \right ).
       \end{align*}
       These estimates hold for any $s\in[0,1]$, thus the upper bound reads
       \begin{equation}
       |\mathcal{M}_i^\varepsilon - \mathcal{M}_i| (v) \leq  C_\delta^{up} (\varepsilon |u_i| + \varepsilon |\tilde{T}|) 
       \mathcal{R}_i^{sub} (\delta) \mathcal{M}_i^\delta, \quad \forall v \in\mathbb{R}^3.
       \end{equation} 
       The notation $\mathcal{R}_i^{sub} (\delta) = \max\left\{ \mathcal{R}_{i,1}^{sub} (\delta), \mathcal{R}_{i,2}^{sub} (\delta) 
       \right\}$, with 
       \begin{align*}
       \mathcal{R}_{i,1}^{sub} (\delta) = & (\frac{1}{1 - |\tilde{T}|})^{\frac{3}{2}} \sup_{v \in \mathbb{R}^3} 
       \left ( \frac{5}{1 - |\tilde{T}|} - \delta \right ) \frac{m_i |v|}{4} \exp \left\{ - \frac{m_i |v|^2}{4} 
       \left ( \frac{1}{1 + |\tilde{T}|} - \delta \right )    \right\}\\
       & + (\frac{1}{1 - |\tilde{T}|})^{\frac{3}{2}} \frac{m_i |u_i|}{1 - |\tilde{T}|} \left (1 + \frac{4}{1 - \delta (1 + |\tilde{T}|)} \right )
        \exp \left\{ \frac{m_i |u_i|^2}{2 (1 - |\tilde{T}|)(1 - \delta (1 + |\tilde{T}|))} \right\},
       \end{align*}
       \begin{align*}
        \mathcal{R}_{i,2}^{sub} (\delta) & = (\frac{1}{1 - |\tilde{T}|})^{\frac{3}{2}} \sup_{v \in \mathbb{R}^3} 
        \left ( \left ( \frac{5}{1 - |\tilde{T}|} - \delta \right )^2 \frac{m_i |v|^2}{32} + \frac{3}{2 (1 - |\tilde{T}|)} \right )\\
        & \times \exp \left\{ - \frac{m_i |v|^2}{4} \left ( \frac{1}{1 + |\tilde{T}|} - \delta \right ) \right\} 
        + (\frac{1}{1 - |\tilde{T}|})^{\frac{3}{2}} \left[ \frac{m_i |u_i|^2}{2 (1 - |\tilde{T}|^2)} 
        \left (1 + \frac{4}{1 - \delta (1 + |\tilde{T}|)} \right )^2 \right.\\
        & \left. + \frac{3}{2 (1 - |\tilde{T}|)} \right]   \times \exp \left\{ \frac{m_i |u_i|^2}{2 (1 - |\tilde{T}|) 
        (1- \delta (1 + |\tilde{T}|)) }\right\}.
       \end{align*}
       
       We end this proof by explaining that $\mathcal{R}_{i}^{low} (\delta), \mathcal{R}_i^{up} (\delta)$ and
       $\mathcal{R}_i^{sub} (\delta)$ can be viewed as positive constants that only depend on $\delta$ 
       (note there are different commands for $\delta$). This is because the macroscopic quantities have upper bounds of 
       $\mathcal{O} (\delta_{MS})$, and the parameter $\delta_{MS}$ has an upper bound in the smallness assumption \eqref{assum for delta-MS}.
\end{proof}
Based on these estimates for the local Maxwellians $(M_i^\varepsilon)_{1 \leq i \leq N}$, we can establish some controls for the operators and the source term arose in the perturbed equation \eqref{pert Bz}, following the analysis in \cite{briant2021stability}. We first focus on the 
linear operator $\mathbf{L}^\varepsilon$ defined in \eqref{def of L-eps}, and split it into two parts, as one usually does for the linearized Boltzmann operator in mono-species case,
\[ \mathbf{L}^\varepsilon = \mathbf{K}^\varepsilon - \bm{\nu}^\varepsilon.
 \]
The operator $\mathbf{K}^\varepsilon = (\mathit{K}_1^\varepsilon, \dots, \mathit{K}_N^\varepsilon)$ is defined by, for any $1 \leq i \leq N$
\begin{equation}
    \begin{aligned}
   \mathit{K}_i^\varepsilon (\mathbf{g}) = \mu_i^{-1/2} \sum_{j=1}^N \int_{\mathbb{R}^3 \times \mathbb{S}^2} \mathit{B}_{ij} 
   \left ( M_i^{\varepsilon, \prime} (\mu_j^{\prime\ast})^{1/2} g_j^{\prime\ast} + M_j^{\varepsilon, \prime\ast} (\mu_i^\prime)^{1/2} 
   g_i^\prime - M_i^\varepsilon (\mu_j^\ast)^{1/2} g_j^\ast \right ) \,\d \sigma \d v_\ast.  \label{def of operator K} 
    \end{aligned}
\end{equation}
And $\bm{\nu}^\varepsilon = (\nu_1^\varepsilon, \dots, \nu_N^\varepsilon)$ acts like a multiplicative operator, i.e. for 
any $1 \leq i \leq N$,
\begin{equation}
\nu_i^\varepsilon (\mathbf{g}) (v) = \sum_{j=1}^N \nu_{ij}^\varepsilon (v) g_i (v),   \label{express of nu}
\end{equation}  
where 
\begin{equation}
\nu_{ij}^\varepsilon (v) = \int_{\mathbb{R}^3 \times \mathbb{S}^2} \mathit{B}_{ij} (|v - v_\ast|, \cos{\theta}) M_j^\varepsilon(v_\ast) 
\,\d \sigma \d v_\ast   \label{explicit express of nu-ij}
\end{equation}
Using estimates on $M_i^\varepsilon$ in the above Lemma, we can establish some controls for $\bm{\nu}^\varepsilon$.
\begin{lemma} \label{lemma for bound of nu and its v-derivative}
For any $\varepsilon \in (0, 1]$ and any $\mathbf{f}, \mathbf{g} \in L^2(\mathbb{R}^3)$, there exist some explicit 
constants $C^{\bm{\nu}, \delta}_{low}$ and $C^{\bm{\nu}, \delta}_{up}$, such that, 
\begin{equation}
 C^{\bm{\nu}, \delta}_{low} \|\mathbf{f}\|_{L_v^2 (\langle v \rangle^\gamma) }  \|\mathbf{g}\|_{L_v^2(\langle v \rangle^\gamma) } 
 \leq \langle \bm{\nu}^\varepsilon (\mathbf{f}), \mathbf{g} \rangle_{L_v^2} \leq C^{\bm{\nu}, \delta}_{up}  
 \|\mathbf{f}\|_{L_v^2 (\langle v \rangle^\gamma) }  \|\mathbf{g}\|_{L_v^2 (\langle v \rangle^\gamma) },  \label{up and lower bound for nu-eps}
\end{equation}
 for any $\delta$ in $C^{\bm{\nu}, \delta}_{low}$ satisfying \eqref{delta comm-low}, and for any $\delta$ in $C^{\bm{\nu}, \delta}_{up}$ 
 satisfying \eqref{delta comm-up}.
 
Moreover, for any $s \in \mathbb{N}^\ast$, with the multi-indices $\alpha, \beta$ satisfying $|\alpha| + |\beta| = s$ and $|\beta| \geq 1$, 
there exist explicit constants $(C_k^{\bm{\nu}})_{1 \leq k \leq 5}$, such that for any $\mathbf{f} \in H^s(\mathbb{T}^3 \times \mathbb{R}^3)$,
\begin{equation}
\begin{aligned}
\langle \partial_v^\beta \partial_x^\alpha \bm{\nu}^\varepsilon (\mathbf{f}), \partial_v^\beta \partial_x^\alpha \mathbf{f}
\rangle_{L_{x,v}^2} \geq (C_1^{\bm{\nu}} - \varepsilon \textbf{1}_{|\alpha| \geq 1} C_2^{\bm{\nu}})  
\|\partial_v^\beta \partial_x^\alpha \mathbf{f}\|_{L_{x,v}^2 (\langle v \rangle^\gamma) }^2\\
 - (C_3^{\bm{\nu}} + \varepsilon \textbf{1}_{|\alpha| \geq 1} C_4^{\bm{\nu}})  \|\mathbf{f}\|_{H_{x,v}^{s-1}}^2\\
 -\varepsilon \textbf{1}_{|\alpha| \geq 1} C_5^{\bm{\nu}} \|\mathbf{f}\|_{H_{x,v}^{s-1} (\langle v \rangle^\gamma) }^2
 \label{lower bound for nu with v derivative}
\end{aligned}
\end{equation}
\end{lemma}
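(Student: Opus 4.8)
The plan is to reduce both \eqref{up and lower bound for nu-eps} and \eqref{lower bound for nu with v derivative} to pointwise estimates on the collision frequencies $\nu_{ij}^\varepsilon$ and on their $x$- and $v$-derivatives, and then to combine these with the Leibniz rule and Young's inequality. By (H2)--(H3) and integrating out $\sigma$, the formula \eqref{explicit express of nu-ij} becomes $\nu_{ij}^\varepsilon(t,x,v)=C_{ij}^\Phi\|b_{ij}\|_{L^1(\mathbb S^2)}\int_{\mathbb R^3}|v-v_\ast|^\gamma M_j^\varepsilon(t,x,v_\ast)\,\d v_\ast$. Inserting the two-sided bound \eqref{lower bound for M-i}--\eqref{upper bound for M-i} of Lemma~\ref{lemma for bound of M-i} and invoking the classical Gaussian-convolution estimate $\int_{\mathbb R^3}|v-v_\ast|^{p}G(v_\ast)\,\d v_\ast\simeq\langle v\rangle^{p}$ for a Maxwellian $G$ and $p>-3$ (here $p=\gamma\in[0,1]$), one obtains $c^{\bm{\nu},\delta}_{low}\langle v\rangle^\gamma\le\sum_{j}\nu_{ij}^\varepsilon(t,x,v)\le c^{\bm{\nu},\delta}_{up}\langle v\rangle^\gamma$ uniformly in $(t,x)$ and $\varepsilon$, the constants being products of the $\delta$-shifted Maxwellian prefactors from Lemma~\ref{lemma for bound of M-i}. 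Multiplying by $f_i g_i$, summing over $i$ and applying Cauchy--Schwarz then gives \eqref{up and lower bound for nu-eps}.

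For the derivative estimate the two guiding facts are: (a) $v$-differentiation makes $\nu_{ij}^\varepsilon$ decay faster, $|\partial_v^{\beta'}\nu_{ij}^\varepsilon(v)|\lesssim\langle v\rangle^{\gamma-1}$ for every $|\beta'|\ge1$; and (b) every $x$-derivative carries a factor $\varepsilon$. Fact (a) is obtained by distributing the derivatives between the kinetic factor and the Maxwellian: arrange that exactly one derivative falls on $|v-v_\ast|^\gamma$, producing $|v-v_\ast|^{\gamma-2}(v-v_\ast)$, and the remaining ones on $M_j^\varepsilon$, which after Lemma~\ref{lemma for bound of M-i} is bounded by a polynomial times a $\delta$-Maxwellian; the Gaussian-convolution estimate with exponent $\gamma-1>-3$ then yields the claim. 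Fact (b) holds because the macroscopic fields enter $M_j^\varepsilon$ only through $c_j=\bar c_j+\varepsilon\tilde c_j$, $u_j=\varepsilon\tilde u_j$ and $T=1+\varepsilon\tilde T$, so the $x$-derivatives of the constant parts vanish and each $x$-derivative of $\nu_{ij}^\varepsilon$ brings down a factor $\varepsilon$ times an $x$-derivative of $(\tilde{\mathbf c},\tilde{\mathbf u},\tilde T)$, whose Sobolev norms are controlled by $\delta_{MS}$; combined with (a) this gives $|\partial_x^{\alpha'}\nu_{ij}^\varepsilon|\lesssim\varepsilon\langle v\rangle^\gamma$ and $|\partial_x^{\alpha'}\partial_v^{\beta'}\nu_{ij}^\varepsilon|\lesssim\varepsilon\langle v\rangle^{\gamma-1}$ for $|\alpha'|,|\beta'|\ge1$ (as $L^\infty_x$- or $L^2_x$-valued bounds, via the usual Moser/Gagliardo--Nirenberg splitting in $x$).

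With these bounds in hand I would expand $\partial_v^\beta\partial_x^\alpha\bigl(\nu_i^\varepsilon(\mathbf f)\bigr)=\sum_j\sum_{\substack{\alpha'+\alpha''=\alpha\\ \beta'+\beta''=\beta}}\binom{\alpha}{\alpha'}\binom{\beta}{\beta'}\bigl(\partial_v^{\beta'}\partial_x^{\alpha'}\nu_{ij}^\varepsilon\bigr)\bigl(\partial_v^{\beta''}\partial_x^{\alpha''}f_i\bigr)$, pair with $\partial_v^\beta\partial_x^\alpha f_i$ and sum over $i$. The term $\alpha'=\beta'=0$ gives $\sum_i\int\bigl(\sum_j\nu_{ij}^\varepsilon\bigr)|\partial_v^\beta\partial_x^\alpha f_i|^2\ge C_1^{\bm{\nu}}\|\partial_v^\beta\partial_x^\alpha\mathbf f\|^2_{L^2_{x,v}(\langle v\rangle^\gamma)}$ by the first step. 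In every remaining term at least one of $\alpha',\beta'$ is non-zero, so the accompanying factor of $\mathbf f$ has total order $\le s-1$; applying (a)--(b) and Young's inequality turns these into a small multiple of $\|\partial_v^\beta\partial_x^\alpha\mathbf f\|^2_{L^2_{x,v}(\langle v\rangle^\gamma)}$, absorbed into $C_1^{\bm{\nu}}$, plus an unweighted term $C_3^{\bm{\nu}}\|\mathbf f\|^2_{H^{s-1}_{x,v}}$ --- the point being that when $\beta'\neq0,\alpha'=0$ the gain $\langle v\rangle^{\gamma-1}$ together with $\gamma\le1$ downgrades the leftover lower-order factor to the unweighted norm --- plus, whenever some $\alpha'\neq0$, terms proportional to $\varepsilon\mathbf 1_{|\alpha|\ge1}$ times $\|\partial_v^\beta\partial_x^\alpha\mathbf f\|^2_{L^2_{x,v}(\langle v\rangle^\gamma)}$, $\|\mathbf f\|^2_{H^{s-1}_{x,v}}$ and $\|\mathbf f\|^2_{H^{s-1}_{x,v}(\langle v\rangle^\gamma)}$, which are exactly the $C_2^{\bm{\nu}},C_4^{\bm{\nu}},C_5^{\bm{\nu}}$ contributions. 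Collecting everything gives \eqref{lower bound for nu with v derivative}.

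The main obstacle is the uniform derivative bound in fact (a): naively differentiating $|v-v_\ast|^\gamma$ many times creates a non-integrable singularity at $v=v_\ast$ when $\gamma$ is small, so one must set up the splitting between the power $|v-v_\ast|^\gamma$ and the Gaussian $M_j^\varepsilon$ carefully and keep track of the $\varepsilon$-dependence through it; this, together with the bookkeeping of the $L^\infty_x$/$L^2_x$ products in the Leibniz expansion --- so that every lower-order remainder lands in an $H^{s-1}_{x,v}$-type norm with the correct power of $\varepsilon$ --- is where the real work lies. The rest follows the mono-species computations of \cite{mouhot2006explicit,briant2015JDEnavierstokes} and their multi-species adaptations.
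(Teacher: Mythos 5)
Your proposal follows essentially the same route as the paper. Both parts reduce to pointwise bounds on $\nu_{ij}^\varepsilon$ and its derivatives: for \eqref{up and lower bound for nu-eps} you use the two-sided Maxwellian bounds of Lemma~\ref{lemma for bound of M-i} together with the Gaussian-convolution estimate, exactly as the paper does; for \eqref{lower bound for nu with v derivative} you expand by the Leibniz rule, note that each $x$-derivative of $\nu_i^\varepsilon$ carries an $\varepsilon$ factor because $c_j=\bar c_j+\varepsilon\tilde c_j$, $u_j=\varepsilon\tilde u_j$, $T=1+\varepsilon\tilde T$ (this is the paper's $\partial_{x_k}\nu_i^\varepsilon=\varepsilon\tilde\nu_i^{\varepsilon,k}$), and close with Young's inequality, which is precisely the paper's four-fold splitting $\sum_{|\beta'|\neq 0,|\alpha'|=0}+\sum_{|\beta'|=0,|\alpha'|=0}+\sum_{|\beta'|\neq 0,|\alpha'|\neq 0}+\sum_{|\beta'|=0,|\alpha'|\neq 0}$.

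One imprecision in your fact~(a): you cannot ``arrange that exactly one derivative falls on $|v-v_\ast|^\gamma$'' under the Leibniz rule, and a direct differentiation of $|v-v_\ast|^\gamma$ to order $|\beta'|\ge 3$ does produce the non-integrable singularity you flag. The correct device is to change variables $v_\ast\mapsto v-v_\ast$ inside the collision-frequency integral, so that all $v$-derivatives land on the Gaussian factor $M_j^\varepsilon(v-v_\ast)$, yielding $|\partial_v^{\beta'}\nu_{ij}^\varepsilon|\lesssim\langle v\rangle^\gamma$ with no singularity; the sharper $\langle v\rangle^{\gamma-1}$ you claim holds only for $|\beta'|=1$. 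This sufficiency (boundedness of the derivatives of $\nu$) is all that is used, and it is consistent with the paper, which writes out only the case $|\alpha|=|\beta|=1$ and defers the general case to ``a similar method'' without spelling out this point either.
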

\begin{proof}
  We first prove that there exist upper and lower bounds for $\langle \bm{\nu}^\varepsilon(\mathbf{f}),\mathbf{g}\rangle_{L_v^2}$. 
  The expressions \eqref{express of nu} and \eqref{explicit express of nu-ij} infer that 
\[ \langle \bm{\nu}^\varepsilon (\mathbf{f}), \mathbf{g} \rangle_{L_v^2} = \sum_{i}^N \nu_i^\varepsilon (v) f_i (v) g_i (v) \,\d v.
\]
The estimates for $M_j^\varepsilon (v_\ast)$, specifically inequalities 
\eqref{lower bound for M-i} and \eqref{upper bound for M-i}, imply that there exist two positive constants 
$\bar{\nu}_{ij}^{\delta, low}, \bar{\nu}_{ij}^{\delta,up}$, depending only on $\delta$, such that, for any 
$v \in \mathbb{R}^3$
\begin{align*}
0< \bar{\nu}_{ij}^{\delta, low} c_j \langle v \rangle^\gamma \leq \nu_{ij}^\varepsilon(v) \leq \bar{\nu}_{ij}^{\delta,up}
c_j \langle v \rangle^\gamma,
\end{align*}
with the notation $\langle v \rangle = (1 + |v|^2)^{1/2}$. Since $\nu_i^\varepsilon (v) = \sum_{j=1}^N \nu_{ij}^\varepsilon (v)$, we obtain 
two positive constants $\bar{\nu}_i^{\delta, low}$ and $\bar{\nu}_i^{\delta,up}$, such that for any $v \in \mathbb{R}^3$,
\begin{equation}
0< \bar{\nu}_i^{\delta, low} \left (\sum_{j=1}^N c_j\right ) \langle v \rangle^\gamma \leq \nu_i^\varepsilon (v) \leq  
\bar{\nu}_i^{\delta,up} \left (\sum_{j=1}^N c_j \right ) \langle v \rangle^\gamma,
\end{equation} 
which in turns deduces that there exist two positive constants $C^{\bm{\nu}, \delta}_{low}$ and $C^{\bm{\nu}, \delta}_{up}$, 
such that the inequality \eqref{up and lower bound for nu-eps} holds. Furthermore, we emphasize that the ranges of the parameter $\delta$ 
are not same in the lower and upper bounds, they are given by the conditions \eqref{delta comm-low} and \eqref{delta comm-up}. 
Details can be found in Lemma \ref{lemma for bound of M-i}.

For the second inequality in this Lemma, we follow the analysis in \cite{briant2021stability}, firstly handling the simple case that 
$|\alpha| = |\beta|= 1$. The expression of $\bm{\nu}^\varepsilon$ 
and the upper bound for $M_j^\varepsilon$ deduce the following bound, 
for any $v \in \mathbb{R}^3$ and $l = {1, 2, 3}$
\begin{align*}
  | \partial_{v_l} \nu_i^\varepsilon(v)| & = \left|\sum_{j=1}^N \int_{\mathbb{R}^3 \times \mathbb{S}^2} 
  \mathit{b}_{ij} (\cos{\theta}) \gamma |v - v_\ast|^{\gamma - 1} \frac{v_l - v_{\ast,l}}{ |v - v_\ast| } M_j^\varepsilon (v_\ast) 
  \,\d \sigma \d v_\ast \right|\\
  & \leq C(\delta) \sum_{j=1}^N c_j \int_{\mathbb{R}^3} \gamma |v - v_\ast|^{\gamma - 1} e^{-\frac{\delta m_j |v_\ast|^2}{2}} \,\d v_\ast
  \leq C(\delta) \sum_{j=1}^N c_j < +\infty,
\end{align*}
which holds for any fixed $\delta$ satisfying \eqref{delta comm-up}. The integral above is uniformly finite with respect to $v$, as $\langle v \rangle^{\gamma-1} \leq 1$ for any $v \in \mathbb{R}^3$. 

Due to the expression $\mathbf{M}^\varepsilon = \mathbf{c} \bm{\mathcal{M}}^\varepsilon$, the $x$-derivative term can be written as, for any $(t,x,v) \in \mathbb{R}_+ \times \mathbb{T}^3 \times \mathbb{R}^3$ and $k = {1, 2, 3}$,
\begin{align*}
\partial_{x_k} \nu_i^\varepsilon (t,x,v) = & \sum_{j=1}^N \int_{\mathbb{R}^3 \times \mathbb{S}^2} \mathit{b}_{ij}  ( \cos{\theta} ) 
|v - v_\ast|^\gamma \partial_{x_k} M_j^\varepsilon (v_\ast) \,\d \sigma \d v_\ast\\
 = &\varepsilon \sum_{j=1}^N  \int_{\mathbb{R}^3 \times \mathbb{S}^2} \mathit{b}_{ij} ( \cos{\theta} ) |v - v_\ast|^\gamma 
 \left[ \partial_{x_k} \tilde{c}_j + \frac{m_j c_j (v_\ast - \varepsilon u_j) \cdot \partial_{x_k} u_j}{T} \right.\\
& + \left. \left ( \frac{m_j |v_\ast - \varepsilon u_j|^2}{2 T} - \frac{3}{2} \right ) \frac{ c_j \partial_{x_k} \tilde{T}}{T} \right] 
\mathcal{M}_j^\varepsilon (v_\ast) \,\d \sigma \d v_\ast\\
 = & \varepsilon \tilde{\nu}_i^{\varepsilon, k}(t,x,v).
\end{align*}
It is not hard to notice that $\tilde{\nu}_i^{\varepsilon,k}$ and $\nu_i^\varepsilon$ share the same structure. The macroscopic quantities and 
their derivatives can be treated as factors with $L^\infty$ bounds, which only require them to 
be sufficiently regular.

Now we consider the $\partial_{v_l} \partial_{x_k}$ derivatives, writing as  
\begin{align*}
\langle \partial_{v_l} \partial_{x_k} \bm{\nu}^\varepsilon (\mathbf{f}), \partial_{v_l} \partial_{x_k} \mathbf{f} \rangle_{L_{x,v}^2}
= & \sum_{i=1}^N \int_{\mathbb{T}^3 \times \mathbb{R}^3} \partial_{v_l} \nu_i^\varepsilon \partial_{x_k} f_i 
\partial_{v_l} \partial_{x_k} f_i \,\d x\d v\\
& + \sum_{i=1}^N \int_{\mathbb{T}^3 \times \mathbb{R}^3} \nu_i^\varepsilon (\partial_{v_l} \partial_{x_k} f_i)^2  \,\d x\d v\\
& + \varepsilon \sum_{i=1}^N \int_{\mathbb{T}^3 \times \mathbb{R}^3} \partial_{v_l} \tilde{\nu}_i^{\varepsilon,k}  f_i 
\partial_{v_l} \partial_{x_k} f_i  \,\d x\d v\\
& + \varepsilon \sum_{i=1}^N \int_{\mathbb{T}^3 \times \mathbb{R}^3}  \tilde{\nu}_i^{\varepsilon,k} \partial_{v_l} f_i 
\partial_{v_l} \partial_{x_k} f_i  \,\d x\d v.
\end{align*}
Using Young's inequality and the above bound for $\partial_{v_l} \nu_i^\varepsilon$, the first term on the right hand side 
can be estimated by 
\begin{align*}
  \sum_{i=1}^N \int_{\mathbb{T}^3 \times \mathbb{R}^3} \partial_{v_l} \nu_i^\varepsilon \partial_{x_k} f_i 
  \partial_{v_l} \partial_{x_k} f_i \,\d x \d v \geq  &  \sum_{i=}^N - \frac{1}{2} \int_{\mathbb{T}^3 \times \mathbb{R}^3} 
  \frac{(\partial_{v_l} \nu_i^\varepsilon)^2}{\nu_i^\varepsilon} (\partial_{x_k} f_i)^2 \,\d x \d v\\
  & - \frac{1}{2} \int_{\mathbb{T}^3 \times \mathbb{R}^3} \nu_i^\varepsilon (\partial_{v_l} \partial_{x_k} f_i)^2 \,\d x \d v,
\end{align*}
Similarly, the third and fourth terms are bounded from below
\begin{align*}
  \varepsilon\sum_{i=1}^N \int_{\mathbb{T}^3\times\mathbb{R}^3} \partial_{v_l} \tilde{\nu}_i^{\varepsilon, k}  f_i 
  \partial_{v_l} \partial_{x_k} f_i  \,\d x \d v  \geq & \sum_{i=1}^N -\frac{\varepsilon}{2} \int_{\mathbb{T}^3 \times \mathbb{R}^3} 
  \frac{(\partial_{v_l} \tilde{\nu}_i^{\varepsilon, k})^2}{\nu_i^\varepsilon} f_i^2   \,\d x \d v\\
  & -\frac{\varepsilon}{2} \int_{\mathbb{T}^3 \times \mathbb{R}^3} \nu_i^\varepsilon( \partial_{v_l} \partial_{x_k}f_i)^2 \,\d x \d v,
\end{align*}
\begin{align*}
  \varepsilon \sum_{i=1}^N \int_{\mathbb{T}^3 \times \mathbb{R}^3}  \tilde{\nu}_i^{\varepsilon, k} \partial_{v_l} f_i 
  \partial_{v_l} \partial_{x_k} f_i \,\d x \d v  \geq & \sum_{i=1}^N -\frac{\varepsilon}{2} \int_{\mathbb{T}^3 \times \mathbb{R}^3} 
  \tilde{\nu}_i^{\varepsilon,k} (\partial_{v_l} f_i)^2  \,\d x \d v\\
  & -\frac{\varepsilon}{2} \int_{\mathbb{T}^3 \times \mathbb{R}^3} \tilde{\nu}_i^{\varepsilon, k} (\partial_{v_l} \partial_{x_k} f_i)^2 \,\d x \d v.
\end{align*}

Since $\tilde{\nu}_i^{\varepsilon,k}$ and $\nu_i^\varepsilon$ share the same structure, by summing all the above bounds, we can establish 
these positive constants as
\begin{gather*}
C_1^{\bm{\nu}} = \frac{\bar{\nu}_i^{\delta, up}}{2} \|c_{tot}\|_{L_t^\infty L_x^\infty} > 0, \quad 
C_3^{\bm{\nu}} = \max_{1\leq i\leq N} \sup_{v \in \mathbb{R}^3} \frac{(\partial_{v_l} \nu_i^\varepsilon)^2}{\nu_i^\varepsilon},\\
C_4^{\bm{\nu}} = \max_{1\leq i\leq N} \sup_{v \in \mathbb{R}^3} \frac{(\partial_{v_l} \tilde{\nu}_i^{\varepsilon,k})^2}{\nu_i^\varepsilon}.
\end{gather*}
Moreover, the constant $C_5^{\bm{\nu}}$ corresponding to the upper bound of $\tilde{\bm{\nu}}^{\varepsilon, k}$ can be established 
using similar calculations in Lemma \ref{lemma for bound of M-i}, while $C_2^{\bm{\nu}}$ corresponding to the upper bound of 
$\tilde{\bm{\nu}}^{\varepsilon, k} + \bm{\nu}^\varepsilon$ can be obtained as well. We emphasize that computations of these constants rely on both the lower and upper bounds of $M_i^\varepsilon$. Consequently, the parameter $\delta$ must be chosen to simultaneously satisfy conditions \eqref{delta comm-low} and \eqref{delta comm-up}. Gathering the four parts, we recover \eqref{lower bound for nu with v derivative} 
in the case $|\alpha| = |\beta| = 1$ (the case $|\alpha| = 0$ is same following the above calculations).

We provide a brief analysis for the general case, splitting the calculations into four parts,
\begin{align*}
&\langle \partial_v^\beta \partial_x^\alpha \bm{\nu}^\varepsilon (\mathbf{f}), \partial_v^\beta \partial_x^\alpha \mathbf{f} 
\rangle_{L_{x,v}^2} \\
& = \sum_{i=1}^N \int_{\mathbb{T}^3 \times \mathbb{R}^3} \partial_v^\beta \partial_x^\alpha (\nu_i^\varepsilon f_i) 
\partial_v^\beta \partial_x^\alpha f_i \,\d x \d v\\
& = \sum_{i=1}^N  \left ( \sum_{ \substack{|\beta^\prime| \neq 0 \\ |\alpha^\prime| = 0} } + \sum_{ \substack{ |\beta^\prime| = 0\\ 
|\alpha^\prime|=0} } + \sum_{ \substack{|\beta^\prime| \neq 0 \\ |\alpha^\prime| \neq 0} } + \sum_{\substack{ |\beta^\prime| = 0 \\ 
|\alpha^\prime|\neq 0} } \right )\int_{\mathbb{T}^3 \times \mathbb{R}^3} \partial_v^{\beta^\prime} \partial_x^{\alpha^\prime} \nu_i^\varepsilon 
\partial_v^{\beta - \beta^\prime} \partial_x^{\alpha - \alpha^\prime} f_i \partial_v^\beta \partial_x^\alpha f_i \,\d x \d v.
\end{align*} 
These four terms correspond exactly to those in the case $|\beta| = |\alpha| = 1$, and we can compute them using a similar method.
\end{proof}
The operator $\mathbf{K}^\varepsilon$ in \eqref{def of operator K} can be written as a kernel operator, by using the Carleman representation \cite{Carleman1957book}, i.e. for any $v \in \mathbb{R}^3$
\begin{equation}
\begin{aligned}
\mathit{K}_i^\varepsilon (\mathbf{f}) (v) = & \sum_{j=1}^N \int_{\mathbb{R}^3} k_{ij}^{(1)} (v, v_\ast) f_j(v_\ast) + k_{ij}^{(2)} 
(v,v_\ast) f_i (v_\ast) - k_{ij}^{(3)} (v, v_\ast) f_j (v_\ast) \,\d v_\ast\\
:= & \mathbf{K}^{(1), \varepsilon} + \mathbf{K}^{(2), \varepsilon} 
+ \mathbf{K}^{(3), \varepsilon},
\end{aligned}
\end{equation}
where the kernels are defined as
\begin{equation}
\begin{aligned}
k_{ij}^{(1)} (v, v_\ast) & = \frac{C_{ij}}{|v - v_\ast|} \int_{\tilde{E}_{v v_\ast}^{ij}} \frac{\mathit{B}_{ij} 
\left ( v - V(w, v_\ast), \frac{v_\ast - w}{ |w - v_\ast| } \right )}{ |w - v_\ast| } M_i^\varepsilon (w) 
\sqrt{\frac{\mu_j (v_\ast)}{\mu_i (v)}} \, \d\tilde{E} (w) , \\
k_{ij}^{(2)}(v, v_\ast) & = \frac{C_{ji}}{|v - v_\ast|} \int_{E_{v v_\ast}^{ij}} \frac{\mathit{B}_{ij} 
\left ( v - V(v_\ast,w), \frac{w - v_\ast}{ |w - v_\ast| } \right )}{|w - v_\ast|} M_j^\varepsilon (w) 
\sqrt{\frac{\mu_i (v_\ast)}{\mu_i (v)}} \, \d E(w) , \\
k_{ij}^{(3)} (v, v_\ast) & = \int_{\mathbb{S}^2} \mathit{B}_{ij} (|v - v_\ast|, \cos{\theta}) M_i^\varepsilon (v) 
\sqrt{\frac{\mu_j (v_\ast)}{\mu_i (v)}} \,\d \sigma.
\end{aligned}
\end{equation}
The notation $V(w, v_\ast) := v_\ast + \frac{m_i}{m_j} w - \frac{m_i}{m_j}v$, and $C_{ij}, C_{ji}$ are 
explicit positive constants that only depending on masses $m_i,m_j$ for any $1 \leq i,j \leq N$ (these constants are not symmetric with respect 
to species indices $i,j$ anymore). Moreover, the symbol $\, \d E$ records the Lebesgue measure on the hyperplane $E_{v v_\ast}^{ij}$, 
which is orthogonal to $v-v_\ast$ and passes through 
\[ V_E(v, v_\ast) = \frac{m_i + m_j}{2 m_j} v - \frac{m_i - m_j}{2 m_j} v_\ast.
\]
The symbol $\, \d \tilde{E}$ records the Lebesgue measure on the spece $\tilde{E}_{v v_\ast}^{ij}$. When $m_i = m_j$, the space 
$\tilde{E}_{v v_\ast}^{ij}$ corresponds to the hyperplane $E_{v v_\ast}^{ij}$, and when $m_i \neq m_j$, 
the space is the sphere centered at $O$ with radius $R$,
\[ O = O(v, v_\ast) := \frac{m_i}{m_i-m_j} v - \frac{m_j}{m_i-m_j} v_\ast, \quad 
R=R(v,v_\ast):=\frac{m_j}{|m_j-m_j|} |v-v_\ast|.
\]
Details on how to derive these expressions through coordinate transformations can 
be found in Section 5 in \cite{briant2016ARMAglobal}. Furthermore, we omit the explicit Carleman representation of 
$\mathbf{K}^\varepsilon$. This again uses coordinate transformations to map the variable $w$ in $k_{ij}^{(1)}$ to $\mathbb{S}^2$ when $m_i \neq m_j$, and map the variable $w$ in $k_{ij}^{(2)}$ to $\mathbb{R}^2$. These details are similar to those in \cite[Appendix A]{briant2021stability}. 

Now we present the controls for the operator $\mathbf{K}^\varepsilon$.
\begin{lemma}\label{lemma for K operator with v derivative}
Let $\varepsilon \in (0, 1]$ and $s \in \mathbb{N}^\ast$, $\alpha, \beta$ are two multi-indices that satisfy $|\alpha| + |\beta| = s$ and 
$|\beta| \geq 1$. Then for any $\xi \in (0, 1)$, there exist constants $C_1^\mathbf{K}(\xi)$ and $C_2^\mathbf{K}$, 
such that, for any $\mathbf{f} \in H^s(\mathbb{T}^3 \times \mathbb{R}^3)$,
\begin{equation}
\langle \partial_v^\beta \partial_x^\alpha \mathbf{K}^\varepsilon (\mathbf{f}), \partial_v^\beta \partial_x^\alpha \mathbf{f} 
\rangle_{L_{x,v}^2} \leq C_1^\mathbf{K} (\xi)  \|\mathbf{f}\|_{H_{x,v}^{s-1}}^2 + \xi C_2^\mathbf{K} 
\|\partial_v^\beta \partial_x^\alpha \mathbf{f}\|_{L_{x,v}^2}^2 \label{bound for x-alpha v-beta deri on K}
\end{equation}
\end{lemma}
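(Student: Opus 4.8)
The plan is to exploit the kernel (Carleman) representation $\mathbf{K}^\varepsilon=\mathbf{K}^{(1),\varepsilon}+\mathbf{K}^{(2),\varepsilon}+\mathbf{K}^{(3),\varepsilon}$ recalled above, together with the $\varepsilon$-uniform upper bound \eqref{upper bound for M-i} on the local Maxwellian $\mathbf{M}^\varepsilon$ from Lemma~\ref{lemma for bound of M-i}, and to reduce the claimed estimate to the $L^2$-boundedness of Grad-type kernel operators, following the mono-species scheme of \cite{briant2015JDEnavierstokes} and its multi-species adaptation in \cite{briant2021stability}. First I would apply the Leibniz rule to $\partial_v^\beta\partial_x^\alpha\mathit{K}_i^\varepsilon(\mathbf{f})(v)=\partial_v^\beta\partial_x^\alpha\sum_{j,m}\pm\int k_{ij}^{(m),\varepsilon}(t,x,v,v_\ast)\,f_{\bullet}(t,x,v_\ast)\,\d v_\ast$. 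Since $v_\ast$ is the integration variable, every $v$-derivative falls on the kernels, while each $x$-derivative splits between the kernels (through the macroscopic quantities $\mathbf{c},\mathbf{u},T$) and $\partial_x^{\alpha-\alpha'}\mathbf{f}(\cdot,v_\ast)$. As in Lemma~\ref{lemma for bound of nu and its v-derivative}, each $x$-derivative hitting a kernel carries an explicit factor $\varepsilon$ (because $\partial_{x_k}\mathbf{M}^\varepsilon=\varepsilon(\cdots)\mathbf{M}^\varepsilon$), so every term with $|\alpha'|\geq1$ is $\varepsilon$-small and involves $\partial_x^{\alpha-\alpha'}\mathbf{f}$ of order $\leq|\alpha|-1\leq s-2$; once the associated kernel operators are shown to be bounded on $L^2_{x,v}$, these are absorbed into $C_1^{\mathbf{K}}(\xi)\|\mathbf{f}\|_{H^{s-1}_{x,v}}^2$. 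One is thus left with the single leading term $\sum_{j,m}\int(\partial_v^\beta k_{ij}^{(m),\varepsilon})(v,v_\ast)\,\partial_x^\alpha f_{\bullet}(v_\ast)\,\d v_\ast$, tested against $\partial_v^\beta\partial_x^\alpha f_i(v)$; note that $|\beta|\geq1$ forces $|\alpha|\leq s-1$, hence $\|\partial_x^\alpha\mathbf{f}\|_{L^2_{x,v}}\leq\|\mathbf{f}\|_{H^{s-1}_{x,v}}$.

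The second step is the pointwise kernel analysis, which I expect to be the main obstacle. Substituting the upper bound \eqref{upper bound for M-i} (and controlling the factors $\sqrt{\mu_j(v_\ast)/\mu_i(v)}$ by Gaussians in $v,v_\ast$) into the expressions for $k_{ij}^{(1)},k_{ij}^{(2)},k_{ij}^{(3)}$, and then differentiating in $v$ — using that $v$-differentiation of the Gaussian weight and of the moving hyperplane/sphere $E_{vv_\ast}^{ij},\tilde E_{vv_\ast}^{ij}$ of integration preserves the Gaussian decay, while the $|v-v_\ast|^{\gamma-1}$-type factor stays locally integrable for $\gamma\in[0,1]$ — one should obtain, uniformly in $\varepsilon\in(0,1]$ and $(t,x)$, a bound $|\partial_v^\beta k_{ij}^{(m),\varepsilon}(v,v_\ast)|\lesssim\widehat k_{|\beta|}(v,v_\ast)$ by an explicit Grad-type kernel $\widehat k_{|\beta|}$ enjoying the classical properties $\sup_v\int\widehat k_{|\beta|}(v,v_\ast)\,\d v_\ast<\infty$, its symmetric counterpart under $v\leftrightarrow v_\ast$, and — the key point — $\sup_v\int\widehat k_{|\beta|}(v,v_\ast)\,\mathbf{1}_{\{|v-v_\ast|\leq\rho\}\cup\{|v|\geq R\}}\,\d v_\ast\to0$ as $\rho\to0^+$ and $R\to+\infty$. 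The same bounds, treating $x$-derivatives of the macroscopic coefficients as $L^\infty_x$-multipliers (finite since $H^s_x\hookrightarrow L^\infty_x$), cover the lower-order kernels produced by the Leibniz expansion. This part, together with the bookkeeping of the coordinate changes of the Carleman representation (cf.\ Section~5 of \cite{briant2016ARMAglobal} and \cite[Appendix~A]{briant2021stability}), is where essentially all the work lies; apart from the multi-species indices and the requirement of $\varepsilon$-uniformity it runs parallel to the mono-species argument.

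Finally, given $\xi\in(0,1)$, I would split $\widehat k_{|\beta|}=\widehat k_{|\beta|}\mathbf{1}_{\{|v-v_\ast|\leq\rho\}\cup\{|v|\geq R\}}+\widehat k_{|\beta|}\mathbf{1}_{\mathrm{rest}}$, choosing $\rho=\rho(\xi)$ small and $R=R(\xi)$ large so that the Schur bound of the first piece is at most $\xi$; by Cauchy–Schwarz this contributes $\leq\xi\,\|\partial_x^\alpha\mathbf{f}\|_{L^2_{x,v}}\|\partial_v^\beta\partial_x^\alpha\mathbf{f}\|_{L^2_{x,v}}\leq\xi\|\mathbf{f}\|_{H^{s-1}_{x,v}}^2+\tfrac{\xi}{2}\|\partial_v^\beta\partial_x^\alpha\mathbf{f}\|_{L^2_{x,v}}^2$. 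The second piece is a bounded kernel, supported in $v_\ast$ in a compact set for each bounded $v$ and vanishing for $|v|\geq R$, hence defines an operator bounded on $L^2_{x,v}$ with norm $C(\xi)$; Cauchy–Schwarz followed by Young's inequality gives a contribution $\leq C_1^{\mathbf{K}}(\xi)\|\mathbf{f}\|_{H^{s-1}_{x,v}}^2+\tfrac{\xi}{2}\|\partial_v^\beta\partial_x^\alpha\mathbf{f}\|_{L^2_{x,v}}^2$. Summing over $m\in\{1,2,3\}$, over $1\leq i\leq N$ and over the finitely many Leibniz terms, and then absorbing all $\xi$-independent multiplicative constants into $C_2^{\mathbf{K}}$, yields \eqref{bound for x-alpha v-beta deri on K}.
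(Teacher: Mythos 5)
There is a genuine gap. Your plan keeps $v_\ast$ as the integration variable and lets \emph{all} the $\partial_v^\beta$ derivatives fall on the Carleman kernels $k_{ij}^{(m),\varepsilon}(v,v_\ast)$, then bounds the resulting kernels by a Grad‐type kernel $\widehat k_{|\beta|}(v,v_\ast)$ with finite Schur integrals and with the cutoff piece tending to zero as $\rho\to 0^+$. This cannot work for the full range of the lemma. In the original variables the kernel carries the scaling factor $|v-v_\ast|^\gamma$ (e.g.\ $k_{ij}^{(3)}(v,v_\ast)=C\,|v-v_\ast|^\gamma\,M_i^\varepsilon(v)\sqrt{\mu_j(v_\ast)/\mu_i(v)}\,\|b_{ij}\|_{L^1}$), so $|\partial_v^\beta k_{ij}^{(m),\varepsilon}(v,v_\ast)|\sim |v-v_\ast|^{\gamma-|\beta|}\cdot(\text{Gaussian})$ near the diagonal. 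For $\gamma\in(0,1]$ and $|\beta|\ge 4$ (the lemma is applied with $|\alpha|+|\beta|=s\ge s_0$, and $s_0$ is large) this is \emph{not locally integrable} in $\mathbb{R}^3$, so $\sup_v\int\widehat k_{|\beta|}(v,v_\ast)\,\d v_\ast=\infty$ and the small‐$\rho$ Schur mass near $\{|v-v_\ast|\le\rho\}$ is $+\infty$ rather than $o(1)$. The Schur/Grad step you lean on therefore breaks down exactly where the top‐order $\xi$‐term is supposed to come from.

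The paper avoids this with two moves that are missing in your argument. First, it performs the change of variable $\eta=v-v_\ast$, writing $\mathit{K}_i^{(1),\varepsilon}(\mathbf f)(v)=\sum_j\int k_{ij}^{(1)}(v,\eta)f_j(v-\eta)\,\d\eta$. In the new variables the factor $|\eta|^\gamma$ is \emph{independent of $v$}, so $\partial_v$ never worsens the singularity; moreover the $v$-derivative is now shared between $k_{ij}^{(1)}(v,\eta)$ and $f_j(v-\eta)$ rather than piling up on the kernel. Second, for the part where $\partial_v$ falls on $f_j(v-\eta)$, the paper splits $k_{ij}^{(1)}=k_{ij}^{(1),S}+k_{ij}^{(1),R}$ via a mollified indicator of $\{|\eta|\ge\xi\}$ and then integrates by parts in $\eta$ \emph{only on the smooth part $k^{(1),S}$}, which is supported away from $\eta=0$ so that the $\eta$-derivative of $|\eta|^\gamma$ stays bounded; the remainder $k^{(1),R}$ is handled without integration by parts and contributes an $L^1_\eta$ mass $\mathcal O(\xi^{\gamma+3})$ that yields the small coefficient in front of $\|\partial_v^\beta\partial_x^\alpha\mathbf f\|^2_{L^2_{x,v}}$. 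Your cutoff $\{|v-v_\ast|\le\rho\}\cup\{|v|\ge R\}$ is applied \emph{after} the $|\beta|$ differentiations, which is too late: by then the diagonal singularity has already been amplified beyond integrability. The Leibniz treatment of the $x$-derivatives (with each $\partial_x$ on $\mathbf M^\varepsilon$ producing an $\varepsilon$ factor and lower-order $\mathbf f$) and the $\varepsilon$-uniformity via \eqref{upper bound for M-i} are fine, but the heart of the lemma — the top-order $v$-derivative estimate — needs the change of variable and the $k^S/k^R$ integration-by-parts mechanism, not a direct Schur bound on $\partial_v^\beta k$.
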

\begin{proof}
We prove this result in the simplest case $|\alpha|=0, |\beta| = 1$, and we only present details for $\mathbf{K}^{(1), \varepsilon}$, 
part of the operator $\mathbf{K}^\varepsilon$, following computations in \cite{briant2021stability}. 
We choose the relative velocity 
$\eta = v - v_\ast$ as a new variable, rewriting $v_\ast = v - \eta$. Then these quantities become
\[ R = R(v, \eta) = \frac{m_j}{|m_i - m_j|} |\eta|, \quad O = O(v,\eta) = v + \frac{m_j}{m_i - m_j} \eta,
\]
and the kernel $k_{ij}^{(1)}$ (for any fixed $i,j\in \{1,\dots,N\}$) becomes
\begin{equation}
k_{ij}^{(1)} (v, \eta) = C_{ij} |\eta|^\gamma \int_{\mathbb{S}^2} \mathit{b}_{ij} (w \cdot \frac{\eta}{|\eta|})
W_{ij}^{(1)} (w \cdot \frac{\eta}{|\eta|}) M_i^\varepsilon(R w + O) \sqrt{\frac{\mu_j(v - \eta)}{\mu_i (v)}} \, \d w,
\label{explicit expression of k-ij-1}
\end{equation} 
with 
\[ M_i^\varepsilon (R w + O) = \left ( \frac{m_i}{2 \pi T} \right )^{\frac{3}{2}} \exp \left\{\frac{m_i |R w + O|^2}{2 T} 
+ \frac{\varepsilon m_i ( R w + O) \cdot u_i}{T} - \frac{\varepsilon^2 m_i |u_i|^2}{2 T} \right\},
 \]
 \begin{align*}
 W_{ij}^{(1)} (w \cdot \frac{\eta}{|\eta|}) := & \left| \frac{m_i^2 + m_j^2}{(m_i - m_j)^2} + \frac{2 m_i m_j}{(m_i - m_j) |m_i - m_j|} 
 \left ( w \cdot \frac{\eta}{|\eta|} \right )  \right|^{\gamma - 1},
 \end{align*}
 and
\[ \mathit{b}_{ij} (w \cdot \frac{\eta}{|\eta|}) := \mathit{b}_{ij} \left ( \frac{\frac{2 m_i m_j}{(m_i - m_j)^2} + 
\frac{m_i^2 + m_j^2}{ (m_i - m_j) |m_i - m_j|} \frac{\eta \cdot w}{|\eta|}}{\frac{m_i^2 + m_j^2}{(m_i - m_j)^2} 
+ \frac{2 m_i m_j}{(m_i - m_j) |m_i - m_j|} \frac{\eta \cdot w}{|\eta|}}  \right ).
\]
Since $\gamma \in [0,1]$, the quantity $W_{ij}^{(1)}$ is finite for any $w \in \mathbb{S}^2$, and for any $\eta \in \mathbb{R}^3$
\[  W_{ij}^{(1)} (w \cdot \frac{\eta}{|\eta|}) = (\frac{m_i w}{m_i - m_j} + \frac{m_j}{|m_i - m_j|} \frac{\eta}{|\eta|})^{2 (\gamma - 1)} \leq 1.
\]

Through this substitution, we notice that the $v$-derivative on $\mathbf{K}^{(1), \varepsilon}(\mathbf{f})$ actually acts on the kernels, 
$( k_{ij}^{(1)} )_{1 \leq i,j \leq N }$ and $(f_j)_{1 \leq j \leq N}$ separately
\begin{equation}
\begin{aligned}
  \langle \nabla_v \mathbf{K}^{(1), \varepsilon} (\mathbf{f}), \nabla_v \mathbf{f} \rangle & _{L_{x,v}^2}\\
  = & \sum_{i , j =1}^N \int_{\mathbb{T}^3 \times \mathbb{R}^3} \left[ \int_{\mathbb{R}^3} \nabla_v k_{ij}^{(1)} (v, \eta) f_j(v - \eta) 
  + k_{ij}^{(1)} (v, \eta) \nabla_v f_j (v - \eta) \,d\eta \right] \nabla_v f_i \,\d x \d v.\label{K-1 v-der express}
\end{aligned}
\end{equation}
To control it, we only need to find a $L^\infty$ bound for $|\nabla_v k_{ij}^{(1)}|$ in the first term. For the second term, 
we split the kernel $k_{ij}^{(1)}$ into two parts as Bondesan and Briant did in \cite{briant2021stability}. 
For a small parameter $\xi\in (0,1)$, we split it into
\[ k_{ij}^{(1)} = k_{ij}^{(1), S} + k_{ij}^{(1), R}, 
\]
where $k_{ij}^{(1), S}$ is a smooth part. It is defined by introducing a mollified indicator function 
$\textbf{1}_{\left\{ |\cdot| \geq \xi \right\}}$, i.e.
\[ k_{ij}^{(1), S} (v, \eta) = \textbf{1}_{\left\{ |\eta| \geq \xi \right\}} k_{ij}^{(1)} (v, \eta).
\]
Applying integration by parts, the second part on the right hand side of \eqref{K-1 v-der express} takes the form 
\begin{equation}
\begin{aligned}
 \int_{\mathbb{R}^3} k_{ij}^{(1)} (v, \eta) \nabla_v f_j (v - \eta) \, \d \eta = & \int_{\mathbb{R}^3} k_{ij}^{(1), R} (v,\eta) 
 \nabla_v f_j (v - \eta) \, \d \eta\\
 & + \int_{\mathbb{R}^3} \nabla_\eta k_{ij}^{(1), S} (v, \eta) f_j (v - \eta) \, \d \eta. \label{k-ij split}
\end{aligned}
\end{equation}

Since the expression $\mu_i = \bar{c}_i \mathcal{M}_i$ holds, by applying the upper bound for $M_i^\varepsilon$ in \eqref{upper bound for M-i} 
and following computations in \cite[Lemma 5.1]{briant2016ARMAglobal} and \cite[Lemmas 4.3-4.6]{Bondesan2020CPAAnonequilibrium}, 
there exists a positive constant $\bar{\delta}_{ij}$, such that, for any $v\in\mathbb{R}^3$
\[ k_{ij}^{(1),R} (v, \eta) \leq C_{ij}(\delta) \|\mathbf{c}\|_{L_t^\infty L_x^\infty} |\eta|^\gamma 
e^{-C(\delta) |\eta|^2},
\]
where the positive constants  $C_{ij} (\delta)$ and $ C (\delta)$ depend only on an arbitrary parameter 
$\delta \in ( \bar{\delta}_{ij}, \frac{1}{1 + \delta_{MS}} )$. 
The constant $\bar{\delta}_{ij}$ is defined as 
\begin{equation*}
    \bar{\delta}_{ij} = \max\{0, \frac{m_i - m_j}{2 m_i}, \frac{m_i}{8 m + m_i}\}, \quad \text{with } 
    8 m := \min\{m_i, m_j, \frac{m_i^2}{m_j} \}.
\end{equation*}
We can divide it into two cases:
\begin{align*}
\text{for } m_i \leq m_j, \quad 8 m = \frac{m_i^2}{m_j}, \quad \bar{\delta}_{ij} = \frac{m_j}{m_i + m_j},\\
\text{for } m_i \geq m_j, \quad 8 m = m_j, \quad \bar{\delta}_{ij} = \frac{m_i}{m_i + m_j}.
\end{align*}
It is not hard to see that $\bar{\delta}_{ij} \leq \frac{\max\{ m_i,m_j \}}{m_i + m_j} < 1$, which has a uniform bound, for any $1\leq i,j \leq N$
\begin{equation}
\bar{\delta}_{ij} \leq \bar{\delta} := 
\frac{ \max_{1 \leq i \leq N} m_i}{ \max_{1 \leq i \leq N} m_i + \min_{1 \leq i \leq N} m_i}.  \label{def of bar-delta}
\end{equation}
Consider the interval $(\bar{\delta}, \frac{1}{1 + \delta_{MS}} )$, it is valid if and only if 
\begin{equation}
\delta_{MS} \leq \bar{\delta}_{MS} \leq \frac{ \min_{1 \leq i \leq N} m_i}{ 2 \max_{1 \leq i \leq N} m_i}. \label{delta-MS commonds in analysis of K}
\end{equation}
Applying the definition of $k_{ij}^{(1), R}$ and the above pointwise estimate, the bound for its $L_\eta^1$ norm is
\begin{equation}
\|k_{ij}^{(1), R} (v, \eta) \|_{L_\eta^1} \leq C_{ij}^{(1),R} (\delta) \xi^{\gamma + 3},
\quad \forall v \in \mathbb{R}^3.
\end{equation}
The method to get the bound for $L_v^1$ norm of $k_{ij}^{(1), R}$ is consistent with the previous discussions, 
where we only need to choose a new variable $\eta = v_\ast - v$ and set $v = v_\ast - \eta$.
The integral for $k_{ij}^{(1),R}$ in \eqref{k-ij split} can be bounded by 
$\xi^{\gamma+3}C(\delta) \|\nabla_v \mathbf{f}\|_{L_{x,v}^2}^2$ after simple computations. 
This explains the necessity of the decomposition for $k_{ij}^{(1)}$ and the small $\xi$, 
as we require this bound to be sufficiently small 
so that it can be absorbed by the negative term $ - C_1^{\bm{\nu}}\|\nabla_v \mathbf{f}\|_{L_{x,v}^2(\langle v \rangle^\gamma) }^2$ stated in 
Lemma \ref{lemma for bound of nu and its v-derivative}. This relies on the fact that 
$\|\mathbf{f}\|_{L_{x,v}^2}^2 \leq \|\mathbf{f}\|_{L_{x,v}^2 (\langle v \rangle^\gamma) }^2$ as $\gamma \in [0, 1]$.

Next, we present the upper bound for $\nabla_v k_{ij}^{(1)}$, which firstly can be written as
\begin{align*}
\nabla_v k_{ij}^{(1)} = C_{ij} |\eta|^\gamma \int_{\mathbb{S}^2} \mathit{b}_{ij} (w \cdot \frac{\eta}{|\eta|}) W_{ij}^{(1)} 
(w \cdot \frac{\eta}{|\eta|})  \left[ \nabla_v M_i^\varepsilon (R w + O) \sqrt{\frac{\mu_j (v - \eta)}{\mu_i(v)}} \right.\\
\left. + M_i^\varepsilon(R w + O) \nabla_v  \sqrt{\frac{\mu_j (v - \eta)}{\mu_i (v)}} \right]\, \d w.
\end{align*}
The upper bound for $M_i^\varepsilon$ in \eqref{upper bound for M-i} implies that for any $\delta \in (0, \frac{1}{1 + \delta_{MS}})$,
\begin{align*}
|\nabla_v M_i^\varepsilon (R w + O)| = & |- \frac{m_i (R w + O - \varepsilon u_i) \cdot \nabla_v (R w + O)}{T} M_i^\varepsilon| = 
|- \frac{m_i (R w + O - \varepsilon u_i)}{T} M_i^\varepsilon|\\
& \leq C_{ij} (\delta) (|\eta| + |v| + |u_i|) c_i \mathcal{M}_i^\delta (R w + O).
\end{align*} 
Moreover, the following inequality holds,
\[ \nabla_v  \sqrt{\frac{\mu_j (v - \eta)}{\mu_i (v)}} = \frac{m_i v - m_j (v - \eta)}{2} \sqrt{\frac{\mu_j (v - \eta)}{\mu_i (v)}}
\leq C (|v| + |\eta|)  \sqrt{\frac{\mu_j (v - \eta)}{\mu_i (v)}}.
\]
The decomposition $v + v_\ast = V^\perp + V^\parallel$ in \cite[Section 5]{briant2016ARMAglobal} is applied, 
where $V^\parallel$ is the projection onto $\mathrm{Span} (v - v_\ast)$ in space $\mathbb{R}^3$ and $V^\perp$ is the orthogonal component. We transform them into our notations, that are $V^\perp \perp \eta$ and $V^\parallel = \langle 2 v - \eta, \frac{\eta}{|\eta|} \rangle \frac{\eta}{|\eta|}$, and obtain 
\[ |v|^2 = \frac{|V^\perp + V^\parallel + \eta|^2}{4} = \frac{|V^\perp|^2}{4} + \frac{\langle v, \eta \rangle^2}{|\eta|^2}.
\]
These two terms can be respectively absorbed by the exponential terms discarded in proof of \cite[Lemma 4.6]{Bondesan2020CPAAnonequilibrium}, controlled by a constant $C(\delta)$. The upper bound for the $v$-derivative is, for any $v \in \mathbb{R}^3$
\begin{equation}
\left| \nabla_v k_{ij}^{(1)} (v, \eta) \right| \leq C_{ij} (\delta) \max_{1 \leq i,j \leq N} \sqrt{ \frac{\bar{c}_j}{\bar{c}_i} }  
\|\mathbf{c}\|_{L_t^\infty L_x^\infty} (1 + |\eta| + \|\mathbf{u}\|_{L_t^\infty L_x^\infty}) |\eta|^\gamma e^{- C (\delta) |\eta|^2},
\end{equation}
with an arbitrary constant $\delta \in (\bar{\delta}, \frac{1}{1 + \delta_{MS}})$. The constant $\bar{\delta}$ is defined in 
\eqref{def of bar-delta} and $\delta_{MS}$ satisfies condition \eqref{delta-MS commonds in analysis of K}. 

The $\eta$-derivative of $k_{ij}^{(1), S}$ can be expanded as 
\begin{align*}
\nabla_\eta k_{ij}^{(1), S} (v, \eta) = & C_{ij} \nabla_\eta \textbf{1}_{ \left\{|\eta| \geq \xi\right\} } |\eta|^\gamma 
\int_{\mathbb{S}^2} \mathit{b}_{ij} W_{ij}^{(1)} M_i^\varepsilon (R w + O) \sqrt{\frac{\mu_j (v - \eta)}{\mu_i (v)}}\, \d w\\
& + C_{ij} \textbf{1}_{ \left\{|\eta| \geq \xi \right\} } \nabla_\eta |\eta|^\gamma \int_{\mathbb{S}^2} 
\mathit{b}_{ij} W_{ij}^{(1)} M_i^\varepsilon (R w + O) \sqrt{\frac{\mu_j (v - \eta)}{\mu_i (v)}} \, \d w\\
& + C_{ij} \textbf{1}_{ \left\{|\eta| \geq \xi \right\} } |\eta|^\gamma \int_{\mathbb{S}^2} 
\nabla_\eta \mathit{b}_{ij} W_{ij}^{(1)} M_i^\varepsilon (R w + O) \sqrt{\frac{\mu_j (v - \eta)}{\mu_i (v)}} \, \d w\\
& + C_{ij} \textbf{1}_{ \left\{|\eta| \geq \xi \right\} } |\eta|^\gamma \int_{\mathbb{S}^2} 
\mathit{b}_{ij} \nabla_\eta W_{ij}^{(1)} M_i^\varepsilon (R w + O) \sqrt{\frac{\mu_j (v - \eta)}{\mu_i (v)}} \, \d w\\
& + C_{ij}  \textbf{1}_{ \left\{|\eta| \geq \xi \right\} } |\eta|^\gamma \int_{\mathbb{S}^2} 
 \mathit{b}_{ij} W_{ij}^{(1)} \nabla_\eta M_i^\varepsilon (R w + O) \sqrt{\frac{\mu_j (v - \eta)}{\mu_i (v)}} \, \d w\\
 & + C_{ij} \textbf{1}_{ \left\{|\eta| \geq \xi \right\} } |\eta|^\gamma \int_{\mathbb{S}^2} 
 \mathit{b}_{ij} W_{ij}^{(1)} M_i^\varepsilon (R w + O) \nabla_\eta \sqrt{\frac{\mu_j (v - \eta)}{\mu_i (v)}} \, \d w\\
  = & I_1^{(1), S} + I_2^{(1), S} + I_3^{(1), S} + I_4^{(1), S} + I_5^{(1), S} + I_6^{(1), S}.
\end{align*}
The following inequalities hold
\[  \left| |\nabla_\eta |\eta \right|^\gamma | \leq \gamma |\eta|^{\gamma - 1}, \quad |\nabla_\eta \mathit{b}_{ij}| \leq C_{ij} |\eta|^{-1}, 
\quad |\nabla_\eta W_{ij}^{(1)}| \leq C_{ij} |\eta|^{-1},\]
and 
\[|\nabla_\eta \sqrt{\frac{\mu_j (v - \eta)}{\mu_i (v)}}| \leq C_{ij} (|v| + |\eta|) \sqrt{\frac{\mu_j (v - \eta)}{\mu_i (v)}}.
\]
Since $\mathit{b}_{ij}^\prime (\cos{\theta}) \leq C$, referring to the computation details in \cite[Lemma 3.4]{briant2021stability}, 
we can obtain these bounds uniformly with respect to $v\in\mathbb{R}^3$,
\begin{align*}
 |I_1^{(1), S}| & \leq C_{ij} (\delta_1) \max_{1 \leq i, j \leq N} \sqrt{\frac{\bar{c}_j}{\bar{c}_i}}
 \|\mathbf{c}\|_{L_t^\infty L_x^\infty} |\eta|^\gamma  e^{-C (\delta_1) |\eta|^2},\\
 |I_k^{(1), S}| & \leq C_{ij} (\delta_k) \max_{1 \leq i, j \leq N} \sqrt{\frac{\bar{c}_j}{\bar{c}_i}}
  \|\mathbf{c}\|_{L_t^\infty L_x^\infty} |\eta|^{\gamma-1}  e^{-C(\delta_k)|\eta|^2}, \quad  k=\{2,3,4\},\\
 |I_l^{(1), S}| & \leq C_{ij} (\delta_l) \max_{1\leq i, j\leq N} \sqrt{\frac{\bar{c}_j}{\bar{c}_i}}
 \|\mathbf{c}\|_{L_t^\infty L_x^\infty}  (1 + |\eta| + \|\mathbf{u}\|_{L_t^\infty L_x^\infty}) |\eta|^\gamma  e^{- C (\delta_l) |\eta|^2},
 \quad l = \{ 5, 6 \}
\end{align*}
for any fixed $(\delta_l)_{1 \leq l \leq 6 }$ satisfying $(\bar{\delta}, \frac{1}{1 + \delta_{MS}})$. Therefore, the $L_v^1$ norm 
and $L_\eta^1$ norm of $\nabla_\eta k_{ij}^{(1), S}$ are finite.

Back to the expressions \eqref{K-1 v-der express} and \eqref{k-ij split}, using all the above estimates, we 
finally deduce 
\[ \langle \nabla_v \mathbf{K}^{(1), \varepsilon} (\mathbf{f}), \nabla_v \mathbf{f} \rangle_{L_{x,v}^2} \leq C^{(1), \mathbf{K}}_1 (\xi) 
\|\mathbf{f}\|_{L_{x,v}^2}^2 + \xi C^{(1), \mathbf{K}}_2 \|\nabla_v \mathbf{f}\|_{L_{x,v}^2}^2,
\]
where Young's inequality with the constant $\xi$ is applied. Similar bounds can be derived for the operators $\mathbf{K}^{(2), \varepsilon}$ and $ \mathbf{K}^{(3), \varepsilon}$ with constants $C^{(2), \mathbf{K}}_k$ and $ C^{(3), \mathbf{K}}_k$, $k = 1, 2$. By suitably choosing the parameter $\delta$ and gathering these bounds, 
the bound for $\mathbf{K}^\varepsilon$ in \eqref{bound for x-alpha v-beta deri on K} is recovered, with the constants 
$C_1^\mathbf{K} (\xi) = \sum_{l=1}^3 C^{(l), \mathbf{K}}_1 (\xi)$ and $C_2^\mathbf{K} = \sum_{l=1}^3 C^{(l), \mathbf{K}}_2$. 
\end{proof}
Next we establish the local coercivity for the operator $\mathbf{L}^\varepsilon$ and provide some estimates related to it.
\begin{lemma}\label{lemma for some estimate on L-eps}
Consider collision kernels $(\mathit{B}_{ij})_{1 \leq i,j \leq N}$ that satisfy assumptions (H1)-(H2)-(H3)-(H4) 
with the parameter $\gamma \in [0,1]$, 
and let $\lambda_\mathbf{L} > 0$ be the spectral gap 
in $L^2(\mathbb{R}^3)$ of the operator $\mathbf{L}$. Let $\varepsilon\in(0,1]$, there exists an explicit constant $C_{coe}^\mathbf{L} > 0$, 
such that for any $\eta_1 > 0$ and for any $\mathbf{f} \in L^2(\mathbb{T}^3 \times \mathbb{R}^3)$,
\begin{equation}
\langle \mathbf{L}^\varepsilon (\mathbf{f}), \mathbf{f} \rangle_{L_{x,v}^2} \leq 
-(\lambda_\mathbf{L} - (\varepsilon + \eta_1) \delta_{MS} C_{coe}^\mathbf{L}) \|\mathbf{f}^\perp\|_{L_{x,v}^2(\langle v \rangle^\gamma)}^2 
+ \varepsilon^2 \frac{\delta_{MS} C_{coe}^\mathbf{L}}{\eta_1}  \|\bm{\pi}_\mathbf{L} (\mathbf{f})\|_{L_{x,v}^2(\langle v \rangle^\gamma)}^2. 
\label{hypercoercivity property for L-eps}
\end{equation}

For the upper bound of $\mathbf{L}^\varepsilon$, there exists a positive constant $C_0^\mathbf{L}$, such that for any 
$\mathbf{f}, \mathbf{g} \in L^2(\mathbb{T}^3 \times \mathbb{R}^3)$,
\begin{equation}
\langle \mathbf{L}^\varepsilon (\mathbf{f}), \mathbf{g} \rangle_{L_{x,v}^2} = 
\langle \mathbf{L}^\varepsilon (\mathbf{f}), \mathbf{g}^\perp \rangle_{L_{x,v}^2}  \leq C_0^\mathbf{L} 
\|\mathbf{f}\|_{L_{x,v}^2(\langle v \rangle^\gamma)}  \|\mathbf{g}^\perp\|_{L_{x,v}^2(\langle v \rangle^\gamma)}.
\end{equation}

Moreover, for $s \in \mathbb{N}^\ast$ and the multi-index $\alpha$ satisfies $|\alpha| = s$, then for any $\varepsilon \in (0, 1]$ and for any $\mathbf{f} \in L^2(\mathbb{T}^3 \times \mathbb{R}^3)$, there exists a positive constant $C_\alpha^\mathbf{L}$, such that
\begin{equation}
\langle \partial_x^\alpha \mathbf{L}^\varepsilon (\mathbf{f}) - \mathbf{L}^\varepsilon (\partial_x^\alpha \mathbf{f}), 
\partial_x^\alpha \mathbf{f} \rangle_{L_{x,v}^2} \leq  \varepsilon\delta_{MS} C_\alpha^\mathbf{L} 
\sum_{|\alpha^\prime| \leq s-1}\| \partial_x^{\alpha^\prime} \mathbf{f}\|_{L^2_{x,v}(\langle v \rangle^\gamma)}  
\| \partial_x^\alpha \mathbf{f}^\perp\|_{L_{x,v}^2(\langle v \rangle^\gamma)}, 
\label{sub of L of alpha-x deri}
\end{equation} 
and 
\begin{equation}
\langle \partial_x^\alpha \mathbf{L}^\varepsilon (\mathbf{f}) - \mathbf{L}^\varepsilon (\partial_x^\alpha \mathbf{f}), 
\partial_{v}^{e_k} \partial_x^{\alpha - e_k} \mathbf{f} \rangle_{L_{x,v}^2} \leq \varepsilon \delta_{MS} C_\alpha^\mathbf{L} 
\sum_{|\alpha^\prime| \leq s-1}  \|\partial_x^{\alpha^\prime} \mathbf{f}\|_{L^2_{x,v}(\langle v \rangle^\gamma)}  
\| \partial_{v}^{e_k} \partial_x^{\alpha - e_k} \mathbf{f}^\perp\|_{L_{x,v}^2(\langle v \rangle^\gamma)}.  \label{sub of L of alpha-k x-v deri}
\end{equation}  
\end{lemma}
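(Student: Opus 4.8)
\textbf{Proof proposal for Lemma \ref{lemma for some estimate on L-eps}.}

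The plan is to exploit the decomposition $\mathbf{L}^\varepsilon = \mathbf{L} + (\mathbf{L}^\varepsilon - \mathbf{L})$, reducing everything to the known spectral gap for $\mathbf{L}$ plus a controllable perturbation. First I would write out $\mathbf{L}^\varepsilon(\mathbf{g}) - \mathbf{L}(\mathbf{g})$ explicitly from \eqref{def of L-eps}: each term is of the form $\mu_i^{-1/2}\mathit{Q}_{ij}(M_i^\varepsilon - \mu_i, \mu_j^{1/2}g_j)$ or $\mu_i^{-1/2}\mathit{Q}_{ij}(\mu_i^{1/2}g_i, M_j^\varepsilon - \mu_j)$. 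Using $\mathbf{M}^\varepsilon = (\bar{\mathbf{c}}+\varepsilon\tilde{\mathbf{c}})\bm{\mathcal{M}}$ and $\bm{\mu} = \bar{\mathbf{c}}\bm{\mathcal{M}}$, one has the further split $M_i^\varepsilon - \mu_i = \bar{c}_i(\mathcal{M}_i^\varepsilon - \mathcal{M}_i) + \varepsilon\tilde{c}_i\mathcal{M}_i^\varepsilon$; the first piece is bounded pointwise by $C_\delta^{up}(\varepsilon|u_i| + \varepsilon|\tilde{T}|)\mathcal{R}_i^{sub}(\delta)\mathcal{M}_i^\delta$ via \eqref{upper bound for sub of M-i-eps and M-i}, and the second by $C_\delta^{up}\varepsilon|\tilde{c}_i|\mathcal{R}_i^{up}(\delta)\mathcal{M}_i^\delta$ via \eqref{upper bound for M-i}. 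Both carry an explicit factor $\varepsilon$ and, after taking $L^\infty_{x}$ norms of the macroscopic quantities and invoking Sobolev embedding, a factor $\delta_{MS}$ (since $\|\tilde{\mathbf{c}}\|_{L^\infty_t H^s_x}, \|\tilde{T}\|_{L^\infty_t H^s_x}, \|\tilde{\mathbf{u}}\|_{L^\infty_t H^{s-1}_x}$ are all $\lesssim \delta_{MS}$). Thus $\mathbf{L}^\varepsilon - \mathbf{L}$ is, as a bilinear-in-$v$ Boltzmann-type operator with a Maxwellian weight $\mathcal{M}^\delta$, bounded in the $L^2_v(\langle v\rangle^\gamma)$-bilinear sense by $\varepsilon\delta_{MS} C$ — this is exactly the kind of estimate proved for $\bm{\nu}^\varepsilon$ and $\mathbf{K}^\varepsilon$ above, so I would cite those mechanisms (continuity estimates for Boltzmann operators around a Maxwellian, as in \cite{briant2015JDEnavierstokes,briant2021stability}) rather than redo them.

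For the coercivity \eqref{hypercoercivity property for L-eps}: the spectral gap gives $\langle \mathbf{L}(\mathbf{f}),\mathbf{f}\rangle_{L^2_{x,v}} \leq -\lambda_{\mathbf{L}}\|\mathbf{f}^\perp\|_{L^2_{x,v}(\langle v\rangle^\gamma)}^2$ (this requires $\gamma\in[0,1]$; from \cite{briant2016ARMAglobal,mouhot2016SIAMJMAhypercoercivity}). Then I write $\mathbf{f} = \bm{\pi}_{\mathbf{L}}(\mathbf{f}) + \mathbf{f}^\perp$ and estimate $\langle (\mathbf{L}^\varepsilon - \mathbf{L})(\mathbf{f}),\mathbf{f}\rangle_{L^2_{x,v}}$. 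The crucial structural point — the same one used in \cite{Bondesan2020CPAAnonequilibrium} — is that $\mathbf{Q}$ is orthogonal to $\mathrm{Ker}\,\mathbf{L}$, so $(\mathbf{L}^\varepsilon - \mathbf{L})(\mathbf{f})$ pairs trivially, or at least very weakly, against the $\bm{\pi}_{\mathbf{L}}$-component on one side; after using this and the $\varepsilon\delta_{MS}$-smallness, the cross terms are of the form $\varepsilon\delta_{MS}C(\|\mathbf{f}^\perp\| + \|\bm{\pi}_{\mathbf{L}}(\mathbf{f})\|)\|\mathbf{f}^\perp\|$ in the $L^2_{x,v}(\langle v\rangle^\gamma)$-norm. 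Applying Young's inequality with weight $\eta_1$ to the term involving $\|\bm{\pi}_{\mathbf{L}}(\mathbf{f})\|\|\mathbf{f}^\perp\|$ produces precisely the stated right-hand side: the $\|\mathbf{f}^\perp\|^2$ part is absorbed into $\lambda_{\mathbf{L}}$ with a loss $(\varepsilon+\eta_1)\delta_{MS}C_{coe}^{\mathbf{L}}$, and the $\|\bm{\pi}_{\mathbf{L}}(\mathbf{f})\|^2$ part comes with coefficient $\varepsilon^2\delta_{MS}C_{coe}^{\mathbf{L}}/\eta_1$ (one factor $\varepsilon$ from the operator bound, one factor $\varepsilon$ traded in from the other side, consistent with the $\mathcal{O}(\varepsilon)$ smallness of $\mathbf{M}^\varepsilon - \bm{\mu}$).

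The upper bound $\langle \mathbf{L}^\varepsilon(\mathbf{f}),\mathbf{g}\rangle = \langle\mathbf{L}^\varepsilon(\mathbf{f}),\mathbf{g}^\perp\rangle$ follows again from orthogonality of $\mathbf{Q}$ to $\mathrm{Ker}\,\mathbf{L}$ together with a direct continuity estimate for $\mathbf{L}^\varepsilon$ in the $L^2_v(\langle v\rangle^\gamma)$ norm (bounded uniformly in $\varepsilon$ since $M_i^\varepsilon$ is sandwiched between $C\mathcal{M}_i^{1/\delta}$ and $C\mathcal{M}_i^\delta$ by Lemma \ref{lemma for bound of M-i}). For the commutator estimates \eqref{sub of L of alpha-x deri} and \eqref{sub of L of alpha-k x-v deri}, the point is that $\partial_x^\alpha \mathbf{L}^\varepsilon(\mathbf{f}) - \mathbf{L}^\varepsilon(\partial_x^\alpha\mathbf{f})$ only involves $x$-derivatives hitting the coefficients $M_i^\varepsilon$, each such derivative producing a factor $\varepsilon$ times a derivative of $(\tilde{\mathbf{c}},\tilde{\mathbf{u}},\tilde{T})$, hence a factor $\varepsilon\delta_{MS}$ after Sobolev embedding; the lower-order derivatives of $\mathbf{f}$ that appear are collected into $\sum_{|\alpha'|\le s-1}\|\partial_x^{\alpha'}\mathbf{f}\|_{L^2_{x,v}(\langle v\rangle^\gamma)}$, and one last use of $\mathbf{Q}\perp\mathrm{Ker}\,\mathbf{L}$ replaces the remaining $\partial_x^\alpha\mathbf{f}$ (or $\partial_v^{e_k}\partial_x^{\alpha-e_k}\mathbf{f}$) factor by its $\perp$-part. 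I expect the main obstacle to be bookkeeping the dependence of all these constants on $\delta$ and checking that a single admissible choice of $\delta$ — simultaneously satisfying \eqref{delta comm-low}, \eqref{delta comm-up} and, where kernel estimates enter, $\delta>\bar{\delta}$ as in \eqref{def of bar-delta} — can be made so that the bounds genuinely close; the underlying functional-analytic steps are standard perturbations of the mono-species argument.
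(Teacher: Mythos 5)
Your proposal is correct and follows essentially the same route as the paper: the paper likewise splits $\mathbf{L}^\varepsilon = \mathbf{L}_\infty^\varepsilon + \varepsilon\tilde{\mathbf{L}}^\varepsilon$ (which is exactly your decomposition $M_i^\varepsilon - \mu_i = \bar{c}_i(\mathcal{M}_i^\varepsilon - \mathcal{M}_i) + \varepsilon\tilde{c}_i\mathcal{M}_i^\varepsilon$), uses the spectral gap of $\mathbf{L}$, the pointwise bounds of Lemma~\ref{lemma for bound of M-i}, the reduction $\langle\mathbf{L}^\varepsilon(\mathbf{f}),\mathbf{f}\rangle = \langle\mathbf{L}^\varepsilon(\mathbf{f}),\mathbf{f}^\perp\rangle$ from $\mathbf{Q}\perp\mathrm{Ker}\,\mathbf{L}$, a further split $\mathbf{f} = \mathbf{f}^\perp + \bm{\pi}_\mathbf{L}(\mathbf{f})$ on the left, and Young's inequality with weight $\eta_1/\varepsilon$ to produce the $\varepsilon^2/\eta_1$ coefficient, and the commutator bounds are obtained as you describe by letting $\partial_x^\alpha$ hit the slowly varying coefficients in $\mathbf{M}^\varepsilon$ (each such hit giving $\varepsilon\delta_{MS}$). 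One small imprecision worth fixing: the term $\langle(\mathbf{L}^\varepsilon-\mathbf{L})(\bm{\pi}_\mathbf{L}\mathbf{f}),\mathbf{f}^\perp\rangle$ does not vanish or become "weak" by orthogonality — it is of size $\varepsilon\delta_{MS}\|\bm{\pi}_\mathbf{L}\mathbf{f}\|\,\|\mathbf{f}^\perp\|$ and the extra factor of $\varepsilon$ in $\varepsilon^2\delta_{MS}/\eta_1$ is produced purely by the $\eta_1/\varepsilon$-weighted Young inequality, not by a second operator smallness.
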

\begin{proof}
We start to analyze the coercivity of $\mathbf{L}^\varepsilon$. The expression 
$\mathbf{M}^\varepsilon = \mathbf{c} \bm{\mathcal{M}}^\varepsilon = (\bar{\mathbf{c}} + \varepsilon\tilde{\mathbf{c}}) 
\bm{\mathcal{M}}^\varepsilon$ implies that we can split this operator defined in \eqref{def of L-eps} into the following parts, 
\begin{align*}
\mathbf{L}^\varepsilon (\mathbf{f}) = & \bm{\mu}^{-1/2} [ \mathbf{Q} (\mathbf{M}^\varepsilon, \bm{\mu}^{1/2}\mathbf{f}) + 
\mathbf{Q}(\bm{\mu}^{1/2}\mathbf{f}, \mathbf{M}^\varepsilon) ]\\
 = & \bm{\mu}^{-1/2} [ \mathbf{Q} (\bar{\mathbf{c}} \bm{\mathcal{M}}^\varepsilon , \bm{\mu}^{1/2} \mathbf{f}) 
 + \mathbf{Q} (\bm{\mu}^{1/2}\mathbf{f}, \bar{\mathbf{c}} \bm{\mathcal{M}}^\varepsilon) ]
 + \varepsilon \bm{\mu}^{-1/2} \left[ \mathbf{Q} (\tilde{\mathbf{c}} \bm{\mathcal{M}}^\varepsilon, \bm{\mu}^{1/2}\mathbf{f}) 
 + \mathbf{Q} (\bm{\mu}^{1/2} \mathbf{f}, \tilde{\mathbf{c}} \bm{\mathcal{M}}^\varepsilon) \right]\\
  := & \mathbf{L}_\infty^\varepsilon (\mathbf{f}) + \varepsilon \tilde{\mathbf{L}}^\varepsilon,
\end{align*}
We study the Dirichlet form of $\mathbf{L}^\varepsilon$ by introducing the penalization with respect to $\mathbf{L}$,
\begin{equation}
\begin{aligned}
\langle \mathbf{L}^\varepsilon (\mathbf{f}), \mathbf{f} \rangle_{L_{x,v}^2} & = \langle \mathbf{L}^\varepsilon(\mathbf{f}), 
\mathbf{f}^\perp \rangle_{L_{x,v}^2}\\
& = \langle \mathbf{L}(\mathbf{f}), \mathbf{f}^\perp \rangle_{L_{x,v}^2} + 
\langle \mathbf{L}_\infty^\varepsilon (\mathbf{f}) - \mathbf{L}(\mathbf{f}), \mathbf{f}^\perp\rangle_{L_{x,v}^2} 
+ \varepsilon \langle \tilde{\mathbf{L}}^\varepsilon (\mathbf{f}), \mathbf{f}^\perp \rangle_{L_{x,v}^2}.  \label{split for the Dirichlet form of L}
\end{aligned}
\end{equation}
The first equality holds due to the conservation laws for the collision operator $\mathbf{Q}$. 
Result of Theorem 3.3 in \cite{briant2016ARMAglobal} tells us the spectral gap for $\mathbf{L}$, that is, there exists $\lambda_\mathbf{L} > 0$, such that, 
\[ \langle \mathbf{L}(\mathbf{f}), \mathbf{f}^\perp \rangle_{L_v^2} \leq - \lambda_\mathbf{L} 
\|\mathbf{f}^\perp \|_{L_{x,v}^2(\langle v \rangle^\gamma )}^2.
\] 
The second term in \eqref{split for the Dirichlet form of L} is of order $\mathcal{O}(\varepsilon)$, 
which can be estimated following the calculations in 
\cite{Bondesan2020CPAAnonequilibrium}. 
Combining our bounds for $\mathcal{M}_i^\varepsilon - \mathcal{M}_i$ in \eqref{upper bound for sub of M-i-eps and M-i}, 
there exists a positive constant $C_{coe}^\mathbf{L}$ such that
\begin{align*}
  \langle (\mathbf{L}_\infty^\varepsilon - \mathbf{L}) (\mathbf{f}), \mathbf{f}^\perp \rangle_{L_{x,v}^2} = &  
  \langle (\mathbf{L}_\infty^\varepsilon - \mathbf{L}) (\mathbf{f}^\perp), \mathbf{f}^\perp \rangle_{L_{x,v}^2} + 
  \langle (\mathbf{L}_\infty^\varepsilon - \mathbf{L}) (\bm{\pi}_\mathbf{L} (\mathbf{f}) ), \mathbf{f}^\perp \rangle_{L_{x,v}^2}\\
  &\leq (\varepsilon + \eta_1) C_{coe}^\mathbf{L} \delta_{MS}  \|\mathbf{f}^\perp\|_{L_{x,v}^2(\langle v \rangle^\gamma)}^2 
  + \frac{\varepsilon^2 C_{coe}^\mathbf{L}}{\eta_1} \delta_{MS}  \|\bm{\pi}_\mathbf{L} (\mathbf{f})\|_{L_{x,v}^2(\langle v \rangle^\gamma)}^2.
\end{align*}
The deduction uses Young's inequality with the constant $\eta_1/\varepsilon$, and the inequality 
\[ (\|\mathbf{u}\|_{L_t^\infty L_x^\infty} + \|\tilde{T}\|_{L_t^\infty L_x^\infty}) \leq C \delta_{MS},
\]
which follows from the Sobolev embedding $H_x^2\hookrightarrow L_x^\infty$. The constant $C$ is independent of $\varepsilon$. 
For the third term, in a similar approach, we apply the upper bound for $\mathcal{M}_i^\varepsilon$ in \eqref{upper bound for M-i} and use 
Young's inequality with the constant $\frac{\eta_1}{\varepsilon}$, resulting in
\begin{align*}
  \varepsilon \langle \tilde{\mathbf{L}}^\varepsilon (\mathbf{f}), \mathbf{f}^\perp \rangle_{L_{x,v}^2} 
  = & \varepsilon \langle \tilde{\mathbf{L}}^\varepsilon (\mathbf{f}^\perp + \bm{\pi}_\mathbf{L} (\mathbf{f})), 
  \mathbf{f}^\perp \rangle_{L_{x,v}^2}\\
    & \leq (\varepsilon + \eta_1) C_{coe}^\mathbf{L} \delta_{MS} \|\mathbf{f}^\perp\|_{L_{x,v}^2(\langle v \rangle^\gamma)}^2 
  + \frac{\varepsilon^2 C_{coe}^\mathbf{L}}{\eta_1} \delta_{MS} \|\bm{\pi}_\mathbf{L} (\mathbf{f})\|_{L_{x,v}^2(\langle v \rangle^\gamma)}^2,
\end{align*}
increasing the constant $C_{coe}^\mathbf{L}$ if necessary. 
The bound $\|\tilde{\mathbf{c}}\|_{L_t^\infty L_x^\infty} \leq C \delta_{MS}$ is applied, which appears in the definition 
of $\tilde{\mathbf{L}}^\varepsilon$. Summing the above three estimates, we recover inequality 
\eqref{hypercoercivity property for L-eps}.

Furthermore, we also prove the control for operator $\mathbf{L}^\varepsilon$. Using the estimate for $M_i^\varepsilon$, 
we can construct upper bounds for $\nu_{ij}^\varepsilon$ and $ (k_{ij}^{(l)})_{1 \leq l \leq 3}$ respectively. 
Thus, there exists a positive constant $C_0^\mathbf{L}$ such that 
\begin{equation}
\langle \mathbf{L}^\varepsilon (\mathbf{f}), \mathbf{g} \rangle_{L_{x,v}^2} = 
\langle \mathbf{L}^\varepsilon (\mathbf{f}), \mathbf{g}^\perp  \rangle_{L_{x,v}^2}  \leq C_0^\mathbf{L} 
\|\mathbf{f}\|_{L_{x,v}^2(\langle v \rangle^\gamma)}  \|\mathbf{g}^\perp\|_{L_{x,v}^2(\langle v \rangle^\gamma)}.
\end{equation}
The constant has the form $C_0^\mathbf{L} = C(\delta) \max_{1 \leq i,j \leq N} \sqrt{\frac{\bar{c}_j}{\bar{c}_i}}
\|\mathbf{c}\|_{L_t^\infty L_x^\infty} > 0$, where $\delta \in (\bar{\delta}, \frac{1}{1 + \delta_{MS}})$ can be chosen arbitrarily.

Finally, there remains analysis about the terms $\langle \partial_x^\alpha \mathbf{L}^\varepsilon (\mathbf{f}), 
\partial_x^\alpha \mathbf{f} \rangle_{L_{x,v}^2}$ and $\langle \partial_x^\alpha \mathbf{L}^\varepsilon (\mathbf{f}), 
\partial_{v_k} \partial_x^{\alpha-e_k} \mathbf{f} \rangle_{L_{x,v}^2}$, which arise in the subsequent computations. 
We only control the terms involving 
$\partial_x^\alpha \mathbf{L}^\varepsilon (\mathbf{f}) - \mathbf{L}^\varepsilon (\partial_x^\alpha \mathbf{f})$, 
as the estimate for $\mathbf{L}^\varepsilon$ has been established above. It can be expressed as 
\[  \partial_x^\alpha \mathbf{L}^\varepsilon (\mathbf{f}) - \mathbf{L}^\varepsilon (\partial_x^\alpha \mathbf{f}) = 
\sum_{ \substack{\alpha^\prime \leq \alpha \\ |\alpha^\prime| \neq 0} } (\partial_x^{\alpha^\prime} \mathbf{L}^\varepsilon) 
(\partial_x^{\alpha - \alpha^\prime} \mathbf{f}),
\]
where the terms $(\partial_x^{\alpha^\prime} \mathbf{L}^\varepsilon)$ denote the actions 
that $\partial_x^{\alpha^\prime}$ derivatives ($|\alpha^\prime| \geq 1$) only act on the the kernels 
$(k_{ij}^{(l)})_{1 \leq i,j \leq N, 1 \leq l \leq 3}$ of operator $\mathbf{K}^\varepsilon$ and $(\nu_{ij}^\varepsilon)_{1 \leq i, j \leq N}$. 
It is not hard to notice that these terms are of order $\mathcal{O} (\varepsilon)$. 
Following estimates in Lemmas \ref{lemma for bound of nu and its v-derivative} 
\ref{lemma for K operator with v derivative}, there exists a constant $C_\alpha^\mathbf{L}$, such that 
\begin{align*}
\langle \partial_x^\alpha \mathbf{L}^\varepsilon (\mathbf{f}) - \mathbf{L}^\varepsilon (\partial_x^\alpha \mathbf{f}), 
\partial_x^\alpha \mathbf{f} \rangle_{L_{x,v}^2} = & 
\langle \partial_x^\alpha \mathbf{L}^\varepsilon (\mathbf{f}) - \mathbf{L}^\varepsilon (\partial_x^\alpha \mathbf{f}), 
\partial_x^\alpha \mathbf{f}^\perp \rangle_{L_{x,v}^2}\\
& \leq \varepsilon \delta_{MS} C_\alpha^\mathbf{L} \sum_{|\alpha^\prime| \leq s-1}  
\|\partial_x^{\alpha^\prime} \mathbf{f}\|_{L^2_{x,v}(\langle v \rangle^\gamma)}  
\|\partial_x^\alpha \mathbf{f}^\perp\|_{L_{x,v}^2(\langle v \rangle^\gamma)}, 
\end{align*}
with the notation $\partial_x^\alpha \mathbf{f}^\perp = \partial_x^\alpha \mathbf{f} -\bm{\pi}_{\mathbf{L}} (\partial_x^\alpha \mathbf{f})$. 
The first equality holds since $x$-derivative does not change its action on $v$ variable. Moreover, $\mathcal{O}(\delta_{MS})$ is obtained from the upper bounds for the macroscopic quantities, 
as there is at least one $x$-derivative acts on $\mathbf{M}^\varepsilon$. Similar computations can recover the inequality 
\eqref{sub of L of alpha-k x-v deri}.
\end{proof}
Controls for the nonlinear operator $\mathbf{\Gamma}$ are extensions of the same property for the Boltzmann operator in mono-species case. By adapting the calculations in Appendix A in \cite{briant2015JDEnavierstokes}, we can obtain the following result.
\begin{lemma}\label{lemma for estimates on Gamma}
The linearized operator $\mathbf{L}^\varepsilon$ and nonlinear operator $\mathbf{\Gamma}$ are orthogonal to the kernel of $\mathbf{L}$, that are 
\begin{equation}
\bm{\pi}_\mathbf{L} (\mathbf{L}^\varepsilon (\mathbf{f})) = \bm{\pi}_\mathbf{L} (\mathbf{\Gamma} (\mathbf{g}, \mathbf{h})) 
= 0,\quad \forall \mathbf{f}, \mathbf{g}, \mathbf{h} \in L^2(\mathbb{R}^3).
\end{equation}
Moreover, for any $s \in \mathbb{N}$ and multi-indices $\alpha, \beta$ satisfying $|\alpha| + |\beta| = s$, there 
exist two nonnegative functionals $\mathcal{G}_x^s$ and $\mathcal{G}_{x,v}^s$ which increase with respect to the 
index $s$ separately, such that 
\begin{equation}
  \left| \langle \partial_x^\beta \partial_x^\alpha \mathbf{\Gamma}(\mathbf{g},\mathbf{h}), \mathbf{f} \rangle_{L_{x,v}^2} \right| \leq 
    \begin{cases} 
  \mathcal{G}_x^s(\mathbf{g}, \mathbf{h})  \|\mathbf{f} \|_{L_{x,v}^2(\langle v \rangle^\gamma)} & \text{if } |\beta| = 0, \\ 
  \mathcal{G}_{x,v}^s(\mathbf{g}, \mathbf{h})  \|\mathbf{f} \|_{L_{x,v}^2(\langle v \rangle^\gamma)} & \text{if } |\beta| \geq 1.
  \end{cases}
  \end{equation}
  In particular, there exists $s_0 \in \mathbb{N}^\ast$ such that, for any integer $s \geq s_0$, there exists an explicit 
  constant $C_s^{\mathbf{\Gamma}} > 0$, such that
  \begin{equation}
  \begin{aligned}
  \mathcal{G}_x^s (\mathbf{g}, \mathbf{h}) \leq C_s^{\mathbf{\Gamma}} \left ( \|\mathbf{g}\|_{H_x^s L_v^2}  
  \|\mathbf{h}\|_{H_x^s L_v^2(\langle v \rangle^\gamma)} + \|\mathbf{h}\|_{H_x^s L_v^2}  
  \|\mathbf{g}\|_{H_x^s L_v^2(\langle v \rangle^\gamma)} \right )\\
     \mathcal{G}_{x,v}^s(\mathbf{g}, \mathbf{h}) \leq C_s^{\mathbf{\Gamma}} \left ( \|\mathbf{g}\|_{H_{x,v}^s}  
     \|\mathbf{h}\|_{H_{x,v}^s (\langle v \rangle^\gamma)} + \|\mathbf{h}\|_{H_{x,v}^s}  
     \|\mathbf{g}\|_{H_{x,v}^s (\langle v \rangle^\gamma)} \right ) .
  \end{aligned}
  \end{equation}
\end{lemma}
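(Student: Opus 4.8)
\textbf{Proof proposal for Lemma~\ref{lemma for estimates on Gamma}.}
The plan is to reduce everything to the mono-species estimates of \cite[Appendix A]{briant2015JDEnavierstokes} (or equivalently the multi-species estimates already available for the operator linearized about the \emph{global} equilibrium in \cite{briant2016ARMAglobal}) and then track the dependence on $\mathbf{M}^\varepsilon$ through the pointwise bounds of Lemma~\ref{lemma for bound of M-i}. First I would dispose of the orthogonality statements. For $\bm{\pi}_{\mathbf{L}}(\mathbf{\Gamma}(\mathbf{g},\mathbf{h}))=0$ this is immediate from the weak forms \eqref{weak form-1}--\eqref{weak form-2}: testing $\mathbf{\Gamma}(\mathbf{g},\mathbf{h})$ against any $\bm{\phi}^{(k)}$ produces, after the $\bm{\mu}^{1/2}$ is absorbed, an expression of the form $\int \mathbf{Q}_{ij}(\cdot,\cdot)\,\psi$ with $\psi$ a collision invariant, hence zero; the same computation applies verbatim to $\mathbf{L}^\varepsilon$ because $\mathit{L}_i^\varepsilon(\mathbf{g})$ is a sum of terms $\mu_i^{-1/2}\mathit{Q}_{ij}(\cdot,\cdot)$ and the collision invariants $\psi$ do not see which Maxwellian appears inside $\mathbf{Q}$. (Note this is exactly why the stronger control on $\bm{\pi}_{\mathbf{L}}(\mathbf{S}^\varepsilon)$ for pure $x$-derivatives works, as remarked in the introduction.)

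Next, for the bilinear estimates I would write, for fixed multi-indices with $|\alpha|+|\beta|=s$,
\[
\partial_v^\beta\partial_x^\alpha\mathbf{\Gamma}(\mathbf{g},\mathbf{h})
=\tfrac12\sum_{\substack{\beta_1+\beta_2=\beta\\ \alpha_1+\alpha_2=\alpha}}\binom{\beta}{\beta_1}\binom{\alpha}{\alpha_1}\,
\bm{\mu}^{-1/2}\partial_v^{?}\big[\mathbf{Q}_{ij}(\mu_i^{1/2}\partial^{(\alpha_1,\beta_1)}g_i,\mu_j^{1/2}\partial^{(\alpha_2,\beta_2)}h_j)+(\mathbf{g}\leftrightarrow\mathbf{h})\big],
\]
the subtlety being that $v$-derivatives hitting the $\mu^{\pm1/2}$ weights and the Jacobians of the collision map must be redistributed using the change of variables $(v,v_\ast)\mapsto(v',v_\ast')$; this is the content of the mono-species computation and for the multi-species operator it was already carried out in \cite{briant2016ARMAglobal}. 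Pairing against $\mathbf{f}$ and applying the weak form \eqref{weak form-1} (to move one derivative and to exhibit the $\langle v\rangle^\gamma$-weighted structure coming from the hard/Maxwellian potential kernel $|v-v_\ast|^\gamma$ with $\gamma\in[0,1]$), together with Cauchy--Schwarz and the Sobolev product inequality \eqref{G-N Hs}/$H^{s/2}\hookrightarrow L^\infty$ to split the low- and high-order factors, yields the claimed bounds with $\mathcal{G}_x^s$ (when $|\beta|=0$, only $x$-derivatives are present and the velocity integrability is handled purely by the kernel weight, giving the $H_x^sL_v^2$ norms) and $\mathcal{G}_{x,v}^s$ (when $|\beta|\ge1$, the $v$-derivatives force full $H_{x,v}^s$ norms). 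The threshold $s_0$ and the constant $C_s^{\mathbf{\Gamma}}$ come precisely from the Sobolev embedding needing $s/2>3/2$ and from the number of terms in the Leibniz expansion.

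The main obstacle, and the only place where this genuinely differs from the mono-species and the global-equilibrium multi-species situations, is that $\mathbf{\Gamma}$ here is still linearized about the \emph{global} Maxwellian $\bm{\mu}$ — not about $\mathbf{M}^\varepsilon$ — so in fact $\mathbf{\Gamma}$ is $\varepsilon$-independent and the estimates are \emph{literally} those of \cite{briant2015JDEnavierstokes,briant2016ARMAglobal}; the work is purely bookkeeping to confirm that the constants depend only on $N$, the masses $(m_i)$ and the kernels $(\mathit{B}_{ij})$, and that the functionals $\mathcal{G}_x^s,\mathcal{G}_{x,v}^s$ inherit monotonicity in $s$ from the nested Sobolev norms. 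I would therefore spend most of the write-up stating the decomposition of $\mathbf{\Gamma}$ into a ``$\mathbf{K}$-like'' part and a ``$\bm{\nu}$-like'' part exactly as for $\mathbf{L}$, invoking the Carleman representation only to the extent needed, and then citing the mono-species lemma for the quantitative kernel bounds, emphasizing that no property of $\mathbf{M}^\varepsilon$ enters the orthogonality or the bilinear estimates — only the lower/upper bounds of Lemma~\ref{lemma for bound of M-i} are needed later for $\mathbf{L}^\varepsilon$, not for $\mathbf{\Gamma}$.
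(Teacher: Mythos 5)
Your proposal matches the paper's approach: the paper proves the orthogonality in one sentence from the conservation laws/weak forms and otherwise defers the bilinear bounds entirely to \cite{briant2015JDEnavierstokes}, exactly as you do, and you correctly identify the structural point the paper leaves implicit, namely that $\mathbf{\Gamma}$ is linearized about $\bm{\mu}$, not $\mathbf{M}^\varepsilon$, so the estimates are $\varepsilon$-independent and Lemma~\ref{lemma for bound of M-i} is not needed here. One small point of precision: a single $\int Q_{ij}(\cdot,\cdot)\,\psi_i\,\d v$ does not vanish for the momentum and energy invariants; it is only after summing over species and pairing $Q_{ij}$ with $Q_{ji}$ via the weak form \eqref{weak form-2} that the factor $\psi_i'-\psi_i+\psi_{j,*}'-\psi_{j,*}$ vanishes, which is why the species symmetry (H1) matters — worth stating explicitly rather than saying ``hence zero'' after a single $Q_{ij}$.
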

The first property is derived from the definitions of operators $\mathbf{L}^\varepsilon$ and $\mathbf{\Gamma}$, and the conservation laws for the collision operator $\mathbf{Q}$. 
We omit the proof details for the bounds of $\mathbf{\Gamma}$.

Next, we present estimates for the source term $\mathbf{S}^\varepsilon$.
\begin{lemma}\label{lemma for estimates on source term S}
Let $\varepsilon \in (0,1]$, $s \in \mathbb{N}$ and $\mathbf{f} \in H^s(\mathbb{T}^3 \times \mathbb{R}^3)$, we introduce two 
multi-indices $\alpha, \beta$ such that $|\alpha| + |\beta|=s$. If $|\beta| \geq 1$, there exists a positive constant 
$C^\mathbf{S}_{\alpha, \beta}$, such that for any $\eta_2 > 0$,
\begin{equation}
 \langle \partial_v^\beta \partial_x^\alpha \mathbf{S}^\varepsilon, \partial_v^\beta \partial_x^\alpha \mathbf{f}\rangle_{L_{x,v}^2} 
 \leq \frac{C^\mathbf{S}_{\alpha, \beta} \delta_{MS}^2}{\eta_2} + \frac{\eta_2}{\varepsilon^2} 
 \| \partial_v^\beta \partial_x^\alpha \mathbf{f}\|_{L_{x,v}^2(\langle v \rangle^\gamma)}^2.
 \label{bound for x-alpha v-beta deri of source term S}
\end{equation}
For $\beta = e_k$ for some $k\in \{1,2,3\}$ with $\alpha_k>0$, there exists a constant $C^\mathbf{S}_{\alpha, k}$, such that for any $\eta_3 > 0$,
\begin{equation}
\langle \partial_x^\alpha \mathbf{S}^\varepsilon, \partial_v^{e_k} \partial_x^{\alpha-e_k} \mathbf{f}\rangle_{L_{x,v}^2} \leq  
\frac{C^\mathbf{S}_{\alpha, k} \delta_{MS}^2}{\varepsilon \eta_3} + \frac{\eta_3}{\varepsilon} 
\|\partial_v^{e_k} \partial_x^{\alpha - e_k} \mathbf{f}\|_{L_{x,v}^2(\langle v \rangle^\gamma)}^2.
\label{bound for x-alpha v-k deri of source term S}
\end{equation}
Moreover, for the pure $x$-derivative, we establish a stronger control on projection $\bm{\pi}_\mathbf{L} (\mathbf{S}^\varepsilon)$. There exists a 
constant $C_\alpha^\mathbf{S}$, such that for any $\eta_4, \eta_5 > 0$,
\begin{equation}
\langle \partial_x^\alpha \mathbf{S}^\varepsilon, \partial_x^\alpha \mathbf{f}\rangle_{L_{x,v}^2} \leq 
\frac{C_\alpha^\mathbf{S} \delta_{MS}^2}{\eta_4 \eta_5} + \eta_4 \|\bm{\pi}_\mathbf{L} (\partial_x^\alpha \mathbf{f})\|_{L_{x,v}^2}^2 
+ \frac{\eta_5}{\varepsilon^2} \|\partial_x^\alpha \mathbf{f}^\perp\|_{{L_{x,v}^2(\langle v \rangle^\gamma)}}^2 
\label{bound for pure x-alpha deri of source term S}
\end{equation}
\end{lemma}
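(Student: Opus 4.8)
The plan is to split $\mathbf{S}^\varepsilon$ into its projection onto $\mathrm{Ker}\,\mathbf{L}$ and its orthogonal part, estimate each separately, and then obtain the three inequalities by Cauchy--Schwarz together with Young's inequality with weights tuned to the powers of $\varepsilon$ and to $\langle v\rangle^\gamma$. Using $\mathbf{M}^\varepsilon=(\bar{\mathbf{c}}+\varepsilon\tilde{\mathbf{c}})\bm{\mathcal{M}}^\varepsilon$, $\bm{\mu}=\bar{\mathbf{c}}\bm{\mathcal{M}}$ and $\mathbf{Q}(\bm{\mu},\bm{\mu})=0$, the starting observation is that each $\mathit{Q}_{ij}$ conserves mass, momentum and energy, hence $\bm{\pi}_\mathbf{L}\big(\bm{\mu}^{-1/2}\mathbf{Q}(\cdot,\cdot)\big)=0$ and also $\int_{\mathbb{R}^3}\mathit{Q}_{ij}(f,g)\,\d v=0$. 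Consequently $\bm{\pi}_\mathbf{L}(\mathbf{S}^\varepsilon)$ is assembled only from the moments against the orthonormal basis $(\bm{\phi}^{(k)})_{k=1}^{N+4}$ of the transport and time-derivative terms in \eqref{equ for source term}, and a direct computation identifies these moments with the residuals of the continuity laws \eqref{M-S 1} (the mass components vanish exactly), of the momentum balance — where the cancellation $\nabla_x(c_{tot}T)=0$, a consequence of $\mathrm{Ker}\,A=\mathrm{Span}(\mathbf{1})$ in Proposition \ref{prop A} applied to \eqref{M-S 2}, is the decisive simplification — and of the temperature equation \eqref{M-S 3} combined with Fick's law \eqref{M-S 4}. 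For the solution of Theorem \ref{theorem for M-S in main result} all these residuals vanish up to quadratic corrections in the perturbations, so that $\|\bm{\pi}_\mathbf{L}(\mathbf{S}^\varepsilon)\|_{H_x^{s'}L_v^2}\le C\delta_{MS}$ uniformly in $\varepsilon$, for $s'$ a fixed number of orders below $s$ (the loss of derivatives coming from expressing $\partial_t\tilde{\mathbf{u}}$ through the algebraic flux--force relation \eqref{M-S 2} and the evolution equations).

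For $\mathbf{S}^{\varepsilon,\perp}:=\mathbf{S}^\varepsilon-\bm{\pi}_\mathbf{L}(\mathbf{S}^\varepsilon)$ the heart of the matter is a componentwise Gaussian bound of size $\delta_{MS}/\varepsilon$. Since the bulk velocities in $\mathbf{M}^\varepsilon$ equal $\varepsilon u_i=\varepsilon^2\tilde u_i$ and $c_i-\bar c_i,\ T-1=\mathcal O(\varepsilon\delta_{MS})$, every $x$-derivative of $\mathbf{M}^\varepsilon$ carries a factor $\varepsilon$, removing one power of $\varepsilon$ from $\tfrac1{\varepsilon^2}\bm{\mu}^{-1/2}v\cdot\nabla_x\mathbf{M}^\varepsilon$ and $\tfrac1{\varepsilon}\bm{\mu}^{-1/2}\partial_t\mathbf{M}^\varepsilon$; for the collision term I would use
\[
\mathit{Q}_{ij}(M_i^\varepsilon,M_j^\varepsilon)(v)=\int_{\mathbb{R}^3\times\mathbb{S}^2}\mathit{B}_{ij}\big[M_i^{\varepsilon,\prime}M_j^{\varepsilon,\prime\ast}-M_i^\varepsilon M_j^{\varepsilon,\ast}\big]\,\d\sigma\,\d v_\ast
\]
together with the conservation of $m_i|v|^2+m_j|v_\ast|^2$ and of $m_iv+m_jv_\ast$ along collisions to rewrite the bracket as $\mathcal O\big(\varepsilon^2|\tilde u_i-\tilde u_j|\,|v-v_\ast|\big)$ times a Gaussian, so that $\tfrac1{\varepsilon^3}\bm{\mu}^{-1/2}\mathbf{Q}(\mathbf{M}^\varepsilon,\mathbf{M}^\varepsilon)=\mathcal O(\delta_{MS}/\varepsilon)$. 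Inserting the upper bounds $M_i^\varepsilon\le C_\delta^{up}\mathcal R_i^{up}(\delta)c_i\mathcal M_i^\delta$ of Lemma \ref{lemma for bound of M-i} for $\delta\in(\bar{\delta},\tfrac1{1+\delta_{MS}})$ — so that $\delta>\tfrac12$ and $\mathcal M_i^\delta\mu_i^{-1/2}$ still decays, the admissible range being that of Lemma \ref{lemma for bound of M-i} rather than the interval $(0,1)$ of the isothermal case precisely because of the varying temperature — and differentiating in $v$ and $x$, one obtains $|\partial_v^\beta\partial_x^\alpha\mathit{S}_i^{\varepsilon,\perp}|\le \tfrac{C\delta_{MS}}{\varepsilon}P(v)\mathcal M_i^{\delta-1/2}$ with $P$ a polynomial, hence $\|\partial_v^\beta\partial_x^\alpha\mathbf{S}^{\varepsilon,\perp}\|_{L_{x,v}^2(\langle v\rangle^{-\gamma})}\le C\delta_{MS}/\varepsilon$; the same bound holds for $\partial_v^\beta\partial_x^\alpha\mathbf{S}^\varepsilon$ whenever $|\beta|\ge1$ because $\partial_v^\beta\partial_x^\alpha\bm{\pi}_\mathbf{L}(\mathbf{S}^\varepsilon)$ is then only $\mathcal O(\delta_{MS})$.

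With these two bounds the conclusions follow mechanically. For \eqref{bound for x-alpha v-beta deri of source term S} ($|\beta|\ge1$), Cauchy--Schwarz and Young's inequality with weight $\varepsilon^2$ give $\langle\partial_v^\beta\partial_x^\alpha\mathbf{S}^\varepsilon,\partial_v^\beta\partial_x^\alpha\mathbf{f}\rangle_{L_{x,v}^2}\le\tfrac{\varepsilon^2}{4\eta_2}\|\partial_v^\beta\partial_x^\alpha\mathbf{S}^\varepsilon\|_{L_{x,v}^2(\langle v\rangle^{-\gamma})}^2+\tfrac{\eta_2}{\varepsilon^2}\|\partial_v^\beta\partial_x^\alpha\mathbf{f}\|_{L_{x,v}^2(\langle v\rangle^\gamma)}^2$, and the first term is $\le C\delta_{MS}^2/\eta_2$. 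For \eqref{bound for x-alpha v-k deri of source term S} the same computation with Young's weight $\varepsilon$ instead of $\varepsilon^2$ produces the stated $\tfrac{C\delta_{MS}^2}{\varepsilon\eta_3}+\tfrac{\eta_3}{\varepsilon}\|\partial_v^{e_k}\partial_x^{\alpha-e_k}\mathbf{f}\|_{L_{x,v}^2(\langle v\rangle^\gamma)}^2$. For the sharper estimate \eqref{bound for pure x-alpha deri of source term S} I use that $\partial_x^\alpha$ commutes with $\bm{\pi}_\mathbf{L}$ to split $\langle\partial_x^\alpha\mathbf{S}^\varepsilon,\partial_x^\alpha\mathbf{f}\rangle_{L_{x,v}^2}=\langle\bm{\pi}_\mathbf{L}(\partial_x^\alpha\mathbf{S}^\varepsilon),\bm{\pi}_\mathbf{L}(\partial_x^\alpha\mathbf{f})\rangle_{L_{x,v}^2}+\langle\partial_x^\alpha\mathbf{S}^{\varepsilon,\perp},\partial_x^\alpha\mathbf{f}^\perp\rangle_{L_{x,v}^2}$, bound the first summand by $\tfrac1{4\eta_4}\|\bm{\pi}_\mathbf{L}(\partial_x^\alpha\mathbf{S}^\varepsilon)\|_{L_{x,v}^2}^2+\eta_4\|\bm{\pi}_\mathbf{L}(\partial_x^\alpha\mathbf{f})\|_{L_{x,v}^2}^2$ using the uniform estimate $\|\bm{\pi}_\mathbf{L}(\mathbf{S}^\varepsilon)\|\le C\delta_{MS}$ of the first paragraph, the second summand by the previous Young argument with parameter $\eta_5$, and finally absorb $\tfrac1{\eta_4}+\tfrac1{\eta_5}\lesssim\tfrac1{\eta_4\eta_5}$.

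The main obstacle is the fluid-level computation in the first paragraph: one must verify, moment by moment against $(\bm{\phi}^{(k)})$, that the non-isothermal Maxwell--Stefan system \eqref{M-S 1}--\eqref{M-S 4} forces exactly the cancellations needed — in particular that the momentum moment collapses thanks to $\nabla_x(c_{tot}T)=0$ and that the energy moment is tamed by combining the temperature equation \eqref{M-S 3} (with its $\tfrac53$ coefficient) with Fick's law \eqref{M-S 4} — features absent from the isothermal analysis of \cite{briant2021stability}. A secondary technical point is keeping the admissible $\delta$-ranges of Lemma \ref{lemma for bound of M-i}, which now depend on $\delta_{MS}$ through $1+\varepsilon\tilde T$, consistent throughout the collision-term estimate, and ensuring that the extra $\varepsilon^2$ extracted from the drift difference $\tilde u_i-\tilde u_j$ is not lost when $M_i^{\varepsilon,\prime}M_j^{\varepsilon,\prime\ast}-M_i^\varepsilon M_j^{\varepsilon,\ast}$ is bounded pointwise.
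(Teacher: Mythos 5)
Your approach is essentially correct and produces the three inequalities, but it is worth contrasting with the paper's proof, which is organized slightly differently. For \eqref{bound for x-alpha v-beta deri of source term S} and \eqref{bound for x-alpha v-k deri of source term S} the paper does \emph{not} pass through the decomposition $\mathbf S^\varepsilon=\bm\pi_{\mathbf L}(\mathbf S^\varepsilon)+\mathbf S^{\varepsilon,\perp}$: it bounds $\partial_v^\beta\partial_x^\alpha\mathbf S^\varepsilon$ directly. You instead run the decomposition uniformly for all three inequalities, which forces you to also check that $\partial_v^\beta\partial_x^\alpha\bm\pi_{\mathbf L}(\mathbf S^\varepsilon)$ is $\mathcal O(\delta_{MS})$ for $|\beta|\ge 1$; that check does go through (since $\bm\pi_{\mathbf L}(\mathbf S^\varepsilon)$ is a finite linear combination of the $\bm\phi^{(k)}$ with $x$-dependent coefficients of size $\mathcal O(\delta_{MS})$), but it is an extra step the paper avoids. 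The more genuine difference is in the collision term: the paper introduces the auxiliary Maxwellian $\bm{\mathcal M}_T^\varepsilon$ (same temperature $T$, zero bulk velocity), writes $\mathbf M^\varepsilon=\mathbf c\bm{\mathcal M}_T^\varepsilon+(\mathbf M^\varepsilon-\mathbf c\bm{\mathcal M}_T^\varepsilon)$, exploits $\mathbf Q(\mathbf c\bm{\mathcal M}_T^\varepsilon,\mathbf c\bm{\mathcal M}_T^\varepsilon)=0$, and then feeds the pointwise bounds \eqref{bound for v-beta x-alpha of sub of M-i and M-T-i}--\eqref{bound for v-beta x-alpha of M-T-i} into the bilinear estimates on $\mathbf\Gamma$ from Lemma \ref{lemma for estimates on Gamma}. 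You instead expand the collision bracket $M_i^{\varepsilon,\prime}M_j^{\varepsilon,\prime\ast}-M_i^\varepsilon M_j^{\varepsilon,\ast}$ using the collision invariants, extracting the factor $\varepsilon^2(\tilde u_i-\tilde u_j)\cdot m_i(v'-v)$ directly in the exponent. This is a perfectly valid alternative and captures the same mechanism (the Maxwellian is a local equilibrium up to velocity shifts of size $\varepsilon^2$); the paper's route is cleaner because it re-uses the already-established $\mathbf\Gamma$ estimates instead of redoing pointwise collision-kernel bounds, while your route is more elementary and does not rely on Lemma \ref{lemma for estimates on Gamma}.

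Two small corrections. First, the admissible range for $\delta$ in the source-term estimates is $\delta\in(\tfrac12,\tfrac1{1+\delta_{MS}})$, with the lower endpoint $\tfrac12$ coming solely from the requirement that $\mathcal M_i^{2\delta-1}$ be integrable; the constant $\bar\delta$ you invoke belongs to the Carleman-kernel estimates for $\mathbf K^\varepsilon$ (Lemma \ref{lemma for K operator with v derivative}) and plays no role here. Since $\bar\delta\ge\tfrac12$ this only over-restricts $\delta$ and does not break the argument, but it conflates two distinct constraints. Second, the ``main obstacle'' you flag at the end --- the moment-by-moment verification against $(\bm\phi^{(k)})$ --- is in fact fully resolved by what you sketch in your first paragraph (mass moments vanish by \eqref{M-S 1}; the momentum moment loses its $\frac1{\varepsilon^2}$ singularity because $\sum_i\nabla_x(c_iT)=0$; the energy moment is handled by \eqref{M-S 3} together with \eqref{M-S 4}); it is the computation the paper carries out and not a gap in your argument.
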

\begin{proof}
The explicit expression for the source term \eqref{equ for source term} infers that we can extract a power 
$\varepsilon^{-1}$ from it, dividing it into linear and nonlinear parts:
\begin{equation}
\partial_v^\beta \partial_x^\alpha \mathbf{S}^\varepsilon = \frac{1}{\varepsilon} \left\{ - \partial_v^\beta \partial_x^\alpha 
\left (\bm{\mu}^{ - \frac{1}{2}} \partial_t\mathbf{M}^\varepsilon + \frac{1}{\varepsilon} \bm{\mu}^{-\frac{1}{2}} v \cdot \nabla_x 
\mathbf{M}^\varepsilon \right ) + \frac{1}{\varepsilon^2} \partial_v^\beta \partial_x^\alpha \left ( 
  \bm{\mu}^{-\frac{1}{2}} \mathbf{Q} (\mathbf{M}^\varepsilon, \mathbf{M}^\varepsilon) \right ) \right\}.  
\label{alpha-x beta-v express of source term}
\end{equation}
The linear term involves time and spatial derivatives. We compute, for $1 \leq i \leq N$,
\begin{equation}
\begin{aligned}
\partial_t M_i^\varepsilon + \frac{1}{\varepsilon} v \cdot \nabla_x & M_i^\varepsilon = \left\{ \partial_t c_i + 
\frac{\varepsilon m_i c_i (v - \varepsilon u_i) \cdot \partial_t u_i}{T} + \left ( \frac{m_i |v-\varepsilon u_i|^2}{2T} - \frac{3}{2} 
\right ) \frac{c_i \partial_t T}{T}\right.\\
+ & \left. \sum_{k=1}^3 v_k \left ( \partial_{x_k} \tilde{c}_i + \frac{m_i c_i (v - \varepsilon u_i) \cdot \partial_{x_k} u_i}{T} 
+ \left ( \frac{m_i |v - \varepsilon u_i|^2}{2T} - \frac{3}{2} \right ) \frac{c_i \partial_{x_k} \tilde{T}}{T} \right ) \right\} 
\mathcal{M}_i^\varepsilon,
\label{v-beta x-alpha source linear term}
\end{aligned}
\end{equation}
and the leading order of this part is $\mathcal{O} (1)$. Since $(\mathbf{c}, \mathbf{u}, T)$ is the unique solution for 
Maxwell-Stefan system stated in Theorem \ref{theorem for M-S in main result}, the following relations involving time derivatives hold,
\[ \partial_t \mathbf{c} = -  \nabla_x \mathbf{c} \cdot \mathbf{u} - \mathbf{c} \nabla_x \cdot \mathbf{u}, 
\]
\[ \partial_t T = \frac{2\alpha}{3} \frac{\Delta_x c_{tot}}{c_{tot}} + \frac{5\alpha}{3} \frac{\nabla_x c_{tot}}{c_{tot}} 
\cdot \nabla_x T.
\]
Applying time derivative on the 
flux-force relations $\nabla_x (\mathbf{c} T) = T^{\gamma/2} A (\mathbf{c}) \mathbf{u}$, one obtains
\begin{equation}
T^{ - \gamma/2} \nabla_x ( \partial_t \mathbf{c} T + \mathbf{c} \partial_t T) = \frac{\gamma}{2} \frac{ \partial_t T}{T} 
A (\mathbf{c}) \mathbf{u} + \partial_t A(\mathbf{c}) \mathbf{u} + A(\mathbf{c}) \partial_t \mathbf{u}. \label{time der on M-S rela}
\end{equation}
Taking the time derivative on the expression of the matrix $A(\mathbf{c})$, we deduce
\[ [ \partial_t A(\mathbf{c})]_{ij} = \frac{\partial_t c_i c_j}{\Delta_{ij}} - \left ( 
  \sum_{r=1}^N \frac{ \partial_t c_i c_r}{\Delta_{ir}} \right ) \delta_{ij} + \frac{ c_i \partial_t c_j}{\Delta_{ij}} 
  - \left ( \sum_{r=1}^N \frac{ c_i \partial_t c_r}{\Delta_{ir}} \right ) \delta_{ij},
\]
which shares the same matrix structure as $A(\mathbf{c})$, leading to the fact that 
$ \partial_t A (\mathbf{c}) \mathbf{u} \in \mathrm{Span} (\mathbf{1})^\perp$. Moreover, the relations 
$\langle \nabla_x (\mathbf{c} T), \mathbf{1} \rangle = 0$ and $\mathrm{Ker} A(\mathbf{c})= \mathrm{Span} ( \mathbf{1} )$ hold. 
Replacing $\partial_t \mathbf{c}, \partial_t T$ and $\partial_t A(\mathbf{c})$ in the equation \eqref{time der on M-S rela} with the above relations, we obtain the following expression
\begin{align*}
\partial_t \mathbf{u} = T^{-\gamma/2} A (\mathbf{c})^{-1} \nabla_x & \left ( - \nabla_x\mathbf{c}\cdot \mathbf{u} T 
- \mathbf{c} T (\nabla_x \cdot \mathbf{u}) + \frac{2\alpha}{3} \frac{\Delta_x c_{tot}}{c_{tot}} \mathbf{c} 
+ (\frac{5\alpha}{3} \frac{\nabla_x c_{tot}}{c_{tot}} \cdot \nabla_x T) \mathbf{c}  \right )\\
  & - \frac{\gamma}{2} \frac{\partial_t T}{T} \mathbf{u} - A (\mathbf{c})^{-1} \partial_t A (\mathbf{c}) \mathbf{u}.
\end{align*} 

Now, we replace the time derivative terms in \eqref{v-beta x-alpha source linear term}, 
where the macroscopic quantities 
$\mathbf{c}, \mathbf{u}, T, c_{tot}$ and their spatial derivatives appear as factors 
in each term on the right hand side. To obtain an upper bound of $\mathcal{O} (\delta_{MS})$, we use the Sobolev 
embedding $H_x^2 \hookrightarrow L_x^\infty$, assuming that the macroscopic quantities in Theorem \ref{theorem for M-S in main result} 
are regular enough. This leads to the following bound for any 
$1 \leq i \leq N$ and for almost any $(t,x,v)\in \mathbb{R}_+ \times \mathbb{T}^3 \times \mathbb{R}^3$,
\[ \left| \mu_i^{-1/2}( \partial_t M_i^\varepsilon + \frac{1}{\varepsilon} v \cdot \nabla_x M_i^\varepsilon) \right|^2 \leq 
\delta_{MS}^2 C (\delta) P (v) \mathcal{M}_i^{2 \delta - 1} (v),
\]
where the expression $\mu_i = \bar{c}_i \mathcal{M}_i$ is utilized. The constant $C (\delta)$ only depends on 
$\delta \in (\frac{1}{2}, \frac{1}{1 + \delta_{MS}})$, and $P (v)$ denotes a polynomial of $|v|$ whose coefficients are independent of $\varepsilon$. We emphasize that small assumptions on $\delta_{MS}$ in 
Theorem \ref{theorem for M-S in main result} guarantee $\frac{1}{1 + \delta_{MS}} > \frac{1}{2}$, ensuring that $\delta$ is alternative. 
Furthermore, we notice that the highest order of spatial derivatives of the macroscopic quantities in the linear parts of $\mathbf{S}^\varepsilon$ are terms $(\nabla_x)^2 \mathbf{c}, (\nabla_x)^2\mathbf{u}, (\nabla_x)^2 T$ and $\nabla_x \Delta_x c_{tot}$. Therefore, the macroscopic quantities 
at least need to satisfy $\mathbf{c}, T \in L_t^\infty H_x^5$ and $\mathbf{u} \in L_t^\infty H_x^4$ to ensure the upper bounds hold. 

Taking $\partial_v^\beta \partial_x^\alpha$ derivatives on the linear term, we notice that 
these derivatives only increase the number of factors depending on polynomials of $v$, and derivatives of $\mathbf{c}, \mathbf{u}, T$.
Similar estimates can deduce that there exist a polynomial $P_L(v)$ with the highest order $\mathcal{O} (|v|^{|\beta| + 1})$, and a positive 
constant $C_L(\delta)$ only depending on $\delta \in (\frac{1}{2}, \frac{1}{1 + \delta_{MS}})$, such that for any $1 \leq i \leq N$ and 
for alomst any $(t,x,v) \in \mathbb{R}_+ \times \mathbb{T}^3 \times \mathbb{R}^3$, 
\begin{equation}
\left| \partial_v^\beta \partial_x^\alpha \left ( \mu_i^{-1/2} (\partial_t M_i^\varepsilon + \frac{1}{\varepsilon} v \cdot M_i^\varepsilon)
  \right )\right|^2 \leq \delta_{MS}^2 C_L (\delta) P_L (v) \mathcal{M}_i^{2 \delta - 1}.
\end{equation}
There are norms $\|\mathbf{c}\|_{L_t^\infty H_x^{|\alpha| + 5}}, \|\mathbf{u}\|_{L_t^\infty H_x^{|\alpha| + 4}}$ and 
$\|T\|_{L_t^\infty H_x^{|\alpha| + 4}}$ hidden inside the constant $C_L (\delta)$.

Now we estimate the following inner product, by applying Young's inequality with a positive constant $\eta_2/ \varepsilon$ 
\begin{equation}
  \begin{aligned}
  \frac{1}{\varepsilon} & \left| \langle \partial_v^\beta \partial_x^\alpha \left ( \bm{\mu}^{-1/2} 
  ( \partial_t \mathbf{M}^\varepsilon + \frac{1}{\varepsilon} v \cdot \mathbf{M}^\varepsilon) \right ), 
  \partial_v^\beta \partial_x^\alpha \mathbf{f}\rangle_{L_{x,v}^2}
\right|\\
 &\leq \frac{1}{\eta_2}  \|\partial_v^\beta \partial_x^\alpha \left ( \bm{\mu}^{-1/2} 
 (\partial_t \mathbf{M}^\varepsilon + \frac{1}{\varepsilon} v \cdot \mathbf{M}^\varepsilon) \right )\| _{L_{x,v}^2}^2 
 + \frac{\eta_2}{\varepsilon^2} \|\partial_v^\beta \partial_x^\alpha \mathbf{f}\|_{L_{x,v}^2}^2\\
 & \leq \frac{C_L (\delta) \delta_{MS}^2}{\eta_2} \sum_{i=1}^N \int_{\mathbb{T}^3 \times \mathbb{R}^3} P_L (v) 
 \mathcal{M}_i^{2 \delta - 1} (v) \,\d x \d v + \frac{\eta_2}{\varepsilon^2}  \|\partial_v^\beta \partial_x^\alpha \mathbf{f}\|_{L_{x,v}^2}^2\\
 & \leq \frac{C_L (\delta) \delta_{MS}^2}{\eta_2} + \frac{\eta_2}{\varepsilon^2}  
 \|\partial_v^\beta \partial_x^\alpha \mathbf{f}\|_{L_{x,v}^2}^2,
  \end{aligned}
\end{equation}
for any $\delta \in (\frac{1}{2}, \frac{1}{1 + \delta_{MS}})$, increasing the constant $C_L (\delta)$ if necessary.

We continue our estimates on the nonlinear term $\bm{\mu}^{-1/2} \mathbf{Q} (\mathbf{M}^\varepsilon, \mathbf{M}^\varepsilon) 
= \mathbf{\Gamma} (\bm{\mu}^{-\frac{1}{2}} \mathbf{M}^\varepsilon, \bm{\mu}^{-\frac{1}{2}} \mathbf{M}^\varepsilon)$, splitting the 
Maxwellian $\mathbf{M}^\varepsilon$ into two parts 
$\mathbf{M}^\varepsilon = \mathbf{c} \bm{\mathcal{M}}_T^\varepsilon + 
(\mathbf{M}^\varepsilon - \mathbf{c} \bm{\mathcal{M}}_T^\varepsilon)$. 
The local Maxwellian is defined as $\bm{\mathcal{M}}_T^\varepsilon = (\mathcal{M}_{T,1}^\varepsilon, \dots, \mathcal{M}_{T,N}^\varepsilon)$, and for any $1 \leq i \leq N$,
\[  \mathcal{M}_{T,i}^\varepsilon (t,x,v) = \left ( \frac{m_i}{2 \pi T (t,x)} \right )^{\frac{3}{2}} 
\exp \left\{ -\frac{m_i |v|^2}{2 \pi T (t,x)} \right\}, \quad  \forall (t,x,v) \in \mathbb{R}_+ \times \mathbb{T}^3 \times \mathbb{R}^3.      
\]
The fact $\mathbf{\Gamma} (\bm{\mu}^{-\frac{1}{2}} \mathbf{c} \bm{\mathcal{M}}_T^\varepsilon, \bm{\mu}^{-\frac{1}{2}} \mathbf{c} 
\bm{\mathcal{M}}_T^\varepsilon) = \bm{\mu}^{-\frac{1}{2}} \mathbf{Q} (\mathbf{c} \bm{\mathcal{M}}_T^\varepsilon, 
\mathbf{c} \bm{\mathcal{M}}_T^\varepsilon) = 0$ is clear. Due to the perturbation $\mathbf{u} = \varepsilon \tilde{\mathbf{u}}$,
it is not hard to notice that the Maxwellians $\mathbf{M}^\varepsilon$  actually are split into a local equilibrium and a supplement, which is closed to this local equilibrium up to an order $\varepsilon^2 \|\tilde{\mathbf{u}}\|_{L_{x,v}^\infty}$. 
We can rewrite the nonlinear term as
\begin{align*}
\mathbf{\Gamma} (\bm{\mu}^{-\frac{1}{2}} \mathbf{M}^\varepsilon, \bm{\mu}^{-\frac{1}{2}} \mathbf{M}^\varepsilon) 
= & \mathbf{\Gamma} ( \bm{\mu}^{-\frac{1}{2}} \mathbf{c} \bm{\mathcal{M}}_T^\varepsilon, \bm{\mu}^{-\frac{1}{2}} 
(\mathbf{M}^\varepsilon - \mathbf{c} \bm{\mathcal{M}}_T^\varepsilon) ) + \mathbf{\Gamma} 
( \bm{\mu}^{-\frac{1}{2}} (\mathbf{M}^\varepsilon - \mathbf{c} \bm{\mathcal{M}}_T^\varepsilon), \bm{\mu}^{-\frac{1}{2}} \mathbf{c} 
\bm{\mathcal{M}}_T^\varepsilon )\\
& + \mathbf{\Gamma} ( \bm{\mu}^{-\frac{1}{2}} (\mathbf{M}^\varepsilon - \mathbf{c} \bm{\mathcal{M}}_T^\varepsilon), 
\bm{\mu}^{-\frac{1}{2}} (\mathbf{M}^\varepsilon - \mathbf{c} \bm{\mathcal{M}}_T^\varepsilon) ).
\end{align*}
The Maxwellian has the form $\mathbf{M}^\varepsilon = \mathbf{c} \bm{\mathcal{M}}^\varepsilon$, and the difference between $\bm{\mathcal{M}}^\varepsilon$ and $\bm{\mathcal{M}}_T^\varepsilon$ appears on the velocity quantities 
$\varepsilon^2 \tilde{\mathbf{u}}$. Referring to \cite{Bondesan2020CPAAnonequilibrium}, there exist polynomials $P_1 (v), P_2 (v)$ and 
positive constants $C_1 (\delta), C_2 (\delta)$ that only depend on $\delta \in (\frac{1}{2}, \frac{1}{1 + \delta_{MS}})$, such that, 
for any $1 \leq i \leq N$, and for any $(t,x,v) \in \mathbb{R}_+ \times \mathbb{T}^3 \times \mathbb{R}^3$, 
\begin{align}
\left| \partial_v^\beta \partial_x^\alpha \left ( c_i \mu_i^{-\frac{1}{2}} (\mathcal{M}_i^\varepsilon - \mathcal{M}_{T,i}^\varepsilon) 
\right ) \right| \leq \varepsilon^2 \delta_{MS} C_1 (\delta) P_1 (v) \mathcal{M}_i^{ \delta - \frac{1}{2}} (v) ,
\label{bound for v-beta x-alpha of sub of M-i and M-T-i}\\
\left| \partial_v^\beta \partial_x^\alpha (c_i \mu_i^{-\frac{1}{2}} \mathcal{M}_{T,i}^\varepsilon) \right| \leq 
C_2 (\delta) P_2 (v) \mathcal{M}_i^{\delta - \frac{1}{2}} (v).
\label{bound for v-beta x-alpha of M-T-i}
\end{align}
We further explain that the second inequality can be bound by $\mathcal{O} (\delta_{MS})$ if $|\alpha| \neq 0$.
The decomposition of $\mathbf{\Gamma} (\bm{\mu}^{-\frac{1}{2}} \mathbf{M}^\varepsilon, \bm{\mu}^{-\frac{1}{2}} \mathbf{M}^\varepsilon)$ 
consists of three parts, each comprising pairs of products formed from the two types of components mentioned above, while preserving the expected $\mathcal{O} (\delta_{MS})$.

Therefore, the nonlinear term can be bounded as follows, by applying the estimate on $\mathbf{\Gamma}$ in Lemma 
  \ref{lemma for estimates on Gamma},
  \begin{align*}
  & \frac{1}{\varepsilon^3} \left| \langle \partial_v^\beta \partial_x^\alpha 
  \mathbf{\Gamma} ( \bm{\mu}^{-\frac{1}{2}} \mathbf{M}^\varepsilon, \bm{\mu}^{-\frac{1}{2}} \mathbf{M}^\varepsilon ), 
  \partial_v^\beta \partial_x^\alpha \mathbf{f} \rangle_{L_{x,v}^2} \right|\\
   \leq & \frac{C_s^{\mathbf{\Gamma}}}{\varepsilon} \left\{ \left ( 
    2 \|\mathbf{c} \bm{\mu}^{-\frac{1}{2}} \bm{\mathcal{M}}_T^\varepsilon\|_{H_{x,v}^s} + 
    \|\mathbf{c} \bm{\mu}^{-\frac{1}{2}} (\bm{\mathcal{M}}^\varepsilon - \bm{\mathcal{M}}_T^\varepsilon)\|_{H_{x,v}^s} \right ) 
    \left\| \frac{ \mathbf{c} \bm{\mu}^{-\frac{1}{2}} (\bm{\mathcal{M}}^\varepsilon - \bm{\mathcal{M}}_T^\varepsilon) }{\varepsilon^2} 
    \right\|_{H_{x,v}^s(\langle v\rangle^\gamma)}\right.\\
    + & \left. \left ( 2 \|\mathbf{c} \bm{\mu}^{-\frac{1}{2}} \bm{\mathcal{M}}_T^\varepsilon\|_{H_{x,v}^s (\langle v \rangle^\gamma)} 
    + \|\mathbf{c} \bm{\mu}^{-\frac{1}{2}} (\bm{\mathcal{M}}^\varepsilon - \bm{\mathcal{M}}_T^\varepsilon)\|_{H_{x,v}^s(\langle v \rangle^\gamma)} 
    \right )  \left\| \frac{\mathbf{c} \bm{\mu}^{-\frac{1}{2}} (\bm{\mathcal{M}}^\varepsilon - \bm{\mathcal{M}}_T^\varepsilon)}
    {\varepsilon^2} \right\|_{H_{x,v}^s} \right\} \\
    & \times \|\partial_v^\beta \partial_x^\alpha \mathbf{f}\|_{L_{x,v}^2 (\langle v \rangle^\gamma)}.
  \end{align*}
Combining inequalities \eqref{bound for v-beta x-alpha of sub of M-i and M-T-i} and \eqref{bound for v-beta x-alpha of M-T-i}, 
and performing calculations similar to those in the linear term, there exists a positive constant $C_{NL} (\delta)$ such that 
\begin{equation}
  \frac{1}{\varepsilon^3} \left| \langle \partial_v^\beta \partial_x^\alpha \left (
  \bm{\mu}^{-1/2} \mathbf{Q} (\mathbf{M}^\varepsilon, \mathbf{M}^\varepsilon) \right ), 
  \partial_v^\beta \partial_x^\alpha \mathbf{f} \rangle_{L_{x,v}^2} \right| \leq \frac{C_{NL} (\delta) \delta_{MS}^2}{\eta_2} 
  + \frac{\eta_2}{\varepsilon^2}  \|\partial_v^\beta \partial_x^\alpha \mathbf{f}\|_{L_{x,v}^2 (\langle v \rangle^\gamma)}^2.
\end{equation}
The above integrals are finite since we choose $\delta> 1/2$. Gathering the linear and nonlinear terms, the following bound for the source term $\mathbf{S}^\varepsilon$ is derived,
\begin{equation}
\langle \partial_v^\beta \partial_x^\alpha \mathbf{S}^\varepsilon, \partial_v^\beta \partial_x^\alpha \mathbf{f} \rangle_{L_{x,v}^2} 
\leq \frac{C^\mathbf{S}_{\alpha, \beta} \delta_{MS}^2}{\eta_2} + \frac{\eta_2}{\varepsilon^2} 
\|\partial_v^\beta \partial_x^\alpha \mathbf{f}\|_{L_{x,v}^2 (\langle v \rangle^\gamma)}^2 ,
\end{equation}
where the constant is defined as $C^\mathbf{S}_{\alpha, \beta} = \max\{ C_L (\delta), C_{NL} (\delta) \}$, for any arbitrary $\delta \in (\frac{1}{2}, \frac{1}{1 + \delta_{MS}})$.

Similar analysis can be used to derive the result \eqref{bound for x-alpha v-k deri of source term S}. The distinction lies in our choice of the constant $\eta_3$ (rather than $\eta_2/\varepsilon$) when applying Young's inequality, due to the presence of an $\varepsilon$-factor multiplying this term in the target norm $\mathcal{H}_\varepsilon^s$.

Finally, we analyze terms involving spatial derivatives of the source term, by first decomposing it into $\mathrm{Ker} \mathbf{L}$ and $(\mathrm{Ker}{\mathbf{L}})^\perp$ components,
\begin{equation}
\langle \partial_x^\alpha \mathbf{S}^\varepsilon, \partial_x^\alpha \mathbf{f} \rangle_{L_{x,v}^2} = 
\langle \bm{\pi}_{\mathbf{L}} (\partial_x^\alpha \mathbf{S}^\varepsilon),\bm{\pi}_{\mathbf{L}} (\partial_x^\alpha \mathbf{f}) 
\rangle_{L_{x,v}^2} + \langle \partial_x^\alpha \mathbf{S}^{\varepsilon, \perp}, \partial_x^\alpha \mathbf{f}^\perp \rangle_{L_{x,v}^2}.
\label{x-alpha source decom}
\end{equation}
Together with the expression of $\mathbf{S}^\varepsilon$ in 
\eqref{equ for source term} and the fact that $\bm{\pi}_\mathbf{L} (\mathbf{\Gamma} (\mathbf{f}, \mathbf{g})) = 0$ for any 
$\mathbf{f},\mathbf{g} \in L^2(\mathbb{T}^3 \times \mathbb{R}^3)$, the projection $\bm{\pi}_{\mathbf{L}} (\mathbf{S}^\varepsilon)$ 
can be expressed as
\[  \bm{\pi}_{\mathbf{L}} (\mathbf{S}^\varepsilon) = - \frac{1}{\varepsilon^2} \bm{\pi}_\mathbf{L} \left ( 
   \bm{\mu}^{-1/2} ( \varepsilon \partial_t \mathbf{M}^\varepsilon + v \cdot \nabla_x \mathbf{M}^\varepsilon) \right ).
\]
Using the expression of the projection $\bm{\pi}_\mathbf{L}$ in \eqref{proje pi-L express}, 
the following equalities hold, for any $1 \leq i \leq N$,
\begin{align*}
\int_{\mathbb{R}^3} \varepsilon \partial_t M_i^\varepsilon + v \cdot \nabla_x M_i^\varepsilon \,\d v & = \varepsilon 
(\partial_t c_i + \nabla_x \cdot (c_i u_i)) , \\
\int_{\mathbb{R}^3} m_i v (\varepsilon \partial_t M_i^\varepsilon + v \cdot \nabla_x M_i^\varepsilon) \,\d v & = \varepsilon^2 m_i \left ( 
  \partial_t (c_i u_i) + \nabla_x \cdot (c_i u_i \otimes u_i ) \right ) + \nabla_x (c_i T) ,\\
\int_{\mathbb{R}^3} (m_i|v|^2 - 3 ) (\varepsilon \partial_t M_i^\varepsilon + v \cdot \nabla_x M_i^\varepsilon) \,\d v & =  3 \varepsilon \partial_t 
(c_i T) + \varepsilon^3 m_i \partial_t (c_i |u_i|^2) - 3 \varepsilon \partial_t c_i\\
& + 5 \varepsilon \nabla_x \cdot (c_i T u_i) + \varepsilon^3 m_i\nabla_x \cdot (c_i |u_i|^2 u_i) - 3 \varepsilon \nabla_x \cdot(c_i u_i),
\end{align*}
where we use computations
\begin{equation*}
\int_{\mathbb{R}^3} M_i^\varepsilon \left ( 
  \begin{matrix} 1 \\ v \\|v|^2 \end{matrix} 
  \right ) \,\d v = \left ( 
    \begin{matrix} c_i \\ \varepsilon c_i u_i \\ \frac{3}{m_i} c_i T + \varepsilon^2 c_i |u_i|^2 \end{matrix}  
    \right ),
\end{equation*}
and
\[ \int_{\mathbb{R}^3} |v|^2 v M_i^\varepsilon \,\d v =\varepsilon^3 c_i |u_i|^2 u_i + \frac{5 c_i T}{m_i} u_i.
\]
Since the quantities $(\mathbf{c}, \mathbf{u}, T)$ satisfy the Maxwell-Stefan system \eqref{M-S 1}-\eqref{M-S 2}-\eqref{M-S 3}-\eqref{M-S 4}, 
these relations hold for any $1\leq i\leq N$
\[ \partial_t c_i + \nabla_x \cdot (c_i u_i) = 0,
\]
\[ \partial_t (\sum_{i=1}^N c_i T) + \frac{5}{3} \nabla_x \cdot (\sum_{i=1}^N c_i u_i T) = 0,
\]
and
\[ \sum_{i=1}^N \nabla_x (c_i T) = 0,
\]
leading to an explicit expression as
\begin{align*}
\bm{\pi}_\mathbf{L} (\mathbf{S}^\varepsilon) = & - \frac{v}{\sum_{i=1}^N m_i \bar{c}_i} \cdot \left[ 
  \sum_{i=1}^N m_i \left ( \partial_t (c_i u_i) + \nabla_x \cdot (c_i u_i \otimes u_i)  \right ) \right] 
  (m_i \mu_i^{1/2})_{1 \leq i \leq N}\\
 & - \frac{\varepsilon}{\sqrt{6} \sum_{i=1}^N \bar{c}_i} \left[ \sum_{i=1}^N \left ( m_i \partial_t (c_i |u_i|^2) 
 + m_i \nabla_x \cdot (c_i |u_i|^2 u_i) \right ) \right] \left ( \frac{m_i |v|^2 - 3}{\sqrt{6}} \mu_i^{1/2} \right )_{1 \leq i \leq N}.
\end{align*}
Assuming the macroscopic quantities $\mathbf{c}, \mathbf{u}$ and $T$ in Theorem \ref{theorem for M-S in main result} are regular enough, we can find two constant $C_1, C_2 > 0$ such that 
\begin{align*}
\left| \sum_{i=1}^N ( \partial_t (c_i u_i) + \nabla_x \cdot (c_i u_i \otimes u_i)) \right|^2 \leq C_1 \delta_{MS}^2,\\
\left| \sum_{i=1}^N \left ( m_i \partial_t (c_i |u_i|^2) + 3 \nabla_x (c_i|u_i|^2 u_i) \right ) \right|^2 \leq C_2 \delta_{MS}^2,
\end{align*}
by repeating the previous discussions about the bound for the linear part of the source term. Based on these inequalities, there exists a constant $C_{\bm{\pi}_\mathbf{L}}>0$, such that
\[ \|\bm{\pi}_\mathbf{L} (\mathbf{S}^\varepsilon)\|_{L_{x,v}^2}^2 \leq C_{\bm{\pi}_\mathbf{L}} \delta_{MS}^2 
\|(1 + |v| + |v|^2) \bm{\mu}^{1/2}\|_{L_{x,v}^2}^2 \leq C_{\bm{\pi}_\mathbf{L}} \delta_{MS}^2.
\]
The above integral is clearly finite. 

The projection $\bm{\pi}_\mathbf{L}$ commutes with $x$-derivative since it only acts on velocity variable. 
Thus we can control the projection on the source term with pure $x$-derivatives, that is
\[ \|\bm{\pi}_\mathbf{L} ( \partial_x^\alpha \mathbf{S}^\varepsilon)\|_{L_{x,v}^2}^2 = 
\| \partial_x^\alpha \bm{\pi}_\mathbf{L} (\mathbf{S}^\varepsilon)\|_{L_{x,v}^2}^2 \leq C_{\alpha, 1}^\mathbf{S} \delta_{MS}^2,
\]
where $C_{\alpha, 1}^\mathbf{S}$ is a positive constant independent of $\varepsilon$. We conclude to get the following inequality by applying Young's inequality with a positive constant $\eta_4$,
\begin{equation}
\begin{aligned}
\langle \bm{\pi}_\mathbf{L} ( \partial_x^\alpha \mathbf{S}^\varepsilon), \bm{\pi}_\mathbf{L} (\partial_x^\alpha \mathbf{f}) 
\rangle_{L_{x,v}^2} & \leq \frac{1}{\eta_4} \| \bm{\pi}_\mathbf{L} (\partial_x^\alpha \mathbf{S}^\varepsilon)\|_{L_{x,v}^2}^2 
+ \eta_4 \|\bm{\pi}_\mathbf{L} (\partial_x^\alpha \mathbf{f})\|_{L_{x,v}^2}^2, \\
& \leq \frac{C_{\alpha,1}^\mathbf{S} \delta_{MS}^2}{\eta_4} + \eta_4 \|\bm{\pi}_\mathbf{L} 
(\partial_x^\alpha \mathbf{f})\|_{L_{x,v}^2}^2.
\label{x-derivative source term projec}
\end{aligned}
\end{equation}
For the second term of \eqref{x-alpha source decom}, we calculate it using the subtraction of two terms,
\[  \langle \partial_x^\alpha \mathbf{S}^{\varepsilon,\perp}, \partial_x^\alpha \mathbf{f}^\perp \rangle_{L_{x,v}^2} 
= \langle \partial_x^\alpha \mathbf{S}^\varepsilon, \partial_x^\alpha \mathbf{f}^\perp \rangle_{L_{x,v}^2} 
- \langle \bm{\pi}_\mathbf{L} (\partial_x^\alpha \mathbf{S}^\varepsilon), \partial_x^\alpha \mathbf{f}^\perp \rangle_{L_{x,v}^2} .
\]
Using a similar method as the estimate in velocity derivatives, the following inequality holds
\[  \langle \partial_x^\alpha \mathbf{S}^\varepsilon, \partial_x^\alpha \mathbf{f}^\perp \rangle_{L_{x,v}^2} \leq 
\frac{2 (C_L (\delta) + C_{NL} (\delta)) \delta_{MS}^2}{\eta_5} + \frac{\eta_5}{2\varepsilon^2} 
\| \partial_x^\alpha \mathbf{f}^\perp\|_{L_{x,v}^2 (\langle v \rangle^\gamma)}^2.
\]
Computations for pure spatial derivatives deduce that
\[ \langle \bm{\pi}_\mathbf{L} (\partial_x^\alpha \mathbf{S}^\varepsilon), \partial_x^\alpha \mathbf{f}^\perp \rangle_{L_{x,v}^2} 
\leq \frac{2 C_{\alpha,1}^\mathbf{S} \delta_{MS}^2}{\eta_5} + \frac{\eta_5}{2} \|\partial_x^\alpha \mathbf{f}^\perp \|_{L_{x,v}^2}^2.
\]
Since the $L_{x,v}^2 (\langle v \rangle^\gamma)$ norm controls the $L_{x,v}^2$ norm and the parameter $\varepsilon \in (0, 1]$, the inequality for 
$\partial_x^\alpha \mathbf{S}^{\varepsilon,\perp}$ finally takes the form
\begin{equation}
\langle \partial_x^\alpha \mathbf{S}^{\varepsilon,\perp}, \partial_x^\alpha \mathbf{f}^\perp \rangle_{L_{x,v}^2} \leq 
\frac{C_{\alpha, 2}^\mathbf{S} \delta_{MS}^2}{\eta_5} + \frac{\eta_5}{\varepsilon^2} 
\|\partial_x^\alpha \mathbf{f}^\perp\|_{L_{x,v}^2 (\langle v \rangle^\gamma)}^2. \label{x-derivative source term orth proj}
\end{equation} 
The positive constant has the form $C_{\alpha, 2}^\mathbf{S} = 2(C_L (\delta) + C_{NL} (\delta) + C_{\alpha, 1}^\mathbf{S})$, where 
$\delta \in (\frac{1}{2}, \frac{1}{1 + \delta_{MS}})$ is arbitrary. The result 
\eqref{bound for pure x-alpha deri of source term S} is recovered by adding the inequalities 
\eqref{x-derivative source term projec} and \eqref{x-derivative source term orth proj} together, with the constant $C_\alpha^\mathbf{S} = \eta_5 C_{\alpha,1}^\mathbf{S} + \eta_4 C_{\alpha, 2}^\mathbf{S}$.
\end{proof}
Furthermore, we present some properties involving the projection $\bm{\pi}_\mathbf{L}$.
\begin{lemma}\label{lemma for som properties of T-eps}
For any $\varepsilon \in (0,1]$, the operator $\mathbf{T}^\varepsilon = \frac{1}{\varepsilon^2} \mathbf{L} - \frac{1}{\varepsilon} 
v \cdot \nabla_x$, defined on the space $H^1 (\mathbb{T}^3 \times \mathbb{R}^3)$, satisfies the property 
$\mathrm{Ker} \mathbf{T}^\varepsilon = \mathrm{Ker} \mathbf{L} \cap \mathrm{Ker}(v \cdot \nabla_x)$. Moreover, the associated projection 
$\bm{\pi}_\mathbf{T}^\varepsilon$ has an explicit expression, for any $\mathbf{f} \in H^1 (\mathbb{T}^3 \times \mathbb{R}^3)$
\begin{equation}\label{def of pi T}
\bm{\pi}_\mathbf{T}^\varepsilon (\mathbf{f}) = \int_{\mathbb{T}^3} \bm{\pi}_\mathbf{L} (\mathbf{f}) \,\d x.
\end{equation}
\end{lemma}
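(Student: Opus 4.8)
The plan is to establish the kernel identity first and then read off the projection formula. The inclusion $\mathrm{Ker}\,\mathbf{L}\cap\mathrm{Ker}(v\cdot\nabla_x)\subseteq\mathrm{Ker}\,\mathbf{T}^\varepsilon$ is immediate from $\mathbf{T}^\varepsilon=\frac{1}{\varepsilon^2}\mathbf{L}-\frac{1}{\varepsilon}v\cdot\nabla_x$. For the converse, I would take $\mathbf{f}\in H^1(\mathbb{T}^3\times\mathbb{R}^3)$ with $\mathbf{T}^\varepsilon(\mathbf{f})=0$, i.e. $\frac{1}{\varepsilon}\mathbf{L}(\mathbf{f})=v\cdot\nabla_x\mathbf{f}$, and pair with $\mathbf{f}$ in $L_{x,v}^2$. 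Since $v\cdot\nabla_x$ is skew-adjoint on the torus, $\langle v\cdot\nabla_x\mathbf{f},\mathbf{f}\rangle_{L_{x,v}^2}=0$, hence $\langle\mathbf{L}(\mathbf{f}),\mathbf{f}\rangle_{L_{x,v}^2}=0$. Invoking the explicit spectral gap for $\mathbf{L}$ (Theorem 3.3 of \cite{briant2016ARMAglobal}, recalled in the proof of Lemma \ref{lemma for some estimate on L-eps}), namely $\langle\mathbf{L}(\mathbf{f}),\mathbf{f}\rangle_{L_{x,v}^2}\le-\lambda_\mathbf{L}\|\mathbf{f}^\perp\|_{L_{x,v}^2(\langle v\rangle^\gamma)}^2$, I conclude $\mathbf{f}^\perp=0$, so $\mathbf{f}=\bm{\pi}_\mathbf{L}(\mathbf{f})\in\mathrm{Ker}\,\mathbf{L}$; feeding this back into the equation yields $v\cdot\nabla_x\mathbf{f}=0$, so $\mathbf{f}\in\mathrm{Ker}\,\mathbf{L}\cap\mathrm{Ker}(v\cdot\nabla_x)$. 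The same computation applies verbatim to $\mathbf{T}^{\varepsilon,\ast}=\frac{1}{\varepsilon^2}\mathbf{L}+\frac{1}{\varepsilon}v\cdot\nabla_x$, so $\mathbf{T}^\varepsilon$ and $\mathbf{T}^{\varepsilon,\ast}$ share this ($\varepsilon$-independent, finite-dimensional) kernel.

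Next I would make the space $\mathrm{Ker}\,\mathbf{L}\cap\mathrm{Ker}(v\cdot\nabla_x)$ explicit by writing a generic element of $\mathrm{Ker}\,\mathbf{L}$ as $\mathbf{f}=\sum_{i=1}^N a_i(x)\bm{\phi}^{(i)}+\sum_{l=1}^3 b_l(x)\bm{\phi}^{(N+l)}+d(x)\bm{\phi}^{(N+4)}$ with coefficients in $H^1(\mathbb{T}^3)$, and imposing $v\cdot\nabla_x\mathbf{f}=0$. Expanding $v\cdot\nabla_x\mathbf{f}=\sum_l v_l\partial_{x_l}\mathbf{f}$ and collecting, in each species component, the polynomial coefficients of the Gaussian $\mu_i^{1/2}$ by their degree in $v$: the degree-three part forces $\nabla_x d=0$; the degree-two part forces $\partial_{x_l}b_m+\partial_{x_m}b_l=0$, whose only $\mathbb{T}^3$-periodic solution has vanishing skew-symmetric (rigid-rotation) part, hence $\nabla_x b_l=0$; the remaining degree-one part forces $\nabla_x a_i=0$. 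Thus every element of $\mathrm{Ker}\,\mathbf{T}^\varepsilon$ is independent of $x$ and lies in $\mathrm{Ker}\,\mathbf{L}$, and conversely such constants-in-$x$ obviously lie in $\mathrm{Ker}\,\mathbf{L}\cap\mathrm{Ker}(v\cdot\nabla_x)$; this proves the first assertion, with $\mathrm{Ker}\,\mathbf{T}^\varepsilon$ identified with the $(N+4)$-dimensional space $V$ of $x$-independent infinitesimal Maxwellians.

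For the projection formula, since $\mathbf{T}^\varepsilon$ and $\mathbf{T}^{\varepsilon,\ast}$ have the same kernel, $\bm{\pi}_\mathbf{T}^\varepsilon$ is the $L^2(\mathbb{T}^3\times\mathbb{R}^3)$-orthogonal projection onto $V$. I would then note that $\bm{\pi}_\mathbf{L}$ acts only on the velocity variable while the spatial mean $\mathbf{g}\mapsto\int_{\mathbb{T}^3}\mathbf{g}\,\d x$ acts only on $x$, so the two commute, and—using the $L^2_v$-orthonormality of $(\bm{\phi}^{(k)})$—that the composite $\mathbf{f}\mapsto\int_{\mathbb{T}^3}\bm{\pi}_\mathbf{L}(\mathbf{f})\,\d x$ is symmetric and idempotent, has range exactly $V$, and restricts to the identity on $V$; being an orthogonal projection with range $V$, it must equal $\bm{\pi}_\mathbf{T}^\varepsilon$. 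The main obstacle is the degree-by-degree elimination in the second paragraph, and in particular the point that the rigid-rotation component of $(b_1,b_2,b_3)$ is excluded by periodicity on $\mathbb{T}^3$; a secondary bookkeeping point is to fix the normalization $|\mathbb{T}^3|=1$ so that the orthogonal projection onto $x$-constants is literally $\int_{\mathbb{T}^3}(\cdot)\,\d x$ rather than its average.
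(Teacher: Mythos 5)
The paper omits the proof, deferring directly to \cite[Lemma 3.9]{briant2021stability}, so there is no in-text argument to compare against; you supply a complete, self-contained derivation. The structure --- skew-adjointness of $v\cdot\nabla_x$ together with the spectral gap for $\mathbf{L}$ to force $\mathbf{f}^\perp=0$, then the degree-by-degree elimination in $v$ against the explicit basis $(\bm{\phi}^{(k)})$ of $\mathrm{Ker}\,\mathbf{L}$ to force $x$-independence of the fluid coefficients --- is the standard route and is sound, including the point that periodicity on $\mathbb{T}^3$ forbids the rigid-rotation solutions of the Killing equation $\partial_{x_l}b_m+\partial_{x_m}b_l=0$. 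Two small remarks. The inference that $\mathrm{Ker}\,\mathbf{T}^\varepsilon=\mathrm{Ker}\,\mathbf{T}^{\varepsilon,\ast}$ by itself makes $\bm{\pi}_{\mathbf{T}}^{\varepsilon}$ orthogonal would on its own require a closed-range/complementarity argument, but you sidestep it by directly verifying that $\mathbf{f}\mapsto\int_{\mathbb{T}^3}\bm{\pi}_{\mathbf{L}}(\mathbf{f})\,\d x$ is symmetric and idempotent with range $\mathrm{Ker}\,\mathbf{T}^\varepsilon$, which is all that is needed. Your normalization caveat is well spotted: without $|\mathbb{T}^3|=1$ the displayed operator squares to $|\mathbb{T}^3|$ times itself, so the formula implicitly assumes unit torus volume (otherwise a factor $1/|\mathbb{T}^3|$ belongs in front), a convention that is used but not stated in the paper.
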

\begin{lemma}\label{lemma for equivalence norms in ker L}
There exists a positive explicit constant $C_\pi$, such that for any $\mathbf{f} \in L^2 (\mathbb{R}^3)$,
\begin{equation}\label{equivalence of two norms in ker L}
\|\bm{\pi}_\mathbf{L} (\mathbf{f})\|_{L_v^2 (\langle v \rangle^\gamma )}^2 \leq C_\pi \|\bm{\pi}_\mathbf{L} (\mathbf{f})\|^2_{L_v^2},
\end{equation}
with the constant 
\[ C_\pi = (N + 4) \max_{1 \leq k, l \leq N+4} \left| \langle \bm{\phi}^{(k)}, \bm{\phi}^{(l)} \rangle_{L_v^2 (\langle v \rangle^\gamma)}
\right|.
\]
\end{lemma}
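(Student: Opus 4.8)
The plan is to reduce the claim to the elementary fact that on the finite-dimensional space $\mathrm{Ker}\,\mathbf{L}$ the weighted norm $\|\cdot\|_{L_v^2(\langle v\rangle^\gamma)}$ and the unweighted norm $\|\cdot\|_{L_v^2}$ are comparable, and then to track the comparison constant explicitly. Since $\bm{\pi}_\mathbf{L}(\mathbf{f})$ always belongs to the span of the $L_v^2$-orthonormal family $(\bm{\phi}^{(1)},\dots,\bm{\phi}^{(N+4)})$, everything will be controlled by the $N+4$ coordinates of $\bm{\pi}_\mathbf{L}(\mathbf{f})$ in this basis.

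Concretely, I would first set $c_k:=\langle\mathbf{f},\bm{\phi}^{(k)}\rangle_{L_v^2}$ for $1\le k\le N+4$, so that by the definition of the orthogonal projection $\bm{\pi}_\mathbf{L}(\mathbf{f})=\sum_{k=1}^{N+4}c_k\bm{\phi}^{(k)}$, and by $L_v^2$-orthonormality of the $\bm{\phi}^{(k)}$ we have $\|\bm{\pi}_\mathbf{L}(\mathbf{f})\|_{L_v^2}^2=\sum_{k=1}^{N+4}c_k^2$. Next I would expand the weighted norm bilinearly,
\begin{equation*}
\|\bm{\pi}_\mathbf{L}(\mathbf{f})\|_{L_v^2(\langle v\rangle^\gamma)}^2=\sum_{k=1}^{N+4}\sum_{l=1}^{N+4}c_kc_l\,\langle\bm{\phi}^{(k)},\bm{\phi}^{(l)}\rangle_{L_v^2(\langle v\rangle^\gamma)},
\end{equation*}
observing that each $\bm{\phi}^{(k)}$ is a polynomial times a Gaussian and $\langle v\rangle^\gamma$ grows at most linearly since $\gamma\le1$, so that all the weighted inner products are finite; write $M:=\max_{1\le k,l\le N+4}|\langle\bm{\phi}^{(k)},\bm{\phi}^{(l)}\rangle_{L_v^2(\langle v\rangle^\gamma)}|$. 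Then, using $|c_kc_l|\le\frac12(c_k^2+c_l^2)$ and summing over the $(N+4)^2$ index pairs gives $\sum_{k,l}|c_kc_l|\le(N+4)\sum_k c_k^2$, whence
\begin{equation*}
\|\bm{\pi}_\mathbf{L}(\mathbf{f})\|_{L_v^2(\langle v\rangle^\gamma)}^2\le M\,(N+4)\sum_{k=1}^{N+4}c_k^2=(N+4)\,M\,\|\bm{\pi}_\mathbf{L}(\mathbf{f})\|_{L_v^2}^2,
\end{equation*}
which is exactly \eqref{equivalence of two norms in ker L} with $C_\pi=(N+4)M$.

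This argument involves no genuine difficulty, which is why there is essentially no main obstacle to flag; the only points deserving attention are checking that the weighted Gaussian moments $\langle\bm{\phi}^{(k)},\bm{\phi}^{(l)}\rangle_{L_v^2(\langle v\rangle^\gamma)}$ are indeed finite — a one-line verification, and if desired they can be written out explicitly in terms of standard $\Gamma$-function integrals — and keeping the bookkeeping in the double sum correct so that the stated value of $C_\pi$ is reproduced rather than a slightly weaker bound.
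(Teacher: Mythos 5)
Your argument is correct and, as the form of the constant $C_\pi=(N+4)\max_{k,l}|\langle\bm{\phi}^{(k)},\bm{\phi}^{(l)}\rangle_{L_v^2(\langle v\rangle^\gamma)}|$ already signals, it is the standard finite-dimensional Gram-matrix bound: expand $\bm{\pi}_\mathbf{L}(\mathbf{f})$ in the $L_v^2$-orthonormal basis, expand the weighted quadratic form bilinearly, and estimate via $|c_kc_l|\le\tfrac12(c_k^2+c_l^2)$. The paper omits the proof, referring instead to Bondesan--Briant; your write-up supplies exactly the argument that reproduces the quoted constant, and the one nontrivial check (finiteness of the weighted Gaussian moments, which holds since $\gamma\in[0,1]$ and each $\bm{\phi}^{(k)}$ has Gaussian decay) is correctly flagged.
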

The latter Lemma states the equivalence between the norms $L_v^2$ and $L_v^2 (\langle v \rangle^\gamma)$ on 
the space $\mathrm{Ker} \mathbf{L}$. We omit the proofs for the two Lemmas, as they are the same as the results in \cite[Lemmas 3.9, 3.10]{briant2021stability}.

We conclude our preparations with a Poincaré inequality for $\bm{\pi}_{\mathbf{L}} (\mathbf{f})$, stated as follows.
\begin{lemma}\label{lemma for Poincaré inequality of pi-L of perturbed f}
Let us denote $C_{\mathbb{T}^3}$ as the Poincaré constant on $\mathbb{T}^3$, and let $\delta_{MS} >0$. Consider a solution 
$\mathbf{f} \in H^1(\mathbb{T}^3 \times \mathbb{R}^3)$ of the perturbed Boltzmann equations \eqref{pert Bz}, with the initial data 
satisfying $\|\bm{\pi}_{\mathbf{T}}^\varepsilon (\mathbf{f}^{in})\|_{L_{x,v}^2} \leq C \delta_{MS}$ for some constant $C > 0$ 
independent of $\varepsilon$ and $\delta_{MS}$. There exists a positive constant $C^\mathbf{T}$ such that 
\begin{equation}\label{pi-L estimate for pert f}
\|\bm{\pi}_\mathbf{L} (\mathbf{f})\|_{L_{x,v}^2}^2 \leq 2 C_{\mathbb{T}^3} \|\nabla_x \mathbf{f} \|_{L_{x,v}^2}^2 
+ \delta_{MS}^2 C^\mathbf{T}.
\end{equation}
\end{lemma}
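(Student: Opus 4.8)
The plan is to split $\bm{\pi}_\mathbf{L}(\mathbf{f})$ into its spatial average $\bm{\pi}_\mathbf{T}^\varepsilon(\mathbf{f}) = \int_{\mathbb{T}^3}\bm{\pi}_\mathbf{L}(\mathbf{f})\,\d x$ (Lemma~\ref{lemma for som properties of T-eps}, formula \eqref{def of pi T}) and a remainder with vanishing spatial average, and to estimate the two pieces by different mechanisms. For the remainder $\bm{\pi}_\mathbf{L}(\mathbf{f}) - \bm{\pi}_\mathbf{T}^\varepsilon(\mathbf{f})$, which integrates to zero over $\mathbb{T}^3$, the Poincaré inequality on $\mathbb{T}^3$ gives $\|\bm{\pi}_\mathbf{L}(\mathbf{f}) - \bm{\pi}_\mathbf{T}^\varepsilon(\mathbf{f})\|_{L_{x,v}^2}^2 \le C_{\mathbb{T}^3}\|\nabla_x\bm{\pi}_\mathbf{L}(\mathbf{f})\|_{L_{x,v}^2}^2$; since $\bm{\pi}_\mathbf{L}$ acts only in $v$, it commutes with $\nabla_x$ and is an $L_v^2$-orthogonal projection, so $\|\nabla_x\bm{\pi}_\mathbf{L}(\mathbf{f})\|_{L_{x,v}^2} \le \|\nabla_x\mathbf{f}\|_{L_{x,v}^2}$. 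Applying $\|a+b\|^2 \le 2\|a\|^2 + 2\|b\|^2$ then reduces \eqref{pi-L estimate for pert f} to establishing the uniform-in-time bound $\|\bm{\pi}_\mathbf{T}^\varepsilon(\mathbf{f})(t)\|_{L_{x,v}^2} \le C\delta_{MS}$ for all $t\ge0$.

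To obtain that bound I would track the time evolution of $\bm{\pi}_\mathbf{T}^\varepsilon(\mathbf{f})$. Applying the (time-independent) operator $\bm{\pi}_\mathbf{T}^\varepsilon$ to the perturbed equation \eqref{pert Bz}: the transport term $\tfrac1\varepsilon v\cdot\nabla_x\mathbf{f}$ is annihilated because $\int_{\mathbb{T}^3}\nabla_x\mathbf{f}\,\d x = 0$ by periodicity (and $\bm{\pi}_\mathbf{L}$ commutes with $\int_{\mathbb{T}^3}\d x$); the terms $\tfrac1{\varepsilon^2}\mathbf{L}^\varepsilon(\mathbf{f})$ and $\tfrac1\varepsilon\mathbf{\Gamma}(\mathbf{f},\mathbf{f})$ are annihilated because $\bm{\pi}_\mathbf{L}(\mathbf{L}^\varepsilon(\mathbf{f})) = \bm{\pi}_\mathbf{L}(\mathbf{\Gamma}(\mathbf{f},\mathbf{f})) = 0$ (Lemma~\ref{lemma for estimates on Gamma}). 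Hence $\tfrac{\d}{\d t}\bm{\pi}_\mathbf{T}^\varepsilon(\mathbf{f}) = \bm{\pi}_\mathbf{T}^\varepsilon(\mathbf{S}^\varepsilon) = \int_{\mathbb{T}^3}\bm{\pi}_\mathbf{L}(\mathbf{S}^\varepsilon)\,\d x$.

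The crucial point is that this right-hand side is a perfect time derivative of a small quantity. Using the explicit form of $\bm{\pi}_\mathbf{L}(\mathbf{S}^\varepsilon)$ produced in the proof of Lemma~\ref{lemma for estimates on source term S} (expression \eqref{equ for source term} combined with \eqref{proje pi-L express}): its mass mode vanishes by \eqref{M-S 1}; its momentum mode equals $-\tfrac{1}{\sum_j m_j\bar c_j}\sum_i m_i\bigl(\partial_t(c_i u_i) + \nabla_x\cdot(c_i u_i\otimes u_i)\bigr)$, where the $\nabla_x(c_{tot}T)=0$ relation (consequence of \eqref{M-S 2} and $\mathrm{Ker}A = \mathrm{Span}(\mathbf{1})$) has been used to eliminate the $\nabla_x(c_iT)$ contribution; and its energy mode has the analogous $\partial_t(\cdot) + \nabla_x\cdot(\cdot)$ structure built from $c_i|u_i|^2$, using \eqref{M-S 1} and \eqref{M-S 3}. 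Integrating over $\mathbb{T}^3$ kills every divergence, leaving $\bm{\pi}_\mathbf{T}^\varepsilon(\mathbf{S}^\varepsilon) = \tfrac{\d}{\d t}\mathbf{G}^\varepsilon(t)$, where $\mathbf{G}^\varepsilon$ is the vector whose momentum mode is $-\tfrac{1}{\sum_j m_j\bar c_j}\sum_i m_i\int_{\mathbb{T}^3}c_i u_i\,\d x$ and whose energy mode is the corresponding expression with $\int_{\mathbb{T}^3}c_i|u_i|^2\,\d x$. Since $c_i = \bar c_i + \varepsilon\tilde c_i$, $u_i = \varepsilon\tilde u_i$ with $\|\tilde{\mathbf{c}}\|_{L_t^\infty H_x^s},\ \|\tilde{\mathbf{u}}\|_{L_t^\infty H_x^{s-1}} \lesssim \delta_{MS}$ (Theorem~\ref{theorem for M-S in main result}), Cauchy–Schwarz gives $\sup_{t\ge0}\|\mathbf{G}^\varepsilon(t)\|_{L_{x,v}^2} \lesssim \varepsilon\delta_{MS}$. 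Integrating the ODE, $\bm{\pi}_\mathbf{T}^\varepsilon(\mathbf{f})(t) = \bm{\pi}_\mathbf{T}^\varepsilon(\mathbf{f}^{in}) + \mathbf{G}^\varepsilon(t) - \mathbf{G}^\varepsilon(0)$, so $\|\bm{\pi}_\mathbf{T}^\varepsilon(\mathbf{f})(t)\|_{L_{x,v}^2} \le \|\bm{\pi}_\mathbf{T}^\varepsilon(\mathbf{f}^{in})\|_{L_{x,v}^2} + 2\sup_t\|\mathbf{G}^\varepsilon(t)\|_{L_{x,v}^2} \le C\delta_{MS}$ by the hypothesis on the initial data. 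Plugging this into the Poincaré step and taking $C^\mathbf{T} = 2C^2$ yields \eqref{pi-L estimate for pert f}.

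The main obstacle is precisely this uniform-in-time control of $\bm{\pi}_\mathbf{T}^\varepsilon(\mathbf{f})$: a crude Grönwall or direct time integration only bounds it by $t\,\delta_{MS}$, which is useless, so one genuinely must recognise the conservation structure — that the $x$-average of $\bm{\pi}_\mathbf{L}(\mathbf{S}^\varepsilon)$ is an exact time derivative of an $O(\varepsilon\delta_{MS})$ quantity — which hinges on carefully reusing the Maxwell–Stefan relations \eqref{M-S 1}, \eqref{M-S 2}, \eqref{M-S 3} that went into the explicit expression for $\bm{\pi}_\mathbf{L}(\mathbf{S}^\varepsilon)$. The rest is routine.
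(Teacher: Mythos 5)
Your proof is correct and follows essentially the same strategy as the paper: split via Poincaré into the gradient piece and the spatial average $\bm{\pi}_\mathbf{T}^\varepsilon(\mathbf{f})$, then control the latter uniformly in time by recognising that $\bm{\pi}_\mathbf{T}^\varepsilon(\mathbf{S}^\varepsilon)$ is an exact time derivative of a uniformly small quantity. The paper obtains this by noting that only the $-\tfrac1\varepsilon\bm{\mu}^{-1/2}\partial_t\mathbf{M}^\varepsilon$ piece of $\mathbf{S}^\varepsilon$ survives under $\bm{\pi}_\mathbf{T}^\varepsilon$ and then reading off moments of $\mathbf{M}^\varepsilon$, whereas you pass through the explicit formula for $\bm{\pi}_\mathbf{L}(\mathbf{S}^\varepsilon)$ from the source-term lemma; the two presentations differ only by a conserved, time-independent constant and give the same conclusion.
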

\begin{proof}
Since the projection $\bm{\pi}_{\mathbf{T}}^\varepsilon (\mathbf{f}) = \int_{\mathbb{T}^3} \bm{\pi}_{\mathbf{L}}(\mathbf{f})\,\d x$ holds, we first obtain the following inequality, by applying Poincaré inequality,
\[  \|\bm{\pi}_{\mathbf{L}} (\mathbf{f})\|_{L_{x,v}^2}^2 \leq 2 C_{\mathbb{T}^3}  
\|\nabla_x \bm{\pi}_{\mathbf{L}} (\mathbf{f})\|_{L_{x,v}^2}^2 + \frac{2}{|\mathbb{T}^3|^2} 
\|\bm{\pi}_{\mathbf{T}}^\varepsilon (\mathbf{f})\|_{L_{x,v}^2}^2.
\]
The function $\mathbf{f}$ satisfies the perturbed Boltzmann equations, thus 
we can apply projection $\bm{\pi}_{\mathbf{T}}^\varepsilon$ on \eqref{pert Bz}, leading to the equality that for almost any 
$\mathbb{R}_+ \times \mathbb{T}^3 \times \mathbb{R}^3$,
\begin{equation}
\partial_t \bm{\pi}_{\mathbf{T}}^\varepsilon (\mathbf{f}) = - \frac{1}{\varepsilon} \partial_t \bm{\pi}_{\mathbf{T}}^\varepsilon 
(\bm{\mu}^{-1/2} \mathbf{M}^\varepsilon).
\end{equation}
It uses the orthogonality of $\mathbf{L}^\varepsilon$ and $\mathbf{\Gamma}$ to $\mathrm{Ker} \mathbf{L}$. 
Integrating over $[0, t]$ with respect to time variable, the above equality transforms to
\begin{equation}
\bm{\pi}_{\mathbf{T}}^\varepsilon (\mathbf{f}) = \bm{\pi}_{\mathbf{T}}^\varepsilon (\mathbf{f}^{in}) - \frac{1}{\varepsilon} 
\bm{\pi}_{\mathbf{T}}^\varepsilon (\bm{\mu}^{-1/2}\mathbf{M}^\varepsilon - \bm{\mu}^{-1/2}\mathbf{M}^{\varepsilon, in}).
\end{equation}
Maxwellians $\mathbf{M}^\varepsilon$ and $\mathbf{M}^{\varepsilon, in}$ are respectively constructed with macroscopic quantities 
$(\mathbf{c},\varepsilon\mathbf{u},T\mathbf{1})$ and $(\mathbf{c}^{in}, \varepsilon \mathbf{u}^{in}, T^{in}\mathbf{1})$, which satisfying the Maxwell-Stefan system as stated in Theorem \ref{theorem for M-S in main result}. Therefore, we can compute 
the second term by using the 
expression of projection $\bm{\pi}_{\mathbf{L}}$ in \eqref{proje pi-L express},
\begin{align*}
   & \bm{\pi}_{\mathbf{T}}^\varepsilon  (\bm{\mu}^{-1/2} \mathbf{M}^\varepsilon - \bm{\mu}^{-1/2} \mathbf{M}^{\varepsilon, in})\\
   & = \int_{\mathbb{T}^3} \bm{\pi}_\mathbf{L} (\bm{\mu}^{-1/2} \mathbf{M}^\varepsilon - \bm{\mu}^{-1/2} 
   \mathbf{M}^{\varepsilon, in}) \,\d x\\
   &= \sum_{i=1}^N \frac{\varepsilon}{\bar{c}_i} \left[ \int_{\mathbb{T}^3} (\tilde{c}_i - \tilde{c}_i^{in}) \,\d x \right] \mu_i^{1/2} 
   \mathbf{e}^{(i)}\\
  & + \frac{\varepsilon v}{\sum_{i=1}^N m_i \bar{c}_i} \cdot \left[ \sum_{i=1}^N \int_{\mathbb{T}^3} m_i(c_i u_i - c_i^{in} u_i^{in}) \,\d x 
  \right]  (m_i \mu_i^{1/2})_{1 \leq i \leq N}\\
  & + \frac{\varepsilon}{\sqrt{6} \sum_{i=1}^N \bar{c}_i} \left[ \sum_{i=1}^N \int_{\mathbb{T}^3} 
  3(c_i \tilde{T} - c_i^{in} \tilde{T}^{in}) 
  + \varepsilon m_i (c_i |u_i|^2 - c_i^{in} |u_i^{in}|^2) \,\d x \right]  
  \left ( \frac{m_i |v|^2 - 3}{\sqrt{6}} \mu_i^{1/2} \right )_{1 \leq i \leq N}.
\end{align*}
The quantity $c_i$ satisfies the equation $\partial_t c_i + \nabla_x \cdot (c_i u_i) = 0$ for every $1 \leq i \leq N$, thus we have 
\[  \frac{\d}{\d t} \int_{\mathbb{T}^3} \tilde{c}_i \,\d x = 0, \quad \forall 1 \leq i \leq N, \quad \forall t \in \mathbb{R}_+.
\]
The explicit expression of $\bm{\pi}_\mathbf{T}^\varepsilon (\mathbf{f})$ finally takes the form
\begin{align*}
  \bm{\pi}_{\mathbf{T}}^\varepsilon (\mathbf{f}) = & \bm{\pi}_{\mathbf{T}}^\varepsilon (\mathbf{f}^{in}) - 
  \frac{v}{\sum_{i=1}^N m_i \bar{c}_i} \cdot \left[\sum_{i=1}^N \int_{\mathbb{T}^3} m_i (c_i u_i - c_i^{in} u_i^{in}) \,\d x \right] 
  (m_i \mu_i^{1/2})_{1 \leq i \leq N} - \frac{1}{\sqrt{6} \sum_{i=1}^N \bar{c}_i}\\
  & \times \left[ \sum_{i=1}^N \int_{\mathbb{T}^3} 3 (c_i \tilde{T} - c_i^{in} \tilde{T}^{in}) + \varepsilon m_i 
  (c_i |u_i|^2 - c_i^{in} |u_i^{in}|^2)  \,\d x \right]  \left ( \frac{m_i |v|^2 - 3}{\sqrt{6}} \mu_i^{1/2} \right )_{1 \leq i \leq N}.
\end{align*}

Under the assumption that macroscopic quantities are regular enough, there exists a uniform bound for the quantities 
$\mathbf{c}, \mathbf{u}$ and $ \tilde{T}$ given by Theorem \ref{theorem for M-S in main result}. By using the Sobolev embedding $H_x^2 \hookrightarrow L_x^\infty$, there exists a positive constant $C^\mathbf{T}$ such that
\begin{align*}
\left|\int_{\mathbb{T}^3} 3 (c_i \tilde{T} - c_i^{in} \tilde{T}^{in}) \,\d x \right| & \leq 3 |\mathbb{T}^3| (\max_{1 \leq i \leq N} \bar{c}_i) 
(\|\mathbf{c}\|_{H_x^2 (\bar{\mathbf{c}}^{-1})} \|\tilde{T}\|_{H_x^2} + \|\mathbf{c}^{in}\|_{H_x^2 (\bar{\mathbf{c}}^{-1})}  
\|\tilde{T}^{in}\|_{H_x^2}) \\
&\leq C^{\mathbf{T}} \delta_{MS} (1 + \varepsilon \delta_{MS}).
\end{align*}
For other terms involving the macroscopic quantities $\mathbf{c}, \mathbf{u}$, similar upper bounds can be established due to the perturbative form $u_i = \varepsilon \tilde{u}_i$. Thus we obtain the finial control for $\bm{\pi}_\mathbf{T}^\varepsilon (\mathbf{f})$:
\begin{align*}
\|\bm{\pi}_\mathbf{T}^\varepsilon (\mathbf{f}) \|_{L_{x,v}^2}^2 & \leq 2\|\bm{\pi}_\mathbf{T}^\varepsilon (\mathbf{f}^{in})\|_{L_{x,v}^2}^2
 + C^\mathbf{T} \delta_{MS}^2  \|(1 + |v| + |v|^2) \bm{\mu}^{1/2}\|_{L_v^2}^2\\
& \leq C^\mathbf{T} \delta_{MS}^2 ,
\end{align*}
appropriately increasing the constant $C^\mathbf{T}$ if necessary. Here the assumptions on the initial data $\mathbf{f}^{in}$ are applied. The norm involving $\bm{\mu}$ on the right hand side is clearly finite.

We conclude this proof with the following inequality, by using the unique orthogonal decomposition
$\nabla_x \mathbf{f} = \bm{\pi}_\mathbf{L} (\nabla_x \mathbf{f}) + \nabla_x \mathbf{f}^\perp$ in the space 
$L^2 (\mathbb{T}^3 \times \mathbb{R}^3)$ and the fact that $\bm{\pi}_\mathbf{L}$ commutes with spatial derivative,
\[ \|\nabla_x \bm{\pi}_\mathbf{L} (\mathbf{f})\|_{L_{x,v}}^2 \leq \|\nabla_x\mathbf{f}\|_{L_{x,v}}^2.
\]
\end{proof}
\subsection{Perturbative Cauchy Theory for the Multi-species Boltzmann Equations}
Let $s \in \mathbb{N}^\ast$ and $\mathbf{f} \in H^s(\mathbb{T}^3 \times \mathbb{R}^3)$ be a solution to the perturbed Boltzmann 
equations \eqref{pert Bz}, where the initial data satisfies the assumptions in Theorem \ref{theorem for perturbed f}. 
Recalling the expression of our target functional 
$\mathcal{H}_\varepsilon^s$ in \eqref{express of functional H-eps-s}, we first calculate the following estimates based on our preparations.

\textbf{Components of $\mathcal{H}_\varepsilon^1$.} 

The estimate for $L_{x,v}^2$ norm of $\mathbf{f}$:
\begin{align*}
  \frac{\d}{\d t} \|\mathbf{f}\|^2_{L_{x,v}^2} = & \frac{2}{\varepsilon^2} \langle \mathbf{L}^\varepsilon(\mathbf{f}), 
  \mathbf{f} \rangle_{L_{x,v}^2} + \frac{2}{\varepsilon} \langle \mathbf{\Gamma} (\mathbf{f},\mathbf{f}), \mathbf{f}^\perp \rangle_{L_{x,v}^2} 
  + 2 \langle \mathbf{S}^\varepsilon, \mathbf{f} \rangle_{L_{x,v}^2}\\
    \leq & - \frac{2 [\lambda_\mathbf{L} - \eta_5 - \eta_0 - (\varepsilon + \eta_1) \delta_{MS} C_{coe}^\mathbf{L}] }{\varepsilon^2}  
    \|\mathbf{f}^\perp\|_{L_{x,v}^2 (\langle v \rangle^\gamma)}^2\\
     & + 2 (\eta_4 + \frac{\delta_{MS} C_\pi C_{coe}^\mathbf{L}}{\eta_1})  \|\bm{\pi}_\mathbf{L} (\mathbf{f})\|_{L_{x,v}^2}^2 
     + \frac{2}{\eta_0} \mathcal{G}_x^0 (\mathbf{f}, \mathbf{f})^2 + \frac{2 \delta_{MS}^2 C_\alpha^\mathbf{S}}{\eta_4 \eta_5}.
\end{align*}
The fact $\langle v \cdot \nabla_x \mathbf{f}, \mathbf{f} \rangle_{L_{x,v}^2} = \sum_{i=1}^N \frac{1}{2} 
\int_{\mathbb{T}^3}\nabla_x \cdot ( \int_{\mathbb{R}^3} v f_i^2 \,\d v ) \,\d x = 0$, and the conservation laws for the Boltzmann collision operator $\mathbf{Q}$ are applied here. Moreover, we use Young's inequality with a positive constant $\frac{\eta_0}{\varepsilon}$ on the estimate for the operator $\mathbf{\Gamma}$, and employ the inequality \eqref{equivalence of two norms in ker L} in this deduction. 
By choosing the constants $\eta_1 = 1, \delta_{MS} \leq \frac{\lambda_\mathbf{L}}{8 C_{coe}^\mathbf{L}}, \eta_0 = \eta_5 = 
\frac{\lambda_\mathbf{L}}{8}$ and $\eta_4 = \delta_{MS}$, and using the Poincaré inequality \eqref{pi-L estimate for pert f}, 
we finally obtain 
\begin{equation}
\frac{\d}{\d t} \|\mathbf{f}\|^2_{L_{x,v}^2} \leq - \frac{\lambda_\mathbf{L}}{\varepsilon^2} 
\|\mathbf{f}^\perp\|_{L_{x,v}^2 (\langle v \rangle^\gamma)}^2 + \delta_{MS} C^{(1)}  \|\nabla_x \mathbf{f}\|_{L_{x,v}^2}^2 
+ \frac{16}{\lambda_\mathbf{L}} \mathcal{G}_x^0 (\mathbf{f}, \mathbf{f})^2 
+ \tilde{C} \delta_{MS}.  \label{estimate for H-1 of L2 for f}
\end{equation}

The $x$-derivative on the operator $\mathbf{\Gamma}$ does not change its action on $v$ variable, leading to
\[ \langle \nabla_x \mathbf{\Gamma} (\mathbf{f}, \mathbf{f}), \nabla_x \mathbf{f} \rangle_{L_{x,v}^2} 
= \langle \nabla_x \mathbf{\Gamma} (\mathbf{f}, \mathbf{f}), \nabla_x \mathbf{f}^\perp \rangle_{L_{x,v}^2} .
\]
Using the inequality \eqref{sub of L of alpha-x deri} and result in Lemma \ref{lemma for Poincaré inequality of pi-L of perturbed f}, the estimate for the $L_{x,v}^2$ norm of $\nabla_x \mathbf{f}$ can be written as
\begin{equation}
  \begin{aligned}
  \frac{\d}{\d t} \|\nabla_x \mathbf{f}\|^2_{L_{x,v}^2} \leq & - \frac{\lambda_\mathbf{L}}{\varepsilon^2}  
  \|\nabla_x\mathbf{f}^\perp\|_{L_{x,v}^2 (\langle v \rangle^\gamma)}^2 + \delta_{MS} C^{(2)} \|\nabla_x \mathbf{f}\|_{L_{x,v}^2}^2 
  + \delta_{MS} C^{(3)} \|\mathbf{f}^\perp\|_{L_{x,v}^2 (\langle v\rangle^\gamma)}^2   \\
   & + \frac{16}{\lambda_\mathbf{L}} \mathcal{G}_x^1 (\mathbf{f}, \mathbf{f})^2 + \tilde{C}_x \delta_{MS}. 
   \label{estimate for H-1 of L2 for x-deri f}
  \end{aligned}
\end{equation}

The estimate for $\nabla_v \mathbf{f}$ first uses the following equality
\[  - \frac{2}{\varepsilon} \langle \nabla_v (v \cdot \nabla_x \mathbf{f}), \nabla_v \mathbf{f} \rangle_{L_{x,v}^2} 
= - \frac{2}{\varepsilon} \langle \nabla_x \mathbf{f}, \nabla_v \mathbf{f} \rangle_{L_{x,v}^2}.
\]
Therefore, we can deduce
\begin{equation}
  \begin{aligned}
   \frac{\d}{\d t} \|\nabla_v \mathbf{f}\|^2_{L_{x,v}^2} \leq & - \frac{C_1^{\bm{\nu}}}{\varepsilon^2}  
   \|\nabla_v \mathbf{f}\|_{L_{x,v}^2 (\langle v \rangle^\gamma)}^2 + \frac{K_1}{\varepsilon^2}  
   \|\mathbf{f}^\perp\|_{L_{x,v}^2}^2 + \frac{C^{(4)}}{\varepsilon^2} \|\nabla_x \mathbf{f}\|_{L_{x,v}^2}^2\\
    & + \frac{16}{C_1^{\bm{\nu}}} \mathcal{G}_{x,v}^1 (\mathbf{f}, \mathbf{f})^2 + \tilde{C}_v \delta_{MS}.
    \label{estimate for H-1 of L2 for v- deri f}
  \end{aligned}
\end{equation}

The estimate for the commutator relies on this equality
\[ - \langle \nabla_x (v \cdot \nabla_x \mathbf{f}), \nabla_v \mathbf{f} \rangle{L_{x,v}^2} 
= - \langle \nabla_v (v \cdot \nabla_x \mathbf{f}), \nabla_x \mathbf{f}\rangle{L_{x,v}^2} = - \|\nabla_x \mathbf{f}\|_{L_{x,v}^2}^2.
\] 
Then the following inequality holds, for any $e > 0$,
\begin{equation}
\begin{aligned}
\frac{\d}{\d t} \langle \nabla_x \mathbf{f}, \nabla_v \mathbf{f} \rangle_{L_{x,v}^2} = & 2 
\langle \partial_t \nabla_x \mathbf{f},\nabla_v \mathbf{f} \rangle_{L_{x,v}^2}\\
& \leq \frac{2 e C_0^\mathbf{L}}{\varepsilon^3} \|\nabla_x \mathbf{f}^\perp\|_{L_{x,v}^2 (\langle v \rangle^\gamma)}^2 
+ \frac{C^{(5)}}{e\varepsilon} \|\nabla_v\mathbf{f}\|_{L_{x,v}^2 (\langle v \rangle^\gamma)}^2 + \frac{2e}{\varepsilon} 
\mathcal{G}_x^1 (\mathbf{f},\mathbf{f})^2\\
& - \frac{2 (1 - \delta_{MS} e C^{(6)})}{\varepsilon}  \|\nabla_x \mathbf{f}\|_{L_{x,v}^2}^2 + 
\frac{e \delta_{MS} K_{1, 1}}{\varepsilon}  \|\mathbf{f}^\perp\|_{L_{x,v}^2 (\langle v \rangle^\gamma)}^2 
+ \frac{\delta_{MS}^2 e \tilde{C}_{x, v}}{\varepsilon}.  \label{estimate for H-1 of commutator for x-v f}
\end{aligned}
\end{equation}
The first equality uses integration by parts, and the deduction uses a further decomposition 
\[ \langle \mathbf{L}^\varepsilon (\nabla_x \mathbf{f}), \nabla_v \mathbf{f} \rangle_{L_{x,v}^2} 
= \langle \mathbf{L}^\varepsilon (\nabla_x \mathbf{f}^\perp), \nabla_v \mathbf{f}\rangle_{L_{x,v}^2} 
+ \langle (\mathbf{L}^\varepsilon - \mathbf{L}) ( \bm{\pi}_\mathbf{L} (\nabla_x \mathbf{f})), \nabla_v \mathbf{f} \rangle_{L_{x,v}^2},
\]
the second part of which is of $\mathcal{O} (\varepsilon )$ and $\mathcal{O} ( \delta_{MS})$, seeing the proof in Lemma \ref{lemma for some estimate on L-eps}.

\textbf{Components of $\mathcal{H}_\varepsilon^s$.}

We further consider two multi-indices $\alpha,\beta$ with $|\alpha|+|\beta|=s$. The estimate for higher $x$-derivatives is 
\begin{equation}
\begin{aligned}
\frac{\d}{\d t} \| \partial_x^\alpha \mathbf{f}\|_{L_{x,v}^2}^2 \leq & - \frac{\lambda_\mathbf{L}}{\varepsilon^2}  
\|\partial_x^\alpha \mathbf{f}^\perp\|_{L_{x,v}^2 (\langle v \rangle^\gamma)}^2 + \frac{16}{\lambda_\mathbf{L}} 
\mathcal{G}_x^s (\mathbf{f}, \mathbf{f})^2 + \delta_{MS} K_\alpha \|\mathbf{f}\|^2_{H_{x,v}^{s-1} (\langle v\rangle^\gamma)} \\
   & + \delta_{MS} C^{(7)}  \|\partial_x^\alpha \mathbf{f}\|_{L_{x,v}^2}^2 + \tilde{C}_\alpha \delta_{MS}.
   \label{estimate for H-s of L2 for x-alpha f}
\end{aligned}
\end{equation}

Next, we estimate the norms of $\mathbf{f}$ that have at least one derivative in $v$, i.e. for $|\beta| \geq 1$, 
\begin{equation}
\begin{aligned}
\frac{\d}{\d t} \|\partial_v^\beta \partial_x^\alpha \mathbf{f}\|_{L_{x,v}^2}^2 \leq & - \frac{C_1^{\bm{\nu}}}{\varepsilon^2}  
\|\partial_v^\beta \partial_x^\alpha \mathbf{f}\|_{L_{x,v}^2 (\langle v \rangle^\gamma)}^2 + \frac{K_s}{\varepsilon^2}  
\|\mathbf{f}\|_{H_{x,v}^{s-1}}^2 + \frac{16}{C_1^{\bm{\nu}}} \mathcal{G}_{x,v}^s (\mathbf{f}, \mathbf{f})^2\\
  & + C^{(8)} \sum_{k, \beta_k > 0}  \|\partial_v^{\beta - e_k} \partial_x^{\alpha + e_k} \mathbf{f}\|_{L_{x,v}^2}^2 
  + \frac{2 C_5^{\bm{\nu}}}{\varepsilon}  \|\mathbf{f}\|_{H_{x,v}^{s-1} (\langle v \rangle^\gamma)}^2 
  + \tilde{C}_{\alpha, \beta} \delta_{MS},  \label{estimate for H-s of L2 for x-alpha v-beta f}
\end{aligned}
\end{equation}
where we assume $\varepsilon \leq \varepsilon_0 \leq \frac{C_1^{\bm{\nu}}}{8 C_2^{\bm{\nu}}}$. The following computations are employed
\begin{align*}
- \frac{2}{\varepsilon} & \langle \partial_v^\beta \partial_x^\alpha (v \cdot \nabla_x \mathbf{f}), 
\partial_v^\beta \partial_x^\alpha \mathbf{f} \rangle_{L_{x,v}^2}\\
& = - \frac{2}{\varepsilon} \langle \partial_v^{\beta - e_k}  \partial_x^{\alpha + e_k}\mathbf{f}, 
\partial_v^\beta \partial_x^\alpha \mathbf{f} \rangle_{L_{x,v}^2} - \frac{2}{\varepsilon} 
\langle \partial_v^{\beta - e_k} (v \cdot \nabla_x \partial_v^{e_k} \partial_x^\alpha \mathbf{f}), 
\partial_v^\beta \partial_x^\alpha \mathbf{f} \rangle_{L_{x,v}^2}\\
& = \dots = - \frac{2}{\varepsilon} \sum_{k, \beta_k > 0}  \langle \partial_v^{\beta - e_k}  \partial_x^{\alpha + e_k}\mathbf{f}, 
\partial_v^\beta \partial_x^\alpha \mathbf{f} \rangle_{L_{x,v}^2} - \frac{2}{\varepsilon} 
\langle v \cdot \nabla_x \partial_v^\beta \partial_x^\alpha \mathbf{f}, \partial_v^\beta \partial_x^\alpha \mathbf{f} \rangle_{L_{x,v}^2},
\end{align*}
in which the second term in the last equality is zero.

For the particular case with $\partial_v^{e_k} \partial_x^{\alpha - e_k}$ derivative, the above inequality in this case becomes 
\begin{equation}
\begin{aligned}
\frac{\d}{\d t} \|\partial_v^{e_k} \partial_x^{\alpha - e_k} \mathbf{f}\|_{L_{x,v}^2}^2 \leq & - \frac{C_1^{\bm{\nu}}}{\varepsilon^2}  
\|\partial_v^{e_k} \partial_x^{\alpha - e_k} \mathbf{f}\|_{L_{x,v}^2(\langle v\rangle^\gamma)}^2 + \frac{K_s}{\varepsilon^2} 
\|\mathbf{f}\|_{H_{x,v}^{s-1}}^2 + \frac{16}{C_1^{\bm{\nu}}} \mathcal{G}_{x,v}^s (\mathbf{f}, \mathbf{f})^2\\
  & + C^{(8)} \|\partial_x^{\alpha} \mathbf{f}\|_{L_{x,v}^2}^2 + \frac{2 C_5^{\bm{\nu}}}{\varepsilon} 
  \|\mathbf{f}\|_{H_{x,v}^{s-1} (\langle v \rangle^\gamma)}^2 + \tilde{C}_{\alpha, \beta} \delta_{MS}.
  \label{estimate for H-s of L2 for x-alpha v-e-k f}
\end{aligned}
\end{equation}

Finally, the upper bound for the commutator with higher derivatives reads, for any $e > 0$,
\begin{equation}
\begin{aligned}
\frac{\d}{\d t} \langle \partial_x^\alpha \mathbf{f}, \partial_v^{e_k} \partial_x^{\alpha - e_k} \mathbf{f} \rangle_{L_{x,v}^2} 
\leq & \frac{2 e C_0^\mathbf{L}}{\varepsilon^3} \| \partial_x^\alpha \mathbf{f}^\perp\|_{L_{x,v}^2(\langle v \rangle^\gamma)}^2 
+ \frac{C^{(9)}}{e\varepsilon}  \|\partial_v^{e_k} \partial_x^{\alpha - e_k} \mathbf{f}\|_{L_{x,v}^2 (\langle v \rangle^\gamma)}^2 
+ \frac{2 e}{\varepsilon} \mathcal{G}_{x}^s (\mathbf{f}, \mathbf{f})^2\\
& - \frac{2 (1 - \delta_{MS} e C^{(10)})}{\varepsilon}  \|\partial_x^\alpha \mathbf{f}\|_{L_{x,v}^2}^2 
+ \frac{e \delta_{MS} K_{\alpha, k}}{\varepsilon}  \|\mathbf{f}\|_{H_{x,v}^{s-1}}^2 + \frac{\delta_{MS} e \tilde{C}_{\alpha, k}}{\varepsilon}. 
\label{estimate for H-s of commutator for x-alpha v-e-k f}
\end{aligned}
\end{equation}

Starting from these inequalities, we can derive a priori energy estimate following steps in \cite{briant2015JDEnavierstokes}. 
We first present the results as follows.
\begin{proposition}\label{proposition preparation of priori estimate}
  For any $s \in \mathbb{N}^\ast$, there exist $\bar{\delta}_{MS} > 0$ and $\varepsilon_0 \in (0, 1]$, such that the following statements 
  hold.\\
  (i) There exist three sets of positive constants $ \{a_\alpha^{(s)}\},  \{b_{\alpha, k}^{(s)} \}$ and $\{ d_{\alpha, \beta}^{(s)}\}$,
 such that, for all $\varepsilon \in (0, \varepsilon_0]$, 
    \begin{equation}
     \begin{aligned}
     \|\cdot\|_{\mathcal{H}_\varepsilon^s} \sim \left ( \sum_{|\alpha| \leq s}  \|\partial_x^\alpha \cdot\|_{L_{x,v}^2}^2 
     + \varepsilon^2 \sum_{ \substack{|\alpha| + |\beta| \leq s \\ |\beta| \geq 1} }  
     \|\partial_v^\beta \partial_x^\alpha \cdot\|_{L_{x,v}^2}^2 \right )^{1/2}.
     \label{equivalencen of functional and standard norm}
     \end{aligned}
     \end{equation}
     (ii) There exist positive constants $K_0^{(s)}, K_1^{(s)}, K_2^{(s)}$ and $C^{(s)}$, such that for any 
     $\varepsilon \in(0, \varepsilon_0]$ and $\delta_{MS} \in [0, \bar{\delta}_{MS}]$, if 
     $\mathbf{f} \in H^s (\mathbb{T}^3 \times \mathbb{R}^3)$ is a solution for the perturbed Boltzmann equations \eqref{pert Bz} with 
     initial data satisfying $\mathbf{f}^{in} \in H^s(\mathbb{T}^3 \times \mathbb{R}^3)$ and 
     $\|\bm{\pi}_\mathbf{T}^\varepsilon (\mathbf{f}^{in})\|_{L_{x,v}^2} = \mathcal{O} (\delta_{MS})$, then for any time $t \geq 0$,
     \begin{equation}
      \begin{aligned}
      \frac{\d}{\d t} \|\mathbf{f}\|_{\mathcal{H}_\varepsilon^s}^2 \leq & - K_0^{(s)} \left ( 
        \|\mathbf{f}\|_{H_{x,v}^s (\langle v \rangle^\gamma)}^2 + \frac{1}{\varepsilon^2} \sum_{|\alpha| \leq s} 
        \|\partial_x^\alpha \mathbf{f}^\perp\|_{L_{x,v}^2 (\langle v \rangle^\gamma)}^2 \right )\\
      & + K_1^{(s)} \mathcal{G}_x^s (\mathbf{f}, \mathbf{f})^2 + \varepsilon^2 K_2^{(s)} \mathcal{G}_{x,v}^s (\mathbf{f}, \mathbf{f})^2 
      + C^{(s)}\delta_{MS},  \label{Prepared priori estimate}
      \end{aligned}
     \end{equation}
     where $\mathcal{G}_x^s$ and $\mathcal{G}_{x,v}^s$ are functionals introduced in Lemma \ref{lemma for estimates on Gamma}.
\end{proposition}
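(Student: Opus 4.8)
\textbf{Proof plan for Proposition \ref{proposition preparation of priori estimate}.} The plan is to build the modified norm $\|\cdot\|_{\mathcal{H}_\varepsilon^s}$ by a hierarchical choice of the constants $\{a_\alpha^{(s)}\}$, $\{b_{\alpha,k}^{(s)}\}$, $\{d_{\alpha,\beta}^{(s)}\}$, exactly as in \cite{briant2015JDEnavierstokes}, so that the good (negative) terms produced by the collision operator dominate all the cross terms. First I would fix $\varepsilon_0$ small enough that $\varepsilon_0 \le C_1^{\bm\nu}/(8C_2^{\bm\nu})$, so that the $v$-derivative multiplicative lower bound \eqref{lower bound for nu with v derivative} keeps a uniformly positive leading coefficient, and similarly absorb the $\varepsilon\mathbf{1}_{|\alpha|\ge1}$ corrections in Lemmas \ref{lemma for bound of nu and its v-derivative}--\ref{lemma for some estimate on L-eps}. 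Part (i) is the norm equivalence \eqref{equivalencen of functional and standard norm}: since $|\langle \partial_x^\alpha\cdot,\partial_v^{e_k}\partial_x^{\alpha-e_k}\cdot\rangle_{L^2_{x,v}}| \le \tfrac12(\|\partial_x^\alpha\cdot\|_{L^2_{x,v}}^2 + \|\partial_v^{e_k}\partial_x^{\alpha-e_k}\cdot\|_{L^2_{x,v}}^2)$, the cross terms in \eqref{express of functional H-eps-s} carry a factor $\varepsilon$ and are therefore controlled by $\varepsilon$ times the sum of the pure $a_\alpha^{(s)}$ terms and the $\varepsilon^2 d_{\alpha,\beta}^{(s)}$ terms; choosing each $b_{\alpha,k}^{(s)}$ small relative to $\min\{a_\alpha^{(s)},a_{\alpha-e_k}^{(s)}\}$ and $\min_\beta d_{\alpha-e_k,\beta}^{(s)}$ makes $\|\cdot\|_{\mathcal{H}_\varepsilon^s}$ equivalent, uniformly in $\varepsilon\in(0,\varepsilon_0]$, to the stated norm.

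For part (ii) I would differentiate $\|\mathbf{f}\|_{\mathcal{H}_\varepsilon^s}^2$ in time and insert the component estimates \eqref{estimate for H-1 of L2 for f}, \eqref{estimate for H-1 of L2 for x-deri f}, \eqref{estimate for H-1 of L2 for v- deri f}, \eqref{estimate for H-1 of commutator for x-v f} at level $1$ and \eqref{estimate for H-s of L2 for x-alpha f}, \eqref{estimate for H-s of L2 for x-alpha v-beta f}, \eqref{estimate for H-s of L2 for x-alpha v-e-k f}, \eqref{estimate for H-s of commutator for x-alpha v-e-k f} at higher levels, each weighted by its coefficient. The mechanism is a downward induction on the number of $v$-derivatives $|\beta|$: the terms $\|\partial_v^{e_k}\partial_x^{\alpha-e_k}\mathbf{f}\|_{L^2_{x,v}(\langle v\rangle^\gamma)}^2$ appearing with a negative sign and coefficient $C_1^{\bm\nu}/\varepsilon^2$ in \eqref{estimate for H-s of L2 for x-alpha v-e-k f} absorb the positive terms $C^{(8)}\sum_{k,\beta_k>0}\|\partial_v^{\beta-e_k}\partial_x^{\alpha+e_k}\mathbf{f}\|_{L^2_{x,v}}^2$ coming from lower-$|\beta|$ (higher-$|\alpha|$) estimates, provided $d_{\alpha,\beta}^{(s)}$ is chosen small relative to $d_{\alpha+e_k,\beta-e_k}^{(s)}$ along the chain $|\beta|=s,\dots,1$. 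Next, the commutator estimates \eqref{estimate for H-1 of commutator for x-v f} and \eqref{estimate for H-s of commutator for x-alpha v-e-k f} contribute, for $e$ chosen small, the crucial negative term $-\tfrac{2}{\varepsilon}\bigl(1-\delta_{MS}eC^{(\cdot)}\bigr)\|\partial_x^\alpha\mathbf{f}\|_{L^2_{x,v}}^2$; these recover dissipation in the pure-$x$ directions (which the $L^2$ estimates alone do not provide, since there $\langle v\cdot\nabla_x\mathbf{f},\mathbf{f}\rangle=0$). Taking $\varepsilon b_{\alpha,k}^{(s)}$ large enough to beat the positive $\varepsilon^2\delta_{MS}C^{(7)}\|\partial_x^\alpha\mathbf{f}\|^2_{L^2_{x,v}}$ terms from \eqref{estimate for H-s of L2 for x-alpha f} — but small enough for (i) — closes the loop. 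Finally, since $\gamma\in[0,1]$ we have $\|\cdot\|_{L^2_{x,v}}\le\|\cdot\|_{L^2_{x,v}(\langle v\rangle^\gamma)}$, and on $\operatorname{Ker}\mathbf{L}$ the two norms are equivalent by Lemma \ref{lemma for equivalence norms in ker L}; combining the surviving negative terms $-c\varepsilon^{-2}\sum_{|\alpha|\le s}\|\partial_x^\alpha\mathbf{f}^\perp\|_{L^2_{x,v}(\langle v\rangle^\gamma)}^2$ with the $v$-derivative dissipation and the commutator-generated $x$-dissipation, together with the equivalence of norms from (i), yields the combined dissipation $-K_0^{(s)}\bigl(\|\mathbf{f}\|^2_{H^s_{x,v}(\langle v\rangle^\gamma)} + \varepsilon^{-2}\sum_{|\alpha|\le s}\|\partial_x^\alpha\mathbf{f}^\perp\|^2_{L^2_{x,v}(\langle v\rangle^\gamma)}\bigr)$.

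The remaining terms are collected cleanly: the $\mathbf{\Gamma}$ contributions aggregate into $K_1^{(s)}\mathcal{G}_x^s(\mathbf{f},\mathbf{f})^2 + \varepsilon^2K_2^{(s)}\mathcal{G}_{x,v}^s(\mathbf{f},\mathbf{f})^2$ (the $\varepsilon^2$ weight attached to the $v$-derivative levels surviving from the $\frac{16}{C_1^{\bm\nu}}\mathcal{G}_{x,v}^s$ pieces), and all source-term residues are $O(\delta_{MS})$ by Lemma \ref{lemma for estimates on source term S}, giving the $C^{(s)}\delta_{MS}$ constant; here one uses that the $\eta$-parameters entering Lemma \ref{lemma for estimates on source term S} have been tuned so that the $\eta/\varepsilon^2$-type feedback into $\|\partial_v^\beta\partial_x^\alpha\mathbf{f}\|^2$ is a fixed small fraction of the available negative coefficient $C_1^{\bm\nu}$ or $\lambda_{\mathbf{L}}$. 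The smallness threshold $\bar\delta_{MS}$ is then fixed so that all terms carrying a factor $\delta_{MS}$ multiplying a non-dissipated quantity (notably $\delta_{MS}C^{(\cdot)}\|\nabla_x\mathbf{f}\|^2$ and $\delta_{MS}eC^{(\cdot)}$ inside the commutator coefficient) are dominated.

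I expect the main obstacle to be the bookkeeping of the coefficient hierarchy: one must simultaneously satisfy the ``small-$b$'' constraints required for the norm equivalence in (i), the ``large enough $b$'' constraints needed to produce net $x$-dissipation from the commutators, and the downward-induction constraints relating $d_{\alpha,\beta}^{(s)}$ across the $|\beta|$-chain — all while the various $\eta$-parameters in the source and $\mathbf{L}^\varepsilon$ estimates are pinned to the fixed positive constants $\lambda_{\mathbf{L}}, C_1^{\bm\nu}$. Following \cite{briant2015JDEnavierstokes}, the resolution is to choose the constants in the order $a_\alpha^{(s)}$ first (top-down in $|\alpha|$), then $d_{\alpha,\beta}^{(s)}$ (bottom-up in $|\alpha|$, i.e. top-down in $|\beta|$), then $b_{\alpha,k}^{(s)}$, and finally $\varepsilon_0$ and $\bar\delta_{MS}$; the only genuinely new feature here relative to the mono-species case is that every appearance of a constant depending on the local Maxwellian must be traced through the $\delta$-dependent bounds of Lemma \ref{lemma for bound of M-i}, and the admissible range of $\delta$ (simultaneously in \eqref{delta comm-low} and \eqref{delta comm-up}, and tighter than $(\bar\delta,\frac{1}{1+\delta_{MS}})$ for the $\mathbf{K}^\varepsilon$ estimates) must be nonempty — which is guaranteed precisely by the smallness assumption on $\delta_{MS}$ already imposed in Theorem \ref{theorem for M-S in main result}.
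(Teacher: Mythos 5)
Your plan follows the same route as the paper, which itself tracks \cite{briant2015JDEnavierstokes}: weight the component estimates \eqref{estimate for H-1 of L2 for f}--\eqref{estimate for H-s of commutator for x-alpha v-e-k f} by the $\mathcal{H}_\varepsilon^s$-coefficients, use the commutator to generate $x$-dissipation, absorb the $v$-derivative chain via the coefficient hierarchy, invoke Lemma \ref{lemma for equivalence norms in ker L} and the Poincar\'e inequality \eqref{pi-L estimate for pert f} to close, and proceed by induction on $s$ via the auxiliary functional. The identification of the $\varepsilon^2$ weight on $\mathcal{G}_{x,v}^s$, the role of $\bar\delta_{MS}$ chosen last, and the caveat about the admissible $\delta$-range in Lemma \ref{lemma for bound of M-i} are all in line with the paper.

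One concrete inaccuracy worth flagging is the coefficient ordering you propose at the end, ``$a_\alpha^{(s)}$ first, then $d$, then $b$,'' and the associated claim that $b_{\alpha,k}^{(s)}$ must be \emph{small} relative to both $a$ and $d$. The actual requirement for positive definiteness (and uniform-in-$\varepsilon$ equivalence) of the quadratic form is $b^2 \lesssim a\,d$, not $b\lesssim a$ and $b\lesssim d$ separately; the paper in fact takes $b$ \emph{large} relative to $d$ (one needs $b \gtrsim d C^{(4)}$ so that $\varepsilon b$ times the commutator dissipation $-\tfrac{2}{\varepsilon}(1-\delta_{MS}eC)\|\partial_x^\alpha\mathbf{f}\|^2$ beats the positive $dC^{(4)}\|\partial_x^\alpha\mathbf{f}\|^2$ coming from the $\varepsilon^2 d$-weighted $v$-derivative estimate) and compensates by taking $a$ much larger still. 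Consequently $a$ at the top $|\alpha|$ level is chosen \emph{last}, not first: the paper's order at $s=1$ is $d$, then $A$ ($=a_0$), then $b$, then $e$, then $a$ ($=a_1$), and finally $\bar\delta_{MS}$ small enough to kill terms like $\delta_{MS}(AC^{(1)}+aC^{(2)})\|\nabla_x\mathbf{f}\|^2$. If you fixed $a$ first as written, you could not later satisfy $2beC_0^{\mathbf{L}} - a\lambda_{\mathbf{L}}/2 \le -1$. Since you explicitly defer to Briant's ordering at the start, I read this as a slip in the sketch rather than a genuinely different (and broken) scheme, but the sketch as literally stated would not close.
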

\begin{proof}
Details in Lemma \ref{lemma for estimates on source term S} require us to consider the solutions for the Maxwell-Stefan system $(\mathbf{c}, \mathbf{u}, T) \in H_x^{s + 5} \times H_x^{s + 4} \times H_x^{s + 5}$, with $\bar{\delta}_{MS}$ satisfying the smallness 
assumptions in Theorem \ref{theorem for M-S in main result}, inequality \eqref{delta-MS commonds in analysis of K} and $\bar{\delta}_{MS} \leq \frac{\lambda_\mathbf{L}}{8 C_{coe}^\mathbf{L}}$. Let $\mathbf{f}$ satisfies the statements in this Proposition, the above inequalities from \eqref{estimate for H-1 of L2 for f} up to \eqref{estimate for H-s of commutator for x-alpha v-e-k f} hold.

We first set $s = 1$, the functional $\mathcal{H}_\varepsilon^1$ reads, for any $\mathbf{f} \in H^1 (\mathbb{T}^3 \times \mathbb{R}^3)$, 
\[  \|\mathbf{f}\|_{\mathcal{H}_\varepsilon^1}^2 = A \|\mathbf{f}\|_{L_{x,v}^2}^2 + a \|\nabla_x \mathbf{f}\|_{L_{x,v}^2}^2 
+ \varepsilon b \langle \nabla_x \mathbf{f}, \nabla_v \mathbf{f} \rangle_{L_{x,v}^2} + \varepsilon^2 d \|\nabla_v\mathbf{f}\|_{L_{x,v}^2}^2.
\]
We consider the linear combination of inequalities $A \eqref{estimate for H-1 of L2 for f} 
+ a \eqref{estimate for H-1 of L2 for x-deri f} + \varepsilon b \eqref{estimate for H-1 of commutator for x-v f} 
+ \varepsilon^2 d \eqref{estimate for H-1 of L2 for v- deri f}$, and notice that $\mathcal{G}_x^0 \leq \mathcal{G}_x^1$, 
deducting that
\begin{align*}
\frac{\d}{\d t}  \|\mathbf{f}\|_{\mathcal{H}_\varepsilon^1}^2 \leq & \frac{1}{\varepsilon^2} [\varepsilon^2 \delta_{MS} (b e K_{1,1} + a C^{(3)}) 
+ \varepsilon^2 d K_1 - \frac{A \lambda_\mathbf{L}}{2} ] \|\mathbf{f}^\perp\|_{L_{x,v}^2 (\langle v \rangle^\gamma)}\\
& + \frac{1}{\varepsilon^2} [ 2 b e C_0^\mathbf{L} - \frac{a \lambda_\mathbf{L}}{2} ]  
\|\nabla_x \mathbf{f}^\perp\|_{L_{x,v}^2 (\langle v \rangle^\gamma)}^2 + [ \frac{b C^{(5)}}{e} - d C_1^{\bm{\nu}} ]  
\|\nabla_v \mathbf{f}\|_{L_{x,v}^2 (\langle v \rangle^\gamma)}^2\\
& + [ \delta_{MS} (A C^{(1)} + a C^{(2)}) + d C^{(4)} - 2 b (1 - e \delta_{MS} C^{(6)}) ]  \|\nabla_x \mathbf{f}\|_{L_{x,v}^2}^2\\
& - \frac{A \lambda_\mathbf{L}}{2 \varepsilon^2}  \|\mathbf{f}^\perp\|_{L_{x,v}^s (\langle v \rangle^\gamma)}^2 
- \frac{a \lambda_\mathbf{L}}{2 \varepsilon^2}  \|\nabla_x\mathbf{f}^\perp\|_{L_{x,v}^s (\langle v \rangle^\gamma)}^2\\
& + K_1^{(1)} \mathcal{G}_x^1 (\mathbf{f}, \mathbf{f})^2 + \varepsilon^2 K_2^{(1)} \mathcal{G}_{x,v}^1 (\mathbf{f}, \mathbf{f})^2 
+ C^{(1)} \delta_{MS},
\end{align*}
where $K_1^{(1)}, K_2^{(1)}$ and $C^{(1)}$ are positive constants that depend only on the choices of $A, a, b$ and $d$. 
We further choose 
\[  \bar{\delta}_{MS} \leq \min\{ \frac{d K_1}{b e K_{1, 1} + a C^{(3)}}, \frac{d C^{(4)}}{A C^{(1)} +a C^{(2)}}, \frac{1}{2 e C^{(6)}} \} ,
\]
while the constants $A, a, b, d, e$ satisfy the following requirements:
\begin{gather*}
\text{choose $d$ such that } -d C_1^{\bm{\nu}} < - 1, \quad A \text{ is big enough such that } 2 d K_1 
- \frac{A \lambda_\mathbf{L}}{2} \leq - 1,\\
b \text{ is big enough such that } 2 d C^{(4)} - b \leq - 1, \quad e \text{ is big enough such that } 
\frac{b C^{(5)}}{e} - d C_1^{\bm{\nu}} \leq - 1,\\
a \text{ is big enough such that } 2 b e C_0^\mathbf{L} - \frac{a \lambda_\mathbf{L}}{2} \leq - 1.
\end{gather*}
The equivalence between the functional $\mathcal{H}_\varepsilon^1$ and the standard Sobolev norm $H_x^1L_v^2$ can be obtained by repeating discussions in \cite{briant2015JDEnavierstokes}.

Furthermore, due to the Poincaré inequality \eqref{pi-L estimate for pert f} and the inequality \eqref{equivalence of two norms in ker L}, there exist two positive constants $C_1^\prime$ and $C_2^\prime$, such that 
\[  \|\mathbf{f}\|_{L_{x,v}^2 (\langle v \rangle^\gamma)}^2 \leq C_1^\prime \left ( 
  \|\mathbf{f}^\perp\|_{L_{x,v}^2 (\langle v \rangle^\gamma)}^2 + \frac{1}{2} \|\nabla_x\mathbf{f}\|_{L_{x,v}^2}^2 
  + \delta_{MS}^2 \right )  
  \] 
\[  \|\nabla_x \mathbf{f}\|_{L_{x,v}^2 (\langle v \rangle^\gamma)}^2 \leq C_2^\prime \left ( 
  \|\nabla_x \mathbf{f}^\perp\|_{L_{x,v}^2 (\langle v \rangle^\gamma)}^2 + \frac{1}{2}  \|\nabla_x\mathbf{f}\|_{L_{x,v}^2}^2 \right ). 
  \] 
Based on the above discussions, we deduce the existence of a positive constant $K_0^{(1)}$, such that 
\begin{align*}
\frac{\d}{\d t} \|\mathbf{f}\|_{\mathcal{H}_\varepsilon^2}^2 \leq - K_0^{(1)} \|\mathbf{f}\|_{H_{x,v}^1 (\langle v \rangle^\gamma)}^2 
- \frac{\lambda_\mathbf{L} \min\{A,a\}}{2 \varepsilon^2} \left ( \|\mathbf{f}^\perp\|_{L_{{x,v}^2 (\langle v \rangle^\gamma)}}^2 
+ \|\nabla_x\mathbf{f}^\perp\|_{L_{x,v}^2 (\langle v \rangle^\gamma)}^2  \right )\\
+ K_1^{(1)} \mathcal{G}_x^1 (\mathbf{f}, \mathbf{f})^2 + \varepsilon^2 K_2^{(1)}\mathcal{G}_{x,v}^1 (\mathbf{f}, \mathbf{f})^2 + C^{(1)} \delta_{MS}.
\end{align*}
Lowering the value of $K_0^{(1)}$ to ensure $0 < K_0^{(1)} \leq \frac{\lambda_\mathbf{L} \min\{A,a\}}{2}$, we recover the inequality \eqref{Prepared priori estimate} in the case $s = 1$.

Next we assume that the results hold up to integer $s - 1$, and prove that they are true for $s$. 
The first statement of the equivalence between two norms is straightforward. We only present details of the second property, firstly define, for any $t \geq 0$,
\begin{align*}
F_s (t) = \varepsilon^2 B \sum_{ \substack{|\alpha| + |\beta| = s \\ |\beta| \geq 2} } 
\|\partial_v^\beta \partial_x^\alpha \mathbf{f}\|_{L_{x,v}^2}^2 + & \sum_{ \substack{|\alpha| = s \\ k, \alpha_k > 0} } 
\left[ a \|\partial_x^\alpha \mathbf{f}\|_{L_{x,v}^2}^2 \right.\\
& \left. + \varepsilon b \langle \partial_v^{e_k} \partial_x^{\alpha - e_k} \mathbf{f}, \partial_x^\alpha \mathbf{f} \rangle_{L_{x,v}^2} 
+ \varepsilon^2 d \|\partial_v^{e_k} \partial_x^{\alpha - e_k} \mathbf{f}\|_{L_{x,v}^2}^2 \right].
\end{align*}
The following equality holds
\[  \sum_{ \substack{|\alpha| + |\beta| = s \\ |\beta| \geq2} } \sum_{k, \beta_k > 0} 
\|\partial_v^{\beta - e_k} \partial_x^{\alpha + e_k} \mathbf{f}\|_{L_{x,v}^2}^2 = \sum_{ \substack{|\alpha| + |\beta| = s \\ |\beta| \geq2} }  
\| \partial_x^\beta \partial_x^\alpha \mathbf{f}\|_{L_{x,v}^2}^2 + \sum_{ \substack{|\alpha| = s \\ k, \alpha_k > 0} }  
\| \partial_v^{e_k} \partial_x^{\alpha - e_k} \mathbf{f}\|_{L_{x,v}^2}^2.
\]
Using the linear combination of inequalities \eqref{estimate for H-s of L2 for x-alpha f} up to 
\eqref{estimate for H-s of commutator for x-alpha v-e-k f}, we obtain
\begin{align*}
\frac{\d}{\d t} F_s (t) \leq & \sum_{ \substack{|\alpha| = s\\ k, \alpha_k > 0}} \left\{ \frac{1}{\varepsilon^2} \left ( 
  2 b e C_0^\mathbf{L} - \frac{a \lambda_\mathbf{L}}{2} \right )  
  \|\partial_x^\alpha \mathbf{f}^\perp\|_{L_{x,v}^2 (\langle v \rangle^\gamma)}^2 \right.\\
& + \left[ a C^{(7)} \delta_{MS} + \varepsilon^2 d C^{(8)} - 2 b (1- e C^{(10)} \delta_{MS}) \right] 
\|\partial_x^\alpha \mathbf{f}\|_{L_{x,v}^2}^2\\
& \left. + \left[ \frac{b C^{(9)}}{e} + \varepsilon^2 B C^{(8)} -d C_1^{\bm{\nu}} \right]  
\|\partial_v^{e_k} \partial_x^{\alpha - e_k} \mathbf{f}\|_{L_{x,v}^2 (\langle v \rangle^\gamma)}^2\right\}\\
& + \sum_{ \substack{|\alpha| + |\beta| = s \\ |\beta| \geq2} } (\varepsilon^2 B C^{(8)} - B C_1^{\bm{\nu}})  
\|\partial_v^\beta \partial_x^\alpha \mathbf{f}\|_{L_{x,v}^2 (\langle v \rangle^\gamma)}^2 - 
\sum_{ \substack{|\alpha| = s \\ k, \alpha_k > 0} } \frac{a \lambda_\mathbf{L}}{2 \varepsilon^2}  
\|\partial_x^\alpha \mathbf{f}^\perp\|_{L_{x,v}^2 (\langle v \rangle^\gamma)}\\
  & + \tilde{K}_0^s  \|\mathbf{f}\|_{H_{x,v}^{s-1} (\langle v \rangle^\gamma)}^2 + \tilde{K}_1^s \mathcal{G}_x^s (\mathbf{f}, \mathbf{f})^2 
  + \varepsilon^2 \tilde{K}_2 \mathcal{G}_{x,v}^s (\mathbf{f}, \mathbf{f})^2 + \tilde{C}^s \delta_{MS},
\end{align*}
where $\tilde{K}_l^s, l = 0, 1, 2$ and $\tilde{C}^s$ are positive constants independent of $\varepsilon$. We further require the bounds satisfy
\[ \bar{\delta}_{MS} \leq \min\{ \frac{d C^{(8)}}{a C^{(7)}} , \frac{1}{2 e C^{(10)}} \}, \quad
\varepsilon_0 \leq \min\{ \frac{b C^{(9)}}{e B C^{(8)}} , \frac{C_1^{\bm{\nu}}}{2C^{(8)}}
   \},
\]
and suitably choose the constants $a, b, d, e$ and $B$. Then, there exists a positive constant $K_s^\prime$, such that
\begin{align*}
\frac{\d}{\d t} F_s (t) \leq - K_s^\prime \sum_{ \substack{|\alpha| = s \\ k, \alpha_k > 0} } \left ( 
  \|\partial_x^\alpha \mathbf{f}^\perp\|_{L_{x,v}^2 (\langle v \rangle^\gamma)}^2 + \|\partial_x^\alpha \mathbf{f}\|_{L_{x,v}^2}^2 
  + \|\partial_v^{e_k} \partial_x^{\alpha - e_k} \mathbf{f}\|_{L_{x,v}^2 (\langle v \rangle^\gamma)}^2 \right )\\
 - C_1^{\bm{\nu}} \sum_{ \substack{|\alpha| + |\beta| = s \\ |\beta| \geq 2} } 
 \|\partial_v^\beta \partial_x^\alpha \mathbf{f}\|_{L_{x,v}^2 (\langle v \rangle^\gamma)}^2 - \frac{a \lambda_\mathbf{L}}{2 \varepsilon^2} 
 \sum_{ \substack{|\alpha| = s \\ k, \alpha_k > 0} } \|\partial_x^\alpha \mathbf{f}^\perp\|_{L_{x,v}^2 (\langle v \rangle^\gamma)}^2\\
 + \tilde{K}_0^s \|\mathbf{f}\|_{H_{x,v}^{s-1} (\langle v \rangle^\gamma)}^2 + \tilde{K}_1^s \mathcal{G}_x^s (\mathbf{f}, \mathbf{f})^2 
 + \varepsilon^2 \tilde{K}_2 \mathcal{G}_{x,v}^s (\mathbf{f}, \mathbf{f})^2 + C^{(s)} \delta_{MS}.
\end{align*}
Following discussions similar to those in the case $s = 1$, the inequality 
\[  \|\partial_x^\alpha \mathbf{f}\|_{L^2_{x,v} (\langle v\rangle^\gamma)}^2 \leq 
C_2^\prime ( \|\partial_x^\alpha \mathbf{f}^\perp\|_{L^2_{x,v} (\langle v \rangle^\gamma)}^2 + \frac{1}{2} 
\|\partial_x^\alpha \mathbf{f}\|_{L_{x,v}^2}^2 )
\]
holds for any $|\alpha| = s$ with $s \in \mathbb{N}^\ast$. Decreasing the value 
of $K_s^\prime$ if necessary, we can obtain
\begin{align*}
\frac{\d}{\d t} F_s (t) \leq - K_s^\prime \left ( \sum_{|\alpha| + |\beta| = s} 
\|\partial_v^\beta \partial_x^\alpha \mathbf{f}\|_{L_{x,v}^2 (\langle v\rangle^\gamma)}^2 + \frac{1}{\varepsilon^2} 
\sum_{|\alpha| = s} \|\partial_x^\alpha \mathbf{f}^\perp\|_{L_{x,v}^2 (\langle v \rangle^\gamma)}^2 \right )\\
 + \tilde{K}_0^s  \|\mathbf{f}\|_{H_{x,v}^{s-1} (\langle v \rangle^\gamma)}^2 + \tilde{K}_1^s \mathcal{G}_x^s (\mathbf{f}, \mathbf{f})^2 
 + \varepsilon^2 \tilde{K}_2 \mathcal{G}_{x,v}^s (\mathbf{f}, \mathbf{f})^2 + C^{(s)} \delta_{MS}.
\end{align*}
Since the second statement holds for $s - 1$, we choose a positive constant $D$ big enough to ensure 
$\tilde{K}_0^s < \frac{D K_0^{(s-1)}}{2}$, and set $\|\mathbf{f}\|_{\mathcal{H}_\varepsilon^s}^2 
= D \|\mathbf{f}\|_{\mathcal{H}_\varepsilon^{s-1}}^2 + F_s$. The functionals $\mathcal{G}_x^s$ and $\mathcal{G}_{x,v}^s$ increase 
with respect to the index $s$. Thus, it is not hard to find a constant $K_0^{(s)}$ (decreasing it if necessary), along with 
constants $K_1^{(s)}, K_2^{(s)}$ and $C^{(s)}$ (increasing them if necessary), 
to recover inequality \eqref{Prepared priori estimate} in the general case.
\end{proof}
Based on the above Proposition, we can establish a uniform priori estimate for the perturbed solution $\mathbf{f}$.
\begin{proposition}\label{proposition of priori estimate}
There exists $s_0\in\mathbb{N}^\ast$, for any integer $s \geq s_0$, there exist $\bar{\delta}_{MS} > 0$, $\varepsilon_0 \in (0, 1]$ and $\delta_B > 0$, 
such that for any $\varepsilon \in (0, \varepsilon_0]$ and $\delta_{MS} \in [0, \bar{\delta}_{MS}]$, if 
$\mathbf{f} \in H^s (\mathbb{T}^3 \times \mathbb{R}^3)$ solves the perturbed Boltzmann equations \eqref{pert Bz} 
with the initial data satisfying
\[  \|\mathbf{f}^{in}\|_{\mathcal{H}_\varepsilon^s} \leq \frac{\delta_B}{2}, \quad 
\|\bm{\pi}_\mathbf{T}^\varepsilon (\mathbf{f}^{in})\|_{L_{x,v}^2} = \mathcal{O} (\delta_{MS}),
\]
then $\|\mathbf{f}\|_{\mathcal{H}_\varepsilon^s} (t) \leq \delta_B$ for any $t\geq0$.
\end{proposition}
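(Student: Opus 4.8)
The plan is to run a continuation (bootstrap) argument based on the a priori estimate established in Proposition \ref{proposition preparation of priori estimate}. First I would fix the integer $s \geq s_0$ (with $s_0$ taken large enough that Lemma \ref{lemma for estimates on Gamma} applies, so that $\mathcal{G}_x^s(\mathbf{f},\mathbf{f}) \leq C_s^{\mathbf{\Gamma}}\|\mathbf{f}\|_{H_x^s L_v^2}\|\mathbf{f}\|_{H_x^s L_v^2(\langle v\rangle^\gamma)}$ and similarly for $\mathcal{G}_{x,v}^s$), and invoke the norm equivalence \eqref{equivalencen of functional and standard norm} together with the estimate \eqref{Prepared priori estimate}. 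The decisive observation is that, by that equivalence, $\mathcal{G}_x^s(\mathbf{f},\mathbf{f})^2 \lesssim \|\mathbf{f}\|_{\mathcal{H}_\varepsilon^s}^2 \|\mathbf{f}\|_{H_{x,v}^s(\langle v\rangle^\gamma)}^2$ and $\varepsilon^2\mathcal{G}_{x,v}^s(\mathbf{f},\mathbf{f})^2 \lesssim \|\mathbf{f}\|_{\mathcal{H}_\varepsilon^s}^2\|\mathbf{f}\|_{H_{x,v}^s(\langle v\rangle^\gamma)}^2$ (the $\varepsilon^2$ absorbs the extra $\varepsilon^{-2}$ that would otherwise come from the $v$-derivative block), so both nonlinear contributions in \eqref{Prepared priori estimate} are bounded by $K\|\mathbf{f}\|_{\mathcal{H}_\varepsilon^s}^2\|\mathbf{f}\|_{H_{x,v}^s(\langle v\rangle^\gamma)}^2$ for a constant $K$ independent of $\varepsilon$ and $\delta_{MS}$.

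Next I would set up the continuity/bootstrap. Choose $\delta_B>0$ small enough that $K_1^{(s)}$-type constants times $\delta_B^2$ is dominated by $K_0^{(s)}$, i.e. so that whenever $\|\mathbf{f}\|_{\mathcal{H}_\varepsilon^s}(t)\leq \delta_B$ one has
\begin{equation*}
\big(K_1^{(s)}+K_2^{(s)}\big)\,\|\mathbf{f}\|_{\mathcal{H}_\varepsilon^s}^2(t)\ \leq\ \tfrac{K_0^{(s)}}{2},
\end{equation*}
using the equivalence to pass from $\|\mathbf{f}\|_{\mathcal{H}_\varepsilon^s}$ to the $H^s_{x,v}$ norm appearing inside the dissipation term. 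Then \eqref{Prepared priori estimate} yields
\begin{equation*}
\frac{\d}{\d t}\|\mathbf{f}\|_{\mathcal{H}_\varepsilon^s}^2 \leq -\tfrac{K_0^{(s)}}{2}\|\mathbf{f}\|_{H^s_{x,v}(\langle v\rangle^\gamma)}^2 + C^{(s)}\delta_{MS} \leq -c\,\|\mathbf{f}\|_{\mathcal{H}_\varepsilon^s}^2 + C^{(s)}\delta_{MS}
\end{equation*}
for some $c>0$ (again via norm equivalence and $\|\cdot\|_{L^2_{x,v}}\leq\|\cdot\|_{L^2_{x,v}(\langle v\rangle^\gamma)}$ since $\gamma\geq 0$). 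Grönwall then gives $\|\mathbf{f}\|_{\mathcal{H}_\varepsilon^s}^2(t)\leq e^{-ct}\|\mathbf{f}^{in}\|_{\mathcal{H}_\varepsilon^s}^2 + \frac{C^{(s)}}{c}\delta_{MS}$, so with $\|\mathbf{f}^{in}\|_{\mathcal{H}_\varepsilon^s}\leq\delta_B/2$ and $\bar\delta_{MS}$ chosen so that $\frac{C^{(s)}}{c}\bar\delta_{MS}\leq \delta_B^2/4$, we get $\|\mathbf{f}\|_{\mathcal{H}_\varepsilon^s}(t)\leq \delta_B$ for all $t$ in the maximal interval of existence where the bound was assumed.

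Finally I would close the argument by a standard continuity/connectedness step: let $T^{\star}=\sup\{\tau\geq0:\ \|\mathbf{f}\|_{\mathcal{H}_\varepsilon^s}(t)\leq\delta_B\ \forall t\in[0,\tau]\}$, which is positive since $\|\mathbf{f}^{in}\|_{\mathcal{H}_\varepsilon^s}\leq\delta_B/2<\delta_B$ and $t\mapsto\|\mathbf{f}\|_{\mathcal{H}_\varepsilon^s}(t)$ is continuous; on $[0,T^\star]$ the differential inequality above applies and yields $\|\mathbf{f}\|_{\mathcal{H}_\varepsilon^s}(t)\leq\sqrt{\delta_B^2/4+\delta_B^2/4}<\delta_B$, which by continuity contradicts maximality unless $T^\star=+\infty$. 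The local-in-time well-posedness needed to make sense of this (existence of an $H^s$ solution on a short interval, and the fact that the $\mathcal{H}_\varepsilon^s$-norm stays finite and continuous) is the standard fixed-point construction for the perturbed multi-species Boltzmann equation, done exactly as in \cite{briant2015JDEnavierstokes,briant2021stability}. The main obstacle is bookkeeping rather than conceptual: one must verify that all the implicit constants $K_0^{(s)},K_1^{(s)},K_2^{(s)},C^{(s)},c$ are genuinely independent of $\varepsilon\in(0,\varepsilon_0]$ — this is where the careful placement of $\varepsilon$-weights in the functional $\mathcal{H}_\varepsilon^s$ in \eqref{express of functional H-eps-s} and the precise form of \eqref{Prepared priori estimate} are used — and that the smallness thresholds $\delta_B$, $\bar\delta_{MS}$ can be chosen consistently with all the earlier constraints (in particular $\bar\delta_{MS}$ must also respect the Maxwell-Stefan smallness in Theorem \ref{theorem for M-S in main result}, \eqref{delta-MS commonds in analysis of K}, and $\bar\delta_{MS}\leq\lambda_\mathbf{L}/(8C_{coe}^\mathbf{L})$ from Proposition \ref{proposition preparation of priori estimate}), so that the chain of choices does not become circular.
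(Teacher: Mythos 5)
Your proposal follows the same route as the paper's own proof: start from the differential inequality \eqref{Prepared priori estimate}, use the norm equivalence \eqref{equivalencen of functional and standard norm} to bound $\mathcal{G}_x^s(\mathbf{f},\mathbf{f})^2$ and $\varepsilon^2\mathcal{G}_{x,v}^s(\mathbf{f},\mathbf{f})^2$ by a constant times $\|\mathbf{f}\|_{\mathcal{H}_\varepsilon^s}^2\|\mathbf{f}\|_{H_{x,v}^s(\langle v\rangle^\gamma)}^2$ with the $\varepsilon^2$ exactly cancelling the $\varepsilon^{-2}$ from the lower equivalence bound, then choose $\delta_B$ small to absorb the nonlinear contribution, apply Gr\"onwall, and close with the bootstrap argument on $T^\star$ after requiring $\bar\delta_{MS}\lesssim\delta_B^2\lambda_B/C^{(s)}$. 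The remark on local existence is unnecessary (the proposition assumes a solution $\mathbf{f}$ is given), but the argument as a whole is correct and is essentially the paper's proof.
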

\begin{proof}
Since we use the same assumptions as those in Proposition \ref{proposition preparation of priori estimate}, the following inequality can be directly applied 
\[\frac{\d}{\d t} \|\mathbf{f}\|_{\mathcal{H}_\varepsilon^s}^2 \leq - K_0^{(s)} \|\mathbf{f}\|_{L_{x,v}^2 (\langle v \rangle^\gamma)}^2 
+ K_1^{(s)} \mathcal{G}_x^s (\mathbf{f}, \mathbf{f})^2 + \varepsilon^2 K_2^{(s)} 
\mathcal{G}_{x,v}^s (\mathbf{f}, \mathbf{f})^2 + C^{(s)} \delta_{MS}.
\]
The equivalence between the norm $\mathcal{H}_\varepsilon^s$ and the norm given in \eqref{equivalencen of functional and standard norm} 
implies that there exist two constants $C_{eq}$ and $C_{EQ}$, such that 
\begin{equation}
C_{eq} \left (  \sum_{|\alpha| \leq s}  \|\partial_x^\alpha \cdot\|_{L_{x,v}^2}^2 + \varepsilon^2  
     \sum_{ \substack{|\alpha| + |\beta| \leq s \\ |\beta| \geq 1} } \|\partial_v^\beta \partial_x^\alpha \cdot\|_{L_{x,v}^2}^2  
     \right )\leq \|\cdot\|_{\mathcal{H}_\varepsilon^s}^2 \leq C_{EQ} \|\cdot\|_{H_{x,v}^s}^2.
\end{equation}
The results in Lemma \ref{lemma for estimates on Gamma} infer that there exists $s_0 \in \mathbb{N}^\ast$, such that for any integer $s \geq s_0$, 
functionals $\mathcal{G}_x^s$ and $\mathcal{G}_{x,v}^s$ can be estimated as
\[  \mathcal{G}_x^s (\mathbf{f}, \mathbf{f})^2 \leq 4 (C_s^\mathbf{\Gamma})^2 \|\mathbf{f}\|_{H_x^s L_v^2}^2  
\|\mathbf{f}\|_{H_x^sL_v^s (\langle v \rangle^\gamma)}^2 \leq \frac{4 (C_s^\mathbf{\Gamma})^2}{C_{eq}}  
\|\mathbf{f}\|_{\mathcal{H}_\varepsilon^s}^2 \|\mathbf{f}\|_{H_x^sL_v^s (\langle v \rangle^\gamma)}^2,
\] 
\[  \mathcal{G}_{x,v}^s (\mathbf{f}, \mathbf{f})^2 \leq \frac{4 (C_s^\mathbf{\Gamma})^2}{\varepsilon^2 C_{eq}}  
\|\mathbf{f}\|_{\mathcal{H}_\varepsilon^s}^2  \|\mathbf{f}\|_{H_x^s L_v^s (\langle v \rangle^\gamma)}^2.
\]
Therefore, we obtain
\[  \frac{\d}{\d t} \|\mathbf{f}\|_{\mathcal{H}_\varepsilon^s}^2 \leq \left ( \frac{4(C_s^\mathbf{\Gamma})^2}{C_{eq}} 
(K_1^{(s)} + K_2^{(s)}) \|\mathbf{f}\|_{\mathcal{H}_\varepsilon^s}^2 - K_0^{(s)} \right ) 
\|\mathbf{f}\|_{H_{x,v}^s (\langle v \rangle^\gamma)}^2 + C^{(s)} \delta_{MS} .
\]
If we choose $\delta_B > 0$ satisfying
\begin{equation}
\frac{4(C_s^\mathbf{\Gamma})^2}{ C_{eq}} (K_1^{(s)} + K_2^{(s)}) \delta_B^2 \leq \frac{K_0^{(s)}}{2},
\end{equation}
then the following inequality holds as long as $\|\mathbf{f}\|_{\mathcal{H}_\varepsilon^s}\leq \delta_B$, 
\[  \frac{\d}{\d t} \|\mathbf{f}\|_{\mathcal{H}_\varepsilon^s}^2 \leq - \frac{K_0^{(s)}}{2} 
\|\mathbf{f}\|_{H_{x,v}^s (\langle v \rangle^\gamma)}^2 + C^{(s)} \delta_{MS} \leq - \frac{K_0^{(s)}}{2 C_{EQ}}  
\|\mathbf{f}\|_{\mathcal{H}_\varepsilon^s}^2 + C^{(s)} \delta_{MS}.
\]
Denoting $\lambda_B = \frac{K_0^{(s)}}{2 C_{EQ}}$, and using Grönwall's inequality, we can deduce this upper bound
\[  \|\mathbf{f}\|_{\mathcal{H}_\varepsilon^s}^2 \leq \|\mathbf{f}^{in}\|_{\mathcal{H}_\varepsilon^s}^2 e^{- \lambda_B t} 
+ \frac{C^{(s)} \delta_{MS}}{\lambda_B} (1 - e^{- \lambda_B t}),
\]
holds as long as $\|\mathbf{f}\|_{\mathcal{H}_\varepsilon^s}^2 \leq \delta_B$. We introduce
\[  T^\ast = \sup \{t>0 : \|\mathbf{f}\|_{\mathcal{H}_\varepsilon^s} (s) \leq \delta_B, \quad \forall s \in [0, t]\}.
\]
If $T^\ast$ is finite, the above inequality infers that at $t=T^\ast$, 
\[  \|\mathbf{f}\|_{\mathcal{H}_\varepsilon^s} (T^\ast) \leq \max \left\{ \frac{\delta_B}{2}, \left ( 
  \frac{C^{(s)} \delta_{MS}}{\lambda_B} \right )^{1/2} \right\}.
\]
By imposing 
\begin{equation}
\bar{\delta}_{MS} \leq \frac{\delta_B^2 \lambda_B}{4 C^{(s)}},
\end{equation}
we deduce that $\|\mathbf{f}\|_{\mathcal{H}_\varepsilon^s} (T^\ast) \leq \frac{\delta_B}{2} < \delta_B$, contradicting the definition of $T^\ast$. 
Therefore, $T = + \infty$, and we finally prove that $\|\mathbf{f}\|_{\mathcal{H}_\varepsilon^2}(t) \leq \delta_B$ for any $t \geq 0$.
\end{proof}
\textbf{Existence and uniqueness of the perturbation $\mathbf{f}$}
\begin{proof}[Proof of Theorem \ref{theorem for perturbed f}]
  Based on the assumptions in Theorem \ref{theorem for perturbed f}, we can directly apply results in Propositions
  \ref{proposition preparation of priori estimate} and \ref{proposition of priori estimate}. 
  For the existence, we use a standard iterative method initially on the time interval $[0,T_0]$, and then extend the time interval to $\mathbb{R}_+$. 
  
Let $s \geq s_0$ and consider $(\mathbf{c}, \mathbf{u}, T)$ is the perturbation solution for the Maxwell-Stefan system in Theorem 
\ref{theorem for M-S in main result}. If we consider $(\mathbf{c}, \mathbf{u}, T) \in L^\infty (\mathbb{R}_+, H_x^{s + 6}) 
\times L^\infty(\mathbb{R}_+, H_x^{s + 5}) \times L^\infty(\mathbb{R}_+, H_x^{s + 6})$, then the source term $\mathbf{S}^\varepsilon$ constructed with Maxwellian $\mathbf{M}^\varepsilon$, belongs to $C^0 (\mathbb{R}_+, H_{x,v}^s)$.

We initially set 
\begin{equation}
\mathbf{f}^{(0)} (t,x,v) 
= \begin{cases}
\mathbf{f}^{in}, \quad t = 0,\\ 0, \quad t > 0;
\end{cases}
\quad T_0 = \frac{\delta_B \min \left\{1, \frac{K_0^{(s)}}{2} \right\}}{4 C^{(s)} \delta_{MS}}.
\end{equation} 
Following steps in \cite[Section 6]{briant2015JDEnavierstokes}, we introduce a functional defined on $H_{x,v}^s$ on $[0, T_0]$,
\[  E_{[0, T_0]} (\mathbf{f}) = \sup_{t \in [0, T_0]} \left ( \|\mathbf{f}\|_{\mathcal{H}_\varepsilon^s}^2 (t) 
+ \int_0^t \|\mathbf{f}\|_{H_{x,v}^s (\langle v \rangle^\gamma)}^2 (\tau) \,d\tau \right ),
\]
omitting the subscript $[0, T_0]$ when considering $t \in[0, T_0]$ for simplicity. Suppose that on the time interval $[0, T_0]$ 
we have constructed 
functions $f^{(n)}$ ($n \in \mathbb{N}$), satisfying for any $t \in[0, T_0]$,
\[  \mathbf{f}^{(n)} \in H_{x,v}^s, \quad \|\bm{\pi}_\mathbf{T}^\varepsilon(\mathbf{f}^{(n)})\|_{L_{x,v}^s} 
= \mathcal{O} (\delta_{MS}), \quad E(\mathbf{f}^{(n)}) \leq \delta_B.
\]
By induction, we defined the function $\mathbf{f}^{(n+1)}$ such that 
\begin{equation} \left\{
  \begin{aligned} 
  & \partial_t \mathbf{f}^{(n+1)} + \frac{1}{\varepsilon} v \cdot \nabla_x \mathbf{f}^{(n+1)} = \frac{1}{\varepsilon^2} 
  \mathbf{L}^\varepsilon (\mathbf{f}^{(n+1)}) + \frac{1}{\varepsilon} \mathbf{\Gamma}(\mathbf{f}^{(n)}, \mathbf{f}^{(n+1)}) 
  + \mathbf{S}^\varepsilon,\\
& \mathbf{f}^{(n+1)}|_{t = 0} = \mathbf{f}^{in} . \label{equation for n+1 perturbed f}
  \end{aligned}   \right.
\end{equation}
The existence of the solution $\mathbf{f}^{(n+1)} \in H^s(\mathbb{T}^3 \times \mathbb{R}^3)$ satisfying 
$\|\bm{\pi}_\mathbf{T}^\varepsilon(\mathbf{f}^{(n+1)})\|_{L_{x,v}^2} \leq C^\mathbf{T}\delta_{MS}$ for any $t \in [0, T_0]$ is a 
classical result, where we rewrite the equation as 
\[ \partial_t \mathbf{f}^{(n+1)} = \mathbf{T}^\varepsilon (\mathbf{f}^{(n+1)}) + \frac{1}{\varepsilon^2} 
(\mathbf{L}^\varepsilon - \mathbf{L}) (\mathbf{f}^{(n+1)}) + \frac{1}{\varepsilon} \mathbf{\Gamma} (\mathbf{f}^{(n)}, \mathbf{f}^{(n+1)}) 
+ \mathbf{S}^\varepsilon.
\]
The operator $\mathbf{T}^\varepsilon$ generates a strongly continuous semigroup in $H^s (\mathbb{T}^3 \times \mathbb{R}^3)$ 
(general theory can be found in \cite{Kato1995bookperturbation} and details for multi-species Boltzmann equation is in Section 4.3 of \cite{briant2016ARMAglobal}). Operators $\mathbf{L}^\varepsilon - \mathbf{L}$ and 
$\mathbf{\Gamma} (\mathbf{f}^{(n)}, \cdot)$ are bounded linear operators from $(H_{x,v}^s; E(\cdot))$ to $(H_{x,v}^s; \|\cdot\|_{H_{x,v}^s})$, and therefore the sequence $(\mathbf{f}^{(n)})_{n \in \mathbb{N}}$ remains well-defined. 
Then we prove that $(E(\mathbf{f}^{(n)}))_{n\in\mathbb{N}}$ has a uniform bound. 
Applying the inequality in Proposition \ref{proposition preparation of priori estimate}, and using the estimates for $\mathcal{G}_x^s$ and 
$\mathcal{G}_{x,v}^s$ in Lemma \ref{lemma for estimates on Gamma}, we can deduce 
\begin{align*}
\frac{\d}{\d t} \|\mathbf{f}^{(n+1)}\|_{\mathcal{H}_\varepsilon^s}^2 \leq & ( \lambda_K \|\mathbf{f}^{(n)}\|_{\mathcal{H}_\varepsilon^s}^2 
- K_0^{(s)} ) \|\mathbf{f}^{(n+1)}\|_{H_{x,v}^2 (\langle v \rangle^\gamma)}^2 + \lambda_K 
\|\mathbf{f}^{(n+1)}\|_{\mathcal{H}_\varepsilon^s}^2  \|\mathbf{f}^{(n)}\|_{H_{x,v}^2 (\langle v \rangle^\gamma)}^2 
+ C^{(s)} \delta_{MS}\\
  \leq & (\lambda_K E(\mathbf{f}^{(n)}) - K_0^{(s)})  \|\mathbf{f}^{(n+1)}\|_{H_{x,v}^2 (\langle v \rangle^\gamma)}^2 
  + \lambda_K E(\mathbf{f}^{(n+1)}) \|\mathbf{f}^{(n)}\|_{H_{x,v}^2 (\langle v \rangle^\gamma)}^2 + C^{(s)} \delta_{MS},
\end{align*}
with the constant defined as $\lambda_K = \frac{2(C_s^{\mathbf{\Gamma}})^2}{C_{eq}} (K_1^{(s)} + K_2^{(s)})$. If we suppose that, 
\begin{equation}
\delta_B \leq \min \left\{1, \frac{K_0^{(s)}}{2} \right\}/ 2 \lambda_K, \quad \delta_B \leq \min \left\{1, \frac{K_0^{(s)}}{2} \right\}/4,
\end{equation}
then $\lambda_K E(\mathbf{f}^{(n)}) \leq \frac{K_0^{(s)}}{2}$, leading to
\[  \frac{\d}{\d t} \|\mathbf{f}^{(n+1)}\|_{\mathcal{H}_\varepsilon^s}^2 + \frac{K_0^{(s)}}{2} 
\|\mathbf{f}^{(n+1)}\|_{H_{x,v}^2 (\langle v \rangle^\gamma)}^2 \leq \lambda_K E(\mathbf{f}^{(n+1)})  
\|\mathbf{f}^{(n)}\|_{H_{x,v}^2 (\langle v \rangle^\gamma)}^2 + C^{(s)} \delta_{MS}.
\]
Integrating this inequality on $[0, t]$, with $t \leq T_0$, we can obtain
\[  E(\mathbf{f}^{(n+1)}) \leq \frac{2}{\min \left\{1, \frac{K_0^{(s)}}{2} \right\}}  \|\mathbf{f}^{in}\|_{\mathcal{H}_\varepsilon^s}^2 
+ \frac{2 C^{(s)} \delta_{MS}}{\min\left\{ 1, \frac{K_0^{(s)}}{2} \right\}} T_0 \leq \delta_B.
\]
Therefore, $(E(\mathbf{f}^{(n)}))_{n \in \mathbb{N}}$ is uniformly bounded by $\delta_B$, and $(\mathbf{f}^{(n)})_{n \in \mathbb{N}}$ 
is uniformly bounded in the norm  $L^\infty (0, T_0; H_{x,v}^s) \cap L^2 (0, T_0; H_{x,v}^s (\langle v \rangle^\gamma))$. By applying 
compact embeddings into less regular Sobolev spaces (Rellich Theorem), we can take the limit as $n\rightarrow +\infty$ in 
\eqref{equation for n+1 perturbed f}, since $\mathbf{T}^\varepsilon, \mathbf{L}^\varepsilon - \mathbf{L}$ and $\mathbf{\Gamma}$ are 
continuous. In particular, we can extract a subsequence that converges to a function 
$\mathbf{f}^{(\infty)} \in C^0 ([0, T_0]; H^s (\mathbb{T}^3 \times \mathbb{R}^3))$, which solves the problem
  \begin{equation*}
 \left\{
  \begin{aligned} 
  & \partial_t \mathbf{f}^{(\infty)} + \frac{1}{\varepsilon} v \cdot \nabla_x \mathbf{f}^{(\infty)} = \frac{1}{\varepsilon^2} 
  \mathbf{L}^\varepsilon(\mathbf{f}^{(\infty)}) + \frac{1}{\varepsilon} \mathbf{\Gamma} (\mathbf{f}^{(\infty)}, \mathbf{f}^{(\infty)}) 
  + \mathbf{S}^\varepsilon,\\
& \mathbf{f}^{(\infty)}|_{t = 0} = \mathbf{f}^{in} .
  \end{aligned}   \right.
  \end{equation*}
Lemma \ref{lemma for Poincaré inequality of pi-L of perturbed f} infers that 
$\|\bm{\pi}_{\mathbf{T}}^\varepsilon (\mathbf{f}^{(\infty)})\|_{L_{x,v}^2} (t) \leq C^\mathbf{T} \delta_{MS}$ for any $t \in [0, T_0]$, 
and the proof in Proposition \ref{proposition of priori estimate} deduces the inequality 
$\|\mathbf{f}^{(\infty)}\|_{\mathcal{H}_\varepsilon^s} (T_0) \leq \frac{\delta_B}{2} < \delta_B$. The extension of the time interval to 
$\mathbb{R}_+$ is analogous to the discussions in the Maxwell-Stefan system. Therefore, we recover the existence of 
$\mathbf{f} \in C^0 (\mathbb{R}_+; H_{x,v}^s)$ with a uniform control 
$ \|\mathbf{f}\|_{\mathcal{H}_\varepsilon^s} (t) \leq \delta_B $ 
for any $t \geq 0$.

The uniqueness starts from the assumption that $\mathbf{f}, \mathbf{g}$ are two solutions to the equations \eqref{pert Bz} with the same initial data 
$\mathbf{f}^{in}$. We first discuss the case on $t \in [0, T_0]$, setting $\mathbf{h} = \mathbf{f} - \mathbf{g}$. 
Subtracting the equations for $\mathbf{f}$ and $\mathbf{g}$, we obtain that $\mathbf{h}$ is a solution on time interval $[0,T_0]$ of
\begin{equation*}\left\{
 \begin{aligned} 
  & \partial_t \mathbf{h} + \frac{1}{\varepsilon} v \cdot \nabla_x \mathbf{h} = \frac{1}{\varepsilon^2} \mathbf{L}^\varepsilon (\mathbf{h}) 
  + \frac{1}{\varepsilon} \left ( \mathbf{\Gamma} (\mathbf{h}, \mathbf{f}) + \mathbf{\Gamma} (\mathbf{h}, \mathbf{f}) \right ) , \\
&\mathbf{h}|_{t = 0} = 0 .
  \end{aligned}   \right.
  \end{equation*}
Applying the computations in the previous analysis, we can deduce the following inequality
\begin{align*}
\frac{\d}{\d t} \|\mathbf{h}\|_{\mathcal{H}_\varepsilon^s}^2 \leq \left ( \frac{\lambda_K}{2} ( E(\mathbf{f}) + E(\mathbf{g}) ) - K_0^{(s)} 
\right ) \|\mathbf{h}\|_{H_{x,v}^s (\langle v \rangle^\gamma)}^2 + \frac{\lambda_K}{2} 
\left (\|\mathbf{f}\|_{H_{x,v}^s (\langle v \rangle^\gamma)}^2 + \|\mathbf{g}\|_{H_{x,v}^s (\langle v \rangle^\gamma)}^2 \right ) E (\mathbf{h}).
\end{align*}
The previous results infer that $E(\mathbf{f}), E(\mathbf{g}) \leq \delta_B$ with $\delta_B \leq \frac{K_0^{(s)}}{2 \lambda_K}$. From this, 
we can further derive an inequality, and integrate it over $[0, t]$ with $t \leq T_0$, leading to
\[  E(\mathbf{h}) \leq \frac{\lambda_K \delta_B}{\min\left\{ 1, \frac{K_0^{(s)}}{2} \right\}} E (\mathbf{h}).
\]
Lowering the value of the positive constant $\delta_B$ if necessary, we can conclude $E(\mathbf{h}) = 0$ by contraction, 
which means $\mathbf{f} = \mathbf{g}$ for $t\in[0,T_0]$. 
Resetting $\mathbf{f} (T_0)$ as the initial data and repeating the above analysis along with the functional $E_{[T_0, 2 T_0]}$, 
we can recover $\mathbf{f} = \mathbf{g}$ on $[T_0, 2 T_0]$. 
By iteration, we finally deduce $\mathbf{f} = \mathbf{g}$ for any $t \geq 0$.
\end{proof}

\section*{Acknowledgments}
This work was supported by National Key R\&D Program of China under the grant 2023YFA1010300. The author N. Jiang is supported by the grants from the National Natural Science Foundation of China under contract No. 11971360 and No.11731008, and also supported by the Strategic Priority Research Program of Chinese Academy of Sciences under grant No. XDA25010404. The author Y.-L. Luo is supported by grants from the National Natural Science Foundation of China under contract No. 12201220, the Guang Dong Basic and Applied Basic Research Foundation under contract No. 2024A1515012358, and the Fundamental Research Funds for the Central Universities of Hunan University under contract No. 531118011008.

\vspace*{1cm}

{\footnotesize
\bibliographystyle{aomvar}

\bibliography{ref} }
\end{document}